\newcommand{\ga}{\alpha}
\newcommand{\gb}{\beta}
\newcommand{\gep}{\epsilon}
\renewcommand{\gg}{\gamma}
\newcommand{\gl}{\lambda}
\newcommand{\go}{\omega}
\newcommand{\gs}{\sigma}
\newcommand{\vp}{\varphi}
\newcommand{\gD}{\Delta}
\newcommand{\gL}{\Lambda}
\newcommand{\gO}{\Omega}
\newcommand{\cA}{\mathcal{A}}
\newcommand{\cB}{\mathcal{B}}
\newcommand{\cC}{\mathcal{C}}
\newcommand{\cD}{\mathcal{D}}
\newcommand{\cE}{\mathcal{E}}
\newcommand{\cF}{\mathcal{F}}
\newcommand{\cG}{\mathcal{G}}
\newcommand{\cL}{\mathcal{L}}
\newcommand{\cM}{\mathcal{M}}
\newcommand{\cP}{\mathcal{P}}
\newcommand{\cX}{\mathcal{X}}
\newcommand{\cY}{\mathcal{Y}}
\newcommand{\N}{\mathbb{N}}
\newcommand{\Q}{\mathbb{Q}}
\newcommand{\R}{\mathbb{R}}
\newcommand{\Z}{\mathbb{Z}}
\newtheorem{theorem}{Theorem}[section]
\newtheorem{lemma}[theorem]{Lemma}
\newtheorem{corollary}[theorem]{Corollary}
\newtheorem{definition}[theorem]{Definition}
\theoremstyle{remark}
\newtheorem{examples}[theorem]{Examples}
\newtheorem{remark}[theorem]{Remark}
\newcommand{\erf}{\mathop{\operator@font erf}\nolimits}
\newcommand{\erfc}{\mathop{\operator@font erfc}\nolimits}
\newcommand{\sign}{\mathop{\operator@font sign}\nolimits}
\newenvironment{enum_a}
    {\begin{enumerate}}
    {\end{enumerate}}
\newenvironment{enum_i}
    {\begin{enumerate}}
    {\end{enumerate}}
\newif\if@golden  \@goldentrue
\newcommand{\f@ctor}{1}
\newlength{\aiv@width}  \setlength{\aiv@width}{210mm}
\newlength{\aiv@height} \setlength{\aiv@height}{297mm}
\newlength{\tmp@width}  \setlength{\tmp@width}{\aiv@width}
\newlength{\tmp@height} \setlength{\tmp@height}{\aiv@height}
\if@golden\setlength{\textwidth}{33pc}
  \else\setlength{\textwidth}{36pc}\fi
\relax\setlength{\textwidth}{29pc}\or
\or\setlength{\textwidth}{33pc}\fi
\relax\setlength{\textwidth}{31pc}\or
\or\setlength{\textwidth}{35pc}\fi\fi
\relax\renewcommand{\f@ctor}{53}
  \or\renewcommand{\f@ctor}{46}\or\renewcommand{\f@ctor}{43}\fi
\relax\renewcommand{\f@ctor}{51}\or
  \renewcommand{\f@ctor}{45}\or\renewcommand{\f@ctor}{42}\fi\fi
\relax \renewcommand{\f@ctor}{46}
  \or\renewcommand{\f@ctor}{43}\or\renewcommand{\f@ctor}{43}\fi
\relax\renewcommand{\f@ctor}{43}
  \or\renewcommand{\f@ctor}{40}\or\renewcommand{\f@ctor}{40}\fi\fi\fi
\let\comp\circ
\newcommand{\sgl}{\ensuremath\sqrt{2\gl}}
\newcommand{\da}{\ensuremath\downarrow}
\newcommand{\ua}{\ensuremath\uparrow}
\newcommand{\cond}{\ensuremath\,\big|\,}
\newcommand{\eval}{\mathop{\big|}\nolimits}
\newcommand{\FX}{\ensuremath \cF^X}
\newcommand{\CD}{\ensuremath C_{\gD}}
\newcommand{\CDR}{\ensuremath \CD(\R_+)}
\newcommand{\Ieqref}[1]{\textup{\tagform@{I.\ref{I_#1}}}}
\newcommand{\IIeqref}[1]{\textup{\tagform@{II.\ref{II_#1}}}}
\newcommand{\Rbp}{\ensuremath\overline{\R}_+}
\newcommand{\hgO}{\ensuremath{\hat\gO}}
\newcommand{\hgo}{\ensuremath{\hat\go}}
\newcommand{\hcM}{\ensuremath{\hat\cM}}
\newcommand{\hP}{\ensuremath{\hat P}}
\newcommand{\hE}{\ensuremath{\hat E}}
\newcommand{\hcA}{\ensuremath{\hat \cA}}
\newcommand{\hcF}{\ensuremath{\hat \cF}}
\newcommand{\hPx}{\ensuremath{\hat P}_x}
\newcommand{\hEx}{\ensuremath{\hat E}_x}
\newcommand{\hth}{\ensuremath{\hat\theta}}
\newcommand{\hX}{\ensuremath{\hat X}}
\newcommand{\hT}{\ensuremath{\hat T}}
\newcommand{\hgL}{\ensuremath{\hat\gL}}
\newcommand{\zt}[1][t]{\ensuremath\{\zeta>#1\}}
\newcommand{\LtR}[1][\gL_t]{\ensuremath #1\times\Rbp}
\newcommand{\LR}{\ensuremath(\LtR[\gL])}
\newcommand{\hMT}{\ensuremath\hat\cM_{\hat T}}
\newcommand{\Xil}[1][n-1]{\ensuremath\Xi^{\le #1}}
\newcommand{\cCl}[1][n-1]{\ensuremath\cC^{\le #1}}
\newcommand{\Qxl}{\ensuremath Q_x^{\le n-1}}
\newcommand{\Xiu}[1][n]{\ensuremath\Xi^{\ge #1}}
\newcommand{\cCu}[1][n]{\ensuremath\cC^{\ge #1}}
\newcommand{\Qu}[1][n]{\ensuremath Q^{\ge #1}}
\newlength{\BCs@ze}
\newlength{\BCsh@ft}
\DeclareFixedFont\MT{OMS}{cmsy}{m}{n}{\BCs@ze}    
\newcommand{\BigCart}{\ensuremath\mathop{\raisebox{\BCsh@ft}{{\MT\char"02}}}}
\numberwithin{equation}{section}
\date{December 6, 2010}
\title[Brownian Motions on Intervals Revisited]
{Brownian Motions on Metric Graphs:\\
{\small Feller Brownian Motions on Intervals Revisited}}
\dedicatory{Dedicated to the memory of Pierre Duclos}
\author[V.~Kostrykin]{Vadim Kostrykin}
\address{Vadim Kostrykin\newline
Institut f\"ur Mathematik\newline
Johannes Gutenberg--Universit\"at\newline
D--55099 Mainz, Germany}
\email{kostrykin@mathematik.uni-mainz.de}
\author[J.~Potthoff]{J\"urgen Potthoff}
\address{J\"urgen Potthoff\newline
Institut f\"ur Mathematik\newline
Universit\"at Mannheim\newline
D--68131 Mann\-heim, Germany}
\email{potthoff@math.uni-mannheim.de}
\author[R.~Schrader]{Robert Schrader}
\address{Robert Schrader\newline
Institut f\"{u}r Theoretische Physik\newline
Freie Universit\"{a}t Berlin, Arnimallee~14\newline
D--14195 Berlin, Germany}
\email{schrader@physik.fu-berlin.de}
\subjclass[2000]{05C99,35K05,58J65,60H99,60J65}
\keywords{Brownian motion, Feller Brownian motion, metric graphs}
\begin{document}
\begin{abstract}
The construction of the paths of all possible Brownian motions
(in the sense of~\cite{Kn81}) on a half line or a finite interval
is reviewed.
\end{abstract}

\maketitle
\tableofcontents
\thispagestyle{empty}

\section{Introduction} \label{sect_intro}
In recent years there has been a growing interest in metric graphs as the underlying
structure for models in many branches of science, for example in physics, biology,
chemistry, engineering and computer science, to name just a few. The interested
reader is referred to the review~\cite{Ku04} and to the articles in the
volume~\cite{BrEx09}. Metric graphs are piecewise linear varieties with
singularities at a finite number of points, namely at the vertices, and they can be
thought of as a finite collection of finite intervals or half lines, which are glued
together at some of their endpoints.

There exists an extensive amount of literature on Laplace operators and their
semigroups on metric graphs, see e.g., \cite{KoSc99, KoSc00, KoSc06a, KoPo07d,
KoPo09c} and the literature quoted there. In view of this, it is natural to
investigate Brownian motions on metric graphs, and in this context we also want to
mention the articles~\cite{BaCh84, DeJa93, FrWe93, FrSh00, Fr94, Gr99, Kr95} which
deal with various aspects of stochastic processes on (metric) graphs.

The present article is one of four articles (cf.~\cite{BMMG1, BMMG2, BMMG3})
of the authors in which we address the problem of the characterization of all
Brownian motions (the precise definition of this class of stochastic processes is
given in~\cite{BMMG1}), and their pathwise construction on a given metric graph.
Thus, for metric graphs we consider --- with certain  restrictions (see~\cite{BMMG1}
and below) --- the analogue of the problem raised by Feller in his pioneering
articles~\cite{Fe52, Fe54, Fe57a, Fe58}, which have to be considered together with
the work of It\^o and McKean~\cite{ItMc63, ItMc74}, and the article~\cite{We56} of
Wentzell. For later accounts of this subject we also refer to~\cite{DyJu69}
and~\cite{Kn81}.

Of course, according to our above description, the simplest metric graphs are given
by a half line as $\R_+$ or a compact interval, say, $[0,1]$, that is, the cases
considered in the aforementioned classical literature. One of the intentions of the
present article is to bring the material presented there into a form which is suitable
for a generalization to metric graphs. The other intention is to give a pedagogical
(and rather detailed) account of the construction of Feller Brownian motions on
intervals. A very readable treatment can be found in the book~\cite{Kn81} by Knight.
However, there some arguments are only hinted at, others are not so easy to follow
(at least for the present authors). We have tried to make the article self-contained,
which means that several of our arguments are rather well-known. On the other hand,
we shall give a number of arguments and
computations which to the best of our knowledge are new. For example, we shall
compute all transition and resolvent kernels explicitly, and  in two cases we
provide rather simple calculations of the generators based on Dynkin's
formula~\cite{Dy65a}. These calculations carry through to the case of metric graphs,
and this will make it possible to compare the resolvents obtained with those found
in~\cite{KoSc99, KoSc00}. This in turn allows --- at least in certain cases --- to
rediscover from a stochastic point of view a central entity of quantum theory,
namely the \emph{scattering matrix}.

For the definition of a Brownian motion on an interval we shall follow
Knight~\cite{Kn81} (in~\cite{BMMG1} this definition is extended to metric
graphs). Let $I$ be a finite or semi-finite interval, and let $X$ denote a
stochastic process with values in $I\cup\{\gD\}$, where $\gD$ is a cemetery point.
Moreover, let $B$ be a standard one dimensional Brownian motion. We denote by $\hat
X$, $\hat B$, the processes $X$, $B$ respectively, with absorption (i.e., stopping)
at the endpoint(s) of $I$.

\begin{definition}  \label{def_BM}
A Brownian motion $X$ on a finite or semi-infinite interval is a normal strong
Markov process on $I\cup\{\gD\}$, a.s.\ with right continuous paths, continuous
paths up to its lifetime, and such that the stochastic process~$\hat X$ is
equivalent to~$\hat B$.
\end{definition}

We remark that this definition implies that we consider a considerably smaller class
of stochastic processes than has been treated in the quoted work by Feller and
It\^o--McKean, in that there the paths are not required to be continuous up to the
lifetime. In particular, there jumps from an endpoint back into the interval are allowed.
On the other hand, this restriction will allow us in the present series of articles
to work within the class of Feller processes, which has advantages concerning the control
of the strong Markov property. Some of the consequences the restriction we impose
will also be discussed below. Within the more general framework of metric graphs the
assumption that the paths are continuous up to their lifetime will be removed in
a forthcoming work.

For the sake of definiteness, from now on we shall only consider intervals $I$ of
the form $[0,+\infty)$ or $[0,1]$. $C_0([0,+\infty))$ denotes the space of real
valued continuous functions on $[0,+\infty)$ vanishing at infinity, while $C([0,1])$
is the space of real valued continuous functions on $[0,1]$. By $C_0(I)$ we mean
either of the two spaces, and we make the convention that every $f\in C_0(I)$ is
extended to $I\cup\{\gD\}$ by setting $f(\gD)=0$. $C_0^2(I)$ denotes the subspace of
$C_0(I)$ consisting of those functions $f$ in $C_0(I)$ which are twice continuously
differentiable in the interior of $I$, and are such that the second derivative $f''$
extends to a
continuous function on $I$. It is not difficult to check that for $f\in C^2_0(I)$
the derivative $f'$ has a (finite) limit $f(0+)$ from the right at $0$, and --- if
$I=[0,1]$ --- also a (finite) limit $f(1-)$ from the left at $1$. Then for Brownian
motions on $I$ as in definition~\ref{def_BM} Feller's theorem is the following
statement.

\begin{theorem} \label{Feller-bc}
Assume that $X$ is a Brownian motion on $I$. Then the generator $A$ of its semigroup
on $C_0(I)$ is given by $A f = 1/2 f''$ with a domain $\cD(A)$ being a subset of
$C_0^2(I)$. Moreover the following holds true:
\begin{enum_a}
    \item   Suppose that $I=[0,+\infty)$. Then there exist $a_0$, $b_0$,
            $c_0\in [0,1]$ with $a_0+b_0+c_0 = 1$, $a_0\ne 0$, such that every
            $f\in\cD(A)$ satisfies the Wentzell boundary condition at the
		   origin	
            \begin{equation}    \label{Wbc_0}
                a_0 f(0) - b_0 f'(0+) + \frac{c_0}{2}\,f''(0+) = 0,
            \end{equation}
            and the domain $\cD(A)\subset C_0^2([0,+\infty))$ is uniquely determined
            by this boundary condition.
    \item   Suppose that $I=[0,1]$. Then there exist $a_v$, $b_v$,
            $c_v\in [0,1]$, $v=0$, $1$, with $a_v+b_v+c_v = 1$, $a_v\ne 0$,
            such that every $f\in\cD(A)$ satisfies the boundary condition~\eqref{Wbc_0},
            supplemented by the boundary condition
            \begin{equation}    \label{Wbc_1}
                a_1 f(1) + b_1 f'(1-) + \frac{c_1}{2}\,f''(1-) = 0.
            \end{equation}
            The domain $\cD(A)\subset C^2([0,1])$ is uniquely determined
            by the boundary conditions~\eqref{Wbc_0}, \eqref{Wbc_1}.
\end{enum_a}
\end{theorem}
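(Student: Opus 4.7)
I would proceed in three steps: establish the Feller property and show $A = \tfrac{1}{2}D^2$ with $\cD(A) \subset C_0^2(I)$; derive the Wentzell condition(s) at the endpoint(s) via Dynkin's formula; invoke Hille--Yosida to pin down $\cD(A)$. In detail: the strong Markov property, right-continuity, and $\hat X \sim \hat B$ give that $X$ is Feller on $C_0(I)$, so $A$ is closed and densely defined. For $f \in \cD(A)$ and $x \in I^\circ$, Dynkin's formula on the exit time $\tau_\epsilon^x$ from $(x-\epsilon, x+\epsilon)$ (with $\epsilon < \mathrm{dist}(x, \partial I)$, so that $X$ agrees in law with $B$ on this time scale) combined with the Brownian moments $E_x[\tau_\epsilon^x] = \epsilon^2$ and $E_x[(X_{\tau_\epsilon^x} - x)^2] = \epsilon^2$ yields $(Af)(x) = \tfrac{1}{2} f''(x)$ whenever $f$ is $C^2$ near $x$. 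The inclusion $\cD(A) \subset C_0^2(I)$ follows from $f = R_\lambda(\lambda f - Af)$ combined with smoothing by the local Brownian Green kernel; the identity $f'' = 2(\lambda f - Af) \in C_0(I)$ then gives continuous extension of $f''$ to $\partial I$.

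Now derive the boundary condition at $0$. Fix $f \in \cD(A)$ and let $\sigma_\epsilon := \inf\{t : X_t = \epsilon\}$. Since paths are continuous up to lifetime, a trajectory leaving $[0,\epsilon)$ either reaches $\epsilon$ (with probability $p_\epsilon := P_0(\sigma_\epsilon < \zeta)$) or dies at $0$; since $f(\gD) = 0$, Dynkin's formula yields
\[
p_\epsilon\,f(\epsilon) - f(0) \;=\; E_0\!\Bigl[\int_0^{\sigma_\epsilon \wedge \zeta} \tfrac{1}{2} f''(X_s)\,ds\Bigr].
\]
Taylor-expanding $f(\epsilon)$ to second order and using $f''(X_s) = f''(0+) + o(1)$ uniformly for $s \le \sigma_\epsilon$ gives
\[
(1-p_\epsilon)\,f(0) - \epsilon p_\epsilon\,f'(0+) + \tfrac{1}{2}\bigl(E_0[\sigma_\epsilon \wedge \zeta] - \epsilon^2 p_\epsilon\bigr)\,f''(0+) = o(\epsilon^2) + o\!\bigl(E_0[\sigma_\epsilon \wedge \zeta]\bigr).
\]
The nonnegative coefficients $\alpha_\epsilon := 1-p_\epsilon$, $\beta_\epsilon := \epsilon p_\epsilon$, $\gamma_\epsilon := E_0[\sigma_\epsilon \wedge \zeta] - \epsilon^2 p_\epsilon$ depend only on the law of $X$, not on $f$; dividing by their sum (positive for small $\epsilon$) and taking a subsequential limit yields $a_0, b_0, c_0 \in [0,1]$ with $a_0 + b_0 + c_0 = 1$ satisfying \eqref{Wbc_0}. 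The analogous argument at $1$ produces \eqref{Wbc_1} for $I=[0,1]$.

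To identify $\cD(A)$ with the full subspace of $C_0^2(I)$ cut out by these condition(s), apply Hille--Yosida/Lumer--Phillips to $\tfrac{1}{2}D^2$ restricted to that subspace: dissipativity follows from the positive maximum principle, and the range condition from solving $(\lambda - \tfrac{1}{2}D^2)f = g$ in $C_0^2$ under the Wentzell constraint, which is an explicit ODE computation with fundamental system $\{e^{\pm\sqrt{2\lambda}\,x}\}$. The generated Feller semigroup extends that of $X$; uniqueness of Feller extensions then forces the two domains to coincide.

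\emph{Main obstacle.} The most delicate point is the passage to the limit $\epsilon \downarrow 0$ in the boundary step along the full sequence, together with independence of the triple $(a_0, b_0, c_0)$ from the test function $f$. The cleanest resolution is a bootstrap: first extract \emph{any} subsequential limit to obtain a candidate Wentzell condition; then invoke the Hille--Yosida step to show the operator defined by this candidate condition already generates a Feller semigroup extending that of $X$. Uniqueness of Feller generators forces this operator to coincide with $A$, which retroactively makes $(a_0,b_0,c_0)$ intrinsic to $X$ and promotes the subsequential limit to a genuine limit.
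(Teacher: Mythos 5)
A preliminary remark: the paper itself contains no proof of theorem~\ref{Feller-bc}; it is quoted from Knight~\cite{Kn81} (and, for metric graphs, \cite{BMMG1}), and the body of the paper only supplies the converse direction by constructing, for each admissible triple, a process realizing it. So your proposal can only be measured against the classical argument, and while your overall plan (interior identification of $A$ via Dynkin's formula, an exit-time expansion at the boundary, then Hille--Yosida/maximal dissipativity to identify $\cD(A)$) is the standard one, it has concrete gaps as written. First, the Dynkin step at $0$ tacitly requires $E_0[\sigma_\epsilon\wedge\zeta]<\infty$. This fails exactly in the cases with $b_0=0$: for the absorbed Brownian motion $\hat B$ (which is a Brownian motion in the sense of definition~\ref{def_BM}) one has $\sigma_\epsilon\wedge\zeta=+\infty$ $P_0$--a.s., and for the trap-with-exponential-killing process $p_\epsilon=0$ and your normalization degenerates; these cases must be split off and handled separately (as the paper's introduction in fact does for $b_0=0$), e.g.\ by noting that at a trap $Af(0)=0$ or by the holding-time computation.

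Second, the nonnegativity of $\gamma_\epsilon=E_0[\sigma_\epsilon\wedge\zeta]-\epsilon^2 p_\epsilon$ is asserted, not proved. A first-passage decomposition inside $(0,\epsilon)$ shows that the sign of $\gamma_\epsilon$ is equivalent to the sign of $\gamma_\delta$ for smaller $\delta$, so it does not come for free; one needs either a genuine argument or to recover the sign constraints a posteriori from the positive maximum principle satisfied by the Feller generator. This also makes the bootstrap in your ``main obstacle'' paragraph circular as stated: the Lumer--Phillips verification for the candidate boundary condition only goes through when the candidate triple already has the correct signs and is not the pure Dirichlet triple, which is precisely what the subsequential limit has not yet delivered. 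Third, the nondegeneracy assertion in the theorem (the exclusion of the pure Dirichlet condition, which is what the condition on $a_0$ is meant to encode, cf.\ the discussion in the introduction) is never addressed; it does not follow from your expansion and requires an argument using normality and right continuity at $t=0$ (or $\hat X\sim\hat B$), e.g.\ that $R_\lambda f(0)>0$ for some $f\ge 0$, so that not every element of $\cD(A)$ vanishes at $0$. Finally, you lean on the Feller property as if it were immediate from definition~\ref{def_BM}; it is in fact the separate theorem~\ref{Feller_prop}, also quoted from \cite{Kn81, BMMG1}, so it may be cited but not dismissed in one line.
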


Furthermore the following statement holds true:

\begin{theorem} \label{Feller_prop}
Every Browian motion on $I$ in the sense of definition~\ref{def_BM} is a Feller
process.
\end{theorem}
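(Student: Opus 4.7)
The strategy is to verify directly that the semigroup $T_tf(x) := E_x[f(X_t)]$ satisfies the two defining properties of a Feller semigroup on $C_0(I)$: it maps $C_0(I)$ into itself and is strongly continuous at $t=0$. The key tool is a decomposition of $T_t$ at the first hitting time of the boundary, which reduces everything to properties of absorbed Brownian motion $\hat B$ plus a finite-dimensional boundary correction.

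Let $\tau := \inf\{t \ge 0 : X_t \in \partial I\}$, where $\partial I$ is the finite set consisting of the endpoints of $I$. The continuity of paths up to lifetime together with the equivalence $\hat X \sim \hat B$ imply $X_\tau \in \partial I$ on $\{\tau<\infty\}$, and the $P_x$-law of $(\tau, X_\tau)$ coincides with that of the first hitting of $\partial I$ by standard Brownian motion started at $x$. The strong Markov property then yields
\begin{equation*}
T_t f(x) = \hat T_t f(x) + \sum_{v \in \partial I} \int_0^t T_{t-s}f(v)\, P_x(\tau \in ds,\, X_\tau = v),
\end{equation*}
with $\hat T_t$ the Feller semigroup of $\hat B$. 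The first term belongs to $C_0(I)$ by the Feller property of $\hat B$; the classical first-passage formulas show that the measures $P_x(\tau \in ds, X_\tau = v)$ depend weakly continuously on $x$ with absolutely continuous density on $(0,t)$, and total mass vanishing as $x\to\infty$ in the case $I = [0,\infty)$. Combined with the uniform bound $|T_{t-s}f(v)|\le \|f\|_\infty$, this gives $T_tf \in C_0(I)$.

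For strong continuity, right-continuity of paths and dominated convergence yield $T_tf(x) \to f(x)$ pointwise. To promote this to uniform convergence, one would estimate $|T_tf(x) - f(x)| \le \omega_f(\delta) + 2\|f\|_\infty\, P_x(|X_t-x|\ge \delta)$, where $\omega_f$ is the modulus of continuity of $f \in C_0(I)$; away from $\partial I$ the second term is uniformly small by Brownian estimates together with $P_x(\tau\le t)\to 0$, while near $\partial I$ the uniformity must be extracted from path right-continuity. A cleaner alternative is to apply the same hitting-time decomposition to the resolvent $R_\lambda := \int_0^\infty e^{-\lambda t}T_t\,dt$, deduce $R_\lambda(C_0(I)) \subseteq C_0(I)$, and observe that $\lambda R_\lambda f(x)\to f(x)$ pointwise is immediate from right-continuity; the Hille--Yosida--Ray theorem then delivers strong continuity of $T_t$. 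The main obstacle is precisely the uniformity of the small-time estimate near the boundary, since the detailed behaviour of $X$ at $\partial I$ --- exactly what Theorem~\ref{Feller-bc} will later classify --- is not yet under control at this stage; the resolvent route circumvents this by replacing the uniform small-time estimate with a pointwise statement that follows at once from right-continuity of paths.
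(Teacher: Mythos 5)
Your resolvent route is essentially the approach the paper itself relies on: Theorem~\ref{Feller_prop} is quoted from \cite{Kn81} and \cite{BMMG1} rather than proved in the text, but where the paper does verify a Feller property (lemma~\ref{int_lemix}, via the first passage time formula of appendix~\ref{app_FPTF}) it argues exactly as you propose --- decompose the resolvent at the first hitting of the boundary, use the explicit Brownian expressions for the Dirichlet resolvent and for $E_x\bigl(e^{-\lambda\tau};\,X_\tau=v\bigr)$ to see that $R_\lambda$ preserves $C_0(I)$, obtain $E_x\bigl(f(X_t)\bigr)\to f(x)$ pointwise from right continuity of paths and dominated convergence, and invoke the standard criterion that these two facts together give the Feller property. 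Two points of correction. First, in your displayed decomposition the first term must be the \emph{killed} (Dirichlet) semigroup $E_x\bigl(f(X_t);\,t<\tau\bigr)$, not the semigroup $\hat T_t$ of the \emph{absorbed} process $\hat B$: the latter already contains the boundary terms $\sum_v f(v)\,P_x(\tau\le t,\,X_\tau=v)$, so as written the boundary contribution is counted twice; the same care is needed when you pass to resolvents (compare appendix~\ref{app_FPTF}, where the first term is $R^0_\lambda f$, the resolvent of the process killed at the boundary). Second, your own diagnosis of the semigroup-level argument is accurate and is a real obstruction, not just an inconvenience: weak continuity in $x$ of $P_x(\tau\in ds,\,X_\tau=v)$ only yields continuity of the boundary term if $s\mapsto T_{t-s}f(v)$ is continuous at $s=0$, i.e.\ one needs left continuity of $u\mapsto T_u f(v)$ at $u=t$, which is not available at this stage (right continuity of paths only gives right continuity in $u$); the resolvent/Laplace-transform formulation removes both this issue and the uniform small-time estimate near the boundary, and is precisely the route taken in lemma~\ref{int_lemix} and in \cite{Kn81, BMMG1}.
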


Proofs of these theorems can be found in~\cite{Kn81}. For the case of metric graphs
proofs of similar statements are provided in~\cite{BMMG1}.

We remark that the boundary conditions determined by a Brownian motion in the sense
of definition~\ref{def_BM} are \emph{local}, because each of the
conditions~\eqref{Wbc_0}, \eqref{Wbc_1} only involves the values of the function $f$
and its derivatives at one or the other endpoints of the interval. In fact, this is
a consequence of our requirement that the paths of the Brownian motion be continuous
up to their lifetime. More general boundary conditions arise if one allows for jumps
from the endpoints back into the interval. They have been discussed in the above
quoted work by Feller and by It\^o--McKean, see also \cite{DyJu69, Kn81}.

In order to explain the ideas of how to construct such Brownian motions pathwise from
a standard Brownian motion $B$, for the choice $I=\R_+$ let us first discuss the
``pure'' cases, where only one of the terms in equation~\eqref{Wbc_0} is present.

First consider the choice $a_0=1$, $b_0=c_0=0$, i.e., the Dirichlet boundary
condition $f(0)=0$. (This condition is actually not permitted according to the statement of
the theorem, but temporarily we shall consider it nevertheless, see also below.)
Then this boundary condition can be implemented by killing the standard Brownian motion
when it reaches the origin, since by our convention $f(\gD)=0$. The reason that this
case has to be excluded in theorem~\ref{Feller-bc} is that due our definition, a
Brownian motion $X$ on $\R_+$ is such that $\hat X$ is equivalent to $\hat B$, and
therefore has to be able to reach $0$, which is not possible for a right continuous
process which is killed upon reaching $0$. Moreover, a process which is killed upon
reaching $0$, obviously cannot be a Feller process.

The choice $a_0=0$, $b_0=1$, $c_0=0$ is the Neumann boundary
condition $f'(0+)=0$, and it is well-known (e.g., \cite[p.~40]{ItMc74}, cf.\ also
section~\ref{sect_el}) that this can be obtained from a Brownian motion with reflection
at $0$.

Finally, it we choose $a_0=b_0=0$, $c_0=1$, then we find the Wentzell boundary condition
$f''(0+)=0$. Obviously, this means that the generator annihilates the function at $0$,
i.e., the semigroup acts trivially there. In terms of the underlying stochastic process
this means that it stops to move there, and so one can implement this boundary condition
with the Brownian motion $\hat B$ with absorption in $0$.

For the general case~\eqref{Wbc_0} one has to construct a stochastic process which
combines all these effects. According to~\cite[Section~2]{ItMc63}, it was Feller who
suggested how to do this (at least for the \emph{elastic case} $a_0\ne 0$, $b_0\ne
0$, $c_0=0$): For a reflecting Brownian motion one builds in the effects of killing
and absorption \emph{on the scale of the local time at $0$}. More precisely, for the
Brownian motion on $\R_+$ with all three coefficients in~\eqref{Wbc_0} non-zero, one
uses the local time of the reflecting Brownian motion to define a new stochastic
time scale which slows down the process when it is at the origin. This makes the
origin ``sticky'' for the process, and replaces the full absorption in the ``pure''
case $c_0=1$. Then, on this new stochastic time scale the process is killed
exponentially to produce a term involving $f(0)$ like in the ``pure'' Dirichlet
case. These ideas have been carried out in~\cite{ItMc63, ItMc74},  cf.\
also~\cite{DyJu69, Kn81}. The case of an elastic Brownian motion has also been
discussed in~\cite{KaSh91, Wi79}.

Before we end this introduction with an overview of the organization of this article
let us quickly settle one trivial case of a Brownian motion on $\R_+$, namely when
$b_0=0$. The subcase $a_0=0$ has already been treated above: the Brownian motion
$\hat B$ with absorption realizes this boundary condition. So consider $a_0\ne 0$,
$c_0\ne 0$, and set $\gb = a_0/c_0$. Define a stochastic process $X$ as follows:
Start with a Brownian motion at $x\ge 0$, and when hitting the origin keep the
process there for an independent exponential holding time of rate $\gb$. After
expiration of this holding time let the process jump to the cemetery point $\gD$. It
is straightforward to check that this process is a Brownian motion on $\R_+$ in the
sense of definition~\ref{def_BM}, that it has $1/2$ times the Laplacean as the
generator, and that its semigroup $U$ acts at the origin as $U_t f(0) = \exp(-\gb t)
f(0)$, $t\ge 0$. Thus this process implements the desired boundary condition $\gb
f(0) + 1/2 f''(0+) =0$.

The organization of the article is as follows. In
sections~\ref{sect_el} -- \ref{sect_gen} we consider the interval $I=\R_+$.
\emph{Elastic Brownian motion} ($a_0\ne 0$, $b_0\ne 0$, $c_0=0$) on $\R_+$ is
constructed and analyzed in section~\ref{sect_el}. In section~\ref{sect_st} the
Brownian motion with \emph{sticky origin} ($a_0=0$, $b_0\ne 0$, $c_0\ne 0$) is
considered, and in section~\ref{sect_gen} we treat the general case where all three
coefficients are non-zero. In section~\ref{sect_fii}, following an idea of
Knight~\cite{Kn81}, we use the stochastic processes constructed before on the
intervals $(-\infty,1]$ and $[0,+\infty)$ to piece together a stochastic process on
$[0,1]$, which generates the boundary conditions~\eqref{Wbc_0}, \eqref{Wbc_1}. The
most difficult point is the verification of the (simple) Markov property of this
process, and some parts of the rather lengthy and technical proof are deferred to an
appendix. There are also appendices about killing a process with a perfect
homogeneous functional (included here, because we use the same method of killing as
in~\cite{Kn81, KaSh91}, for which we need some results which are not easily
available in the standard literature), about some results related to the Brownian
local time, and about general heat kernels and their Laplace transforms.

\vspace{1.5\baselineskip}
\noindent
\textbf{Acknowledgement.}
The authors thank Mrs.~and Mr.~Hulbert for their warm hospitality at the
\textsc{Egertsm\"uhle}, Kiedrich, where part of this work was done.
J.P.\ gratefully acknowledges fruitful discussions with O.~Falkenburg,
A.~Lang and F.~Werner. R.S.~thanks the organizers of the \emph{Chinese--German
Meeting on Stochastic Analysis and Related Fields}, Beijing, May 2010,
where some of the material of this article was presented.

\section{Elastic Brownian Motion} \label{sect_el}
Throughout this article we suppose that $(\gO,\cA)$ is a measurable space endowed
with a family of probability measures $P=(P_x,\,x\in\R)$, a right continuous
filtration $\cF=(\cF_t,\,t\ge 0)$ which is complete with respect to $P$, and a
standard one-dimensional Brownian motion $B=(B_t,\,t\ge 0)$ relative to $\cF$, so
that $P_x(B_0=x)=1$ for every $x\in\R$. Without loss of generality we may assume that
$(\gO,\cA)$ is equipped with a family $\theta = (\theta_t,\,t\ge 0)$ of shift
operators for $B$: $B_s\comp \theta_t = B_{s+t}$ for all $s$, $t\ge 0$. In
particular, our assumptions mean that $B$ is a normal strong Markov process relative
to $\cF$. As usual, we set $\cF_\infty = \gs(\cF_t,\,t\ge 0)$.

The local time of $B$ at the origin will be denoted by $L=(L_t,\,t\ge 0)$, and we
choose its normalization so that Tanaka's formula (e.g., \cite[p.~205]{KaSh91},
\cite[p.~207]{ReYo91}, \cite[p.~68]{Mc69}) holds in the form
\begin{equation*}
    d|B_t| = \sign(B_t)\,dB_t + dL_t,\qquad t\ge 0.
\end{equation*}
It will be convenient to assume that both, $B$ and $L$, have exclusively continuous
paths.

Figure~\ref{fig1} shows a simulated path of a Brownian motion on the real line~(blue)
and its local time at the origin~(red).%
\footnote{\footnotesize
    All simulations of this article have been done with {\scshape SciLab}~5.2.1.
}\space
All other paths in the figures below will be constructed from this path. Of course,
the figures of these paths can only be considered as caricatures of the ``true'' paths.
\begin{figure}
\begin{center}
    \includegraphics{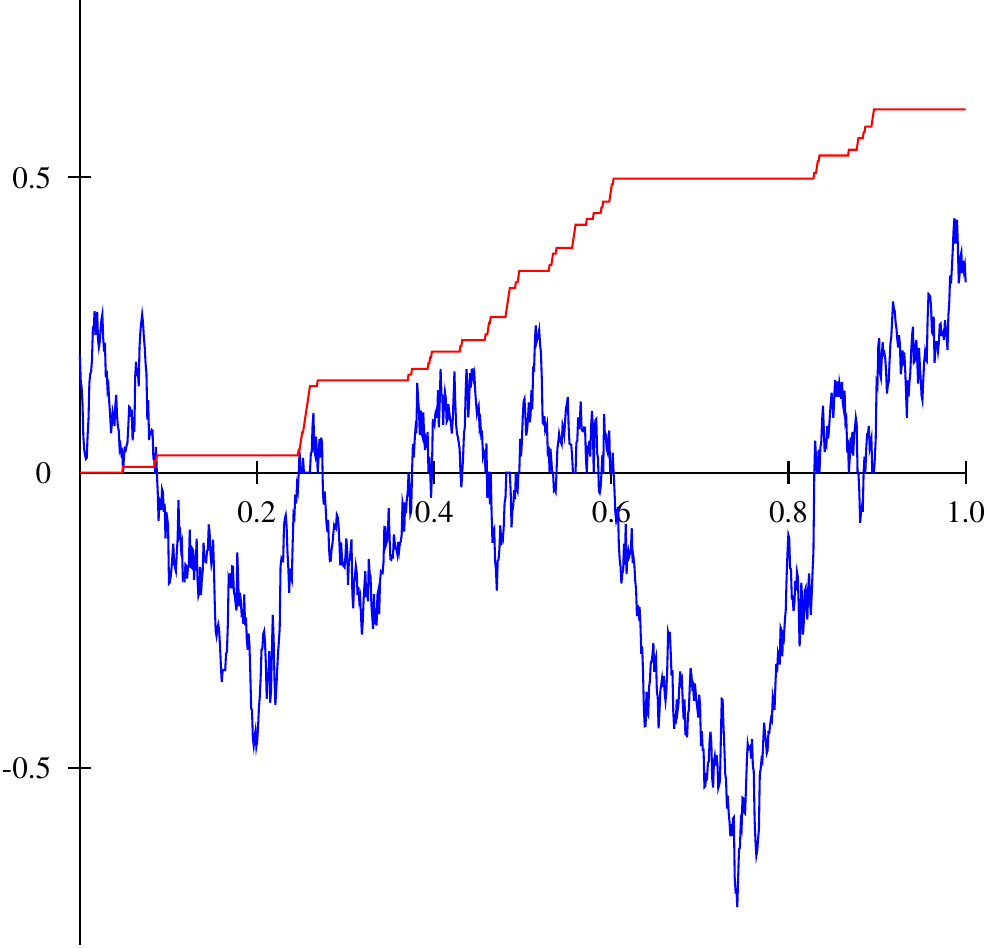}\\
    \caption{Simulation of a path of a Brownian motion (blue) and its local time
    at the origin (red).}
    \label{fig1}
\end{center}
\end{figure}

We shall often have occasion to use the right continuous pseudo-inverse
of $L$ denoted by $K$:
\begin{equation}    \label{el1}
    K_r = \inf\,\{t\ge 0,\,L_t>r\},\qquad r\in\R_+,
\end{equation}
with the usual convention $\inf\emptyset=+\infty$. The law of $K_r$, $r>0$, under
$P_0$ is computed in appendix~\ref{app_LT} (cf.\ lemma~\ref{a_LT_lem1}). Since for fixed
$r\ge 0$, $K_r$ is the moment when the local time $L$ increases above level $r$, and
$L$ only grows when $B$ is at the origin, we find that $B_{K_r}=0$, $r\in\R_+$. It
is not hard to show that the continuity of $L$ entails $L_{K_r}=r$,
$r\in\R_+$, and for all $r$, $t\ge 0$,
\begin{equation}    \label{el2}
    \{K_r\ge t\} = \{L_t\le r\}.
\end{equation}
In particular, the last equation shows that for every $r\ge 0$, $\{K_r<t\}$ belongs
to $\cF_t$, and since $\cF$ is right continuous, $K_r$ is an $\cF$--stopping time
for all $r\ge 0$. For later purposes we also remark here that due to its right
continuity $K$ is a measurable stochastic process.

Let us consider the reflecting Brownian motion $|B_t|$, $t\ge 0$ (cf.\
figure~\ref{fig2}).
\begin{figure}[ht]
\begin{center}
    \includegraphics{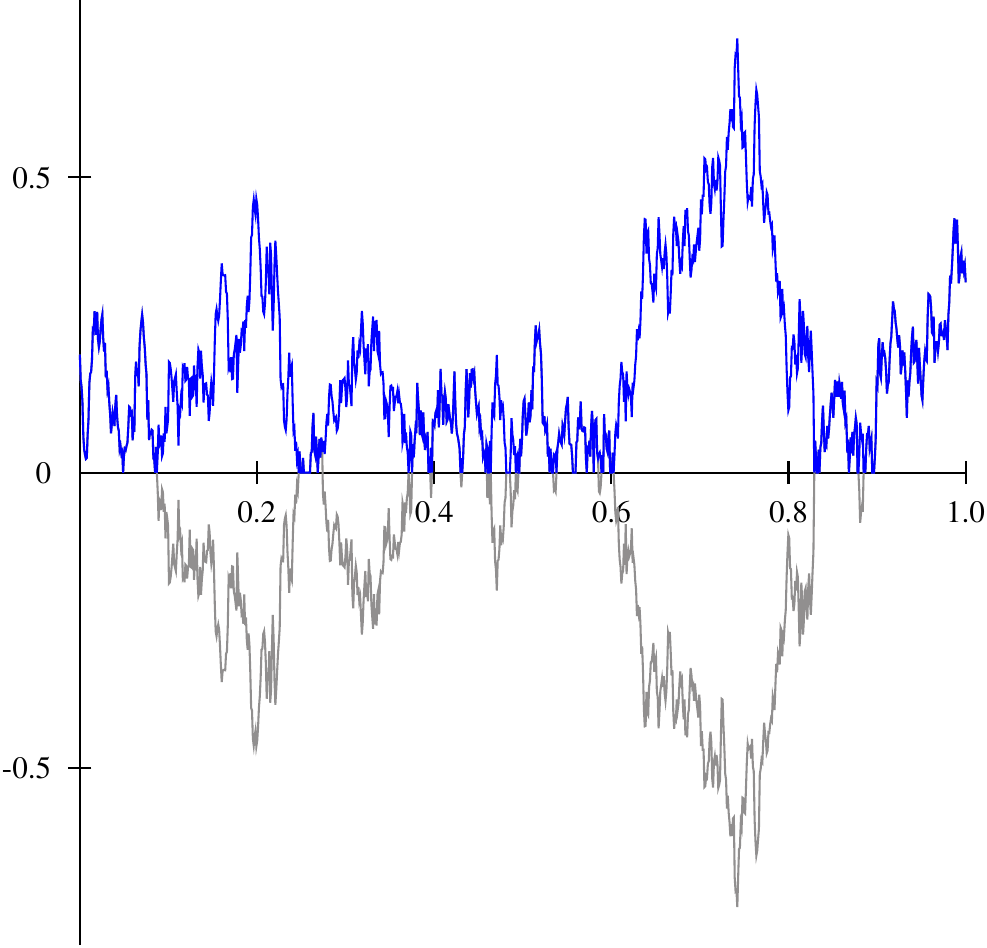}\\[0\baselineskip]
    \caption{A path of a Brownian motion (grey) and the associated reflected path (blue)}
    \label{fig2}
\end{center}
\end{figure}
Clearly, the transition semigroup of the reflecting Brownian motion has the
integral kernel
\begin{equation}    \label{el3}
    p^N(t,x,y) = p(t,x,y) + p(t,x,-y),\qquad t\ge0,\,x,\,y\in\R_+,
\end{equation}
where $p(t,x,y)$ denotes the usual heat kernel on the line, i.e.,
\begin{equation}    \label{el4}
    p(t,x,y) = \frac{1}{\sqrt{2\pi t}}\,e^{-(x-y)^2/2t},\qquad t\ge0,\,x,\,y\in\R_+.
\end{equation}
Hence the resolvent $R^N=(R^N_\gl,\,\gl>0)$ of the reflecting Brownian motion
has the integral kernel
\begin{equation}    \label{el5}
    r^N_\gl(x,y)
        = \frac{1}{\sgl}\,\Bigl(e^{-\sgl|x-y|}+e^{-\sgl(x+y)}\Bigr),
                \qquad \gl>0,\,x,\,y\in\R_+.
\end{equation}
It is now a straightforward calculation to show that for every $f\in C_0(\R_+)$
we have
\begin{equation}    \label{el6}
    (R^N_\gl f)'(0+) = 0,\qquad \gl>0,
\end{equation}
i.e., the generator of reflecting Brownian motion is $1/2$ times the second
derivative with Neumann boundary condition at the origin.

\emph{Elastic Brownian motion} $B^e$ is the stochastic process defined by killing
the reflecting Brownian exponentially on the scale of the local time at the origin,
cf., e.g., \cite[p.~45]{ItMc74}, \cite[p.~158]{Kn81}, \cite[p.~425~f.]{KaSh91}.
Here we shall follow the construction given in \cite{Kn81, KaSh91}, which is slightly
different from the one in \cite{ItMc74}, or in --- in the more general form of
killing on the scale of a \emph{perfect continuous additive homogeneous functional}
--- \cite{BlGe68, Wi79}. For a more detailed account the interested reader is also
referred to appendix~\ref{app_killing}.

Let $\gb>0$, and introduce the auxiliary probability space $(\R_+,\cB(\R_+),P_\gb)$
where $P_\gb$ is the exponential law of rate $\gb$. Denote by $S$ the random
variable $S(s) = s$, $s\in\R_+$. Now form for every $x\in\R$ the product space of
$(\gO,\cA,P_x)$ and $(\R_+,\cB(\R_+),P_\gb)$, and denote the resulting probability
space by $(\hgO,\hcA,\hP_x)$. $S$ and all random variables on $(\gO,\cA,\cP_x)$,
$x\in\R$, are extended in the trivial way to the product space, and we denote the
extended random variables by the same symbols.

On $(\hgO,\hcA)$ define a random time $\zeta_\gb$ by
\begin{equation}    \label{el7}
    \zeta_\gb = \inf\,\{t\ge 0,\, L_t > S\}.
\end{equation}
Observe that we can write
\begin{equation}    \label{el8}
    \zeta_\gb = K_S,
\end{equation}
because $K$ is a measurable process. Now set
\begin{equation}    \label{el9}
    B^e_t = \begin{cases}
                |B_t|,   & t < \zeta_\gb,\\
                \gD,     & t \ge \zeta_\gb,
            \end{cases}
\end{equation}
where $\gD$ is a cemetery state.
\begin{figure}[ht]
\begin{center}
    \includegraphics{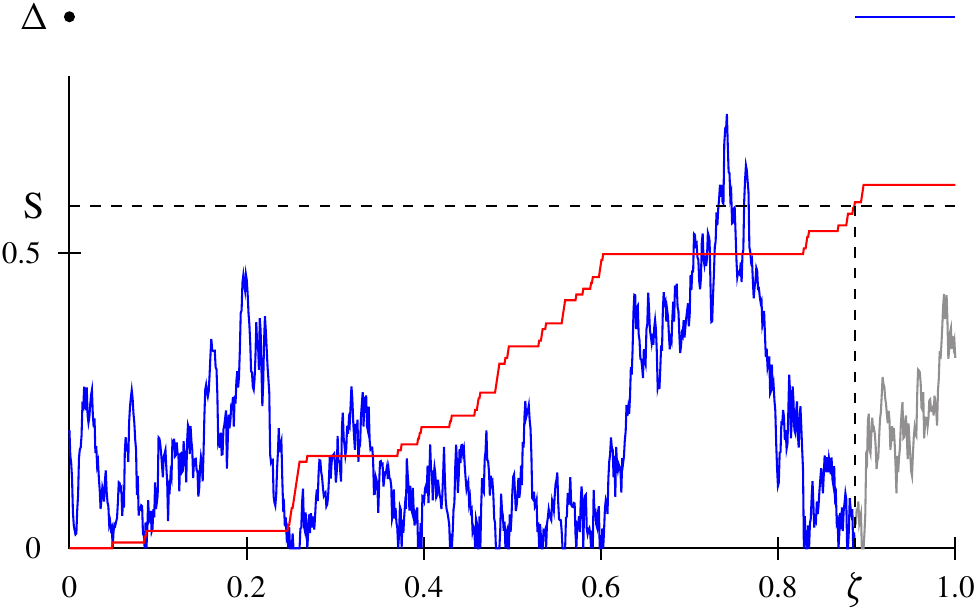}\\[0\baselineskip]
    \caption{Construction of elastic Brownian motion}  \label{fig3}
\end{center}
\end{figure}
In figure~\ref{fig3} the local time of the path of a reflected Brownian motion (in
grey) is drawn in red, the value of $S$ is $0.58$ (horizontal dashed line). The time
of killing the reflected Brownian motion for the corresponding path of elastic
Brownian motion is at $\zeta_\gb = 0.89$ (vertical dashed line). The path of the
elastic Brownian motion is the blue piece of the depicted path, extended to be equal
to $\gD$ after time $\zeta_\gb$.

As is shown in appendix~\ref{app_killing}, $(\hgO,\hcA)$ can be equipped with a
right continuous filtration $\hcF=(\hcF_t,\,t\ge 0)$ (there denoted by
$(\hcM_t,\,t\ge 0)$) which is complete with respect to the family
$(\hP_x,\,x\in\R_+)$, such that $B^e$ is $\hcF$--adapted and a strong Markov process
relative to $\hcF$. In particular, $B^e$ is a strong Markov process relative to its
natural filtration. Moreover, the lifetime $\zeta_\gb$ is a stopping time for
$\hcF$.

In the sequel we make the convention that every real valued function on $\R_+$ is
extended to $\R_+\cup\{\gD\}$ via $f(\gD)=0$.

Now we compute the boundary condition of the generator of the elastic Brownian
motion. Our calculation is based on Dynkin's formula (cf., e.g.,
\cite[p.~133]{Dy65a}, \cite[p.~99]{ItMc74}, \cite[p.~131]{Wi79}) for the generator
$A^e$ of the elastic Brownian motion $B^e$.

\begin{theorem} \label{el_thm1}
The domain $\cD(A^e)$ of the generator $A^e$ of the elastic Brownian motion $B^e$
of parameter $\gb>0$ is equal to the space of functions $f\in C_0^2(\R_+)$ so that
\begin{equation}    \label{el10}
    f'(0+) = \gb f(0)
\end{equation}
holds.
\end{theorem}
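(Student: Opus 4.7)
The plan is to invoke Dynkin's formula for the elastic Brownian motion started at $0$, stopped at a hitting time of order $\epsilon$, and to read off the boundary condition by letting $\epsilon\downarrow 0$. By Theorem~\ref{Feller-bc}(a), the generator $A^e$ acts as $\tfrac12 f''$ on a subspace of $C_0^2(\R_+)$ which is uniquely determined by a single Wentzell condition at the origin with $a_0\neq 0$. Hence it suffices to extract that one linear identity from the dynamics of $B^e$ and to match it against the families listed in the theorem; the reverse inclusion will come for free from the uniqueness clause.

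Fix $f\in\cD(A^e)$ and set
\begin{equation*}
    T_\epsilon := \inf\{t\ge 0 : |B_t| = \epsilon\},
    \qquad
    \sigma_\epsilon := T_\epsilon\wedge\zeta_\gb.
\end{equation*}
Since $\sigma_\epsilon\le T_\epsilon$ and $E_0[T_\epsilon] = \epsilon^2$, the stopping time is integrable, so Dynkin's formula for $B^e$ yields
\begin{equation*}
    \hE_0\bigl[f(B^e_{\sigma_\epsilon})\bigr] - f(0)
    = \hE_0\Bigl[\int_0^{\sigma_\epsilon}\tfrac12 f''(B^e_s)\,ds\Bigr].
\end{equation*}
By the convention $f(\gD) = 0$ and because $B^e_{\sigma_\epsilon}\in\{\epsilon,\gD\}$, the left side collapses to $f(\epsilon)\cdot\hP_0(T_\epsilon<\zeta_\gb)$. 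Writing $\zeta_\gb = K_S$ with $S$ exponential of rate $\gb$ independent of $B$ gives $\{T_\epsilon<\zeta_\gb\} = \{L_{T_\epsilon}<S\}$, and thus $\hP_0(T_\epsilon<\zeta_\gb) = E_0[e^{-\gb L_{T_\epsilon}}]$.

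The crux is the identity $E_0[e^{-\gb L_{T_\epsilon}}] = (1+\gb\epsilon)^{-1}$. For this I would apply It\^o's formula to $\vp(|B_t|)\,e^{-\gb L_t}$ with the affine function $\vp(x) := (1+\gb x)/(1+\gb\epsilon)$. Using Tanaka's formula for $|B|$, the fact that $\vp''=0$, and the crucial relation $\vp'(0+) = \gb\vp(0)$, designed to cancel exactly the local--time drift produced by the $dL_t$ term, one checks that the stopped process is a bounded martingale. Optional stopping at $T_\epsilon$ then gives $\vp(0) = E_0[\vp(\epsilon)\,e^{-\gb L_{T_\epsilon}}] = E_0[e^{-\gb L_{T_\epsilon}}]$, which is the claim.

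Substituting and estimating the right side of Dynkin's identity by $\tfrac12\sup_{[0,\epsilon]}|f''|\,E_0[T_\epsilon] = O(\epsilon^2)$, we arrive at
\begin{equation*}
    \frac{f(\epsilon)}{1+\gb\epsilon} - f(0) = O(\epsilon^2).
\end{equation*}
Expanding $f(\epsilon) = f(0) + \epsilon f'(0+) + O(\epsilon^2)$, dividing by $\epsilon$ and letting $\epsilon\downarrow 0$ produces $f'(0+) = \gb f(0)$, so $\cD(A^e)$ is contained in the claimed space. Conversely, $f'(0+) = \gb f(0)$ is precisely the Wentzell condition \eqref{Wbc_0} with $c_0 = 0$ and $a_0/b_0 = \gb$, both non--zero, so the uniqueness part of Theorem~\ref{Feller-bc}(a) forces equality of the two subspaces. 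The main obstacle is the It\^o--Tanaka Laplace transform computation; everything else is either bookkeeping around Dynkin's formula or a direct invocation of Feller's theorem.
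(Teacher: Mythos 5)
Your proof is correct and follows essentially the same route as the paper: Dynkin's formula for $B^e$ at the origin with the stopping time $H_\epsilon\wedge\zeta_\gb$, the identity $\hP_0(H_\epsilon<\zeta_\gb)=(1+\gb\epsilon)^{-1}$, the $O(\epsilon^2)$ bound coming from $E_0(H_\epsilon)=\epsilon^2$, and the reverse inclusion supplied by the uniqueness clause of Theorem~\ref{Feller-bc}. The only (minor) divergence is that you derive the hitting probability directly via an It\^o--Tanaka martingale built from the affine function $\vp(x)=(1+\gb x)/(1+\gb\epsilon)$, whereas the paper invokes Lemma~\ref{a_LT_lem4}, which rests on the appendix computation (Lemma~\ref{a_LT_lem2}) that $L_{H_\epsilon}$ is exponentially distributed with mean $\epsilon$; your argument is a leaner special case of the same martingale idea.
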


\begin{proof}
By construction of $B^e$ the origin is not an absorbing point for $B^e$. Thus it is
permissible to apply Dynkin's formula at the origin: for every $f\in C_0^2(\R_+)$,
\begin{equation}    \label{el11}
    A^ef(0) = \lim_{\gep\downarrow 0}
                \frac{E_0\bigl(f(B^e_{\tau_\gep})\bigr)-f(0)}{E_0(\tau_\gep)},
\end{equation}
where $\tau_\gep$, $\gep>0$, is the hitting time of the complement of the interval
$[0,\gep)$, i.e., of the set $[\gep,+\infty)\cup \{\gD\}$, by $B^e$. Thus $\tau_\gep
= H_\gep\land \zeta_\gb$, where $H_\gep$ is the hitting time of the point $\gep$
on the real axis by the reflecting Brownian motion $|B|$. Write
\begin{align*}
    E_0\bigl(f(B^e_{\tau_\gep})\bigr)
        &= E_0\bigl(f(B^e_{\tau_\gep}); H_\gep<\zeta_\gb\bigr)
                + E_0\bigl(f(B^e_{\tau_\gep}); H_\gep\ge \zeta_\gb\bigr)\\
        &= f(\gep)\,P_0(H_\gep<\zeta_\gb) + f(\gD)\,P_0(H_\gep\ge\zeta_\gb)\\
        &= \frac{1}{1+\gep\gb}\,f(\gep),
\end{align*}
where we used lemma~\ref{a_LT_lem4}, and the convention $f(\gD)=0$. Thus we obtain
\begin{align*}
    A^e f(0)
        &= \lim_{\gep\downarrow 0} \frac{1}{E_0(\tau_\gep)}
                \Bigl(\frac{1}{1+\gep\gb}\bigl(f(\gep)-f(0)\bigr)-\frac{\gep\gb}{1+\gep\gb}\,f(0)\Bigr)\\
        &= \lim_{\gep\downarrow 0} \frac{\gep}{E_0(\tau_\gep)}
                \frac{1}{1+\gep\gb}\,\Bigl(\frac{f(\gep)-f(0)}{\gep}-\gb f(0)\Bigr).
\end{align*}
We know that the limit on the right hand side exists, and clearly
\begin{equation*}
    \lim_{\gep\downarrow 0}\frac{1}{1+\gep\gb}\,\Bigl(\frac{f(\gep)-f(0)}{\gep}
            -\gb f(0)\Bigr) =f'(0+)-\gb f(0).
\end{equation*}
On the other hand we have $E_0(\tau_\gep)\le E_0(H_\gep)$, and the latter
expectation is equal to $\gep^2$ (e.g., \cite[Sect.~1.7, Problem~6]{ItMc74},
\cite[Chap.~II, Problems~5, 18]{DyJu69}, \cite[Example~III.24.3]{Wi79}), so that
\begin{equation*}
    \frac{\gep}{E_0(\tau_\gep)} \ge \frac{1}{\gep}.
\end{equation*}
Therefore we must have $f'(0+)-\gb f(0)=0$.
\end{proof}

\begin{remark}  \label{el_rem2}
In order to compare the boundary condition~\eqref{el10} with~\eqref{Wbc_0}
with $c_0=0$, for given $a_0$, $b_0 \ne 0$ with $a_0+b_0=1$, we just have to
make the choice of $\gb$ above as $\gb = a_0/(1-a_0)$.
\end{remark}

Next we compute the resolvent $R^e=(R^e_\gl,\,\gl>0)$ and the semigroup $U^e =
(U^e_t,\,t\ge 0)$ of the elastic Brownian motion. First we prepare with the
following lemma:

\begin{lemma}   \label{el_lem3}
For all $f\in C_0(\R_+)$, $\gl>0$, $x\in\R_+$, the following formula holds true
\begin{equation}    \label{el12}
    R^e_\gl f(x) = R^N_\gl f(x) - e_\gl(x)\, \hE_0\bigl(e^{-\gl\zeta_\gb}\bigr)\, R^N_\gl f(0),
\end{equation}
where
\begin{equation}    \label{el13}
    e_\gl(x) = E_x\bigl(e^{-\gl H_0}\bigr) = e^{-\sgl x},
\end{equation}
and $H_0$ is the hitting time of the origin by the Brownian motion $B$.
\end{lemma}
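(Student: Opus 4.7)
The plan is to compute $R^N_\gl f(x) - R^e_\gl f(x)$ and show it equals the claimed expression by two applications of the strong Markov property, one at the hitting time $H_0$ of the origin by $B$, and one at the killing time $\zeta_\gb$.

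First I would write both resolvents as time integrals of $f(|B_t|)$ with respect to $\hP_x$, using the definition \eqref{el9} and the convention $f(\gD)=0$:
\begin{equation*}
    R^e_\gl f(x) = \hE_x\Bigl(\int_0^{\zeta_\gb} e^{-\gl t} f(|B_t|)\,dt\Bigr),\qquad
    R^N_\gl f(x) = \hE_x\Bigl(\int_0^{\infty} e^{-\gl t} f(|B_t|)\,dt\Bigr),
\end{equation*}
where the second identity uses that under $\hP_x$ the marginal law of $B$ is $P_x$ and $S$ plays no role. Subtracting gives
\begin{equation*}
    R^N_\gl f(x) - R^e_\gl f(x)
        = \hE_x\Bigl(\int_{\zeta_\gb}^{\infty} e^{-\gl t} f(|B_t|)\,dt\Bigr).
\end{equation*}

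Next I would apply the strong Markov property of $|B|$ at the stopping time $\zeta_\gb$. Recall from the discussion after \eqref{el8} that $B_{\zeta_\gb}=0$ on $\{\zeta_\gb<\infty\}$, and that $\zeta_\gb$ is an $\hcF$--stopping time relative to which the reflecting Brownian motion $|B|$ is still strong Markov (this is where I use the enlargement described in the paragraph after figure~\ref{fig3}). A change of variables $s=t-\zeta_\gb$ and conditioning on $\hcF_{\zeta_\gb}$ yield
\begin{equation*}
    \hE_x\Bigl(\int_{\zeta_\gb}^{\infty} e^{-\gl t} f(|B_t|)\,dt\Bigr)
        = \hE_x\bigl(e^{-\gl\zeta_\gb}\bigr)\,R^N_\gl f(0).
\end{equation*}

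Finally I would factor $\hE_x(e^{-\gl\zeta_\gb})$ for $x>0$. Since $L$ does not grow before the process reaches the origin, starting from $x>0$ we have $\zeta_\gb = H_0 + \zeta_\gb\comp\theta_{H_0}$, $\hP_x$-a.s., where $H_0$ depends only on $B$ and $\zeta_\gb\comp\theta_{H_0}$ depends on the post-$H_0$ path and on the independent random variable $S$. Applying the strong Markov property of $B$ at $H_0$ (and independence of $S$), together with the standard identity $E_x(e^{-\gl H_0})=e^{-\sgl x}$, gives $\hE_x(e^{-\gl\zeta_\gb}) = e_\gl(x)\,\hE_0(e^{-\gl\zeta_\gb})$. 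Combining the three displays yields \eqref{el12}.

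The main point requiring care is the strong Markov step at $\zeta_\gb$: one must check that on the enlarged space $(\hgO,\hcA,\hP_x)$ the reflecting Brownian motion $|B|$ is still strong Markov at $\zeta_\gb$ (so that after $\zeta_\gb$ it behaves like a reflecting Brownian motion started at $0$, independently of $\hcF_{\zeta_\gb}$). This is exactly what the enlargement constructed in appendix~\ref{app_killing} provides, and it is the only nontrivial ingredient; everything else reduces to the decomposition $\zeta_\gb = H_0 + \zeta_\gb\comp\theta_{H_0}$ and bookkeeping of the two expectations.
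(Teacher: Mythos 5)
Your plan is the same as the paper's (write $R^N_\gl f(x)-R^e_\gl f(x)$ as the post-$\zeta_\gb$ integral, identify it with $\hE_x(e^{-\gl\zeta_\gb})R^N_\gl f(0)$, then factor out $e_\gl(x)$ via $H_0$), but the justification you give for the crucial middle step does not work. Appendix~\ref{app_killing} proves that the \emph{killed} process $B^e=\hX$ is strong Markov relative to the filtration $(\hcM_t)$; it says nothing about the \emph{unkilled} reflecting motion $|B|$ at the time $\zeta_\gb$, and it is $f(|B_t|)$, not $f(B^e_t)$ (which vanishes after $\zeta_\gb$), that appears in your post-$\zeta_\gb$ integral, so the appendix result cannot be invoked there. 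Worse, the statement you want --- that after $\zeta_\gb$ the path behaves like reflecting Brownian motion from $0$ \emph{independently of} $\hcF_{\zeta_\gb}$ --- is false for that filtration: by the defining relation~\eqref{a_kill12}, for \emph{any} $\hgL\in\hcM$ one has $\hgL\cap\{\zeta_\gb\le t\}\cap\{\zeta_\gb>t\}=\emptyset=(\emptyset\times\Rbp)\cap\{\zeta_\gb>t\}$, so $\hgL\cap\{\zeta_\gb\le t\}\in\hcM_t$ for all $t$, i.e.\ $\hcM_{\zeta_\gb}$ is essentially all of $\hcM$ and in particular already contains the entire future of $B$. So the "only nontrivial ingredient" of your sketch is exactly the point that is not supplied by the enlargement.

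The identity you are after is nevertheless true, and the paper's device for proving it is to avoid any strong Markov property at $\zeta_\gb$ altogether: using $\zeta_\gb=K_S$ (equation~\eqref{el8}) one first integrates out the independent exponential variable $S$, writing the expectation as $\gb\int_0^\infty e^{-\gb s}E_x(\dotsb)\,ds$, and then, for each fixed $s$, applies the strong Markov property of $B$ at the genuine $\cF$--stopping time $K_s$ (with $B_{K_s}=0$); the same conditioning on $S$ also yields your factorization step, via $K_s=H_0+K_s\comp\theta_{H_0}$ and the strong Markov property at $H_0$. If you prefer to keep your formulation with a single stopping time, the correct filtration is the initial enlargement $\cG_t=\cF_t\vee\gs(S)$: since $S$ is independent of $\cF_\infty$, $B$ (hence $|B|$) remains strong Markov relative to $\cG$, and $\zeta_\gb$ is a $\cG$--stopping time by~\eqref{el2}; but this has to be stated and checked explicitly --- it is not the filtration $(\hcM_t)$ constructed in appendix~\ref{app_killing}.
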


\begin{proof}
By construction of $B^e$,
\begin{align*}
    R^e_\gl f(x)
        &= \hE_x\Bigl(\int_0^{\zeta_\gb} e^{-\gl t} f(|B_t|)\,dt \Bigr)\\
        &= R^N_\gl f(x) - \hE_x\Bigl(\int_{\zeta_\gb}^\infty e^{-\gl t} f(|B_t|)\,dt \Bigr)\\
        &= R^N_\gl f(x) - \hE_x\Bigl(e^{-\gl\zeta_\gb} \int_0^\infty e^{-\gl t}
                                        f\bigl(|B_{t+\zeta_\gb}|\bigr)\,dt \Bigr).
\end{align*}
We compute the last expectation
\begin{align*}
    \hE_x\Bigl(e^{-\gl\zeta_\gb} &\int_0^\infty e^{-\gl t} f\bigl(|B_{t+\zeta_\gb}|\bigr)\,dt \Bigr)\\
        &= \gb \int_0^\infty e^{-\gb s} E_x\Bigl(e^{-\gl K_s} \int_0^\infty e^{-\gl t}
                f\bigl(|B_{t+K_s}|\bigr)\,dt\Bigr)\,ds\\
        &= \gb \int_0^\infty e^{-\gb s} E_x\Bigl(e^{-\gl K_s} \int_0^\infty e^{-\gl t}
                E_x\Bigl(f\bigl(|B_{t+K_s}|\bigr)\cond\cF_{K_s}\Bigr)\,dt\Bigr)\,ds\\
        &= \gb \int_0^\infty e^{-\gb s} E_x\bigl(e^{-\gl K_s}\bigr)
                E_0\Bigl(\int_0^\infty e^{-\gl t} f(|B_t|)\,dt\Bigr)\,ds\\
        &= \gb \int_0^\infty e^{-\gb s} E_x\bigl(e^{-\gl K_s}\bigr)\,ds\, R^N_\gl f(0),
\end{align*}
where we used relation~\eqref{el8}, the strong Markov property of $B$ with respect to the
$\cF$--stopping time $K_s$,
and $B_{K_s}=0$. Since $L$ only grows when $B$ is at the origin, we have for all $s\ge 0$,
$K_s\ge H_0$, whence $K_s = H_0 + K_s\comp \theta_{H_0}$. Therefore we get with the strong
Markov property of $B$ with respect to $H_0$
\begin{align*}
    E_x\bigl(e^{-\gl K_s}\bigr)
        &= E_x\bigl(e^{-\gl(H_0+K_s\comp\theta_{H_0})}\bigr)\\
        &= E_x\bigl(e^{-\gl H_0}\bigr)\,E_0\bigl(e^{-\gl K_s}\bigr)\\
        &= e^{-\sgl x}\,E_0\bigl(e^{-\gl K_s}\bigr),
\end{align*}
where we used the well-known Laplace transform of the density of $H_0$ under $P_x$,
e.g., \cite[p.~26]{ItMc74}, \cite[p.~96]{KaSh91} or \cite[p.~67]{ReYo91}. Inserting
the last expression above, we get formula~\eqref{el12}.
\end{proof}

\begin{remark}  \label{el_rem4}
We want to emphasize that the same calculation works for any Brownian motion on
$\R_+$ in the sense of definition~\ref{def_BM} with infinite lifetime having a local
time at the origin, i.e., a PCHAF (cf.\ appendix~\ref{app_killing}) whose
Lebesgue--Stieltjes measure is carried by the origin. In section~\ref{sect_gen} we
shall take advantage of this fact when with the corresponding analogue of
formula~\eqref{el12} we compute the resolvent of a Brownian motion with the boundary
condition~\eqref{Wbc_0} in its most general form, i.e., with all coefficients
non-vanishing.
\end{remark}

To get an explicit expression for the resolvent $R^e_\gl$, it remains to calculate
the expectation $\hE_0(\exp(-\gl\zeta_\gb))$. But this is easy with the help
formula~\eqref{a_LT3} in appendix~\ref{app_LT}:
\begin{align}
    \hE_0\bigl(e^{-\gl\zeta_\gb}\bigr)
        &= \gb\int_0^\infty e^{-\gb s}\,E_0\bigl(e^{-\gl K_s}\bigr)\,ds\nonumber\\
        &= \gb\int_0^\infty e^{-\gb s} e^{-\sgl s}\,ds\nonumber\\
        &= \frac{\gb}{\gb+\sgl}.    \label{el14}
\end{align}
Thus we have proved the following

\begin{corollary}   \label{el_cor5}
For all $f\in C_0(\R_+)$, $\gl>0$, $x\in\R_+$,
\begin{equation}    \label{el15}
    R^e_\gl f(x) = R^N_\gl f(x) - \frac{\gb}{\gb+\sgl}\, e_\gl(x)\, R^N_\gl f(0).
\end{equation}
\end{corollary}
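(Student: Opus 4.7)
The plan is to combine Lemma~\ref{el_lem3} with an explicit evaluation of the Laplace transform $\hE_0\bigl(e^{-\gl\zeta_\gb}\bigr)$, so essentially only one scalar quantity needs to be computed.

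First I would recall from equation~\eqref{el8} that $\zeta_\gb = K_S$, where $S$ is, under $\hP_0$, an exponentially distributed random variable of rate $\gb$ that is independent of the Brownian motion $B$ (and hence of the process $K$). Conditioning on $S$ and using Fubini, one gets
\begin{equation*}
    \hE_0\bigl(e^{-\gl \zeta_\gb}\bigr)
        = \hE_0\bigl(e^{-\gl K_S}\bigr)
        = \gb\int_0^\infty e^{-\gb s}\, E_0\bigl(e^{-\gl K_s}\bigr)\,ds.
\end{equation*}

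Next I would invoke formula~\eqref{a_LT3} from the local-time appendix, which supplies the classical identity $E_0\bigl(e^{-\gl K_s}\bigr) = e^{-\sgl s}$ (reflecting the fact that $K$ is a $1/2$--stable subordinator under $P_0$). Substituting this into the integral above yields
\begin{equation*}
    \hE_0\bigl(e^{-\gl \zeta_\gb}\bigr)
        = \gb\int_0^\infty e^{-(\gb+\sgl)s}\,ds
        = \frac{\gb}{\gb+\sgl},
\end{equation*}
which is exactly equation~\eqref{el14}.

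Finally I would plug this closed form into the conclusion~\eqref{el12} of Lemma~\ref{el_lem3}, and the claim~\eqref{el15} follows at once. There is really no obstacle here: all the structural work was already carried out in the lemma, and the only new ingredient is the explicit Laplace transform of $K_s$, which is a standard fact about the inverse local time of Brownian motion recorded in Appendix~\ref{app_LT}. If anything, the one point worth being slightly careful about is the independence of $S$ and $K$ on the product space $(\hgO,\hcA)$, which however follows directly from the construction of $\hP_x$ as a product measure in the discussion preceding~\eqref{el7}.
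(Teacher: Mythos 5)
Your proposal is correct and follows essentially the same route as the paper: the text also obtains equation~\eqref{el14} by writing $\zeta_\gb=K_S$, integrating $E_0(e^{-\gl K_s})=e^{-\sgl s}$ from~\eqref{a_LT3} against the exponential law of $S$, and then substituting into~\eqref{el12}. Nothing is missing.
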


With formula~\eqref{el15} it is now very easy to give another proof of theorem~\ref{el_thm1}:
Let $f\in C_0(\R_+)$, $\gl>0$. Then $R^N_\gl f$ satisfies the Neumann boundary condition at
$0$, and therefore we get with equation~\eqref{el13}
\begin{equation}    \label{el16}
    \bigl(R^e_\gl f\bigr)'(0+) = \frac{\gb\sgl}{\gb+\sgl}\,R^N_\gl f(0),
\end{equation}
while on the other hand
\begin{equation}    \label{el17}
    R^e_\gl f(0) = \frac{\sgl}{\gb+\sgl}\, R^N_\gl f(0),
\end{equation}
and therefore
\begin{equation}   \label{el18}
    \bigl(R^e_\gl f\bigr)'(0+) = \gb R^e_\gl f(0).
\end{equation}
Since for every $\gl>0$, $R^e_\gl$ maps $C_0(\R_+)$ onto the domain of the generator
of $B^e$, we have once again proved theorem~\ref{el_thm1}.

From equation~\eqref{el15} we can read off the resolvent kernel $r^e_\gl(x,y)$,
$x$, $y\in\R_+$, of the elastic Brownian motion
\begin{equation}    \label{el19}
    r^e_\gl(x,y)
        = r^N_\gl(x,y) - \frac{2\gb}{\gb +\sgl}\,\frac{1}{\sgl}\,e^{-\sgl(x+y)},
                \qquad x,\,y\in\R_+,
\end{equation}
where $r^N_\gl(x,y)$ is the Neumann kernel~\eqref{el5}. In order to compute the
transition kernel of the elastic Brownian motion, we want to find the inverse
Laplace transform of $r^e_\gl(x,y)$ as a function of $\gl>0$. It turns out that
it is more convenient to achieve this if we rewrite $r^e_\gl(x,y)$ in terms of the
Dirichlet kernel
\begin{equation}    \label{el20}
    r^D_\gl(x,y) = \frac{1}{\sgl}\,\bigl(e^{-\sgl |x-y|} - e^{-\sgl (x+y)}\bigr),
                        \qquad x,\,y\in\R_+.
\end{equation}
Then for $x$, $y\in\R_+$,
\begin{equation}    \label{el21}
    r^e_\gl(x,y)
        = r^D_\gl(x,y) + \frac{2}{\gb+\sgl}\,e^{-\sgl (x+y)}.
\end{equation}
Now the inverse Laplace transform of the second term on the right hand side is given
by formula~(5.6.12) in~\cite{ErMa54a}. So we find for the transition density
$p^e(t,x,y)$, $t>0$, $x$, $y\in\R_+$, of the elastic Brownian motion
\begin{equation}    \label{el22}
    p^e(t,x,y) = p^D(t,x,y) + 2 g_{\gb,0}(t,x+y),
\end{equation}
where $p^D(t,x,y)$ is the transition kernel of a Brownian motion on $\R_+$ which is
killed when reaching the origin, i.e., whose generator is one half times the
Laplacean on $\R_+$ with Dirichlet boundary conditions at the origin:
\begin{equation}    \label{el23}
    p^D(t,x,y) = p(t,x,y) - p(t,x,-y).
\end{equation}
Moreover, we have set
\begin{equation}    \label{el24}
    g_{\gb,0}(t,x)
        = g(t,x) - \frac{\gb}{2}\,\exp\Bigl(\gb x+ \frac{\gb^2 t}{2}\Bigr)
            \,\erfc\Bigl(\frac{x}{\sqrt{2t}}+\gb\sqrt{\frac{t}{2}}\,\Bigr),
\end{equation}
and $g(t,x)$ denotes the standard Gau{\ss}-kernel
\begin{equation}    \label{el25}
    g(t,x) = \frac{1}{\sqrt{2\pi t}}\,e^{-x^2/2t},\qquad t>0,\,x\in\R.
\end{equation}
Alternatively, we can write $p^e(t,x,y)$ in the form
\begin{equation}    \label{el26}
    p^e(t,x,y)
        = p^N(t,x,y) - \gb e^{\gb(x+y)+\gb^2 t/2}\,
                \erfc\Bigl(\frac{x+y}{\sqrt{2t}}+\gb\sqrt{\frac{t}{2}}\,\Bigr),
\end{equation}
where $p^N(t,x,y)$ is the Neumann heat kernel~\eqref{el3}.

\section{Brownian Motion With Sticky Boundary} \label{sect_st}
In this section we consider the case $a_0=0$, $b_0$, $c_0>0$  in the
boundary condition~\eqref{Wbc_0}. As mentioned in the introduction, this case is
handled by introducing a new time scale which slows down the reflecting Brownian
motion $|B|$ when it is at the origin. Again, the idea of how to implement
this with the help of L\'evy's local time goes back to the fundamental
paper~\cite{ItMc63} of It\^o and McKean. We continue to denote the local time of
the Brownian motion $B$ at the origin by $L=(L_t,\,t\ge 0)$. Let $\gg>0$, and set
\begin{equation}    \label{st1}
    \tau^{-1}(t) = t + \gg L_t,\qquad t\ge 0.
\end{equation}
Since $L$ is increasing, $\tau^{-1}$ is strictly increasing. Furthermore,
$\tau^{-1}(0)=0$ and $\tau^{-1}(+\infty) = +\infty$ are valid, which implies that
$\tau$ exists and is strictly increasing, too. In figure~\ref{fig4} the local time of
the path of
a reflected Brownian motion from above is depicted in red, the path of $\tau^{-1}$
(with $\gg=0.3$) in green, while the path of the new time scale $\tau$ is in
blue. (The grey diagonal line represents the ``deterministic time scale''.)
\begin{figure}[ht]
\begin{center}
    \includegraphics{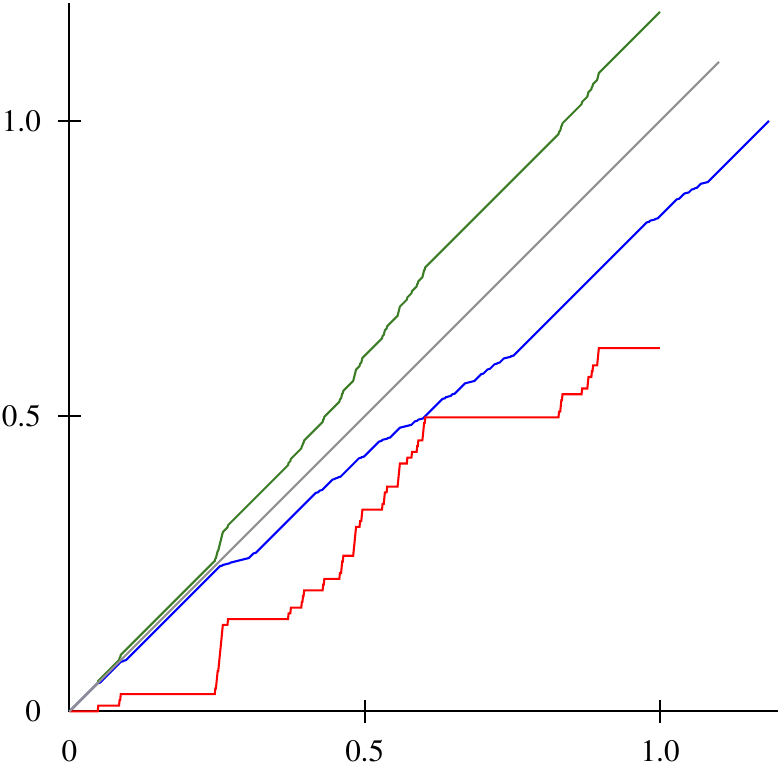}\\[0\baselineskip]
    \caption{Construction of the time scale $\tau$ (blue) from the local time
			$L$ (red) and $\tau^{-1}$ (grey)}  \label{fig4}
\end{center}
\end{figure}

Define a Brownian motion $B^s$ on $\R_+$ with \emph{sticky boundary} at the origin by
\begin{equation}    \label{st2}
    B_t^s = |B_{\tau(t)}|, \qquad t\ge 0.
\end{equation}
Figure~\ref{fig5} shows the path of a reflected Brownian motion from above in grey,
and the corresponding path of a Brownian motion with sticky boundary at $0$ in blue.
\begin{figure}[ht]
\begin{center}
    \includegraphics{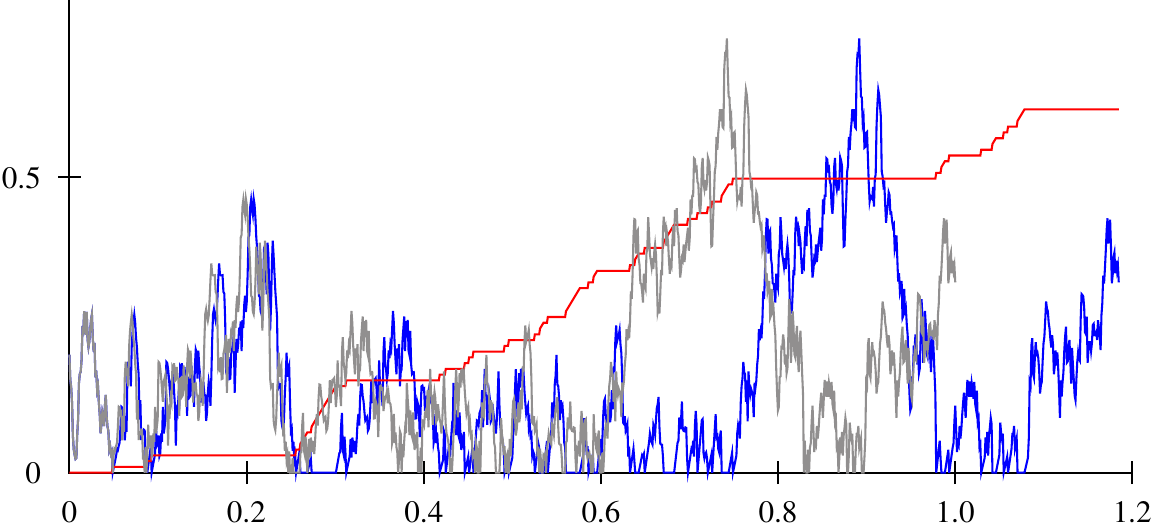}\\[0\baselineskip]
    \caption{A path of a ``sticky'' Brownian motion (blue) constructed from the
             path of a reflecting Brownian motion (grey). The local time at $0$
             of the ``sticky'' path is in red.}  \label{fig5}
\end{center}
\end{figure}
By formula~\eqref{st1} it is clear that $\tau^{-1}$ increases at a faster rate than
the ``deterministic time scale $t\mapsto t$'' whenever $B$ is at the origin, and
therefore $\tau$ increases slower than the deterministic time scale in these
instances. Hence --- at least in a heuristic sense --- $B^s$ experiences a slow down
when it is at the origin. Below this remark will be made precise.
With these considerations in mind, we shall also call $\gg$ the \emph{parameter of
stickiness} of $B^s$.

First we investigate the (strong) Markov property of $B^s$, and we fill in some
details of a proof given in~\cite[p.~160]{Kn81}. We recall the additivity property
\begin{equation} \label{addL}
L_{s+t} = L_s + L_t\comp\theta_s,\qquad s,t\in\R_+,
\end{equation}
of the local time $L$, which directly leads to the following formula, see~\cite[p.~160]{Kn81},
\begin{equation}    \label{st3}
    \tau(s+t) = \tau(s) + \tau(t)\comp\theta_{\tau(s)},\qquad s,\,t\ge 0,
\end{equation}
i.e., $\tau$ is additive on its own scale.

For $t\ge 0$ define $\cF^s_t=\cF_{\tau(t)}$. Then we have the following result

\begin{lemma}   \label{st_lem1}
For every $t\ge 0$, $\tau(t)$ is an $\cF$--stopping time. Furthermore,
$\cF^s=(\cF^s_t,\,t\ge 0)$ is a filtration of sub--$\gs$--algebras of
$\cF_{\infty}$, which is right continuous and complete relative to $P$, and such
that for every $t\ge 0$, $\cF^s_t\subset\cF_t$. Furthermore, $\cF^s_{\infty} =
\gs(\cF^s_t,\,t\ge 0)\subset\cF_\infty$.
\end{lemma}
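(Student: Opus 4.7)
The plan is to verify the four assertions of the lemma in the natural order: first that each $\tau(t)$ is a stopping time, then the filtration properties (sub--$\sigma$--algebra of $\cF_\infty$, inclusion in $\cF_t$, monotonicity, completeness, right continuity, and the statement about $\cF^s_\infty$). The main inputs will be the continuity and strict monotonicity of $\tau^{-1}$, and the right continuity and completeness of the underlying filtration $\cF$.

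First I would prove that $\tau(t)$ is an $\cF$--stopping time for each fixed $t\ge 0$. Since $\tau^{-1}(s) = s + \gg L_s$ is continuous and strictly increasing in $s$, and $\tau$ is its inverse, one has for every $s\ge 0$ the identity $\{\tau(t)\le s\} = \{\tau^{-1}(s)\ge t\} = \{s+\gg L_s\ge t\}$. Because $L$ is $\cF$--adapted, the latter event belongs to $\cF_s$, so $\tau(t)$ is indeed an $\cF$--stopping time. In particular each $\cF^s_t = \cF_{\tau(t)}$ is a well-defined sub--$\sigma$--algebra of $\cF_\infty$, and monotonicity of the family $\cF^s$ follows from monotonicity of $\tau$ and of $\cF$. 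The inclusion $\cF^s_t\subset\cF_t$ uses that $\tau^{-1}(t) = t+\gg L_t\ge t$, hence $\tau(t)\le t$ pointwise; combining this deterministic bound with the stopping time property yields, for every $A\in\cF_{\tau(t)}$, that $A = A\cap\{\tau(t)\le t\}\in\cF_t$.

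For completeness, I would note that since $\cF$ is $P$--complete, $\cF_0$ already contains all $P$--null sets of $\cF_\infty$, and $\cF_0 \subset \cF_{\tau(t)} = \cF^s_t$ because $\tau(t)\ge 0$. The containment $\cF^s_\infty\subset\cF_\infty$ is immediate from the definition $\cF^s_\infty = \gs(\cF^s_t,\,t\ge 0)$ and the already established inclusions $\cF^s_t\subset\cF_t\subset\cF_\infty$.

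The main obstacle is right continuity, and here I would proceed as follows. Because $\tau^{-1}$ is continuous and strictly increasing with $\tau^{-1}(+\infty)=+\infty$, its inverse $\tau$ is continuous and strictly increasing, so $\tau(u)\downarrow\tau(t)$ whenever $u\downarrow t$. Pick any sequence $u_n\downarrow t$; then $\tau(u_n)$ is a sequence of $\cF$--stopping times decreasing to the stopping time $\tau(t)$. For a right continuous filtration the standard identity $\bigcap_n \cF_{\sigma_n} = \cF_\sigma$ holds for stopping times $\sigma_n\downarrow\sigma$ (one inclusion is monotonicity of $\cF$; the other follows from the characterization of $\cF_{\sigma+}$ as $\{A:A\cap\{\sigma<s\}\in\cF_s \text{ for all } s\}$ together with $\cF_{\sigma+}=\cF_\sigma$ by right continuity). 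Applying this to $\sigma_n=\tau(u_n)$, $\sigma=\tau(t)$, I obtain
\begin{equation*}
    \bigcap_{u>t}\cF^s_u = \bigcap_n\cF_{\tau(u_n)} = \cF_{\tau(t)} = \cF^s_t,
\end{equation*}
which is the desired right continuity of $\cF^s$. This completes the verification of all claims.
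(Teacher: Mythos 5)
Your proposal is correct and follows essentially the same route as the paper: the stopping-time property via $\{\tau(t)\le s\}=\{t\le s+\gg L_s\}\in\cF_s$, the inclusions from $\tau(t)\le t$ and $\tau(0)=0$ (resp.\ $\cF_0\subset\cF_{\tau(t)}$) for completeness, and right continuity from the pathwise continuity of $\tau$ combined with right continuity of $\cF$. The only cosmetic difference is that where you invoke (and correctly sketch) the standard fact that $\bigcap_n\cF_{\sigma_n}=\cF_\sigma$ for stopping times $\sigma_n\downarrow\sigma$ in a right-continuous filtration, the paper proves exactly this fact inline for $\sigma_n=\tau(t+1/n)$ by writing $\{\tau(t)<u\}$ as a countable union of events $\{\tau(t+1/n)\le u-\gep\}$.
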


\begin{proof}
First we show that for every $t\ge 0$, $\tau(t)$ is an $\cF$--stopping time: let
$s\ge 0$, then by~\eqref{st1}
\begin{equation*}
    \{\tau(t)\le s\} = \{t\le \tau^{-1}(s)\} = \{t \le s +\gg L_s\},
\end{equation*}
and the last set belongs to $\cF_s$. In particular, $\cF_{\tau(t)}$ is a
sub--$\gs$--algebra of $\cF_\infty$, and this entails the last statement of the
lemma. That $\cF^s$ is a filtration follows from the fact that $\tau$ is increasing,
and that for every $t\ge 0$, $\cF^s_t\subset\cF_t$ is due to $\tau(t)\le t$.
Moreover, $\tau(0)=0$ gives $\cF^s_0=\cF_0$, and therefore the completeness of
$\cF$ entails the completeness of $\cF^s$. It remains to show that $\cF^s$ is
right continuous. To this end let $t\ge 0$, and assume that $\gL\in\cF^s_{t+}$,
i.e., $\gL\in\cap_{n\in\N} \cF^s_{t+1/n}$.
We have to prove that $\gL\in\cF^s_t = \cF_{\tau(t)}$.
The fact that $\tau$ is (pathwise)
continuous implies that $\tau(t+1/n)$ converges from above to $\tau(t)$ as
$n$ tends to infinity. Hence for $u > 0$ we get
\begin{equation*}
    \{\tau(t)<u\} = \bigcup_{n\in\N} \{\tau(t+1/n) < u\}
        = \bigcup_{\gep>0,\,\gep\in\Q} \bigcup_{n\in\N} \{\tau(t+1/n) \le u-\gep\},
\end{equation*}
and
\begin{equation*}
    \gL\cap\{\tau(t)<u\}  = \bigcup_{\gep>0,\,\gep\in\Q} \bigcup_{n\in\N}
                                    \gL\cap\{\tau(t+1/n) \le u-\gep\}.
\end{equation*}
Because $\gL\in\cF_{\tau(t+1/n)}$ for all $n\in\N$, we find that for all $n\in\N$,
$\gep\in\Q$ with $\gep>0$,
\begin{equation*}
    \gL\cap\{\tau(t+1/n) \le u-\gep\} \in\cF_{u-\gep}\subset\cF_u.
\end{equation*}
Thus we have shown that for all $u>0$, $\gL\cap\{\tau(t)<u\}\in\cF_u$. But then
for all $u\ge 0$,  $\gL\cap\{\tau(t)\le u\}\in\cF_{u+}$. By assumption $\cF_{u+}=\cF_u$, and
this gives $\gL\in\cF_{\tau(t)}$.
\end{proof}

Since $B$ has continuous paths, $B$ is progressively measurable with respect to
$\cF$, and therefore (see, e.g., \cite[Proposition~I.4.9]{ReYo91}), for every $t\ge 0$,
$|B_{\tau(t)}|$ is measurable with respect to $\cF_{\tau(t)}$, i.e., $B^s$ is
adapted to $\cF^s$. Henceforth we shall always consider $B^s$ relative to the
filtration $\cF^s$, unless mentioned otherwise.

Next we define a family $\theta^s=(\theta^s_t,\,t\ge 0)$ of mappings from $\gO$
into itself by
\begin{equation}    \label{st4}
    \theta^s_t = \theta_{\tau(t)},\qquad t\ge 0.
\end{equation}
Then we obtain from equation~\eqref{st3} for $s$, $t\in\R_+$,
\begin{align}
    B^s_{s+t}
        &= \bigl|B_{\tau(s) + \tau(t)\comp \theta_{\tau(s)}}\bigr|\nonumber\\
        &= \bigl|B_{\tau(t)}\bigr|\comp \theta_{\tau(s)}    \label{st5}
\end{align}
and therefore
\begin{equation}    \label{st6}
    B^s_{s+t} = B^s_t\comp \theta^s_s,
\end{equation}
which shows that $\theta^s$ is a family of shift operators for $B^s$.

Now we claim that $B^s$ is a Markov process relative to $\cF^s$. First we remark
that it is an easy exercise to show that the reflecting Brownian motion $|B|$ is a
strong Markov process with respect to $\cF$. Let $x\in\R_+$, $s$, $t\ge 0$,
$C\in\cB(\R_+)$. Recall lemma~\ref{st_lem1} by which $\tau(t)$ is an $\cF$--stopping
time for every $t\ge 0$. Using equation~\eqref{st5} we can calculate as follows:
\begin{align*}
    P_x\bigl(B^s_{s+t}\in C \cond \cF^s_t\bigr)
        &= P_x\bigl(|B_{\tau(s)}|\comp\theta_{\tau(t)}\in C \cond \cF_{\tau(t)}\bigr)\\
        &= P_{|B_{\tau(t)}|}\bigl(|B_{\tau(s)}|\in C\bigr)\\
        &= P_{B^s_t}\bigl(B^s_s\in C\bigr),
\end{align*}
and our claim is proved.

Recall that $H_0$ is the hitting time of the origin by the standard Brownian
motion $B$. As a next step we want to prove that $B^s$ has a strong Markov property
with respect to the hitting time $H_0$ of the origin and the filtration $\cF$, i.e.,
we want to show that for all $x\in\R_+$, $t\ge 0$, $C\in\cB(\R_+)$,
\begin{equation}    \label{st7}
    P_x\bigl(B^s_{t+H_0}\in C\cond \cF_{H_0}\bigr)
        = P_0\bigl(B^s_t\in C\bigr),\qquad \text{$P_x$--a.s.\ on $\{H_0<+\infty\}$}.
\end{equation}
All considerations below are done on the set $\{H_0<+\infty\}$ which has
$P_x$--measure one for all $x\in\R_+$. Observe that $\tau(H_0)=\tau^{-1}(H_0) = H_0$
which follows from
\begin{equation*}
    \tau^{-1}(H_0) = H_0 + \gg L_{H_0} = H_0,
\end{equation*}
because $L_{H_0}=0$. In particular, it follows that $H_0$ is the hitting time
of the origin for both $B$ and $B^s$. Thus by~\eqref{st3}
\begin{align}
    B^s_{t+H_0}
        &= |B_{\tau(t+H_0)}| \nonumber\\
        &= |B_{\tau(H_0)+\tau(t)\comp \theta_{\tau(H_0)}}|  \nonumber\\
        &= |B_{H_0+\tau(t)\comp \theta_{H_0}}|  \nonumber\\
        &= |B_{\tau(t)}|\comp \theta_{H_0}  \nonumber\\
        &= B^s_t\comp \theta_{H_0}. \label{st8}
\end{align}
Using the fact shown above that for $t\ge 0$, $B^s_t$ is $\cF_\infty$--measurable,
and the strong Markov property of the reflecting Brownian motion $|B|$, we now
compute for $x\in\R_+$, $C\in\cB(\R_+)$, $t\ge 0$, as follows:
\begin{align*}
    P_x\bigl(B^s_{t+H_0}\in C\cond \cF_{H_0}\bigr)
        &= P_x\bigl(B^s_t\comp \theta_{H_0}\in C\cond \cF_{H_0}\bigr)\\
        &= P_x\bigl(|B_{\tau(t)}|\comp\theta_{H_0}\cond \cF_{H_0}\bigr)\\
        &= P_0\bigl(|B_{\tau(t)}|\in C\bigr)\\
        &= P_0\bigl(B^s_t\in C\bigr).
\end{align*}

Since  $H_0$ is also the hitting time of $0$ by $B^s$, it is also a stopping time
for $\cF^s$. Thus $\cF^s_{H_0}$ is a well-defined $\gs$--algebra, and
$\gL\in\cF^s_{H_0}$ if and only if $\gL\in\cF^s_\infty$, and for all $t\ge 0$,
$\gL\cap\{H_0\le t\}\in\cF^s_t$. Since $\cF^s_\infty\subset\cF_\infty$ (s.a.), we
immediately get $\gL\in\cF_{H_0}$, i.e., $\cF^s_{H_0}\subset\cF_{H_0}$. Consequently
$B^s$ is a strong Markov process relative to $H_0$ and the filtration $\cF^s$, too.

Using the facts established above that $(B^s,\cF^s)$ is Markovian, and strongly
Markovian for the hitting time of the origin, we are now in position to apply the
same arguments as in the proof of Theorem~6.1 in~\cite{Kn81} (cf.\ also the proof of
theorem~4.3 in~\cite{BMMG1} with a slightly different argument for the case of metric
graphs and somewhat more details than those given in~\cite{Kn81}) to conclude that $B^s$
is a Feller process. But this proves

\begin{theorem} \label{thm}
$B^s$ is a normal strong Markov process with respect to $\cF^s$.
\end{theorem}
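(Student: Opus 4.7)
The plan is to bootstrap from the simple Markov property already established for $(B^s,\cF^s)$ to the strong Markov property by verifying that $B^s$ is a Feller process: since $\cF^s$ is right continuous and complete (lemma~\ref{st_lem1}), and since a Feller process is automatically strongly Markov with respect to such a filtration, this will give the conclusion. Normality is immediate: $\tau(0)=0$, so $B^s_0 = |B_0|$, and under $P_x$ with $x\in\R_+$ this equals $x$ almost surely.

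The core task is to prove the Feller property, i.e.\ that the semigroup $U^s_t f(x) = E_x(f(B^s_t))$ preserves $C_0(\R_+)$ and is strongly continuous at $t=0$. Path continuity of $B^s$ is free: $B$ is pathwise continuous, $\tau^{-1}$ is strictly increasing and continuous, hence so is $\tau$, and therefore $t\mapsto |B_{\tau(t)}|$ is continuous. Strong continuity at $t=0$ then follows from dominated convergence together with the uniform continuity of $f\in C_0(\R_+)$ applied to $B^s_t\to B^s_0=x$. To see that $U^s_t$ maps $C_0(\R_+)$ into itself, I would use the strong Markov property of $B^s$ at $H_0$ (already established in the paragraph preceding theorem~\ref{thm}) together with the observation that, on $\{t<H_0\}$, $L_t=0$ forces $\tau(t)=t$ and $B^s_t=B_t$. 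This yields the decomposition
\begin{equation*}
    U^s_t f(x) = E_x\bigl(f(B_t);\,t<H_0\bigr) + E_x\bigl(U^s_{t-H_0}f(0);\,t\ge H_0\bigr),\qquad x\ge 0.
\end{equation*}
The first term is the absorbed Brownian semigroup applied to $f$ and is well-known to lie in $C_0(\R_+)$. The second term depends on $x$ only through the law of $H_0$ under $P_x$, which depends continuously on $x$; it vanishes as $x\to+\infty$ since $P_x(H_0\le t)\to 0$.

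The main obstacle is the joint continuity and measurability of $(s,x)\mapsto E_x(U^s_{t-s}f(0);\,H_0\in ds)$, i.e.\ verifying that $s\mapsto U^s_sf(0)$ is continuous on $[0,t]$ so that one can integrate it against the distribution of $H_0$. For this I would argue that, by the pathwise continuity of $B^s$ and dominated convergence, $s\mapsto U^s_sf(0)$ is at least right continuous; combined with the fact that the $P_x$-distribution of $H_0$ has a continuous density for $x>0$, this suffices to deduce continuity of the mixture in $x$ (including the boundary value $x=0$, where $H_0=0$ a.s.\ and the decomposition collapses to $U^s_tf(0)$). Once the Feller property is in hand, the standard argument (as in the proof of Theorem~6.1 of~\cite{Kn81}, or theorem~4.3 of~\cite{BMMG1}) produces the strong Markov property with respect to $\cF^s$: for any $\cF^s$-stopping time $T$, one approximates $T$ from above by discrete stopping times, applies the simple Markov property at each discrete level together with Feller continuity of $U^s_t$, and passes to the limit using the right continuity of $\cF^s$ and the right continuity of the paths of $B^s$.
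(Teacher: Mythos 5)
Your proposal follows essentially the same route as the paper: the paper likewise takes the already established simple Markov property of $(B^s,\cF^s)$ and the strong Markov property at $H_0$, and then invokes the argument of Theorem~6.1 in~\cite{Kn81} (which is precisely your first-passage decomposition of $U^s_tf$ at $H_0$ into the killed Brownian semigroup plus the mixture $E_x\bigl(U^s_{t-H_0}f(0);\,H_0\le t\bigr)$) to obtain the Feller property, from which the strong Markov property with respect to the right continuous complete filtration $\cF^s$ follows by the standard discretization argument you describe. So your write-up is a correct reconstruction, merely spelling out the details the paper delegates to~\cite{Kn81} and~\cite{BMMG1}.
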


The necessary path properties being obvious, we have also proved

\begin{corollary}   \label{cor}
$B^s$ is a Brownian motion on $\R_+$ in the sense of Definition~\ref{def_BM}.
\end{corollary}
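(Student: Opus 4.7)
The plan is straightforward: since the preceding theorem already establishes that $B^s$ is a normal strong Markov process with respect to $\cF^s$, I only need to verify the two remaining clauses of Definition~\ref{def_BM}, namely (a) that the paths of $B^s$ are a.s.\ right continuous and continuous up to the lifetime, and (b) that the absorbed process $\hat B^s$ is equivalent to $\hat B$. Both should follow directly from the construction $B^s_t=|B_{\tau(t)}|$ and the properties of $\tau$.

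For (a), I would argue as follows. The function $t\mapsto\tau^{-1}(t)=t+\gg L_t$ is continuous, strictly increasing, starts at $0$, and tends to $+\infty$ as $t\to+\infty$ (since already $t\to+\infty$). Consequently its inverse $\tau$ is a continuous, strictly increasing bijection of $\R_+$ onto itself. Composing the a.s.\ continuous reflecting Brownian motion $|B|$ with $\tau$ then yields a process with a.s.\ continuous paths, so the right continuity and continuity-up-to-lifetime requirements are automatic; moreover the lifetime of $B^s$ is a.s.\ equal to $+\infty$, so $B^s$ never visits the cemetery $\gD$.

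For (b), I would exploit the fact that the local time $L$ grows only when $B$ is at the origin. Let $H_0=\inf\{t\ge 0:B_t=0\}$ denote the hitting time of the origin by $B$. Since $L_t=0$ for all $t\le H_0$, formula~\eqref{st1} gives $\tau^{-1}(t)=t$ and hence $\tau(t)=t$ for $0\le t\le H_0$. Therefore
\begin{equation*}
    B^s_t = |B_{\tau(t)}| = |B_t| = B_t,\qquad 0\le t\le H_0,
\end{equation*}
under $P_x$ for any $x\ge 0$ (the last equality uses $B_t\ge 0$ on $[0,H_0]$ when $x\ge 0$, with the convention that starting at $x=0$ both sides equal $0$ immediately). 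Since $\tau(H_0)=H_0$, the hitting time of the origin coincides for $B$ and for $B^s$, and the stopped processes satisfy $\hat B^s_t=B^s_{t\wedge H_0}=B_{t\wedge H_0}=\hat B_t$ pathwise a.s. In particular $\hat B^s$ and $\hat B$ are equivalent.

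I do not anticipate a real obstacle here: the theorem just proved supplies the strong Markov property, which is the nontrivial part, and the remaining verifications reduce to the pathwise identities above together with the structural properties of $\tau$ established in Section~\ref{sect_st}. The one point that warrants a brief sentence of justification, to make the argument self-contained, is the surjectivity of $\tau:\R_+\to\R_+$, which guarantees infinite lifetime and hence legitimizes the reading of ``continuous up to its lifetime'' as ordinary pathwise continuity.
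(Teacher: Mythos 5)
Your proposal is correct and follows essentially the same route as the paper, which simply invokes Theorem~\ref{thm} for the strong Markov property and declares ``the necessary path properties being obvious,'' the facts that $\tau$ is a continuous strictly increasing bijection of $\R_+$ and that $\tau(t)=t$ (hence $B^s_t=B_t$) on $[0,H_0]$ having already been recorded in section~\ref{sect_st}. You merely spell out explicitly the path-continuity, infinite-lifetime, and stopped-process verifications that the paper leaves implicit.
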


Next we turn to calculate the boundary conditions of the generator $A^s$ of $B^s$.
Again we will use Dynkin's formula. We first argue that the
origin is not a trap for $B^s$: Let $Z\subset \R_+$ denote the set of zeroes of the
underlying Brownian motion $B$. Given $s\ge 0$, choose $t_0\ge s$ in the complement
$Z^c$ of $Z$. Consider $t = \tau^{-1}(t_0)$, i.e., $t = t_0 + \gg L_{t_0}$.
Obviously $t\ge s$, and $\tau(t)\in Z^c$. Therefore $B^s_t = |B_{\tau(t)}| \ne 0$.
Hence $0$ is indeed not a trap for $B^s$. For $\gep>0$ let $H_{\gg,\gep}$ denote the hitting
time of the complement of $[0,\gep)$ by $B^s$, that is, since by construction
$B^s$ has infinite lifetime, $H_{\gg,\gep}$ is the hitting time of the point
$\gep\in\R_+$ by $B^s$. Then for $f\in C_0(\R_+)$ we can calculate as follows:
\begin{align*}
    A^s f(0)
        &= \lim_{\gep\downarrow 0} \frac{E_0\Bigl(f\bigl(B^s_{H_{\gg,\gep}}\bigr)\Bigr)
                    - f(0)}{E_0(H_{\gg,\gep})}\\
        &= \lim_{\gep\downarrow 0} \frac{f(\gep) - f(0)}{E_0(H_{\gg,\gep})}.
\end{align*}
Now we use the following

\begin{lemma}   \label{st_lem4}
Let $H_\gep$ denote the hitting time of $\gep>0$ by the reflecting Brownian motion
$|B|$. Then under $P_0$,
\begin{equation}    \label{st9}
    H_{\gg,\gep} = H_\gep + \gg L_{H_\gep}.
\end{equation}
\end{lemma}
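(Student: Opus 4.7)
The plan is to reduce the identity to a trivial consequence of the continuous, strictly monotone time-change $\tau$, plus the very definition of $\tau^{-1}$.

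First, I would observe that $\tau^{-1}$, given by $\tau^{-1}(t) = t + \gg L_t$, is continuous and strictly increasing, with $\tau^{-1}(0)=0$ and $\tau^{-1}(+\infty)=+\infty$; hence its inverse $\tau$ is also continuous and strictly increasing, and both are bijections of $\R_+$ onto itself. In particular, since $\tau$ is a continuous strictly increasing time-change of the (continuous) reflecting Brownian motion $|B|$, the first passage times of $B^s_t = |B_{\tau(t)}|$ through a level $\gep>0$ are related to those of $|B|$ simply by the change of variable $s=\tau(t)$.

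Second, applying this change of variable,
\begin{equation*}
    H_{\gg,\gep} \;=\; \inf\,\{t\ge 0 : |B_{\tau(t)}| = \gep\}
        \;=\; \tau^{-1}\bigl(\inf\,\{s\ge 0 : |B_s| = \gep\}\bigr)
        \;=\; \tau^{-1}(H_\gep),
\end{equation*}
where monotonicity and continuity of $\tau^{-1}$ justify pulling the infimum through. Under $P_0$, one has $H_\gep<+\infty$ a.s., so the expression is meaningful pathwise. Substituting the definition of $\tau^{-1}$ at the (finite) stopping time $H_\gep$ yields
\begin{equation*}
    H_{\gg,\gep} \;=\; H_\gep + \gg L_{H_\gep},
\end{equation*}
which is the claimed formula.

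No step looks to be a genuine obstacle: the only points that require a word of care are (i) checking that $\tau$ is indeed a continuous strictly increasing bijection of $\R_+$, which follows from the corresponding properties of $\tau^{-1}$ together with the fact that $L$ is continuous and non-decreasing, and (ii) verifying under $P_0$ that $H_\gep<+\infty$ a.s.\ so that the right-hand side is well defined. Both are immediate, and the lemma reduces to the tautology $H_{\gg,\gep} = \tau^{-1}(H_\gep)$.
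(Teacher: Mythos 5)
Your proof is correct and rests on the same idea as the paper's: the identity $H_{\gg,\gep}=\tau^{-1}(H_\gep)$ coming from the fact that $\tau$ is a continuous, strictly increasing bijection of $\R_+$. The paper merely unpacks this into two inequalities (exhibiting $s=H_\gep+\gg L_{H_\gep}$ with $\tau(s)=H_\gep$ for the upper bound, and a contradiction via strict monotonicity of $\tau$ for the lower bound), whereas you invoke directly that infima commute with such a time change; both arguments are essentially the same.
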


\begin{proof}
By definition
\begin{align*}
    H_{\gg,\gep}
        &= \inf\{s\ge 0,\, |B_{\tau(s)}| = \gep\},\\
    H_\gep
        &= \inf\{t\ge 0,\, |B_t|=\gep\}.
\end{align*}
Set $s = H_\gep + \gg L_{H_\gep}$, then $\tau(s) = H_\gep$, i.e.,
$|B_{\tau(s)}|=\gep$, and therefore we get the inequality
\begin{equation*}
    H_{\gg,\gep} \le H_\gep + \gg L_{H_\gep}.
\end{equation*}
Now assume that the last inequality is strict. This would entail
\begin{equation*}
    \tau(H_{\gg,\gep}) < \tau\bigl(H_\gep + \gg L_{H_\gep}\bigr) = H_\gep
\end{equation*}
because $\tau$ is strictly increasing. But $|B_{\tau(H_{\gg,\gep})}|=\gep$,
in contradiction to the definition of $H_\gep$ as the hitting time of $\gep$
by $|B|$. Thus formula~\eqref{st9} is proved.
\end{proof}

We already know (cf.\ the proof of theorem~\ref{el_thm1}) that $E_0(H_\gep)=\gep^2$.
On the other hand, lemma~\ref{a_LT_lem2} states that under $P_0$,  $L_{H_\gep}$
is exponentially distributed with mean $\gep$. Therefore we find by lemma~\ref{st_lem4},
\begin{equation}    \label{st10}
    E_0\bigl(H_{\gg,\gep}\bigr) = \gep^2 + \gg \gep.
\end{equation}
Thus
\begin{equation}    \label{st11}
    \frac{\gg}{2}\,f''(0+) = \gg A^s f(0) = f'(0+),
\end{equation}
and we have proved

\begin{theorem} \label{st_thm5}
The domain $\cD(A^s)$ of the generator $A^s$ of the Brownian motion $B^s$ with sticky
origin and
stickiness parameter $\gg>0$ is equal to the space of functions $f\in C_0^2(\R_+)$ so that
the boundary condition~\eqref{st11} holds true.
\end{theorem}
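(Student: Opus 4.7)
My plan is to combine Feller's theorem (Theorem~\ref{Feller-bc}) applied to $B^s$ with a direct Dynkin's-formula computation at the origin. By Corollary~\ref{cor}, $B^s$ is a Brownian motion on $\R_+$ in the sense of Definition~\ref{def_BM}, so Theorem~\ref{Feller-bc} immediately yields $A^s f = \frac{1}{2} f''$ with $\cD(A^s)\subset C_0^2(\R_+)$, together with a Wentzell-type boundary condition~\eqref{Wbc_0} at $0$ that uniquely determines the domain. The task therefore reduces to identifying, up to normalization, which linear combination of $f(0)$, $f'(0+)$, $f''(0+)$ is forced by the construction of $B^s$.

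To pin down that boundary condition I would apply Dynkin's formula at the origin with exit time $H_{\gg,\gep}$ from $[0,\gep)$. This is legitimate because the preceding discussion has shown that $0$ is not a trap for $B^s$: for any $s\ge 0$ one picks $t_0\ge s$ in the complement of the zero set $Z$ of $B$ and takes $t=\tau^{-1}(t_0)$ to get $B^s_t\ne 0$. Since $B^s$ has infinite lifetime, Dynkin's formula reads
\begin{equation*}
A^s f(0) \;=\; \lim_{\gep\da 0}\,\frac{f(\gep)-f(0)}{E_0\bigl(H_{\gg,\gep}\bigr)}.
\end{equation*}
The crucial input is the evaluation of the denominator. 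Lemma~\ref{st_lem4} provides the decomposition $H_{\gg,\gep} = H_\gep + \gg L_{H_\gep}$ under $P_0$; combining the classical identity $E_0(H_\gep)=\gep^2$ with Lemma~\ref{a_LT_lem2}, which asserts that $L_{H_\gep}$ is $P_0$-exponentially distributed with mean $\gep$, then delivers $E_0(H_{\gg,\gep}) = \gep^2 + \gg\gep$.

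Substituting and using $f\in C_0^2(\R_+)$, the limit collapses to $A^s f(0) = f'(0+)/\gg$. Equating this with the interior value $\frac{1}{2}f''(0+)$ --- justified by $A^s f = \frac{1}{2}f''$ on the interior and by continuity of $f''$ up to $0$, both provided by Feller's theorem --- gives exactly the condition $\frac{\gg}{2} f''(0+) = f'(0+)$ stated in~\eqref{st11}. The converse inclusion, namely that every $f\in C_0^2(\R_+)$ obeying this relation lies in $\cD(A^s)$, is then immediate from the uniqueness clause of Theorem~\ref{Feller-bc}. The only delicate point in the whole argument is the justification of Dynkin's formula at the boundary, which rests on $0$ being non-absorbing and on finiteness of $E_0(H_{\gg,\gep})$; both are supplied by Lemma~\ref{st_lem4}, and the remaining limit-passing is routine.
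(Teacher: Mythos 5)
Your proposal is correct and follows essentially the same route as the paper: the non-trap argument at the origin, Dynkin's formula with exit time $H_{\gg,\gep}$, the decomposition $H_{\gg,\gep}=H_\gep+\gg L_{H_\gep}$ from lemma~\ref{st_lem4} together with $E_0(H_\gep)=\gep^2$ and lemma~\ref{a_LT_lem2} to get $E_0(H_{\gg,\gep})=\gep^2+\gg\gep$, and then identification of the boundary condition~\eqref{st11} via $A^sf=\tfrac12 f''$ and the uniqueness clause of theorem~\ref{Feller-bc}. No gaps to report.
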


\begin{remark}  \label{st_rem6}
Assume that in~\eqref{Wbc_0} we are given $a_0=0$, $b_0$, $c_0\ne 0$. Then  we choose
$\gg = c_0/(1-c_0)$, and the so constructed process $B^s$ has a generator with
a boundary condition given by~\eqref{Wbc_0}.
\end{remark}

\begin{remark}  \label{st_rem7}
Equation~\eqref{st10} shows that when starting at the origin the Brownian motion
$B^s$ in the average needs the time $\gep^2+\gg\gep$ to leave the interval $[0,\gep)$,
while the reflecting Brownian motion needs the time $\gep^2$. On the other hand,
consider the case that $B^s$ starts at $x>0$, and let $\gep>0$ be small enough so that
$\gep<x$. Then on the average $B^s$ takes the time $\gep^2$ to leave the interval
$(x-\gep,x+\gep)$, because there it is equivalent to $|B|$. This shows very clearly the effect
of ``stickiness'' of the origin for $B^s$: Not only takes $B^s$ longer than $|B|$ to
leave a neighborhood of the origin, even it does this on a time scale different from
that of $|B|$.
\end{remark}

Finally we compute the resolvent $R^s=(R^s_\gl,\,\gl>0)$ and the transition kernel $p^s(t,x,y)$,
$t>0$, $x$, $y\in\R_+$, of $B^s$. It will be convenient to introduce
for measurable functions $f$, $g$ on $\R_+$ the inner product
\begin{equation*}
    (g,f) = \int_0^\infty g(x) f(x)\,dx,
\end{equation*}
whenever the integral on the right hand side is defined.

\begin{lemma}   \label{st_lem8}
For all $f\in C_0(\R_+)$, the relation
\begin{equation}    \label{st12}
    \frac{1}{2}\,(R^s_\gl f)''(0+)
        = \frac{1}{\sgl + \gg\gl}\,\bigl(2\gl\,(e_\gl,f)-\sgl\,f(0)\bigr),\qquad \gl>0,
\end{equation}
holds true, where $e_\gl$ is given by equation~\eqref{el13}.
\end{lemma}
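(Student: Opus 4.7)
The strategy is to reduce the assertion to an evaluation of $R^s_\gl f$ at the origin, and then to exploit the time-change definition $B^s_t=|B_{\tau(t)}|$ to rewrite the resolvent in terms of the reflecting Brownian motion $|B|$.

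The first reduction uses the resolvent equation. By theorem~\ref{Feller-bc}(a) combined with theorem~\ref{st_thm5}, the generator $A^s$ acts as $\tfrac12 \p^2_x$ on the interior of $\R_+$; hence $(\gl-A^s)R^s_\gl f=f$ gives, upon letting $x\da 0$,
\begin{equation*}
\tfrac12 (R^s_\gl f)''(0+) = \gl R^s_\gl f(0) - f(0).
\end{equation*}
A direct rearrangement shows that the claimed identity~\eqref{st12} is equivalent to
\begin{equation*}
R^s_\gl f(0) = \frac{\gg f(0) + 2(e_\gl,f)}{\sgl+\gg\gl}.
\end{equation*}

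To establish the latter I would start from $R^s_\gl f(x)=E_x\bigl(\int_0^\infty e^{-\gl t}f(|B_{\tau(t)}|)\,dt\bigr)$ and change variable $u=\tau(t)$. Because $u\mapsto\tau^{-1}(u)=u+\gg L_u$ is a pathwise continuous, strictly increasing homeomorphism of $\R_+$ by lemma~\ref{st_lem1} and the continuity of $L$, its Lebesgue--Stieltjes differential is $du+\gg\,dL_u$, and the substitution yields
\begin{equation*}
R^s_\gl f(x) = E_x\Bigl(\int_0^\infty e^{-\gl u-\gl\gg L_u}f(|B_u|)\,du\Bigr)
 + \gg\,E_x\Bigl(\int_0^\infty e^{-\gl u-\gl\gg L_u}f(|B_u|)\,dL_u\Bigr).
\end{equation*}

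At $x=0$ the two pieces can be evaluated separately. For the $du$-integral, the Feynman--Kac type formula underlying the construction of section~\ref{sect_el}, namely $\hP_x(\zeta_\gb>u\cond\cF_\infty)=e^{-\gb L_u}$, identifies it with the elastic Brownian motion resolvent $R^e_\gl f(0)$ at parameter $\gb=\gl\gg$; by formula~\eqref{el17} together with $R^N_\gl f(0)=(2/\sgl)(e_\gl,f)$ read off from~\eqref{el5}, this equals $2(e_\gl,f)/(\sgl+\gl\gg)$. For the $dL_u$-integral, $L$ grows only on $\{B_u=0\}$, so $f(|B_u|)\,dL_u=f(0)\,dL_u$; changing variable $r=L_u$, $u=K_r$, and invoking the Laplace transform $E_0(e^{-\gl K_r})=e^{-\sgl r}$ from appendix~\ref{app_LT}, this piece evaluates to $\gg f(0)/(\sgl+\gl\gg)$. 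Summing the two contributions gives precisely the displayed formula for $R^s_\gl f(0)$. The main technical point is the pathwise justification of the time change and the Feynman--Kac identification of $R^e_\gl f(0)$; once the properties of $L$, $\tau$, and the construction of $B^e$ recorded earlier are granted, the remaining manipulations are routine.
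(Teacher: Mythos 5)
Your proposal is correct, and it shares the paper's skeleton up to a point: the reduction via $\tfrac12(R^s_\gl f)''(0+)=\gl R^s_\gl f(0)-f(0)$ and the pathwise substitution $t=\tau^{-1}(u)$, $dt=du+\gg\,dL_u$, are exactly the paper's first two steps. You diverge in how the resulting integral is evaluated. The paper keeps the difference $f(|B_u|)-f(0)$ under the integral, so that the $dL_u$-term cancels outright (since $L$ grows only on $\{B_u=0\}$), and then computes the remaining $du$-term by plugging in the explicit joint law~\eqref{st13} of $(|B_u|,L_u)$ under $P_0$ together with the Laplace transform~\eqref{st14}. You instead keep both terms: the $du$-part is recognized, via the Feynman--Kac identity of appendix~\ref{app_killing} (with PCHAF $\gl\gg L$), as the elastic resolvent $R^e_\gl f(0)$ at parameter $\gb=\gl\gg$, which formula~\eqref{el17} and $R^N_\gl f(0)=\tfrac{2}{\sgl}(e_\gl,f)$ evaluate to $2(e_\gl,f)/(\sgl+\gg\gl)$; the $dL_u$-part is computed through the inverse local time $K$ and $E_0(e^{-\gl K_r})=e^{-\sgl r}$ from appendix~\ref{app_LT}, giving $\gg f(0)/(\sgl+\gg\gl)$. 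This proves~\eqref{st15} first and deduces~\eqref{st12} from it, reversing the paper's order of lemma~\ref{st_lem8} and corollary~\ref{st_cor9}; there is no circularity, since~\eqref{el17} and the appendix results are independent of section~\ref{sect_st}, and the interior action $A^s=\tfrac12\partial_x^2$ needs only theorem~\ref{Feller-bc} (citing theorem~\ref{st_thm5} is unnecessary but harmless, as its Dynkin-formula proof does not use the present lemma). What your route buys is avoidance of the joint density~\eqref{st13}, reusing instead the machinery already built for the elastic case (and anticipating the paper's own $\ga$-potential computation~\eqref{st25}); what the paper's route buys is self-containedness at this point, needing only the classical law of $(|B_u|,L_u)$ and one tabulated Laplace transform.
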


\begin{proof}
The proof given here is basically the proof in~\cite{Kn81}, though somewhat
reorganized and simplified. Let $A^s$ denote the generator
of $B^s$. Since for $\gl>0$, $R^s_\gl f \in \cD(A^s)$, $A^s R^s_\gl f$ belongs to
$C_0(\R_+)$. Also for $x>0$, $A^s R^s_\gl f(x) = 1/2\,(R^s_\gl f)''(x)$. Thus
$1/2\,(R^s_\gl f)''(0+) = A^s R^s_\gl f(0)$, and with
$A^s R^s_\gl = \gl R^s_\gl -\text{id}$ we can compute as follows
\begin{align*}
    \frac{1}{2}\,(R^s_\gl f)''(0+)
        &= \gl R^s_\gl f(0) - f(0)\\
        &= \gl E_0\Bigl(\int_0^\infty e^{-\gl t} \bigl(f(B^s_t)-f(0)\bigr)\,dt\Bigr)\\
        &= \gl E_0\Bigl(\int_0^\infty e^{-\gl (s+\gg L_s)}
                \bigl(f(|B_s|)-f(0)\bigr)\,(ds+\gg dL_s)\Bigr)\\
        &= \gl E_0\Bigl(\int_0^\infty e^{-\gl (s+\gg L_s)}
                \bigl(f(|B_s|)-f(0)\bigr)\,ds\Bigr).
\end{align*}
In the last step we made use of the fact that $L$ only grows when $|B|$ is at
the origin. Therefore the terms with $f(|B_s|)$ and $f(0)$ cancel each other in the
integral with respect to $dL_s$. Now we use the well-known joint law of  $(|B_s|,
L_s)$ under $P_0$
\begin{equation}    \label{st13}
    P_0(|B_s|\in dx, L_s\in dy) = 2\,\frac{x+y}{\sqrt{2\pi s^3}}\,e^{-(x+y)^2/2s}\,dx\,dy,
                                    \qquad x,\,y\ge 0,
\end{equation}
which follows directly from L\'evy's theorem (see, e.g., \cite[Theorem~3.6.17]{KaSh91}),
and the joint law of a standard Brownian motion and its running maximum under $P_0$
(e.g., \cite[Equation~2.8.2]{KaSh91}). Thus we get
\begin{equation*}
\begin{split}
    \frac{1}{2}\,(R^s_\gl f)''(0+)
        = 2\gl \int_0^\infty \int_0^\infty \int_0^\infty
                &e^{-\gl(s+\gg y)}\,\bigl(f(x)-f(0)\bigr)\\
                &\hspace{2em}\times \frac{x+y}{\sqrt{2\pi s^3}}\,
                     e^{-(x+y)^2/2s}\,ds\,dx\,dy.
\end{split}
\end{equation*}
We make use of the well-known Laplace transform (e.g., \cite[1.5.30]{ObBa73})
\begin{equation}    \label{st14}
    \int_0^\infty e^{-\gl s}\,\frac{a}{\sqrt{2\pi s^3}}\,e^{-a^2/2s}\,ds
        = e^{-\sgl a},\qquad a>0,
\end{equation}
and get
\begin{align*}
    \frac{1}{2}\,(R^s_\gl f)''(0+)
        &= 2\gl\int_0^\infty e^{-\gl \gg y}\,e^{-\sgl (x+y)}\bigl(f(x)-f(0)\bigr)\,dx\,dy\\
        &= \frac{1}{\sgl +\gg\gl}\,\bigl(2\gl\,(e_\gl,f) - \sgl f(0)\bigr),
\end{align*}
which proves the lemma.
\end{proof}

\begin{corollary}   \label{st_cor9}
For all $f\in C_0(\R_+)$, $\gl>0$,
\begin{equation}    \label{st15}
    R^s_\gl f(0) = \frac{1}{\sgl + \gg\gl}\,\bigl(2(e_\gl,f) + \gg f(0)\bigr).
\end{equation}
\end{corollary}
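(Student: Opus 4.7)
The plan is to derive the corollary as a direct algebraic consequence of Lemma~\ref{st_lem8}, using nothing more than the defining resolvent identity $A^s R^s_\gl - \gl R^s_\gl = -\mathrm{id}$ evaluated at the origin. So the only real work has already been done in the lemma; what remains is a short computation.

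First I would recall that for every $f\in C_0(\R_+)$ and every $\gl>0$ we have $R^s_\gl f\in\cD(A^s)\subset C_0^2(\R_+)$ by Theorem~\ref{st_thm5}, together with the resolvent identity $A^s R^s_\gl f = \gl R^s_\gl f - f$ as an equation in $C_0(\R_+)$. Since $A^s$ acts on the interior as $\tfrac12 d^2/dx^2$, and $A^s R^s_\gl f$ is continuous on $\R_+$ up to $0$, one has
\begin{equation*}
    \tfrac{1}{2}(R^s_\gl f)''(0+) = A^s R^s_\gl f(0) = \gl R^s_\gl f(0) - f(0),
\end{equation*}
exactly as was observed at the start of the proof of Lemma~\ref{st_lem8}.

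Next I would equate this with the explicit expression supplied by Lemma~\ref{st_lem8}:
\begin{equation*}
    \gl R^s_\gl f(0) - f(0) = \frac{1}{\sgl + \gg\gl}\,\bigl(2\gl\,(e_\gl,f) - \sgl\,f(0)\bigr),
\end{equation*}
and solve for $R^s_\gl f(0)$. Multiplying through by $\sgl+\gg\gl$ and collecting the $f(0)$ terms gives
\begin{equation*}
    \gl(\sgl+\gg\gl)\,R^s_\gl f(0) = 2\gl(e_\gl,f) + \bigl((\sgl+\gg\gl) - \sgl\bigr) f(0) = 2\gl(e_\gl,f) + \gg\gl\, f(0).
\end{equation*}
Dividing by $\gl(\sgl+\gg\gl)$ yields formula~\eqref{st15}.

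There is no genuine obstacle here; the potential pitfall is only bookkeeping (making sure the factor $\sgl+\gg\gl$ from the lemma is preserved cleanly when solving for $R^s_\gl f(0)$, and that the transition from the right derivative at zero to $A^s R^s_\gl f(0)$ is justified by the continuity of $A^s R^s_\gl f$ at the boundary, which follows from $R^s_\gl f\in\cD(A^s)$).
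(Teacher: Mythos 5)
Your argument is correct and is essentially the paper's own proof: the paper likewise obtains \eqref{st15} directly from equation~\eqref{st12}, the identity $R^s_\gl f = \gl^{-1}(f + A^s R^s_\gl f)$, and the fact that $A^s$ acts as one half the second derivative on its domain, which is exactly your rearrangement $\gl R^s_\gl f(0) - f(0) = \tfrac12 (R^s_\gl f)''(0+)$ solved for $R^s_\gl f(0)$. The algebra checks out, so nothing further is needed.
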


\begin{proof}
Formula~\eqref{st15} follows directly from equation~\eqref{st12}, the identity
$R^s_\gl f = \gl^{-1}(f+A^s R^s_\gl f)$, and the
fact that $A^s$ acts as one half the second derivative on its domain.
\end{proof}

Now we apply the well-known first passage time formula (e.g., \cite[p.~94]{ItMc74})
in the following form (for the convenience of the reader we include a detailed proof
of this formula within a general context in appendix~\ref{app_FPTF})
\begin{equation}    \label{st16}
    R^s_\gl f(x) = R^D_\gl f(x) + E_x\bigl(e^{-\gl H_0}\bigr)\,R^s_\gl f(0),
\end{equation}
which combined with~\eqref{el13} simplifies to
\begin{equation}    \label{st17}
    R^s_\gl f(x) = R^D_\gl f(x) + e_\gl(x)\,R^s_\gl f(0).
\end{equation}
From this formula we can read off the resolvent kernel $r^s_\gl(x,dy)$ of the
Brownian motion with sticky origin:

\begin{corollary}   \label{st_cor10}
For all $\gl>0$, $x$, $y\in\R_+$,
\begin{equation}    \label{st18}
    r^s_\gl(x,dy)
        = r^D_\gl(x,y)\,dy + \frac{1}{\sgl + \gg \gl}\,\bigl(2e^{-\sgl(x+y)}
                    + \gg e^{-\sgl x}\,\gep_0(dy)\bigr),
\end{equation}
where $r^D_\gl(x,y)$ is defined in~\eqref{el20}, and $\gep_0$ is the Dirac measure in $0$.
\end{corollary}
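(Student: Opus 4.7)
The plan is to read off the kernel directly from the two explicit formulas already established: the first-passage decomposition \eqref{st17} and the boundary value \eqref{st15}. Nothing deep is required beyond substitution and matching the resulting expression against the general form $R^s_\gl f(x) = \int f(y)\,r^s_\gl(x,dy)$.

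First I would rewrite \eqref{st17} in kernel form. Since $R^D_\gl$ has the integral kernel $r^D_\gl(x,y)$ of \eqref{el20} and $e_\gl(x)=e^{-\sgl x}$ by \eqref{el13}, equation \eqref{st17} reads
\begin{equation*}
    R^s_\gl f(x) = \int_0^\infty r^D_\gl(x,y)\,f(y)\,dy + e^{-\sgl x}\,R^s_\gl f(0).
\end{equation*}
Next I would substitute the value of $R^s_\gl f(0)$ given by Corollary~\ref{st_cor9}, expanding $(e_\gl,f) = \int_0^\infty e^{-\sgl y} f(y)\,dy$:
\begin{equation*}
    R^s_\gl f(x)
      = \int_0^\infty r^D_\gl(x,y) f(y)\,dy
         + \frac{e^{-\sgl x}}{\sgl+\gg\gl}\Bigl(2\int_0^\infty e^{-\sgl y} f(y)\,dy + \gg f(0)\Bigr).
\end{equation*}

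Collecting the two integrals with respect to Lebesgue measure on $\R_+$ and interpreting the remaining $f(0)$ term as $\int f(y)\,\gep_0(dy)$ yields
\begin{equation*}
    R^s_\gl f(x)
      = \int_{\R_+}\!\Bigl[r^D_\gl(x,y) + \frac{2 e^{-\sgl(x+y)}}{\sgl+\gg\gl}\Bigr]\,f(y)\,dy
         + \int_{\R_+}\!\frac{\gg\,e^{-\sgl x}}{\sgl+\gg\gl}\,f(y)\,\gep_0(dy),
\end{equation*}
which is the identity $R^s_\gl f(x) = \int f(y)\,r^s_\gl(x,dy)$ with $r^s_\gl(x,dy)$ as claimed in \eqref{st18}.

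There is no real obstacle here; the substance of the corollary has already been done in Corollary~\ref{st_cor9} and in the first-passage identity \eqref{st17}. The only point worth a brief comment is conceptual rather than technical: the presence of the singular component $\gg(\sgl+\gg\gl)^{-1} e^{-\sgl x}\,\gep_0(dy)$ reflects the stickiness of the origin, i.e.\ the fact that $B^s$ spends a set of times of strictly positive Lebesgue measure at $0$ (since $\tau$ is constant on the intervals where $\tau^{-1}$ jumps via $\gg L$), so that the occupation measure of $B^s$ charges the singleton $\{0\}$. This is consistent with $\gg\to 0$ recovering the pure reflecting case, where the $\gep_0$-mass disappears and $r^s_\gl$ collapses to the Neumann kernel $r^N_\gl$.
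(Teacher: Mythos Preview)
Your proof is correct and follows exactly the approach the paper takes: the corollary is simply read off from the first-passage formula~\eqref{st17} after substituting the explicit value of $R^s_\gl f(0)$ from Corollary~\ref{st_cor9}. Your additional remark about the singular $\gep_0$ component is accurate and matches the paper's later Remark~\ref{st_rem12}.
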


From the first passage time formula~\eqref{st17}, we immediately get for all $f\in
C_0(\R_+)$
\begin{equation}    \label{st19}
    \bigl(R^s_\gl f\bigr)'(0+) = 2 (e_\gl, f) -\sgl\,R^s_\gl f(0),\qquad \gl>0.
\end{equation}
Inserting formula~\eqref{st15} into the right hand side and comparing the result
with~\eqref{st12}, we find
\begin{equation}    \label{st20}
    \bigl(R^s_\gl f\bigr)'(0+) = \frac{\gg}{2} \bigl(R^s_\gl f\bigr)''(0+),
\end{equation}
which is another proof of theorem~\ref{st_thm5}.

The inverse Laplace transform of the second and the third term on the right hand side
of equation~\eqref{st18} can be done with the help of formula~(5.6.16) in~\cite{ErMa54a}.
The result is the following

\begin{corollary}   \label{st_cor11}
For $t>0$, $x$, $y\in\R_+$, the transition kernel $p^s(t,x,dy)$ of the Brownian
motion with sticky origin is given by
\begin{equation}    \label{st21}
    p^s(t,x,dy) = p^D(t,x,y)\,dy + 2 g_{0,\gg}(t,x+y)\,dy + \gg g_{0,\gg}(t,x)\,\gep_0(dy),
\end{equation}
where $p^D(t,x,y)$ is given in~\eqref{el23}, and
\begin{equation}    \label{st22}
    g_{0,\gg}(t,x) = \frac{1}{\gg}\,\exp\Bigl(\frac{2x}{\gg} + \frac{2t}{\gg^2}\Bigr)\,
                    \erfc\Bigl(\frac{x}{\sqrt{2t}}+\frac{\sqrt{2t}}{\gg}\,\Bigr),
                        \qquad t>0,\,x\ge 0.
\end{equation}
\end{corollary}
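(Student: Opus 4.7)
The plan is to invert the Laplace transform in $\gl$ of each of the three pieces of the resolvent-kernel formula~\eqref{st18} separately. For the first piece $r^D_\gl(x,y)\,dy$, the inverse Laplace transform yields the Dirichlet transition density $p^D(t,x,y)\,dy$ of~\eqref{el23}. This follows by applying the elementary identity $\mathcal{L}^{-1}\{e^{-\sgl c}/\sgl\}(t) = g(t,c)$ (with $c\ge 0$) to each of the two exponentials in the explicit formula~\eqref{el20}, exactly as one passes from the Neumann resolvent kernel~\eqref{el5} to the Neumann heat kernel~\eqref{el3}, but with the opposite sign on the second term.

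The remaining two pieces share the common structure $e^{-\sgl a}/(\sgl+\gg\gl)$, with $a=x+y$ and coefficient $2$ in one case, and $a=x$ and coefficient $\gg$ (multiplied by the Dirac mass $\gep_0(dy)$) in the other. Since $\gg\gl = \gg(\sgl)^2/2$, I would first rewrite
\begin{equation*}
    \frac{e^{-\sgl a}}{\sgl+\gg\gl} \;=\; \frac{(2/\gg)\,e^{-\sgl a}}{\sgl\bigl(\sgl+2/\gg\bigr)},
\end{equation*}
and then apply formula~(5.6.16) of~\cite{ErMa54a}, which inverts the generic expression $e^{-k\sqrt\mu}/\bigl(\sqrt\mu(\sqrt\mu+h)\bigr)$ (in the variable $\mu$) to $\exp(hk+h^2 u)\,\erfc\bigl(k/(2\sqrt u)+h\sqrt u\bigr)$. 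After the change of variable $\mu=2\gl$, $t=2u$, and matching $k=a$, $h=2/\gg$, this produces precisely $g_{0,\gg}(t,a)$ as defined in~\eqref{st22}. The Dirac measure $\gep_0(dy)$ is a passive spectator under inversion in $\gl$ and is simply carried through the calculation.

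Summing the three inverted contributions with their respective prefactors reproduces~\eqref{st21}. I expect the main obstacle to be the careful bookkeeping of scaling conventions: the paper employs $\sgl = \sqrt{2\gl}$ rather than $\sqrt{\gl}$, so invoking the table entry from~\cite{ErMa54a} requires the rescaling $\mu = 2\gl$ together with the corresponding time rescaling, and the factors of $2$ in both the argument of $\erfc$ and the exponent must be tracked consistently. Once this accounting is done, the identification with $g_{0,\gg}(t,a)$ is immediate and the corollary follows.
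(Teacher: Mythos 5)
Your proposal is correct and follows essentially the same route as the paper: term-by-term Laplace inversion of the resolvent kernel~\eqref{st18}, with the Dirichlet piece giving $p^D(t,x,y)$ and the two remaining pieces handled via formula~(5.6.16) of~\cite{ErMa54a} after rewriting $\sgl+\gg\gl=\tfrac{\gg}{2}\,\sgl\bigl(\sgl+2/\gg\bigr)$; your bookkeeping of the $\sgl=\sqrt{2\gl}$ convention (via $\mu=2\gl$, $t=2u$, $k=a$, $h=2/\gg$) is exactly the accounting needed and reproduces $g_{0,\gg}$ as in~\eqref{st22}.
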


\begin{remark}  \label{st_rem12}
$g_{0,\gg}$ is a special case of a more general heat kernel which will play an
essential role in the next section, and which is investigated in detail in
appendix~\ref{app_LTHK}. In particular, it is shown there, that as $\gg\downarrow 0$,
$g_{0,\gg}(t,x)$ converges pointwise to the usual Gau{\ss} kernel
$g(t,x)$~\eqref{el25}. Therefore, as $\gg\downarrow 0$, the transition kernel of the
Brownian motion with sticky origin converges pointwise to the Neumann transition
kernel, i.e., to the transition kernel of the reflecting Brownian motion, as it
should.
\end{remark}

We conclude this section with a discussion of the local time of $(B^s,\cF^s)$ at the
origin, and this will also provide results needed for the construction of the general
Brownian motion on $\R_+$ in the next section. Define $L^s = (L^s_t,\,t\ge 0)$ with
\begin{equation}    \label{st23}
    L^s_t = L_{\tau(t)}.
\end{equation}
Observe that $L^s$ has continuous non-decreasing paths and that $L^s_0=0$, because
$\tau(0)=0$ and $L_0=0$. Moreover, $L^s$ is $\cF^s$--adapted since for every $t\ge
0$, $\tau(t)$ is an $\cF$--stopping time, so that the progressive measurability of
$L$ relative to $\cF$ entails that $L_{\tau(t)}$ is measurable with respect to
$\cF_{\tau(t)}=\cF^s_t$. We claim that $L^s$ is additive with respect to the shift
operators $\theta^s$. Namely, $L^s_{s+t} = L^s_s + L^s_t\comp\theta^s_s$ holds for all
$s$, $t\in\R_+$, as a consequence of the relations~\eqref{addL}, \eqref{st3}, and the
definition $\theta^s_s=\theta_{\tau(s)}$. Indeed, by evaluating at $\go\in\gO$
we obtain
\begin{align*}
    L^s_{s+t}(\go)
        &= L_{\tau(s+t)(\go)}(\go)\\
        &= L_{\tau(s)(\go) + \tau(t)(\theta_{\tau(s)(\go)}(\go))}(\go)\\
	   &= L_{\tau(s)(\go)}(\go) + L_{\tau(t)(\theta_{\tau(s)(\go)}(\go))}
					\bigl(\theta_{\tau(s)(\go)}(\go)\bigr)\\	
        &= L^s_s(\go) + L^s_t\bigl(\theta^s_s(\go)\bigr).
\end{align*}
Altogether we have proved that $L^s$ is a PCHAF for
$(B^s,\cF^s)$. Finally we show that the Lebesgue--Stieltjes measures of its paths are
carried by the origin, that is, we have to prove that $L^s$ only increases when $B^s$
is at the origin. But $L^s$ increases at time $t\ge 0$ if and only if $L$ increases
at time $\tau(t)$ which is true only if $|B_{\tau(t)}|=B^s_t$ is at the origin.
Hence $L^s$ is a local time at $0$ for $B^s$. We determine its normalization by
computing its $\ga$--potential (see e.g., \cite{BlGe68}): Let $\ga>0$, then
\begin{align*}
    E_0\Bigl(\int_0^\infty e^{-\ga t}\,dL^s_t\Bigr)
        &= E_0\Bigl(\int_0^\infty e^{-\ga t}\,dL_{\tau(t)}\Bigr)\\
        &= E_0\Bigl(\int_0^\infty e^{-\ga t -\ga\gg L_t}\,dL_t\Bigr)\\
        &= -\frac{1}{\ga\gg}E_0\Bigl(\int_0^\infty e^{-\ga t}\,de^{-\ga\gg L_t}\Bigr)\\
        &= \frac{1}{\ga\gg} - \frac{1}{\gg}E_0\Bigl(\int_0^\infty e^{-\ga t - \ga\gg L_t}\,dt\Bigr).
\end{align*}
The well-known law of $L_t$ under $P_0$ is given by
\begin{equation}    \label{st24}
    P_0(L_t\in dy) = \frac{2}{\sqrt{2\pi t}}\,e^{-y^2/2t}\,dy,\qquad y\ge 0,
\end{equation}
as, for example, we can easily deduce from the joint law~\eqref{st13}. Thus
we can compute the above expectation
\begin{align*}
    E_0\Bigl(\int_0^\infty e^{-\ga t - \ga\gg L_t}\,dt\Bigr)
        &= 2 \int_0^\infty e^{-\ga\gg y}\,\Bigl(\int_0^\infty e^{-\ga t}\,g(t,y)\,dt \Bigr)\,dy\\
        &= \frac{2}{\sqrt{2\ga}\,(\sqrt{2\ga} + \ga\gg)},
\end{align*}
where $g(t,y)$ is the Gau{\ss} kernel~\eqref{el25}. Hence
\begin{equation*}
    E_0\Bigl(\int_0^\infty e^{-\ga t}\,dL^s_t\Bigr)
        = \frac{1}{\sqrt{2\ga}+\ga\gg}, \qquad x\in\R_+,\,\ga>0.
\end{equation*}
Now consider $x\ge 0$. Recall that above we had argued that on $[0,H_0]$ the paths
of $B$ and $B^s$ coincide, so that $H_0$ is also the hitting time of $B^s$,
and $L^s$ is identically zero on $[0,H_0]$, too. Hence the strong Markov
property of $B^s$ gives
\begin{align*}
    E_x\Bigl(\int_0^\infty e^{-\ga t}\,dL^s_t\Bigr)
        &= E_x\Bigl(e^{-\ga H_0}\int_0^\infty e^{-\ga t}\,dL^s_{t+H_0}\Bigr)\\
        &= E_x\Bigl(e^{-\ga H_0}E_x\Bigl(\int_0^\infty e^{-\ga t}\,dL^s_{t+H_0}\,\Big|
                    \,\cF^s_{H_0}\Bigr)\Bigr)\\
        &= E_x\bigl(e^{-\ga H_0}\bigr)\,E_0\Bigl(\int_0^\infty e^{-\ga t}\,dL^s_t\Bigr).
\end{align*}
Thus we finally obtain
\begin{equation}    \label{st25}
    E_x\Bigl(\int_0^\infty e^{-\ga t}\,dL^s_t\Bigr)
        = \frac{e^{-\sqrt{2\ga}x}}{\sqrt{2\ga}+\ga\gg}, \qquad x\in\R_+,\,\ga>0.
\end{equation}

\section{The General Brownian Motion on $\R_+$} \label{sect_gen}
In this section we construct and investigate the most general Brownian motion on
$\R_+$ in the sense that all three coefficients in the boundary
condition~\eqref{Wbc_0} are non-zero. That is, in contradistinction to our
discussion in the previous section now the term $f(0)$ appears, and we have already
seen in section~\ref{sect_el} that such a term appears when killing the process
at the origin on the scale of the local time at $0$. We proceed as follows.

Fix $\gg>0$ and consider the Brownian motion $B^s$ defined in section~\ref{sect_st}.
Fix $\gb>0$, and consider the product spaces $\bigl(\hgO,\hcA, (\hP_x,\,x\in\R_+)\bigr)$,
and the random variable $S$ as in section~\ref{sect_el}. As there, extend the
random variables $B^s_t$, $L^s_t$, $t\ge 0$, to $\hgO$ in the trivial way. Set
\begin{equation}    \label{gen1}
    \zeta_{\gb,\gg} = \inf\,\{t\ge 0,\,L^s_t>S\},
\end{equation}
and
\begin{equation}    \label{gen2}
    B^g_t = \begin{cases}
                B^s_t, &  \text{if $t<\zeta_{\gb,\gg}$,}\\
                \gD,   &  \text{if $t\ge \zeta_{\gb,\gg}$.}
            \end{cases}
\end{equation}
Figure~\ref{fig6} shows this construction on the basis of the path of a ``sticky''
Brownian motion depicted in figure~\ref{fig5} (which in turn was constructed from
the path of a reflecting Brownian motion in figure~\ref{fig2}). In figure~\ref{fig6}
the local time $L^s$ at $0$ of the motion $B^s$ is in red, its path
in blue and grey. When $L^s$ increases above level $S$ (given in the figure by
$S=0.58$) at time $\zeta_{\gb,\gg}=1.06$, the process is killed, i.e., we obtain the
blue piece of path, and after time $\zeta_{\gb,\gg}=1.06$ the path is constant and
equal to $\gD$ .
\begin{figure}[ht]
\begin{center}
    \includegraphics{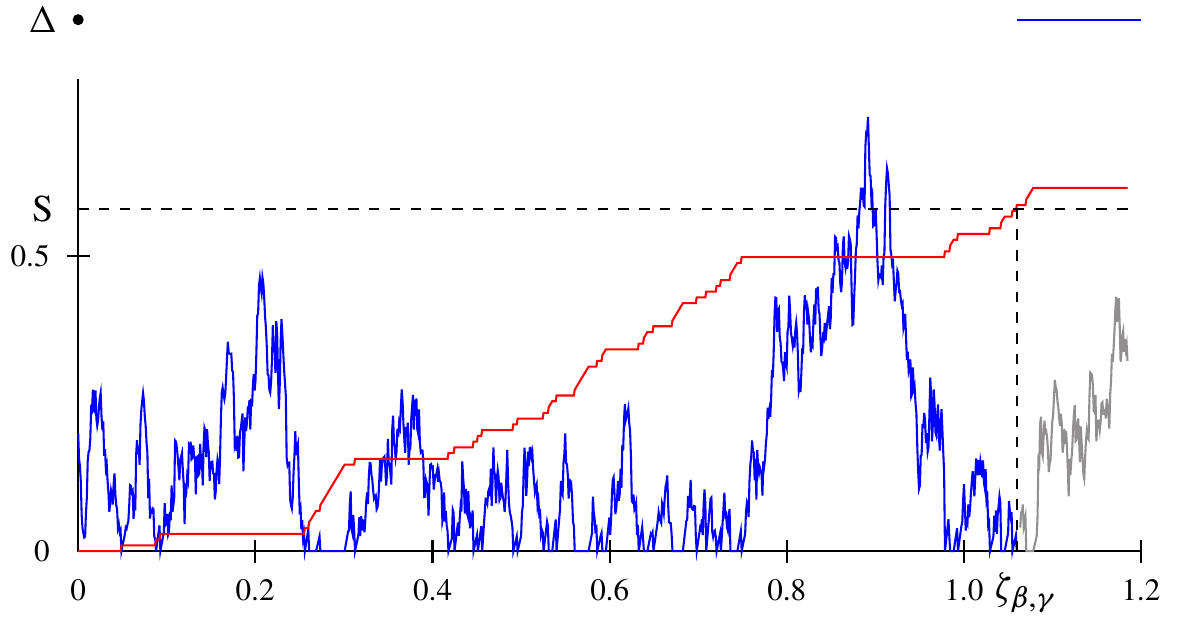}\\
    \caption{A path of the general Brownian motion on $\R_+$ (blue) constructed
             from the path of a sticky Brownian motion (grey) and its local time (red)}
             \label{fig6}
\end{center}
\end{figure}

Again we can apply the results in appendix~\ref{app_killing}. In particular, we can
equip $(\hgO,\hcA)$ with a right continuous, complete filtration $\hcF^g$ which is
constructed there, so that $B^g$ is adapted and strongly Markovian relative to
$\hcF^g$. Moreover, $B^g$ has obviously the right path properties, and it is equivalent
to a standard Brownian motion up to its hitting time of the origin. Thus $B^g$
is a Brownian motion on $\R_+$ in the sense of definition~\ref{def_BM}.

Next we compute the boundary conditions of the generator $A^g$ of $B^g$.
In this case it seems to be difficult to do this
via Dynkin's formula. Therefore this time we only give an argument based on the
computation on the resolvent $R^g$ of $B^g$. However, instead of using the first
passage time formula as is done in~\cite{ItMc63, ItMc74, Kn81}, here we use the
analogue of formula~\eqref{el12}, which in the case at hand reads
\begin{equation}    \label{gen3}
    R^g_\gl f(x) = R^s_\gl f(x) - e_\gl(x)\,
                        \hE_0\bigl(e^{-\gl\zeta_{\gb,\gg}}\bigr)\, R^s_\gl f(0),
\end{equation}
where $R^s$ is the resolvent of $B^s$. As mentioned in remark~\ref{el_rem4}, the
proof is the same as for the elastic case discussed in section~\ref{sect_el}. Thus we
only need to compute the expectation occuring in equation~\eqref{gen3}. To this end we
first prove the following result.

\begin{lemma}   \label{gen_lem1}
For all $\gb$, $\gg> 0$,
\begin{equation} \label{gen4}
    \zeta_{\gb,\gg} = \zeta_\gb + \gg S
\end{equation}
holds true.
\end{lemma}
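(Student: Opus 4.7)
The plan is to rewrite $\zeta_{\gb,\gg}$ in terms of $\zeta_\gb$ by performing a change of variables that undoes the time-change~$\tau$. Since $L^s_t = L_{\tau(t)}$ by definition, the event $\{L^s_t > S\}$ is the same as the event $\{L_{\tau(t)} > S\}$, so the entry time~$\zeta_{\gb,\gg}$ into $\{L^s > S\}$ on the sticky time scale should correspond, via~$\tau$, to the entry time~$\zeta_\gb = K_S$ of $L$ into $(S,\infty)$ on the original time scale.

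The first step is to use the fact (established before~\eqref{st3}) that $\tau^{-1}(t) = t + \gg L_t$ is strictly increasing, continuous, with $\tau^{-1}(0)=0$ and $\tau^{-1}(+\infty)=+\infty$; therefore its inverse $\tau$ has the same properties, and $\tau$ defines a bijection of~$\R_+$ onto itself (pathwise). In particular, $u \mapsto \tau^{-1}(u)$ is an order-preserving bijection, and so
\begin{equation*}
    \inf\{t\ge 0:\,L_{\tau(t)} > S\} \;=\; \tau^{-1}\bigl(\inf\{u\ge 0:\,L_u > S\}\bigr),
\end{equation*}
with the usual convention $\tau^{-1}(+\infty)=+\infty$. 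The right-hand infimum is, by definitions \eqref{el1}, \eqref{el7}, \eqref{el8}, exactly $K_S = \zeta_\gb$. Hence $\zeta_{\gb,\gg} = \tau^{-1}(\zeta_\gb)$.

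The second step is to evaluate $\tau^{-1}(\zeta_\gb)$ explicitly. By the definition of $\tau^{-1}$ we obtain
\begin{equation*}
    \zeta_{\gb,\gg} \;=\; \tau^{-1}(\zeta_\gb) \;=\; \zeta_\gb + \gg\, L_{\zeta_\gb}.
\end{equation*}
Since $\zeta_\gb = K_S$ and, as noted after~\eqref{el1}, $L_{K_r} = r$ for all $r\in\R_+$, we have $L_{\zeta_\gb} = S$, giving the desired identity $\zeta_{\gb,\gg} = \zeta_\gb + \gg S$.

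I do not anticipate a real obstacle here: the only points requiring a moment's care are the strict monotonicity and continuity of $\tau$ (so that the change of variables in the infimum is legitimate and the extreme case $\zeta_\gb = +\infty$ is consistent with $\zeta_{\gb,\gg}=+\infty$), and the normalization identity $L_{K_r}=r$, both of which are established earlier in the paper.
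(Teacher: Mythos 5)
Your proposal is correct and follows essentially the same route as the paper: both arguments use that $\tau$ (equivalently $\tau^{-1}$) is a strictly increasing bijection of $\R_+$ onto itself to identify $\zeta_{\gb,\gg}=\tau^{-1}(\zeta_\gb)=\zeta_\gb+\gg L_{\zeta_\gb}$, including the degenerate case $\zeta_\gb=+\infty$. The only cosmetic difference is the last step: you invoke the normalization $L_{K_r}=r$ (applied pathwise at $r=S$), whereas the paper cites the continuity of $L$ directly to get $L_{\zeta_\gb}=S$ — the same underlying fact.
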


\begin{proof}
Define the subsets
\begin{align*}
    J_\gb(S)        &= \{t\ge 0,\,L_t>S\}\\
    J_{\gb,\gg}(S)  &= \{t\ge 0,\,L^s_t>S\}
\end{align*}
of $\R_+$. Since $L^s_t=L_{\tau(t)}$ and $\tau$ is a bijection from $\R_+$ onto
itself (cf.\ section~\ref{sect_st}), we get $J_\gb(S)=\emptyset$ if and only if
$J_{\gb,\gg}(S)=\emptyset$, and in this case relation~\eqref{gen4} is trivially valid
due to the convention $\inf\emptyset = +\infty$.
Now assume that one of the sets is non-empty, and therefore both are non-empty. Then we
have $t\in J_{\gb,\gg}(S)$ if and only if $\tau(t)\in J_\gb(S)$, and therefore
$\tau$ is also a bijection from $J_{\gb,\gg}(S)$ onto $J_\gb(S)$. Because $\tau$ is
strictly increasing (cf.\ section~\ref{sect_st}) it follows that
\begin{equation*}
    \tau\bigl(\inf J_{\gb,\gg}(S)\bigr) = \inf J_\gb(S),
\end{equation*}
or in other words
\begin{equation*}
    \zeta_{\gb,\gg} = \tau^{-1}(\zeta_\gb) = \zeta_\gb + \gg L_{\zeta_\gb}.
\end{equation*}
The continuity of $L$ implies $L_{\zeta_\gb}=S$, and the proof is concluded.
\end{proof}

\begin{corollary}   \label{gen_cor2}
For all $\gl >0$,
\begin{equation}    \label{gen5}
    \hE_0\bigl(e^{-\gl \zeta_{\gb,\gg}}\bigr) = \gb \rho(\gl),
\end{equation}
holds true, where $\rho$ is given by
\begin{equation}    \label{gen6}
    \rho(\gl) = (\gb + \sgl +\gg\gl)^{-1},\qquad \gl>0.
\end{equation}
\end{corollary}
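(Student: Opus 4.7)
The plan is to combine Lemma \ref{gen_lem1} with the Laplace transform of $K_s$ under $P_0$ already used in section~\ref{sect_el}.

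First I would apply Lemma~\ref{gen_lem1} to write
\begin{equation*}
    e^{-\gl\zeta_{\gb,\gg}} = e^{-\gl\zeta_\gb}\,e^{-\gl\gg S}.
\end{equation*}
Then, recalling that on the product space $(\hgO,\hcA,\hP_0)$ the random variable $S$ is independent of the Brownian motion $B$ and hence of $L$ and $K$, while $\zeta_\gb = K_S$ by~\eqref{el8}, I would condition on $S$ and disintegrate with respect to the exponential law of rate $\gb$:
\begin{equation*}
    \hE_0\bigl(e^{-\gl\zeta_{\gb,\gg}}\bigr)
        = \gb\int_0^\infty e^{-\gb s}\,e^{-\gl\gg s}\,E_0\bigl(e^{-\gl K_s}\bigr)\,ds.
\end{equation*}

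Next I would substitute the Laplace transform $E_0(e^{-\gl K_s}) = e^{-\sgl\,s}$ (which is exactly formula~\eqref{a_LT3} of appendix~\ref{app_LT}, already invoked in the derivation of~\eqref{el14}). The integral then collapses to a single elementary exponential integral,
\begin{equation*}
    \hE_0\bigl(e^{-\gl\zeta_{\gb,\gg}}\bigr)
        = \gb\int_0^\infty e^{-(\gb+\sgl+\gg\gl)s}\,ds
        = \frac{\gb}{\gb+\sgl+\gg\gl},
\end{equation*}
which is precisely $\gb\rho(\gl)$ with $\rho$ as in~\eqref{gen6}.

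There is no real obstacle here; the only subtle point is to verify that $S$ is independent of the $\gs$--algebra generated by $B$ (so that $K_s$ may be substituted for $K_S$ after conditioning), but this is built into the product construction of $(\hgO,\hcA,\hP_x)$ recalled at the start of section~\ref{sect_el}. The structural content of the statement is therefore that the Laplace exponents of the two killing mechanisms — the exponential clock on the local-time scale (producing $\sgl$) and the additional time change (producing $\gg\gl$) — add to the exponent $\gb$ of the auxiliary exponential variable $S$.
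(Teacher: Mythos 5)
Your proposal is correct and follows essentially the same route as the paper: both use lemma~\ref{gen_lem1} together with $\zeta_\gb = K_S$ from~\eqref{el8} to write $\zeta_{\gb,\gg}=K_S+\gg S$, disintegrate over the exponential law of $S$, and then apply the Laplace transform~\eqref{a_LT3} of $K_s$ to reduce the expression to an elementary exponential integral yielding $\gb/(\gb+\sgl+\gg\gl)$. Your added remark on the independence of $S$ from the Brownian filtration is exactly the justification implicit in the paper's disintegration step.
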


\begin{proof}
By formula~\eqref{el8} and lemma~\ref{gen_lem1},
\begin{align*}
    \hE_0\bigl(e^{-\gl \zeta_{\gb,\gg}}\bigr)
        &= \hE_0\bigl(e^{-\gl K_S - \gl\gg S}\bigr)\\
        &= \gb\int_0^\infty e^{-(\gb + \gg\gl)s}\,E_0\bigl(e^{-\gl K_s}\bigr)\,ds\\
        &= \gb\int_0^\infty e^{-(\gb+\sgl+\gg\gl)s}\,ds\\
        &= \frac{\gb}{\gb + \sgl + \gg\gl},
\end{align*}
where again we used equation~\eqref{a_LT3}.
\end{proof}

Equation~\eqref{gen3} immediately yields for $f\in C_0(\R_+)$, $\gl>0$, $x\in\R_+$,
\begin{equation}    \label{gen7}
    R^g_\gl f(x) = R^s_\gl f(x) - \gb \rho(\gl) e_\gl(x) R^s_\gl f(0),
\end{equation}
and in particular for $x=0$
\begin{equation}    \label{gen8}
    R^g_\gl f(0) = (\sgl+\gg\gl)\rho(\gl) R^s_\gl f(0).
\end{equation}
Differentiation of the right hand side of equation~\eqref{gen7} at $0$ from the
right gives
\begin{equation}    \label{gen9}
    \bigl(R^g_\gl f\bigr)'(0+) = \bigl(R^s_\gl f\bigr)'(0+) + \gb\sgl\rho(\gl) R^s_\gl f(0).
\end{equation}
In order to compute the second derivative of $R^g_\gl f$ at $0$ from the right,
we make use of the fact that $R^s_\gl f$ satisfies the boundary condition~\eqref{st11}:
\begin{align}
   \frac{\gg}{2}\, \bigl(R^g_\gl &f\bigr)''(0+)\nonumber\\
        &= \bigl(R^s_\gl f\bigr)'(0+) - \gg\gl\gb\rho(\gl) R^s_\gl f(0)\nonumber\\
        &= \bigl(R^s_\gl f\bigr)'(0+) + \gb\sgl\rho(\gl) R^s_\gl f(0)
                - \gb(\sgl+\gg\gl)\rho(\gl) R^s_\gl f(0)   \label{gen10}.
\end{align}
Upon comparison of equation~\eqref{gen10} with~\eqref{gen8} and~\eqref{gen9}, we find
the following result.

\begin{theorem} \label{gen_thm3}
The domain $\cD(A^g)$ of the generator $A^g$ of the Brownian motion $B^g$ with parameters
$\gb$, $\gg>0$ is equal to the space of functions $f\in C_0^2(\R_+)$ so that
\begin{equation}    \label{gen11}
    \gb f(0) - f'(0+) + \frac{\gg}{2}\,f''(0+) = 0.
\end{equation}
holds true.
\end{theorem}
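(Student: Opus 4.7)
The plan is to complete the chain of calculations already laid out in equations~\eqref{gen8}, \eqref{gen9}, and \eqref{gen10} by eliminating the auxiliary quantities $R^s_\gl f(0)$ and $(R^s_\gl f)'(0+)$ to obtain a single linear relation among $R^g_\gl f(0)$, $(R^g_\gl f)'(0+)$, and $(R^g_\gl f)''(0+)$. The two substitutions required are transparent: equation~\eqref{gen8} rewrites as $\gb(\sgl+\gg\gl)\rho(\gl)\,R^s_\gl f(0)=\gb\, R^g_\gl f(0)$, and the first two summands on the right-hand side of~\eqref{gen10} add to $(R^g_\gl f)'(0+)$ by~\eqref{gen9}.

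Plugging these into~\eqref{gen10} yields
\begin{equation*}
    \frac{\gg}{2}\bigl(R^g_\gl f\bigr)''(0+) = \bigl(R^g_\gl f\bigr)'(0+) - \gb\, R^g_\gl f(0),
\end{equation*}
which is exactly the boundary condition~\eqref{gen11} for the function $R^g_\gl f$. Since this holds for every $f\in C_0(\R_+)$ and every $\gl>0$, and since $R^g_\gl$ maps $C_0(\R_+)$ onto $\cD(A^g)$ (a standard resolvent property for Feller generators, applicable because $B^g$ is a Feller process by theorem~\ref{Feller_prop}), every $g\in\cD(A^g)$ satisfies~\eqref{gen11}. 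The inclusion $\cD(A^g)\subset C_0^2(\R_+)$ and the fact that $A^g g=\tfrac{1}{2}g''$ on $\cD(A^g)$ follow from theorem~\ref{Feller-bc}.

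For the converse direction --- that~\eqref{gen11} uniquely characterizes $\cD(A^g)$ inside $C_0^2(\R_+)$ --- I would appeal directly to part (a) of theorem~\ref{Feller-bc}: after the normalization $(a_0,b_0,c_0) = (\gb, 1, \gg/2)/(\gb+1+\gg/2)$, condition~\eqref{gen11} becomes an instance of~\eqref{Wbc_0} with $a_0\neq 0$, and Feller's theorem asserts uniqueness of the domain determined by such a boundary condition. This requires only that $B^g$ satisfies definition~\ref{def_BM}, which was verified just after the construction~\eqref{gen2} using appendix~\ref{app_killing} and the fact that $B^g$ agrees with the standard Brownian motion up to hitting the origin.

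The only mild obstacle is that one has to keep careful track of why the $R^s$-quantities cancel so cleanly: this is not coincidence but a direct consequence of the sticky boundary relation~\eqref{st20} (equivalently theorem~\ref{st_thm5}), which was used in deriving~\eqref{gen10} in the first place. Otherwise the proof is purely algebraic manipulation of the three preceding identities, and I do not anticipate any nontrivial analytic difficulty.
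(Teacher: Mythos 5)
Your proposal is correct and follows essentially the same route as the paper: the paper's proof of theorem~\ref{gen_thm3} is precisely the algebraic comparison of equations~\eqref{gen8}, \eqref{gen9} and~\eqref{gen10}, yielding the boundary condition~\eqref{gen11} for $R^g_\gl f$, combined (as in the resolvent proof of theorem~\ref{el_thm1}) with the fact that $R^g_\gl$ maps $C_0(\R_+)$ onto $\cD(A^g)$ and the uniqueness part of theorem~\ref{Feller-bc}. Your explicit remark that the clean cancellation rests on the sticky boundary relation~\eqref{st11}/\eqref{st20} is exactly the ingredient the paper uses in deriving~\eqref{gen10}.
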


\begin{remark}  \label{gen_rem4}
In order to compare the boundary condition~\eqref{gen11} with~\eqref{Wbc_0}, for
given non-zero $a_0$, $b_0$, $c_0$, as in theorem~\ref{Feller-bc}, we only need to
choose $\gb = a_0/b_0$, and $\gg=c_0/b_0$, to get the boundary condition in the
form~\eqref{Wbc_0}.
\end{remark}

As another byproduct of equation~\eqref{gen7} we get in combination with formulae~\eqref{st15},
\eqref{st17} the following result.

\begin{corollary}   \label{gen_cor5}
For all $\gl>0$, $f\in C_0(\R_+)$,
\begin{equation}    \label{gen12}
    R^g_\gl f(x)
        = R^D_\gl f(x) + \rho(\gl)\bigl(2(e_\gl,f)+\gg f(0)\bigr) e_\gl(x),\qquad x\in\R_+,
\end{equation}
and in particular, $R^g_\gl$ has the integral kernel
\begin{equation}    \label{gen13}
\begin{split}
    r^g_\gl(x,dy)
        = r^D_\gl(x,y)\,dy + 2\rho(\gl)\, &e_\gl(x+y)\,dy\\
            &+ \gg\rho(\gl)\,e_\gl(x)\,\gep_0(dy), \qquad x,\,y\in\R_+,
\end{split}
\end{equation}
where $\rho(\gl)$ is given in~\eqref{gen6}, $e_\gl$ in~\eqref{el13}, and $r^D_\gl$
in~\eqref{el20}.
\end{corollary}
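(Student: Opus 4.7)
The plan is to substitute the first passage time formula~\eqref{st17} for $R^s_\gl$ into the identity~\eqref{gen7}, then use the explicit expression~\eqref{st15} for $R^s_\gl f(0)$ to rewrite the coefficient of $e_\gl(x)$ in the desired form. The only small algebraic step is noticing that
\begin{equation*}
    1 - \gb\rho(\gl)
        = 1 - \frac{\gb}{\gb + \sgl + \gg\gl}
        = (\sgl + \gg\gl)\,\rho(\gl),
\end{equation*}
so that the prefactor $(\sgl + \gg\gl)$ that appears after inserting~\eqref{st17} cancels exactly with the denominator $\sgl + \gg\gl$ sitting in front of~\eqref{st15}. This is essentially a bookkeeping computation rather than a conceptual step; since the needed identities are already proved (namely~\eqref{gen7} via the analogue of lemma~\ref{el_lem3} noted in remark~\ref{el_rem4}, and~\eqref{st15}, \eqref{st17} from section~\ref{sect_st}), no additional Markov-property or local-time argument is required here.

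To read off the integral kernel, I would next use the factorization $e_\gl(x+y) = e_\gl(x)\,e_\gl(y)$ together with the definition $(e_\gl,f) = \int_0^\infty e_\gl(y)\,f(y)\,dy$ to rewrite
\begin{equation*}
    \rho(\gl)\,e_\gl(x)\,\bigl(2(e_\gl,f) + \gg f(0)\bigr)
        = \int_0^\infty 2\rho(\gl)\,e_\gl(x+y)\,f(y)\,dy
            + \gg\rho(\gl)\,e_\gl(x)\,f(0),
\end{equation*}
and then interpret the second term as integration of $f$ against $\gg\rho(\gl)\,e_\gl(x)\,\gep_0(dy)$. Combining this with the fact that $R^D_\gl$ has kernel $r^D_\gl(x,y)\,dy$ (equation~\eqref{el20}) immediately gives~\eqref{gen13}.

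There is no real obstacle in this proof; all the hard work has been done, namely deriving~\eqref{gen7} (which rests on the killing construction and the analogue of lemma~\ref{el_lem3}) and the sticky-case formulas~\eqref{st15}, \eqref{st17}. The only point to double-check is the consistency of the Dirac mass term with the boundary behaviour encoded in~\eqref{gen11}: setting $x=0$ in~\eqref{gen12} should reproduce~\eqref{gen8}, which provides a convenient internal sanity check before concluding.
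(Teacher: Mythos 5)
Your proposal is correct and follows essentially the same route as the paper, which presents the corollary precisely as a byproduct of~\eqref{gen7} combined with the sticky-case formulae~\eqref{st15} and~\eqref{st17}; the cancellation $1-\gb\rho(\gl)=(\sgl+\gg\gl)\rho(\gl)$ and the reading-off of the kernel via $e_\gl(x+y)=e_\gl(x)e_\gl(y)$ together with the Dirac mass at $0$ are exactly the intended bookkeeping. The sanity check that setting $x=0$ in~\eqref{gen12} recovers~\eqref{gen8} is a nice extra touch but not needed.
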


In order to determine the transition kernel of $B^g$ we have to compute the
inverse Laplace transform of the right hand side of equation~\eqref{gen13}.
This can be done with formulae provided and proved in appendix~\ref{app_LTHK}.
For $t>0$ and $x\in\R_+$ set
\begin{equation}    \label{gen14}
    g_{\gb,\gg}(t,x)
        = \frac{1}{\gg^2}\,\frac{1}{\sqrt{2\pi}}\,\int_0^t \frac{s+\gg x}{(t-s)^{3/2}}\,
            \exp\Bigl(-\frac{(s+\gg x)^2}{2\gg^2(t-s)}\Bigr)\,e^{-\gb s/\gg}\,ds,
\end{equation}
Then we get from corollary~\ref{a_LTHK_cor2}

\begin{corollary}   \label{gen_cor6}
For $t>0$, $x$, $y\in\R_+$, the transition kernel of $B^g$ is given by
\begin{equation}    \label{gen15}
    p^g(t,x,dy)
        = p^D(t,x,y)\,dy + 2 g_{\gb,\gg}(t,x+y)\,dy + \gg g_{\gb,\gg}(t,x)\,\gep_0(dy).
\end{equation}
\end{corollary}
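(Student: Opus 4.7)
The plan is to invert the Laplace transform of the resolvent kernel $r^g_\gl(x,dy)$ from Corollary~\ref{gen_cor5} term by term and appeal to the uniqueness of Laplace transforms for finite (sub-)measures. Since
\begin{equation*}
    r^g_\gl(x,dy) = r^D_\gl(x,y)\,dy + 2\rho(\gl)\,e_\gl(x+y)\,dy + \gg\rho(\gl)\,e_\gl(x)\,\gep_0(dy),
\end{equation*}
it is enough to show that each of the three summands on the right-hand side equals the Laplace transform in $t$ of the corresponding summand in the claimed formula~\eqref{gen15}.

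First I would treat the absolutely continuous Dirichlet piece. The kernel $r^D_\gl(x,y)$ in~\eqref{el20} is the Laplace transform in $t$ of the Dirichlet heat kernel $p^D(t,x,y)$ from~\eqref{el23}; this is classical, reducing via~\eqref{el4} and the reflection identity to the inversion of $e^{-\sgl|x-y|}/\sgl$ and $e^{-\sgl(x+y)}/\sgl$. This identifies the first summand.

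For the remaining two summands I would invoke Corollary~\ref{a_LTHK_cor2} in Appendix~\ref{app_LTHK}, which asserts precisely that for each fixed $a\ge 0$ the function $t\mapsto g_{\gb,\gg}(t,a)$ has Laplace transform $\rho(\gl)\,e^{-\sgl a}$ in $t$. Choosing $a=x+y$ in the second summand, and $a=x$ (with the Dirac mass $\gep_0(dy)$ factored out) in the third summand, reproduces $2g_{\gb,\gg}(t,x+y)$ and $\gg g_{\gb,\gg}(t,x)$ respectively. Assembling the three pieces yields~\eqref{gen15}.

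The main obstacle is really contained in the appendix lemma: verifying that the convolution-type integral~\eqref{gen14} defining $g_{\gb,\gg}(t,a)$ has the asserted Laplace transform $\rho(\gl)\,e^{-\sgl a}$. The natural route is to apply Fubini to exchange the $s$- and $t$-integrals, substitute $u=t-s$, and then use the identity $\int_0^\infty e^{-\gl u}(a'/\sqrt{2\pi u^3})e^{-a'^2/(2u)}\,du = e^{-\sgl a'}$ from~\eqref{st14} with $a' = (s+\gg a)/\gg$; the outer integral in $s$ reduces to an elementary exponential integral that produces the factor $(\gb+\sgl+\gg\gl)^{-1}=\rho(\gl)$. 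Once this appendix identity is granted, the body of the proof of Corollary~\ref{gen_cor6} is a routine assembly of the three inversions above.
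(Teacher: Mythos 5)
Your proposal is correct and follows essentially the same route as the paper: the paper also inverts the resolvent kernel~\eqref{gen13} term by term, identifying the Dirichlet piece classically and the remaining two terms via corollary~\ref{a_LTHK_cor2}, whose proof (through lemma~\ref{a_LTHK_lem1}) is exactly the Fubini-plus-\eqref{st14} computation you sketch.
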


\begin{remark}  \label{gen_rem7}
In appendix~\ref{app_LTHK} it is argued that for all $t>0$, $x\in\R_+$,
\begin{align*}
    \lim_{\gb\downarrow 0} g_{\gb,\gg}(t,x)
        &= g_{0,\gg}(t,x)\\
    \lim_{\gg\downarrow 0} g_{\gb,\gg}(t,x)
        &= g_{\gb,0}(t,x)\\
    \lim_{\gb\downarrow 0} \lim_{\gg\downarrow 0} g_{\gb,\gg}(t,x)
        &= \lim_{\gg\downarrow 0} \lim_{\gb\downarrow 0} g_{\gb,\gg}(t,x)
         = g(t,x),
\end{align*}
where $g_{\gb,0}(t,x)$ is defined in~\eqref{el24}, $g_{0,\gg}(t,x)$ in~\eqref{st22},
and $g(t,x)$ in~\eqref{el25}. Thus, in the limit that we let one or both parameters
$\gb$ and $\gg$ converge to zero from above, we recover the Brownian motions
discussed previously, as we should.
\end{remark}

\section{The General Brownian Motion on a Finite Interval} \label{sect_fii}
In this section we construct a Brownian motion on a finite interval following an
idea of Knight sketched in~\cite{Kn81}. There will be no loss of generality to choose
the interval $[0,1]$. From the discussion in the previous sections we may
consider as given the following building blocks
\begin{enum_a}
    \item a standard one-dimensional Brownian motion $O=(O_t,\,t\in\R_+)$,
    \item a Feller Brownian motion $A = (A_t,\,t\in\R_+)$ on $[0,+\infty)$
            which implements the boundary conditions~\eqref{Wbc_0} at $0$,
    \item and a Feller Brownian motion $B = (B_t,\,t\in\R_+)$ on $(-\infty, 1]$,
            which implements the boundary conditions~\eqref{Wbc_1} at $1$.
\end{enum_a}
Roughly speaking, the idea of construction is as follows. The process is obtained as
the standard Brownian motion $O$ starting at some point $x\in[0,1]$, and it runs
until it hits one of the boundary points $0$ or $1$, say $0$. From that moment on,
the process continues as the Feller Brownian motion $A$. If $A$ hits $1$ before its
lifetime expires, it continues from the moment of hitting $1$ as the Feller Brownian
motion $B$. If this process hits $0$ before its lifetime expires, another
independent copy of the Feller Brownian motion $A$ is started at this moment in $0$,
and so on.

This construction has similarity with some aspects involved in It\^o's excursion
theory~\cite{It71a}, cf.\ also~\cite{Sa86a, Sa86b, Ro83}. But the situation
considered here is much simpler, and can be handled with relatively elementary ---
even though somewhat lengthy --- arguments.

In the next subsection the construction sketched above is formalized. In
subsection~\ref{ssect_Markov} it is proved that the resulting process has the
simple Markov property, some technical details being deferred to
appendix~\ref{app_int}. The strong Markov property is proved in
subsection~\ref{ssect_SMarkov}, where we also compute the generator of the process.

\subsection{Construction}   \label{ssect_constr}
Let $I$ denote either of the intervals $(-\infty,1]$, $[0,+\infty)$ or $[0,1]$, and
write $C_\gD(\R_+,I)$ for the space of mappings $f$ from $\R_+$ into $I\cup\{\gD\}$
which are right continuous, have left limits in $I$, are continuous up to their
lifetime \begin{equation*}
    \zeta_f = \inf\,\{t\ge 0,\,f(t)= \gD\},
\end{equation*}
and which are such that $f(t)=\gD$ implies $f(s)=\gD$ for all $s\ge t$.
If $I=[0,1]$ we shall also simply write $C_\gD(\R_+)$.

Assume we are given a family $(M,\cM,\mu=(\mu_x,\,x\in[0,1]))$ of
probability spaces, on which a standard one-dimensional Brownian motion
$O=(O_t,\,t\in\R_+)$ with exclusively continuous paths is defined. Moreover, we
assume that $A=(A_t,\,t\in\R_+)$ and $B=(B_t,\,t\in\R_+)$ are two Feller Brownian
motions on $[0,+\infty)$, $(-\infty, 1]$ respectively, constructed from $O$ as
in the  previous sections, and implementing the boundary conditions~\eqref{Wbc_0},
\eqref{Wbc_1}, at $0$, $1$ respectively. It follows from the construction that $A$
and $B$ have all their paths in $\CD(\R_+,[0,+\infty))$ and $\CD(\R_+,(-\infty,1])$
respectively.

The hitting time of $c=0$ or $1$ by $O$ will be denoted by $\rho_c$, and we set
$\rho=\rho_0\land \rho_1$. We write $\gs$ for the hitting time of $1$ by $A$,
and $\tau$ for the hitting time of $0$ by $B$.

Below it will be convenient to have somewhat specialized versions of these processes
$A$ and $B$ at our disposal, and we construct these now. Let $(\Xi^1_c, \cC^1_c,
Q^1_c)$, $c=0$, $1$, be two probability spaces. Furthermore, let $A^1$ be a Feller
Brownian motion on $(\Xi^1_0, \cC^1_0, Q^1_0)$, with all paths belonging to
$\CD(\R_+,[0,+\infty))$, starting in $0$, and such that under $Q^1_0$, $A^1$ is
equivalent to $A$ under $\mu_0$. Analogously, $B^1$ is a Feller Brownian motion on
$(\Xi^1_1, \cC^1_1, Q^1_1)$, which under $Q^1_1$ is equivalent to $B$ under $\mu_1$, and
which has exclusively paths starting in $1$ and belonging to $\CD(\R_+,(-\infty,1])$.
Let $(\Xi^1,\cC^1, Q^1)$ denote the product probability space, and view $A^1$, and
$B^1$ as well as their hitting times $\gs^1$, $\tau^1$, of $1$, $0$ respectively, as
random variables on the product space.

Let
\begin{equation*}
    \bigr(\Xi^0,\cC^0,Q^0=(Q^0_x,\,x\in [0,1]), O^0, \rho_1^0, \rho_0^0,\rho^0\bigr)
\end{equation*}
be a copy of
\begin{equation*}
    \bigl(M,\cM,\mu=(\mu_x,\,x\in[0,1]), O, \rho_0, \rho_1,\rho\bigr),
\end{equation*}
and let
\begin{equation*}
    \bigl((\Xi^k,\cC^k,Q^k, A^k, \gs^k, B^k, \tau^k),\,k\in\N\bigr)
\end{equation*}
be a sequence of copies of
\begin{equation*}
    (\Xi^1,\cC^1,Q^1, A^1, \gs^1, B^1, \tau^1).
\end{equation*}
Define the product space $(\Xi,\cC)$ as
\begin{equation*}
    \Xi = \BigCart_{k\in\N_0} \Xi^k,\quad \cC = \bigotimes_{k\in\N_0}\cC^k.
\end{equation*}
Furthermore, on $(\Xi,\cC)$ we consider the family $Q=(Q_x,\,x\in[0,1])$
of product probability measures defined by
\begin{equation}    \label{int_eqi}
    Q_x = Q^0_x \otimes \Bigl(\bigotimes_{k\in\N} Q^k\Bigr).
\end{equation}
All stochastic processes and random variables introduced above on the measurable
spaces $(\Xi^k,\cC^k)$, $k\in\N_0$, are also viewed as defined on $(\Xi,\cC)$.

Let us set
\begin{equation}    \label{int_eqii}
    \Xi_0 = \{\rho^0_0<\rho^0_1\},\qquad \Xi_1 = \{\rho^0_1\le \rho^0_0\},
\end{equation}
and observe that both sets belong to $\cC$. (We remark that in the second event
equality occurs if and only if a path of $O$ never hits both endpoints of the
interval, i.e., if $\rho^0_0=\rho^0_1=+\infty$. Of course, these paths form
a negligible event.)

Next we define on $(\Xi,\cC,Q)$ a sequence $(S_k,\,k\in\N_0)$ of random variables
with values in $[0,+\infty]$ which will serve as crossover times for the pieces of
paths from which we shall construct the first version of the stochastic process we
aim at. We set $S_0=0$, $S_1=\rho^0$, and for $k\in\N$, $\go\in\Xi$,
\begin{equation}    \label{int_eqiii}
    S_{k+1}(\go) = S_k(\go) +
              \begin{cases}
                \gs^k(\go),    & \text{$\go\in \Xi_0$, $k$ odd,
                                            or $\go\in \Xi_1$, $k$ even,}\\[.5ex]
                \tau^k(\go),   & \text{otherwise.}
              \end{cases}
\end{equation}
By an application of the Borel--Cantelli--Lemma, for example, it is easy to see that
there exists a set $\Xi'\in\cC$, so that for every $x\in [0,1]$, $Q_x(\Xi')=1$,
and for all $\go\in\Xi'$, the sequence $(S_k(\go),\,k\in\N_0)$ is strictly increasing
to~$+\infty$.

Now we can construct a ``raw model'' $Y=(Y_t,\,t\in\R_+)$ of the stochastic process
desired. On $\Xi\setminus\Xi'$ we define $Y_t = \gD$ for all $t\in\R_+$. Given
$t\in\R_+$ and $\go\in\Xi'$, there exists $k\in\N_0$ so that $t\in [S_k(\go),S_{k+1}(\go))$.
Then we set $Y_t(\go) = O^0_t(\go)$ if $k=0$, and if $k\in\N$,
\begin{equation}    \label{int_eqiv}
    Y_t(\go) = \begin{cases}
                    A^k_{t-S_k}(\go),   &\text{$\go\in \Xi_0$, $k$ odd,
                                            or $\go\in \Xi_1$, $k$ even,}\\[1ex]
                    B^k_{t-S_k}(\go),   &\text{otherwise.}
               \end{cases}
\end{equation}
Moreover, we make the convention $Y_{+\infty}=\gD$.
(Note that due to their path properties, $A^k$ and $B^k$ are measurable stochastic
processes, so that $A^k_{t-S_k}$ and $B^k_{t-S_k}$ are well defined measurable
mappings, and hence $Y_t$ is indeed for every $t\ge 0$ a random variable on
$(\Xi,\cC)$ with values in $[0,1]\cup\{\gD\}$.) Figure~\ref{fig7} schematically
explains the idea: The piece of the (unrealistic) trajectory in green at the
beginning belongs to $O^0$, the blue pieces to trajectories of copies of $A$, and the
red pieces to trajectories of copies of $B$.
\begin{figure}[ht]
\begin{center}
    \includegraphics{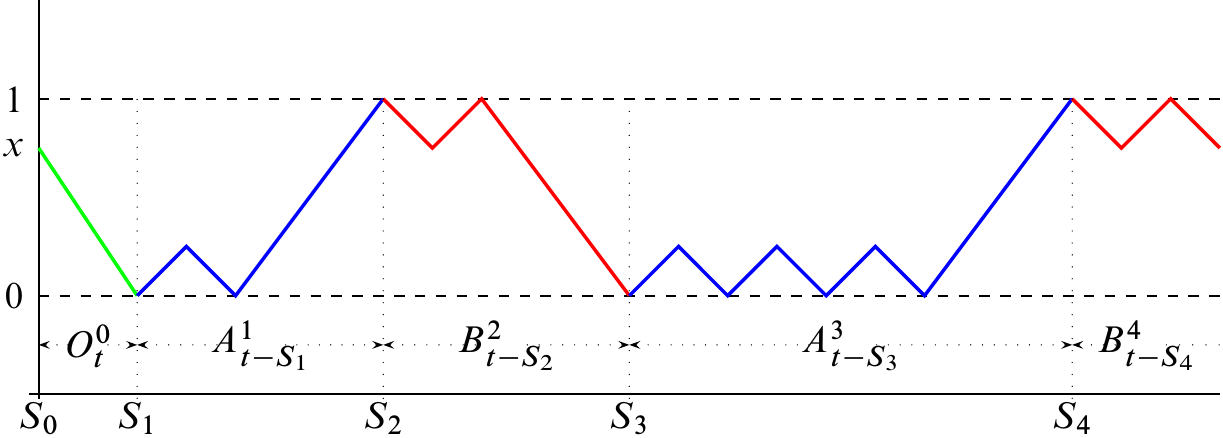}
    \caption{Construction of $Y$}\label{fig7}
\end{center}
\end{figure}

We want to argue that once $Y$ has reached the cemetery point $\gD$, it cannot
leave it. To this end, let us assume that $\go\in\Xi_0$, that $k$ is odd, and
that $t\in[S_k(\go),S_{k+1}(\go))$. Then $Y_t(\go) = A^k_{t-S_k}(\go)$.
Suppose that the lifetime of $A^k_{\,\cdot\,}(\go)$ expires before this
trajectory hits $1$, and hence $Y_s(\go)=\gD$ for some $s$ in the interval
$[S_k(\go),S_{k+1}(\go))$. Then $\gs^k(\go)=+\infty$, and therefore also
$S_{k+1}(\go)=+\infty$. Thus there are no more finite crossover times for this
$\go$, and hence after $s$ the trajectory $Y_{\,\cdot\,}(\go)$ is identical to the
trajectory of $A^k_{\,\cdot\,-S_k}(\go)$, namely constantly equal to $\gD$, as it should.
It is clear that in all the other cases one can argue in the same way.

Observe that by construction all paths of $Y$ belong to $\CD(\R_+)$, and in
particular they are all right continuous. Hence $Y$ is a measurable stochastic
process, and it follows that if $R$ is any random time (i.e., a random variable on
$(\Xi,\cC)$ with values in $\R_+\cup\{+\infty\}$) the stopped process
$Y_{\,\cdot\,\land R}=(Y_{t\land R},\,t\ge0)$ is a well defined stochastic process.

Consider the measurable space $(\gO,\cA)$, where $\gO=\CDR$, and $\cA$ is the
$\gs$--algebra generated by the cylinder sets of $\CDR$. Clearly, $Y$, considered
as a mapping from $\Xi$ into $\gO=\CDR$ is $\cC/\cA$--measurable. We denote by
$P_x$, $x\in [0,1]$, the image of $Q_x$ under $Y$. Furthermore, by $X=(X_t,\,t\in\R_+)$
we denote the canonical coordinate process on $\CDR$. We make the convention
$X_{+\infty}=\gD$.

Assume that we are given a sequence $(U_k,\,k\in\N)$ of random times on $(\gO,\cA)$,
and a sequence of random times $(R_k,\,k\in\N)$ on $(\Xi,\cC)$, which are such that
for every $k$, $R_k = U_k\comp Y$ holds true. Set $U_\infty=R_\infty=+\infty$, and
consider the families of random variables
\begin{equation*}
    \cY = \bigl\{Y_{t\land R_k},\,R_l,\,t\in\R_+,\,k,l\in\N_\infty\bigr\},\
    \cX = \bigl\{X_{t\land U_k},\,U_l,\,t\in\R_+,\,k,l\in\N_\infty\bigr\},
\end{equation*}
where $\N_\infty = \N\cup\{+\infty\}$. Then we obtain the following trivial
statement, which for ease of later reference we record here as a lemma.

\begin{lemma}   \label{int_lemi}
For every $x\in [0,1]$, the family $\cY$ has the same finite dimensional
distributions under $Q_x$, as $\cX$ has under $P_x$, that is, for all $m$, $n\in \N$,
$t_1$, \dots, $t_m\in\R_+$, $i_1$, \dots, $i_m$, $j_1$, \dots, $j_n\in\N_\infty$,
$B_1$, \dots, $B_m\in\cB([0,1]\cup\{\gD\})$, $C_1$, \dots,
$C_n\in\cB(\R_+\cup\{+\infty\})$,
\begin{equation}    \label{int_eqv}
\begin{split}
    Q_x\bigl(&Y_{t_1\land R_{i_1}}\in B_1,\dotsc, Y_{t_m\land R_{i_m}}\in B_m,
                R_{j_1}\in C_1,\dotsc, R_{j_n}\in C_n\bigr)\\
        &= P_x\bigl(X_{t_1\land U_{i_1}}\in B_1,\dotsc, X_{t_m\land U_{i_m}}\in B_m,
                U_{j_1}\in C_1,\dotsc, U_{j_n}\in C_n\bigr)
\end{split}
\end{equation}
holds true.
\end{lemma}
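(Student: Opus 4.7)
The plan is to observe that Lemma~\ref{int_lemi} is a pure change-of-variables statement. Recall that $P_x$ was \emph{defined} as the image measure $Q_x\comp Y^{-1}$, where $Y:\Xi\to\gO=\CDR$ is the $\cC/\cA$--measurable map sending $\go$ to the trajectory $t\mapsto Y_t(\go)$. The whole lemma will follow once we check that the event appearing on the right-hand side of~\eqref{int_eqv} pulls back under $Y$ to the event on the left-hand side.

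The key identities I would record first are pointwise, valid for every $\go\in\Xi$. Because $X$ is the coordinate process, we have $X_s\comp Y = Y_s$ for every $s\in\R_+\cup\{+\infty\}$, where by convention $X_{+\infty}=\gD=Y_{+\infty}$. Next, by hypothesis $R_k = U_k\comp Y$ for each $k\in\N$, with the convention $R_\infty=U_\infty=+\infty=U_\infty\comp Y$, so $R_k = U_k\comp Y$ in fact holds for $k\in\N_\infty$. Composing the two gives
\begin{equation*}
    X_{t\land U_k}\comp Y = Y_{t\land(U_k\comp Y)} = Y_{t\land R_k},
        \qquad t\in\R_+,\,k\in\N_\infty.
\end{equation*}

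With these identities in hand the lemma is immediate. Given $m$, $n$, the times $t_j$ and indices $i_j$, $j_l$, and Borel sets $B_j$, $C_l$ as in the statement, set
\begin{equation*}
    E = \{X_{t_1\land U_{i_1}}\in B_1,\dotsc,X_{t_m\land U_{i_m}}\in B_m,
            U_{j_1}\in C_1,\dotsc,U_{j_n}\in C_n\}\in\cA.
\end{equation*}
Applying the pointwise identities above coordinate by coordinate, one sees that
\begin{equation*}
    Y^{-1}(E) = \{Y_{t_1\land R_{i_1}}\in B_1,\dotsc,Y_{t_m\land R_{i_m}}\in B_m,
            R_{j_1}\in C_1,\dotsc,R_{j_n}\in C_n\},
\end{equation*}
and this set belongs to $\cC$ by the measurability of the $Y_s$ and the $R_k$. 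The change-of-variables formula for the image measure then gives $P_x(E) = Q_x(Y^{-1}(E))$, which is precisely~\eqref{int_eqv}.

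The only thing worth being slightly careful about is the treatment of the value $+\infty$: one has to make sure that $X_{t\land U_k}$ is well defined and measurable when $U_k=+\infty$, which is handled by the conventions $X_{+\infty}=Y_{+\infty}=\gD$ already fixed in the construction. Beyond that, there is no genuine obstacle; the lemma is a bookkeeping statement that justifies transporting all subsequent computations about $Y$ on $(\Xi,\cC,Q_x)$ to computations about $X$ on $(\gO,\cA,P_x)$.
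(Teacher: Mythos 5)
Your proof is correct and coincides with the paper's intended justification: the paper records this lemma without proof as a ``trivial statement,'' and the change-of-variables argument you give --- $P_x$ is by definition the image of $Q_x$ under $Y$, $X_s\comp Y=Y_s$, and $R_k=U_k\comp Y$, so the cylinder event on the right-hand side pulls back to the one on the left --- is exactly the bookkeeping the authors have in mind. Your remark on the convention $X_{+\infty}=Y_{+\infty}=\gD$ is an appropriate touch of care and does not deviate from the paper's setup.
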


We shall denote by $\FX=(\FX_t,\,t\in\R_+)$ the natural filtration of $X$,
and set $\cF^X_\infty = \gs(X_t,\,t\in\R_+)$.

Below it will be convenient to work with both versions, $X$ and $Y$, of the
stochastic process simultaneously. The reason is that $Y$, having been constructed on
product spaces, inherits a strong independence structure. This will turn out to be
very convenient to employ. On the other hand, on the path space $\gO=\CDR$ we have
the canonical families $\theta = (\theta_t,\,t\in\R_+)$, $\gg = (\gg_t,\,t\in\R_+)$
of shift and stop operators, respectively:
\begin{align*}
    \theta_t(\go)(s) &= \go(s+t),\qquad s,\,t\in\R_+,\,\go\in\gO\\
    \gg_t(\go)(s)    &= \go(s\land t),\qquad s,\,t\in\R_+,\,\go\in\gO,
\end{align*}
and we get
\begin{align*}
    X_s\comp \theta_t &= X_{s+t},\\
    X_s\comp\gg_t     &= X_{s\land t}.
\end{align*}
This implies that on $\CDR$ we can use Galmarino's theorem (cf., e.g.,
\cite[p.~458]{Ba91}, \cite[p.~86]{ItMc74}, \cite[p.~43~ff]{Kn81},
\cite[p.~45]{ReYo91}), which in particular gives a very useful characterization of
the $\gs$--algebra $\FX_{T}$ in the case when $T$ is a stopping time with respect to
$\FX$:
\goodbreak

\begin{lemma}[Galmarino's Theorem]   \label{int_lemii}
{\ }
\begin{enum_a}
    \item Let $T$ be an $\overline\R_+$--valued random variable on $(\gO,\cA)$.
            Then $T$ is an $\FX$--stopping time if and only if the following
            statement holds true: for all $\go_1$, $\go_2\in\gO$, $t\in\R_+$,
            $\gg_t(\go_1) = \gg_t(\go_2)$, and $T(\go_1)\le t$ imply
            $T(\go_1)=T(\go_2)$.
    \item Let $T$ be an $\FX$--stopping time, and let $\FX_T$ be the
            $\gs$-algebra of the $T$--past. Then $\FX_T = \gs(X_{t\land
            T},\,t\in\R_+)$.
\end{enum_a}
\end{lemma}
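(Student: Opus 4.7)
The plan is to first establish a preliminary characterization of $\FX_t$ via the stop operator $\gg_t$, and then deduce both parts of the lemma from it. Specifically, I claim that for every $t\in\R_+$ a set $A\in\cA$ lies in $\FX_t$ if and only if $A$ is $\gg_t$-invariant, meaning $\gg_t(\go_1)=\gg_t(\go_2)$ and $\go_1\in A$ imply $\go_2\in A$; equivalently, an $\cA$-measurable random variable $Z$ is $\FX_t$-measurable iff $Z=Z\comp\gg_t$. One direction is routine: for any $s\ge 0$, $X_s\comp\gg_t=X_{s\land t}$ is $\FX_t$-measurable, so if $Z$ is a measurable function of the coordinates then $Z\comp\gg_t$ is the same function of the stopped coordinates, hence $\FX_t$-measurable. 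The other direction is a $\gs$-algebra argument: the generators $\{X_s\in B\}$ of $\FX_t$ with $s\le t$ are manifestly $\gg_t$-invariant, and the class of $\gg_t$-invariant sets forms a $\gs$-algebra.

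For part (a), suppose first that $T$ is an $\FX$-stopping time and let $\gg_t(\go_1)=\gg_t(\go_2)$ with $s:=T(\go_1)\le t$. Since $\gg_s=\gg_s\comp\gg_t$ (stopping first at $t$ then at $s\le t$ is the same as stopping at $s$), also $\gg_s(\go_1)=\gg_s(\go_2)$, and $\gg_s$-invariance of $\{T\le s\}\in\FX_s$ gives $T(\go_2)\le s=T(\go_1)$. Swapping the roles of $\go_1,\go_2$ with $s':=T(\go_2)\le t$ yields $T(\go_1)\le s'$, so $T(\go_1)=T(\go_2)$. Conversely, assume the path-identification hypothesis and fix $t$. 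If $\gg_t(\go_1)=\gg_t(\go_2)$ and $T(\go_1)\le t$, then the hypothesis forces $T(\go_2)=T(\go_1)\le t$; by the symmetric version of the hypothesis, if $T(\go_1)>t$ then $T(\go_2)>t$ as well. Hence $\{T\le t\}$ is $\gg_t$-invariant, so it belongs to $\FX_t$ by the preliminary characterization, and $T$ is a stopping time.

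For part (b), the inclusion $\gs(X_{t\land T},\,t\in\R_+)\subset\FX_T$ reduces to the standard fact that $X_S$ is $\FX_S$-measurable for any $\FX$-stopping time $S$, applied to $S=t\land T$ together with $\FX_{t\land T}\subset\FX_T$. The reverse inclusion is where part (a) does its real work. I introduce the stop-at-$T$ map $\Phi:\gO\to\gO$, $\Phi(\go):=\gg_{T(\go)}(\go)$ (with the convention $\gg_{+\infty}:=\mathrm{id}$), and note the identity $X_t\comp\Phi=X_{t\land T}$, whence $\gs(X_{t\land T},\,t\in\R_+)=\Phi^{-1}(\cA)$. It therefore suffices to show that every $A\in\FX_T$ is $\Phi$-invariant. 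On $\{T=+\infty\}$ this is trivial. On $\{T<+\infty\}$, put $s:=T(\go)$; then $\Phi(\go)=\gg_s(\go)$, and idempotence of $\gg_s$ yields $\gg_s(\Phi(\go))=\gg_s(\go)$. By part (a) applied to $T$ at time $s$, $T(\Phi(\go))=T(\go)=s$, so both $\go$ and $\Phi(\go)$ lie in $\{T\le s\}$. Since $A\cap\{T\le s\}\in\FX_s$ is $\gg_s$-invariant by the preliminary characterization, $\go\in A\iff\Phi(\go)\in A$, as required.

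The main obstacle is precisely the identity $T(\Phi(\go))=T(\go)$ in the last step: two distinct paths could in principle be stopped at different times and yet yield the same stopped trajectory, and it is the stopping-time property of $T$, digested through part (a), that rules this out. The overall architecture is therefore to prove the $\gg_t$-invariance lemma first, then part (a), and then use both to deduce part (b); the bookkeeping for $\{T=+\infty\}$ and the convention $\gg_{+\infty}=\mathrm{id}$ are routine.
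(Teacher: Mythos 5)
Your proof is correct: the characterization of $\FX_t$ as the $\gg_t$-invariant sets of $\cA$ (valid here because $\cA=\gs(X_t,\,t\in\R_+)$ on the canonical space $\CDR$ and $\gg_t$ is idempotent), the two-sided application of it in part (a), and the stopped-path map $\Phi(\go)=\gg_{T(\go)}(\go)$ with the key identity $T(\Phi(\go))=T(\go)$ in part (b) all go through as written. The paper does not prove this lemma but cites the literature (Bauer, It\^o--McKean, Knight, Revuz--Yor), and your argument is essentially the standard one found there, so there is nothing to flag.
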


In order not to burden our notation too much, for both stochastic processes,
$Y$ and $X$, we shall denote the lifetimes by $\zeta$ and the hitting times of sets
$C\subset \R$ by $H_C$, and if $C=\{x\}$, $x\in\R$, by $H_x$ for simplicity.
Also, in both cases we shall write for the expectation $E_x(\,\cdot\,)$,
$x\in [0,1]$. Whenever necessary, we add a corresponding superscript to avoid confusion.

We bring in the notation
\begin{equation*}
    c^* = \begin{cases}
            1, & \text{if $c=0$},\\
            0, & \text{if $c=1$},
          \end{cases}
\end{equation*}
and set $T_0=0$, $T_1 = H_0\land H_1$. For $k\in\N$, we define on the event
$\{T_k<+\infty\}$ and with $c=X_{T_k}$
\begin{equation}    \label{int_eqvi}
    T_{k+1} = T_k + \begin{cases}
                            H_c\comp \theta_{T_k}, & \text{if $k$ is odd,}\\[1ex]
                            H_{c^*}\comp \theta_{T_k}, & \text{if $k$ is even.}
                    \end{cases}
\end{equation}
On $\{T_k=+\infty\}$ put $T_{k+1}=+\infty$. Obviously, $S_k = T_k\comp Y$, $k\in\N_0$,
and it follows that under $Q_x$, $x\in [0,1]$, $S_k$ has the same law as $T_k$ under
$P_x$. Hence a.s.\ the sequence $(T_k,\,k\in\N_0)$ is strictly increasing to~$+\infty$.
Moreover, it is easy to check --- for example with the help of lemma~\ref{int_lemii} ---
that for every $k\in\N_0$, $T_k$ is an $\cF^X$--stopping time.

Consider the process $\hat X$ defined as $X$ being stopped at the stopping time
$T_1$, i.e., $X$ with absorption in the endpoints $0$, $1$ of the interval $[0,1]$.
By lemma~\ref{int_lemi} we find that $\hat X$ is equivalent to $Y$ with absorption
in $\{0,1\}$, and hence by the construction of $Y$, $\hat X$ is equivalent to
a standard Brownian motion with absorption in $\{0,1\}$. Then it is easy to
check that $\hat X$ is a Markov process relative to $\cF^X$. Futhermore, if instead
of stopping we consider a process $\tilde X$ which is defined as $X$ with killing
at the time $T_1$, we analogously get that $\tilde X$ is equivalent to a standard
Brownian motion which is killed at the boundary of the interval $[0,1]$, and $\tilde X$,
too, has the Markov property relative to $\cF^X$.

\subsection{Simple Markov Property} \label{ssect_Markov}
In this subsection we prove the following

\begin{theorem} \label{int_thmiii}
$X$ is a normal, homogeneous Markov process.
\end{theorem}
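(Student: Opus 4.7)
The plan is to work with the raw model $Y$ on the product space $(\Xi, \cC, Q)$, where the strong independence built into the construction is explicit, and then transfer the resulting Markov property to $X$ via lemma~\ref{int_lemi}. Normality is immediate: by construction $Y_0 = O^0_0 = x$ holds $Q_x$--a.s., whence $P_x(X_0 = x) = 1$. Homogeneity will be automatic because the construction of $Y$ is time-homogeneous, so the transition kernel produced below depends only on the value $Y_s$ and on $t$, not on $s$.

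To establish the Markov property, fix $s \ge 0$, a bounded measurable function $f \colon [0,1]\cup\{\gD\} \to \R$, and $x\in[0,1]$. Writing $\cF^Y = (\cF^Y_t,\,t\ge 0)$ for the natural filtration of $Y$, the goal is to show
\begin{equation*}
    E_x\bigl(f(Y_{s+t}) \cond \cF^Y_s\bigr) = U_t f(Y_s), \qquad t \ge 0,
\end{equation*}
where $U_t f(y) := E_y\bigl(f(X_t)\bigr)$. On $\{s \ge \zeta\}$ both sides vanish by the convention $f(\gD)=0$, so the argument reduces to the event $\{s < \zeta\}$, which I decompose as the disjoint union over $k\ge 0$ of the events $\gL_k := \{S_k \le s < S_{k+1}\}$. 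On $\gL_k$, by the construction~\eqref{int_eqiv}, the trajectory $Y_{s+\,\cdot\,}$ begins as the shifted tail (by the $\cF^Y_s$--measurable amount $s - S_k$) of $O^0$ if $k=0$, or of $A^k$ or $B^k$ otherwise, according to parity and the cases in~\eqref{int_eqiv}. It is then continued by the crossover construction using the not-yet-consumed factors $A^{k+1}, B^{k+1}, A^{k+2}, \dotsc$, which are \emph{independent} of $\cF^Y_s$.

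The main step is to apply the Markov property of the currently active Feller Brownian motion at the time $s - S_k$, furnished for the respective case by the theorems of sections~\ref{sect_el}--\ref{sect_gen}. This yields that, conditional on $\cF^Y_s$, the residual trajectory of the current piece has, on $\gL_k$, the law of a fresh Feller Brownian motion of the appropriate type started at $Y_s$ and run up to its first hit of the opposite endpoint, and is independent of the unused factors. Concatenating this tail with the independent copies $A^{k+1}, B^{k+1}, \dotsc$ reproduces exactly the construction of $Y$ under $Q_{Y_s}$, so $Y_{s+t}$ has the same distribution as $Y_t$ under $Q_{Y_s}$, and hence conditional expectation $U_t f(Y_s)$. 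Summing over the $\gL_k$ gives the desired identity; pushing it forward through $Y$ and invoking lemma~\ref{int_lemi} delivers the homogeneous Markov property of $X$ on $(\gO,\cA,P_x)$.

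The hard part will be the technical bookkeeping supporting this sketch: one must verify that $S_k$ and the ``already consumed'' portions of each factor space are $\cF^Y_s$--measurable; that the Feller processes $A^k$, $B^k$ really are Markovian at the random times $s - S_k$, which is where their Feller character and the fact that $S_k$ is built from strictly earlier factors only are essential; and that the resulting concatenation has the same finite-dimensional distributions as $Y$ restarted at $Y_s$, including the parity bookkeeping for whether the next Feller motion in play is of type $A$ or of type $B$ and the special cases in which $Y_s$ equals an endpoint. These are precisely the technicalities the authors defer to appendix~\ref{app_int}.
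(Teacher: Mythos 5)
Your overall strategy---decompose at time $s$ according to which piece of the construction is currently active, regenerate using the product structure, and transfer to $X$ via lemma~\ref{int_lemi}---is in spirit the same idea the paper implements, but your main step conceals a genuine gap rather than mere bookkeeping. On $\gL_k$ with, say, $A^k$ active, the (simple) Markov property of $A$ at the elapsed time $s-S_k$ gives you a residual process distributed as $A$ started at $Y_s$ and run until it first hits $1$, concatenated with the unused independent factors. This is \emph{not} ``exactly the construction of $Y$ under $Q_{Y_s}$'': for $Y_s\in(0,1)$ the $Q_{Y_s}$--construction begins with a \emph{standard} Brownian piece $O^0$ run until it hits $\{0,1\}$ and only then switches to a copy of $A$ or $B$, whereas your residual process begins immediately with the Feller Brownian motion $A$ from $Y_s$, which may visit $0$ (with its elastic/sticky/killing behaviour there, possibly dying) before ever reaching $1$. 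The two laws do coincide, but proving this requires splitting the residual $A$--piece at its first hitting time of $\{0,1\}$, using that $A$ stopped at $0$ is equivalent to a standard Brownian motion stopped at $0$ (definition~\ref{def_BM}) up to that time, and the \emph{strong} Markov property of $A$ at its hitting time of $0$ afterwards, together with an identification of the finite-dimensional distributions of the two concatenations. That identification is precisely the content of lemma~\ref{int_lemv} (equivalence of $X$ killed at $1$ with $A$ killed at $1$), whose proof in appendix~\ref{app_int} is the technical heart of the whole argument; the simple Markov property of the active piece at time $s-S_k$, which is all you invoke, does not bridge this mismatch.

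A second point needs more than a measurability check: $\cF^Y_s$ is \emph{not} independent of the unused factors globally, only ``on $\gL_k$'', so the conditional expectation given $\cF^Y_s$ must be handled on a $\cap$--stable generator of $\cF^Y_s$ intersected with $\gL_k$, combined with the factorization of $(\Xi,\cC,Q_x)$ into the first factors and the rest; this is exactly what the paper does via Galmarino's theorem (lemma~\ref{int_lemii}) and the product splitting in the proof of lemma~\ref{int_lemiv}, and you only gesture at it. For comparison, the published proof conditions on $\cF^X_t$, decomposes over $\{T_k\le t<T_{k+1}\}$ and over the value of $X_{T_1}$, and reduces everything to the strong Markov property at the crossover times (lemma~\ref{int_lemiv}), the equivalence of killed processes (lemma~\ref{int_lemv}), and the computations of lemmas~\ref{int_lemvi}--\ref{int_lemviii}. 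If you supply full proofs of the regeneration statement, including the two points above, your route would go through, but it would then essentially be a repackaging of the same ingredients rather than a shortcut.
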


The remainder of this subsection is devoted to the proof of theorem~\ref{int_thmiii}. We
first prepare some lemmas, whose statements heuristically are quite obvious.
Unfortunately, their formal proofs are rather technical, and therefore they are
deferred to appendix~\ref{app_int}.

Throughout this subsection $\cF=(\cF_t,\,t\ge 0)$ denotes the filtration $\cF^X$ of
$X$, $f$ is a bounded, measurable function on $[0,1]$ (extended to $[0,1]\cup\{\gD\}$
in the usual way via $f(\gD)=0$), $s$, $t$ belong to $\R_+$, and $x$ to $[0,1]$.
The first two lemmas, lemma~\ref{int_lemiv} and lemma~\ref{int_lemv} below, form the
key to the proof of theorem~\ref{int_thmiii}.

\begin{lemma}   \label{int_lemiv}
$X$ is a strong Markov process with respect to the stopping times $T_k$, $k\in\N$.
That is, for every bounded $\cF_\infty$--measurable random variable $V$, all
$k\in\N$, $x\in[0,1]$,
\begin{equation}    \label{int_eqvii}
    E_x\bigl(V\comp\theta_{T_k}\cond \cF_{T_k}\bigr)
        = E_{X_{T_k}}(V),\qquad \text{$P_x$--a.s.\ on $\{X_{T_k}\ne \gD\}$}.
\end{equation}
\end{lemma}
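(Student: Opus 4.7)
The strategy is to lift the Markov identity from the canonical realization $X$ on $\gO=\CDR$ to the concrete construction $Y$ on the product space $(\Xi,\cC,Q_x)$, where the independence built into the product structure makes the argument transparent.

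Since $V$ is $\cF_\infty=\cA$-measurable, one has $V=\Phi(X)$ for a bounded measurable $\Phi$ on $\gO$. Because $T_k$ is an $\cF^X$-stopping time, Galmarino's theorem (lemma~\ref{int_lemii}(b)) furnishes, for every bounded $\cF_{T_k}$-measurable $Z$, a bounded measurable $\Psi_k$ on $\gO$ with $Z=\Psi_k(X_{\cdot\wedge T_k})$. Composing with $Y\colon\Xi\to\gO$ yields
\begin{equation*}
Z\comp Y \;=\; \Psi_k(Y_{\cdot\wedge S_k}),\qquad V\comp\theta_{T_k}\comp Y \;=\; \Phi(Y_{\cdot+S_k}),
\end{equation*}
and by lemma~\ref{int_lemi} the claim is equivalent to
\begin{equation*}
E^Q_x\bigl(\Psi_k(Y_{\cdot\wedge S_k})\,\Phi(Y_{\cdot+S_k});\,Y_{S_k}\ne\gD\bigr)
 \;=\; E^Q_x\bigl(\Psi_k(Y_{\cdot\wedge S_k})\,g(Y_{S_k});\,Y_{S_k}\ne\gD\bigr),
\end{equation*}
where $g(c)=E^Q_c(\Phi(Y))=E^P_c(\Phi(X))$, and where it suffices to treat arbitrary bounded measurable $\Psi_k$.

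The main work is a case analysis on the four disjoint events $\Xi_0\cap\{k\text{ odd}\}$, $\Xi_0\cap\{k\text{ even}\}$, $\Xi_1\cap\{k\text{ odd}\}$, $\Xi_1\cap\{k\text{ even}\}$. On each of them I would verify two statements. \emph{Independence:} Inspecting~\eqref{int_eqiii}--\eqref{int_eqiv}, the stopped path $s\mapsto Y_{s\wedge S_k}$ is a measurable function of a specific \emph{initial} finite sub-family of the building blocks $(O^0,A^1,B^1,A^2,B^2,\dotsc)$, whereas the shifted path $Y_{\cdot+S_k}$ is a measurable function of the complementary \emph{tail} sub-family; the product structure~\eqref{int_eqi} of $Q_x$ thus forces the $\gs$-algebras they generate to be independent. \emph{Matching laws:} On each of the four events the shifted path $Y_{\cdot+S_k}$ is manufactured from the tail blocks by exactly the same recipe by which $Y$ is manufactured from the full block sequence under $Q_c$ with $c=Y_{S_k}\in\{0,1\}$---using that $\rho^0=0$ $Q_c$-a.s., so the initial $O^0$-segment degenerates and the process starts directly at the boundary point $c$ with an $A^1$- or $B^1$-segment. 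Since the i.i.d.\ tail blocks have the same joint law as the initial ones, the conditional law of $Y_{\cdot+S_k}$ given $\gs(Y_{\cdot\wedge S_k})$ equals the law of $Y$ under $Q_c$ on $\{Y_{S_k}=c\}$.

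Combining the two statements yields $E^Q_x(\Phi(Y_{\cdot+S_k})\mid\gs(Y_{\cdot\wedge S_k}))=g(Y_{S_k})$ a.s.\ on each of the four events, from which the required identity follows by multiplying by $\Psi_k(Y_{\cdot\wedge S_k})$ and integrating. The main obstacle is the meticulous bookkeeping in the \emph{Independence} step---tracking which block index appears on which segment and hence identifying the ``past'' versus ``future'' $\gs$-algebras precisely---together with the verification in the \emph{Matching laws} step that the degenerate initial $O^0$-segment under $Q_c$ is exactly what makes the re-indexing of the i.i.d.\ copies go through. These measure-theoretic details are deferred to appendix~\ref{app_int}.
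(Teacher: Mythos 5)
Your proposal follows essentially the same route as the paper's proof in appendix~\ref{app_int}: reduce via Galmarino's theorem (lemma~\ref{int_lemii}) and the transfer lemma~\ref{int_lemi} to the product-space model $Y$, split according to $\Xi_0/\Xi_1$ and the parity of $k$ (equivalently $X_{T_k}=0$ or $1$), exploit the product structure of $Q_x$ to separate the blocks generating the stopped path from those generating the shifted path, and identify the conditional law of the shifted path with the law of $Y$ under $Q_c$ by re-indexing the i.i.d.\ copies, using that the initial $O^0$-segment degenerates when starting at a boundary point. The bookkeeping you defer (which block appears on which segment and which sums of crossover-time differences occur) is precisely the content of the paper's displayed formulas in its appendix proof, carried out there for cylinder functions $V$ and cylinder sets generating $\cF^X_{T_k}$, followed by the standard monotone-class extension.
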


We write $\tilde X^c$ for the process $X$ with killing at $c=0$ or $c=1$. Thus
\begin{equation*}
    E_x\bigl(f(\tilde X^c_t)\bigr) = E_x\bigl(f(X_t);\,t<H_c\bigr),\qquad c=0,\,1.
\end{equation*}
Similarly, $\tilde A$ and $\tilde B$ denote the processes $A$, $B$, with killing
at $1$, $0$ respectively.

\begin{lemma}   \label{int_lemv}
Under $P_x$, $x\in[0,1]$, $\tilde X^1$ is equivalent to $\tilde A$ under $\mu_x$, and
$\tilde X^0$ is equivalent to $\tilde B$ under $\mu_x$. In particular, $\tilde X^1$ and
$\tilde X^0$ are Markov processes.
\end{lemma}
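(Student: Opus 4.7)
The plan is to work with the product-space process $Y$ rather than the canonical coordinate process $X$, since $Y$ carries an explicit independent decomposition into building blocks. Because $P_x=Q_x\circ Y^{-1}$ and killing a path at its first hit of $1$ is a measurable operation on path space, the law of $\tilde X^1$ under $P_x$ coincides with the law of $\tilde Y^1$ under $Q_x$, where $\tilde Y^1$ denotes $Y$ killed at its first hit of $1$. It therefore suffices to prove that $\tilde Y^1$ under $Q_x$ is equivalent to $\tilde A$ under $\mu_x$; the statement for $\tilde X^0$ then follows by the symmetric argument, exchanging the roles of $(A,1)$ and $(B,0)$.

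For $x\in(0,1)$ we decompose both processes according to which endpoint is reached first. By construction of $Y$: on $\Xi_1$ the first hit of $1$ by $Y$ occurs at $\rho_1^0$, so $\tilde Y^1$ is just $O^0$ killed at $\rho_1^0$; on $\Xi_0$ the first hit of $1$ by $Y$ occurs at $S_1+\gs^1$, and $\tilde Y^1$ is the concatenation of $O^0$ stopped at $\rho_0^0$ (ending at $0$) with the shifted trajectory $t\mapsto A^1_{t-\rho_0^0}$, run until $\gs^1$. By the product structure~\eqref{int_eqi}, $O^0$ and $A^1$ are $Q_x$--independent. For the $A$--side we use that, by the explicit construction in sections~\ref{sect_el}--\ref{sect_gen}, $A_t=O_t$ for all $t\in[0,\rho_0]$ under $\mu_x$ with $x>0$: before hitting $0$ the underlying BM is strictly positive (so $|O|=O$), its local time at $0$ vanishes, the time-change equals the identity, and the killing time $\zeta_{\gb,\gg}=K_S+\gg S$ exceeds $\rho_0$ almost surely since $L_{\rho_0}=0<S$. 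Consequently the hitting time of $0$ by $A$ equals $\rho_0$; on $\{\rho_1\le\rho_0\}$, $\tilde A$ is then $O$ killed at $\rho_1$. On $\{\rho_0<\rho_1\}$ the strong Markov property of the Feller process $A$ at $\rho_0$ (Theorem~\ref{Feller_prop}) shows that $(A_{\rho_0+t})_{t\ge0}$ is, conditionally on the past up to $\rho_0$, distributed as $A$ under $\mu_0$, so $\tilde A$ is the concatenation of $O$ stopped at $\rho_0$ with an independent copy of $A$ under $\mu_0$, killed at its first hit of $1$.

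The comparison is now immediate: the pre-$0$ pieces on $\Xi_0$ and on $\{\rho_0<\rho_1\}$ are in both cases standard Brownian paths from $x$ stopped at their first hit of $0$; the post-$0$ pieces are each an independent copy of $A$ under $\mu_0$, killed at its first hit of $1$; and on $\Xi_1$ versus $\{\rho_1\le\rho_0\}$ both processes reduce to a standard BM from $x$ killed at $\rho_1$ on that event. Equality of the finite-dimensional distributions of $\tilde Y^1$ and $\tilde A$ then follows from these descriptions by conditioning on the pre-crossover past and factorizing the post-crossover contribution by the two independence statements. The case $x=0$ is immediate, since $\rho^0=0$ $Q_0$--a.s.\ forces $S_1=0$ and $\tilde Y^1$ equals $A^1$ killed at $\gs^1$, which under $Q^1_0$ is equivalent to $\tilde A$ under $\mu_0$; the case $x=1$ follows symmetrically on the $\tilde B$ side. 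Finally, the Markov property of $\tilde X^1$ and $\tilde X^0$ is inherited from $\tilde A$ and $\tilde B$, each of which is a Feller process killed at a single hitting time and hence strong Markov.

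The hard part will be the joint-law matching on $\Xi_0$ versus $\{\rho_0<\rho_1\}$: one has to verify that the pre-$0$ path and the post-$0$ continuation combine in the same way on both sides. Two points require explicit justification: (i) that $A$ coincides pathwise with the underlying standard BM $O$ up to $\rho_0$, which entails tracing through the time-change and the exponential killing in sections~\ref{sect_el}--\ref{sect_gen}, and (ii) that the strong Markov conditional law of $A$ starting afresh from $0$ is exactly $\mu_0$, thereby matching the independent factor $A^1$ in the product construction of $Y$, whose law under $Q^1_0$ is by design that of $A$ under $\mu_0$.
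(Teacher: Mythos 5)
Your proposal is correct and follows essentially the same route as the paper's proof: decompose at the first hit of $0$, use that $A$ coincides pathwise with $O$ up to that hitting time (so the hitting times and the event $\{\rho_0<\rho_1\}$ agree for $A$ and $O$), apply the strong Markov property of $A$ there with the restarted process equivalent to $A^1$ under $Q^1_0$, exploit the product structure of $Q_x$, and transfer between $X$ and $Y$ as in lemma~\ref{int_lemi}. The ``hard part'' you defer --- conditioning on the pre-crossover past and factorizing the post-crossover contribution for time points straddling the hitting time --- is precisely what the paper's proof carries out explicitly via the measure $m_{x,s,G}$ and the semigroup composition in formulae~\eqref{aint_eqvi}--\eqref{aint_eqix}.
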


In appendix~\ref{app_int} we derive from these two lemmas the following three results.
\begin{lemma}   \label{int_lemvi}
On the event $\{0\le t< T_1,\,X_t\ne \gD\}$,
\begin{equation}    \label{int eqviii}
    E_x\bigl(f(X_{s+t})\cond \cF_t\bigr)
        = E_{X_t}\bigl(f(X_s)\bigr),\qquad \text{$P_x$--a.s., $x\in [0,1]$}.
\end{equation}
\end{lemma}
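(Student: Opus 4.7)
The plan is to split $f(X_{s+t})$ according to whether the hitting time $T_1$ has been reached, and to handle the two pieces using two Markov properties already at hand: the simple Markov property of the absorbed process $\hat X = X^{T_1}$ with respect to $\cF=\cF^X$, established at the end of subsection~\ref{ssect_constr}, and the strong Markov property of $X$ at $T_1$ supplied by Lemma~\ref{int_lemiv}. Writing $r:=s+t$, I split
\[
    f(X_r) = f(X_r)\,\1_{\{r<T_1\}} + f(X_r)\,\1_{\{r\ge T_1\}}.
\]

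For the first summand, set $g := f\cdot\1_{(0,1)}$ and extend $g$ by $0$ at $0$, $1$, and $\gD$. Since $\hat X_u = X_u\in(0,1)$ on $\{u<T_1\}$ and $\hat X_u = X_{T_1}\in\{0,1\}$ on $\{u\ge T_1\}$, one has $g(\hat X_u) = f(X_u)\,\1_{\{u<T_1\}}$ for every $u\ge 0$. The Markov property of $\hat X$ with respect to $\cF$, together with $\hat X_t = X_t$ on $\{t<T_1\}$, then gives
\[
    E_x\bigl(f(X_r)\,\1_{\{r<T_1\}}\cond\cF_t\bigr)
        = E_{X_t}\bigl(f(X_s)\,\1_{\{s<T_1\}}\bigr)
    \quad\text{on $\{t<T_1\}$.}
\]

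For the second summand, a freezing argument based on Lemma~\ref{int_lemiv} at $T_1$ yields
\[
    E_x\bigl(f(X_r)\,\1_{\{r\ge T_1\}}\cond\cF_{T_1}\bigr)
        = \1_{\{r\ge T_1\}}\,h(T_1,X_{T_1}),
\]
where $h(v,y) := E_y\bigl(f(X_{r-v})\bigr)$ for $v\le r$ and $y\in\{0,1\}$. The right-hand side is a measurable functional of the pre-$T_1$ trajectory, hence of $\hat X$. Applying the Markov property of $\hat X$ at time $t$, and noting that $T_1 = t + T_1\comp\theta_t$ on $\{t<T_1\}$, I convert the conditional expectation into an expectation under $P_{X_t}$:
\[
    E_x\bigl(\1_{\{r\ge T_1\}}\,h(T_1,X_{T_1})\cond\cF_t\bigr)
        = E_{X_t}\bigl(\1_{\{s\ge T_1\}}\,h(t+T_1,X_{T_1})\bigr).
\]
Reading the definition of $h$ backwards and invoking Lemma~\ref{int_lemiv} once more, now under $P_{X_t}$, turns the right-hand side into $E_{X_t}\bigl(f(X_s)\,\1_{\{s\ge T_1\}}\bigr)$. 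Adding the two pieces yields the stated identity.

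The main obstacle I expect is the freezing step needed to move from the strong Markov identity of Lemma~\ref{int_lemiv} at a deterministic functional to one involving $f(X_{r-T_1(\go)}(\go))$, in which $T_1$ itself enters the time argument. This requires verifying measurability via Galmarino's theorem (Lemma~\ref{int_lemii}) together with a monotone-class approximation by step functions of $T_1$; once this is in place, all remaining steps reduce to bookkeeping of shifts and conditional expectations.
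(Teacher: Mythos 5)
Your proposal is correct and follows essentially the same route as the paper: the same decomposition at $\{s+t<T_1\}$ versus $\{s+t\ge T_1\}$, the simple Markov property of the absorbed/killed process relative to $\cF^X$ for the first piece, and a twice-applied ``frozen'' strong Markov property at $T_1$ (transported through the Markov property of $\hat X$, using $T_1=t+T_1\comp\theta_t$ on $\{t<T_1\}$) for the second. The freezing step you single out as the main obstacle is precisely what the paper isolates as Lemma~\ref{aint_lemi}, established there by a Laplace/Fourier-transform argument rather than your Galmarino-plus-monotone-class approximation, but the content is the same.
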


\begin{lemma}   \label{int_lemvii}
On the event $\{X_{T_1}=c,\,T_k\le t<T_{k+1},\,X_t\ne \gD\}$, $k\in\N$, the following
holds true $P_x$--a.s., $x\in [0,1]$:
\begin{equation}    \label{int_eqix}
\begin{split}
    E_x\bigl(&f(X_{s+t});\,T_k\le s+t <T_{k+1}\cond \cF_t\bigr)\\
        &= E_{X_t}\bigl(f(X_s);\,s<T_1\bigr)\\
        &\hspace{4em}
            + \begin{cases}
E_{X_t}\bigl(f(X_s);\,T_1\le s <T_2,\,X_{T_1}=c\bigr), & \text{$k$
odd,}\\[1ex]E_{X_t}\bigl(f(X_s);\,T_1\le s <T_2,\,X_{T_1}=c^*\bigr), & \text{$k$
even.} \end{cases}
\end{split}
\end{equation}
\end{lemma}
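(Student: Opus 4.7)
The plan is to combine the strong Markov property of $X$ at $T_k$ (lemma~\ref{int_lemiv}) with the Markov property of the auxiliary Feller Brownian motion ($A$ or $B$) filling the $k$-th segment, and then to translate the resulting expression back into an expectation with respect to $X$ by means of lemma~\ref{int_lemv}. For concreteness I describe the case $k$ odd and $c=0$; the three remaining cases are entirely analogous, with the roles of $A$ and $B$ and of the endpoints $0$ and $1$ swapped according to the parity of $k$ and the value of $c$.

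First, on the event $\{T_k\le t<T_{k+1},\,X_{T_1}=0,\,X_t\ne\gD\}$, lemma~\ref{int_lemiv} gives that the shifted process $(X_{T_k+u})_{u\ge 0}$ is, conditionally on $\cF_{T_k}$, distributed as $X$ under $P_{X_{T_k}}=P_0$. Combining this with lemma~\ref{int_lemv} applied at $x=0$, the shifted process killed at its first hit of $1$ is equivalent to $\tilde A$ under $\mu_0$. Under this equivalence, $T_{k+1}-T_k$ corresponds to the hitting time $\gs$ of $1$ by $A$, and $X_t$ corresponds to $A_{t-T_k}$, which lies $P_x$--a.s.\ in $[0,1)$ on the event considered.

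Next I would apply the simple Markov property of $A$ at the intermediate time $u=t-T_k$. Since $u$ is random, the step is carried out by first conditioning on $T_k=r$ for each $r\in[0,t]$, reducing to the deterministic-time identity
\[
    E^A_0\bigl(f(A_{s+u});\,s+u<\gs\cond\cF^A_u\bigr) = E^A_{A_u}\bigl(f(A_s);\,s<\gs\bigr)
    \quad\text{on }\{u<\gs\},
\]
and then integrating over the conditional law of $T_k$ on $[0,t]$; because the right-hand side depends on $u$ only through $A_u=X_t$, this integration is trivial. Altogether, $P_x$--a.s.\ on the event in question,
\[
    E_x\bigl(f(X_{s+t});\,T_k\le s+t<T_{k+1}\cond\cF_t\bigr) = E^A_{X_t}\bigl(f(A_s);\,s<\gs\bigr).
\]

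Finally I would translate the right-hand side back into $X$-language by a second application of lemma~\ref{int_lemv}, now at the interior starting point $X_t$, giving $E^A_{X_t}(f(A_s);\,s<\gs) = E_{X_t}(f(X_s);\,s<H_1)$. Under $P_{X_t}$ with $X_t\in(0,1)$, the construction of $X$ immediately yields the decomposition
\[
    \{s<H_1\} = \{s<T_1\}\,\cup\,\{T_1\le s<T_2,\,X_{T_1}=0\},
\]
since on $\{X_{T_1}=1\}$ one has $H_1=T_1$ (making the second set empty), whereas on $\{X_{T_1}=0\}$ the next endpoint visited is $1$ at time $T_2$, so $H_1=T_2$. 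With $c=0$ this is precisely the right-hand side of~\eqref{int_eqix} for odd $k$. The main obstacle is the handling of the random time $u=t-T_k$ in the second step, which forces a Fubini/measurable-selection argument over the joint law of $(T_k,X_t)$; this is the technical reason the detailed proof is deferred to appendix~\ref{app_int}.
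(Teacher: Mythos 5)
Your plan follows the same route as the paper's proof in appendix~\ref{app_int}: regeneration at $T_k$ (lemma~\ref{int_lemiv}), identification of the current segment with the Feller Brownian motion killed at the far endpoint via lemma~\ref{int_lemv}, the Markov property inside that segment, and finally the decomposition $\{s<H_1\}=\{s<T_1\}\uplus\{T_1\le s<T_2,\,X_{T_1}=0\}$, which is exactly how the paper's argument ends. The endgame and the bookkeeping of parities and endpoints are correct.

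The gap is in the step you yourself flag. The conditioning in~\eqref{int_eqix} is on $\cF^X_t$, and on $\{T_k\le t<T_{k+1}\}$ this $\gs$--algebra contains the entire path up to time $t$ --- in particular the piece between $T_k$ and $t$ --- not merely the pair $(T_k,X_t)$. Hence the identity cannot be obtained by ``conditioning on $T_k=r$ and integrating over the conditional law of $T_k$'', and the observation that the right-hand side depends on $u=t-T_k$ only through $X_t$ does not discharge the obligation to verify the defining property of conditional expectation against \emph{every} $\cF^X_t$--event contained in $\{T_k\le t<T_{k+1}\}$. What is needed, and what the paper actually supplies, is twofold: a $\cap$--stable generating class for $\cF^X_t\cap\{T_k\le t\}$ whose members factor into an $\cF^X_{T_k}$--part and a post--$T_k$ cylinder part (lemma~\ref{aint_lemiii}), and a strong Markov identity at $T_k$ which treats jointly a cylinder functional of the shifted path and the next crossover time $T_{k+1}$, with the time shift evaluated at $r=T_k$ (lemma~\ref{aint_lemi}, whose proof is a genuine Laplace-transform computation, not a routine Fubini). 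In the paper's proof of lemma~\ref{int_lemvii} this identity is applied twice --- once to pass to the process killed at the far endpoint and started at $X_{T_k}$, once to come back to an expectation under $P_{X_t}$ --- before the final decomposition. So your outline is the right one, but the phrase ``this integration is trivial'' is precisely where lemmas~\ref{aint_lemi} and~\ref{aint_lemiii} live; without them, or an equivalent substitute, the middle step remains unproved.
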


\begin{lemma}   \label{int_lemviii}
On the event $\{X_{T_1}=c,\,T_k\le t<T_{k+1},\,X_t\ne \gD\}$, $k\in\N$, the following
holds true $P_x$--a.s., $x\in [0,1]$:
\begin{equation}    \label{int_eqx}
\begin{split}
    E_x\bigl(&f(X_{s+t});\,T_{k+1}\le s+t \cond \cF_t\bigr)\\
        &= \begin{cases}
E_{X_t}\bigl(f(X_s);\,T_2\le s, X_{T_1}=c, \text{ or } T_1\le s,\,X_{T_1}=c^*\bigr),
& \text{$k$ odd},\\[1ex]
E_{X_t}\bigl(f(X_s);\,T_2\le s, X_{T_1}=c^*, \text{ or } T_1\le s,\,X_{T_1}=c\bigr),
& \text{$k$ even}.
          \end{cases}
\end{split}
\end{equation}
\end{lemma}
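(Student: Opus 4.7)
The plan is to collapse the two disjoint pieces of the right-hand side into a single event and then reduce the left-hand side to it by combining the strong Markov property at $T_{k+1}$ (Lemma~\ref{int_lemiv}) with the simple Markov property of the intervening excursion (supplied by Lemma~\ref{int_lemv}). To begin, observe that under $P_{X_t}$ the disjoint events $\{T_2\le s,\,X_{T_1}=c\}$ and $\{T_1\le s,\,X_{T_1}=c^*\}$ together exhaust $\{H_{c^*}\le s\}$, because $X_{T_1}=c$ forces $H_{c^*}=T_2$ while $X_{T_1}=c^*$ forces $H_{c^*}=T_1$. Hence for $k$ odd the right-hand side of~\eqref{int_eqx} equals $E_{X_t}\bigl(f(X_s);\,H_{c^*}\le s\bigr)$. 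The $k$ even case is identical with $c$ and $c^*$ interchanged, so I only treat the odd case below.

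On the conditioning event $\{X_{T_1}=c,\,T_k\le t<T_{k+1},\,X_t\ne\gD\}$ one has $X_{T_k}=c$ and $T_{k+1}-T_k=H_{c^*}\comp\theta_{T_k}$. Since $T_{k+1}$ is an $\FX$-stopping time and the event $\{t<T_{k+1}\}$ places $\cF_t$ inside $\cF_{T_{k+1}}$, the tower property together with Lemma~\ref{int_lemiv} applied at $T_{k+1}$ (using $X_{T_{k+1}}=c^*$) gives
\begin{equation*}
E_x\bigl(f(X_{s+t});\,T_{k+1}\le s+t\cond \cF_t\bigr)
=E_x\bigl(\varphi(s-(T_{k+1}-t))\,\1_{\{T_{k+1}-t\le s\}}\cond \cF_t\bigr),
\end{equation*}
where $\varphi(u):=E_{c^*}\bigl(f(X_u)\bigr)$. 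It therefore suffices to identify the $\cF_t$-conditional law of the residual time $T_{k+1}-t$.

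The hard part, which I expect will absorb most of the work, is exactly this identification. Strong Markov at $T_k$ (Lemma~\ref{int_lemiv}) says that the shifted process $\hat X_{\,\cdot\,}:=X_{T_k+\,\cdot\,}$ is, conditionally on $\cF_{T_k}$, distributed as $X$ under $P_c$; then $\sigma_k:=t-T_k$ becomes a deterministic constant, and our event forces $\sigma_k<\hat H_{c^*}:=H_{c^*}(\hat X)$. Lemma~\ref{int_lemv} identifies $\hat X$ killed at $c^*$ (in law) with $\tilde A$ if $c=0$, and with $\tilde B$ if $c=1$, started from $c$. Because $\tilde A$ and $\tilde B$ are Feller, and hence Markov, the simple Markov property at the deterministic time $\sigma_k$ shows that the conditional law of $\hat H_{c^*}-\sigma_k$ given the past up to time $\sigma_k$ equals the law of $H_1$ (resp.\ $H_0$) for $\tilde A$ (resp.\ $\tilde B$) started from $\hat X_{\sigma_k}=X_t$. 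Transferring back through Lemma~\ref{int_lemv}, this is precisely the $P_{X_t}$-law of $H_{c^*}$.

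Substituting this back yields $E_{X_t}\bigl(\varphi(s-H_{c^*})\,\1_{\{H_{c^*}\le s\}}\bigr)$. Under $P_{X_t}$ the random time $H_{c^*}$ coincides with either $T_1$ (on $\{X_{T_1}=c^*\}$) or $T_2$ (on $\{X_{T_1}=c\}$), both of which are stopping times covered by Lemma~\ref{int_lemiv}; a final application of that lemma at the respective stopping time unfolds $\varphi(s-H_{c^*})$ back into $f(X_s)$, producing $E_{X_t}\bigl(f(X_s);\,H_{c^*}\le s\bigr)$, which by the first paragraph is the right-hand side of~\eqref{int_eqx}.
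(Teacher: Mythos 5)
Your outline coincides with the route the paper itself takes: reduce the right--hand side to $E_{X_t}\bigl(f(X_s);\,H_{c^*}\le s\bigr)$ by noting that under $P_{X_t}$ one has $H_{c^*}=T_2$ on $\{X_{T_1}=c\}$ and $H_{c^*}=T_1$ on $\{X_{T_1}=c^*\}$; unfold the left--hand side at $T_{k+1}$ into a deterministic function $\vp$ of $T_{k+1}$; identify the conditional law of the residual time $T_{k+1}-t$ given $\cF_t$ with the $P_{X_t}$--law of $H_{c^*}$ by combining the strong Markov property at $T_k$ with the Markov property of the process killed at $c^*$ (lemma~\ref{int_lemv}); and finally fold $\vp$ back into $f(X_s)$ at $T_1$, $T_2$. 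The skeleton and the final bookkeeping of the two disjoint events are correct.

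The gap lies in how you bridge these reductions. Lemma~\ref{int_lemiv} only gives $E_x\bigl(V\comp\theta_{T_k}\cond\cF_{T_k}\bigr)=E_{X_{T_k}}(V)$ for a \emph{fixed} bounded $\cF_\infty$--measurable $V$, yet at all three places where you invoke it you substitute a quantity depending on the stopping time itself: $\vp\bigl(s-(T_{k+1}-t)\bigr)$ at $T_{k+1}$, the elapsed time $\sigma_k=t-T_k$ at $T_k$, and $\vp(s-H_{c^*})$ at $T_1$, $T_2$. In particular $\sigma_k$ is not ``a deterministic constant'': it is a random, $\cF_t$--measurable quantity, and the conditioning is on $\cF_t$, not on $\cF_{T_k}$, so the simple Markov property of $\tilde A$, $\tilde B$ at a deterministic time does not apply directly. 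This is exactly where the paper's proof does its real work: it establishes the refined strong Markov statement of lemma~\ref{aint_lemi} (proved via a Laplace/Fourier transform argument), which allows evaluation at $r=T_k$ of expressions of the type $E_{X_{T_k}}\bigl(\vp(r+H_c)\,G_r\bigr)1_{[0,s_1]}(r)$; it works on the $\cap$--stable generator of $\cF_t\cap\{T_k\le t\}$ from lemma~\ref{aint_lemiii}; inside, it uses the Markov property of the killed process (lemma~\ref{int_lemv}) together with the identity $\{t-u<H_{c^*}\le s+t-u\}=\{H_{c^*}=t-u+H_{c^*}\comp\theta_{t-u},\,0<H_{c^*}\comp\theta_{t-u}\le s,\,t-u<H_{c^*}\}$ at the random elapsed time $t-u$, and only then evaluates at $u=T_k$ and applies lemma~\ref{aint_lemi} once more for the fold-back at $T_1$, $T_2$. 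So your plan is the right one, but the steps you attribute to lemma~\ref{int_lemiv} are not justified by that lemma as stated; they require the appendix machinery (or an equivalent conditioning argument), and that is where most of the proof actually lives.
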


Now we are ready to prove theorem~\ref{int_thmiii}. Write
\begin{align*}
    E_x\bigl(f(X_{s+t})\cond \cF_t\bigr)
&= \sum_{k=0}^\infty E_x\bigl(f(X_{s+t});\,T_k\le t < T_{k+1}\cond \cF_t\bigr)\\&=
\sum_{k=0}^\infty E_x\bigl(f(X_{s+t})\cond \cF_t\bigr)\,1_{\{T_k\le t <
T_{k+1}\}},
\end{align*}
and therefore it is sufficient to prove the formula
\begin{equation}    \label{int_eqxi}
E_x\bigl(f(X_{s+t})\cond \cF_t\bigr) = E_{X_t}\bigl(f(X_s)\bigr),\qquad
\text{$P_x$--a.s.}
\end{equation}
on each of the events $\{T_k\le t< T_{k+1},\, X_t\ne \gD\}$, $k\in\N_0$. For $k=0$
this is being taken care of by lemma~\ref{int_lemvi}. Hence we consider $k\in\N$
from now on. On each of the events $\gL_k=\{T_k\le t < T_{k+1}\}$ we have $T_1\le t$. This
implies $\cF_{T_1}\cap \gL_k\subset \cF_t\cap\gL_k$, and that $X_{T_1}$ is
$\cF_t$--measurable. This is so, because it is $\cF_{T_1}$--measurable (e.g.,
\cite[Proposition~I.4.9]{ReYo91}, which can be applied because $X$ being $\cF$-adapted
and having right continuous paths is progressively measurable relative to $\cF$).
Therefore we get on $\{T_k\le t <T_{k+1},\,X_t\ne \gD\}$
\begin{align*}
    E_x\bigl(f(&X_{s+t})\cond \cF_t\bigr)\\
        &= E_x\bigl(f(X_{s+t});\,X_{T_1}=0\cond \cF_t\bigr)
                +  E_x\bigl(f(X_{s+t});\,X_{T_1}=1\cond \cF_t\bigr)\\
        &= E_x\bigl(f(X_{s+t})\cond \cF_t\bigr)\,1_{\{X_{T_1}=0\}}
                +  E_x\bigl(f(X_{s+t})\cond \cF_t\bigr)\,1_{\{X_{T_1}=1\}}.
\end{align*}
Hence it is sufficient to prove equality~\eqref{int_eqxi} on each of the events
$\{T_k\le t < T_{k+1},\,X_t\ne \gD\}\cap \{X_{T_1}=0\}$, and
$\{T_k\le t < T_{k+1},\,X_t\ne \gD\}\cap \{X_{T_1}=1\}$, $k\in\N$. We shall only
consider the first of these under the additional assumption that $k$ is odd. The
case when $k$ is even, and the second term above can be treated in the same manner.

Fix $k\in\N$, $k$ odd. On $\{T_k\le t < T_{k+1},\,X_t\ne \gD\}\cap \{X_{T_1}=0\}$
we write
\begin{equation*}
\begin{split}
        E_x\bigl(f(X_{s+t})\cond \cF_t\bigr)
            = E_x\bigl(f(X_{s+t});\,&T_k\le s+t < T_{k+1}\cond \cF_t\bigr)\\
               & + E_x\bigl(f(X_{s+t});\,T_{k+1}< s+t\cond \cF_t\bigr).
\end{split}
\end{equation*}
To the first of the terms on the right hand side we apply lemma~\ref{int_lemvii}, to
the second lemma~\ref{int_lemviii}, and we obtain ($P_x$--a.s.)
\begin{align*}
    E_x\bigl(f(&X_{s+t})\cond \cF_t\bigr)\\
		&= E_{X_t}\bigl(f(X_s);\,\{T_1\le s < T_2, X_{T_1}=0\}\uplus \{s<T_1\}\bigr)\\
		&\hspace{4em} + E_{X_t}\bigl(f(X_s);\,\{T_2\le s, X_{T_1}=0\}
                                    \uplus \{T_1\le s, X_{T_1}=1\}\bigr)\\
         &= E_{X_t}\bigl(f(X_s);\,\{s<T_1\}\uplus\{T_1\le s,X_{T_1}=0\}
                                    \uplus\{T_1\le s, X_{T_1}=1\}\bigr)\\
         &=  E_{X_t}\bigl(f(X_s);\,\{s<T_1\}\uplus\{T_1\le s\}\bigr)\\
         &= E_{X_t}\bigl(f(X_s)\bigr),
\end{align*}
which concludes the proof of theorem~\ref{int_thmiii}.

\subsection{Strong Markov Property and Generator}   \label{ssect_SMarkov}
In this section we prove the strong Markov property of $X$, and calculate
its generator.

\begin{lemma}	\label{int_lemix}
$X$ is a Feller process.
\end{lemma}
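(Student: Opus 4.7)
My plan is to verify the Feller property by first showing that the resolvent $(R_\lambda)_{\lambda>0}$ maps $C([0,1])$ into itself; strong continuity of the semigroup at $t=0$ then follows by the standard Hille--Yosida-type arguments (as in the proof of Theorem~6.1 in~\cite{Kn81}), based on the right-continuity of the paths of $X$ established in the construction.

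For $f\in C([0,1])$ and $\lambda>0$, I would split the resolvent at the stopping time $T_1$, applying the strong Markov property (lemma~\ref{int_lemiv}) in the form $E_x\bigl(e^{-\lambda T_1}\int_0^\infty e^{-\lambda s}f(X_{T_1+s})\,ds\bigr)=E_x\bigl(e^{-\lambda T_1}R_\lambda f(X_{T_1})\bigr)$:
\begin{equation*}
R_\lambda f(x) = E_x\biggl(\int_0^{T_1} e^{-\lambda t} f(X_t)\,dt\biggr) + E_x\bigl(e^{-\lambda T_1}\,R_\lambda f(X_{T_1})\bigr).
\end{equation*}
By the remark following lemma~\ref{int_lemi} (stating that $X$ killed at $T_1$ is equivalent to a standard Brownian motion killed at $\{0,1\}$), the first term equals at $x$ the resolvent of standard Brownian motion on $(0,1)$ with Dirichlet boundary conditions, which is a continuous function on $[0,1]$ vanishing at the endpoints. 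Since $X_{T_1}\in\{0,1\}$ almost surely, the second term equals
\begin{equation*}
E_x\bigl(e^{-\lambda T_1};\,X_{T_1}=0\bigr)\,R_\lambda f(0) + E_x\bigl(e^{-\lambda T_1};\,X_{T_1}=1\bigr)\,R_\lambda f(1),
\end{equation*}
where the coefficients coincide with the corresponding exit-time Laplace transforms for standard Brownian motion on $(0,1)$, given explicitly as the ratios $\sinh(\sqrt{2\lambda}(1-x))/\sinh(\sqrt{2\lambda})$ and $\sinh(\sqrt{2\lambda}x)/\sinh(\sqrt{2\lambda})$, both continuous on $[0,1]$, while $R_\lambda f(0)$ and $R_\lambda f(1)$ are finite constants bounded by $\|f\|_\infty/\lambda$. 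Adding the two contributions, $R_\lambda f\in C([0,1])$.

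For strong continuity of $(P_t)$ at $t=0$: pointwise convergence $P_t f(x)\to f(x)$ is immediate from the right-continuity of paths and bounded convergence. To upgrade to uniform convergence in $x$, one uses that $\mu R_\mu g\to g$ in sup norm for $g$ in the dense subspace $R_\nu\,C([0,1])\subset C([0,1])$, together with the contractive bound $\|P_t\|\le 1$ and the commutation $P_t R_\mu=R_\mu P_t$, via a routine $3\varepsilon$ argument.

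The main obstacle will be the uniformity of $\|P_t f-f\|_\infty\to 0$ near the boundary, where the dynamics switch qualitatively from standard Brownian motion (before $T_1$) to the Feller Brownian motions $A$ or $B$ (after $T_1$). This transition is controlled by the already-established Feller properties of $A$ and $B$ from the preceding sections, which yield uniform smallness of displacements over short time intervals regardless of the starting position.
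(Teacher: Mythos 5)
Your proposal follows essentially the same route as the paper: split $R_\lambda f$ at $T_1$ using the strong Markov property of lemma~\ref{int_lemiv}, identify the pre-$T_1$ part with the Dirichlet resolvent and the exit-time Laplace transforms of standard Brownian motion (the explicit $\sinh$ ratios), conclude $R_\lambda f\in C([0,1])$, and obtain strong continuity at $t=0$ from right continuity of paths plus the standard resolvent/density argument (which the paper cites from~\cite{BMMG1}). The only cosmetic difference is your worry about uniformity near the boundary in the last step: once the resolvent preserves $C([0,1])$ and $P_tf\to f$ pointwise, the upgrade to uniform convergence is purely functional-analytic and requires no further estimates on $A$ or $B$.
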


\begin{proof}	
It is well-known, that it is sufficient to prove~(i) that the resolvent of $X$
preserves $C([0,1])$, and~(ii) that for all $f\in C([0,1])$, $x\in[0,1]$,
$E_x\bigl(f(X_t)\bigr)$ converges to $f(x)$ as $t$ decreases to $0$.
(For example, a complete proof can be found in~\cite{BMMG1}.) Statement~(ii) immediately
follows by an application of the dominated convergence theorem and the fact
that $X$ is a normal process with right continuous paths.

Suppose that $f\in C([0,1])$, and consider the resolvent $R=(R_\gl,\,\gl>0)$ of $X$.
Because $X$ is a strong Markov process with respect to the stopping times $T_k$, $k\in\N$,
(cf.\ lemma~\ref{int_lemiv}) we get the first passage time formula (see, e.g.,
appendix~\ref{app_FPTF}) for $T_1$ in the form
\begin{align*}
 	R_\gl f(x)
		&= E_x\Bigl(\int_0^{T_1} e^{-\gl t} f(X_t)\,dt\Bigr)
			+ E_x\bigl(e^{-\gl T_1}\,R_\gl f(X_{T_1})\bigr)\\
		&= E_x\Bigl(\int_0^{T_1} e^{-\gl t} f(X_t)\,dt\Bigr)\\
		&\hspace{2em}	+ E_x\bigl(e^{-\gl T_1};\,X_{T_1}=0\bigr)\,R_\gl f(0)
			+ E_x\bigl(e^{-\gl T_1};\,X_{T_1}=1\bigr)\,R_\gl f(1).
\end{align*}
Lemma~\ref{int_lemv} states that $X$ stopped at $T_1$ is equivalent to a standard
Brownian motion $B$ which is stopped when reaching any of the boundary points of the
interval $[0,1]$. Therefore we get for $x\in (0,1)$
\begin{equation}	\label{eq4i}
\begin{split}
 	R_\gl f(x)
        = R^D_\gl f(x) &+ E_x\bigl(e^{-\gl H^B_0};\,H^B_0<H^B_1\bigr)\,R_\gl f(0)\\
					 &+ E_x\bigl(e^{-\gl H^B_1};\,H^B_1<H^B_0\bigr)\,R_\gl f(1),
\end{split}
\end{equation}
where $H^B_c$, $c=0$, $1$, denotes the hitting time of $c$ by $B$, and $R^D$
denotes the resolvent of $B$ with Dirichlet boundary conditions at $0$, $1$
(which is equal to the resolvent of $X$ with Dirichlet boundary conditions
at $0$, $1$, so that the notation is consistent). All expressions above are
known explicitly (see, e.g., \cite{DyJu69, ItMc74}): $R_\gl^D$ has the integral kernel
\begin{equation*}
 	r_\gl^D(x,y)
		= \frac{1}{\sgl}\sum_{k\in\Z}\Bigl(e^{-\sgl |x-y+2k|}
				- e^{-\sgl |x+y+2k|}\Bigr),
\end{equation*}
while
\begin{align*}
	E_x\bigl(e^{-\gl H^B_0};\,H^B_0<H^B_1\bigr)
		&= \frac{\sinh\bigl(\sgl(1-x)\bigr)}{\sinh\bigl(\sgl\bigr)},\\	
	E_x\bigl(e^{-\gl H^B_1};\,H^B_1<H^B_0\bigr)
		&= \frac{\sinh\bigl(\sgl x\bigr)}{\sinh\bigl(\sgl\bigr)}.	
\end{align*}
It is now obvious that $R_\gl$ maps $C([0,1])$ into itself, and the proof
is finished.
\end{proof}

Now we can apply standard results (cf., e.g, \cite[Theorem~III.3.1]{ReYo91}, or
\cite[Theorem~III.15.3]{Wi79}) to obtain

\begin{corollary}	\label{int_corx}
$X$ is a strong Markov process relative to its natural filtration. In particular,
$X$ is a Brownian motion on $[0,1]$ in the sense of definition~\ref{def_BM}.
\end{corollary}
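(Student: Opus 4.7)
The plan is to combine Lemma~\ref{int_lemix} with a standard Feller-process result, and then check that each clause of Definition~\ref{def_BM} has already been verified in the course of the construction.

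First, Lemma~\ref{int_lemix} establishes that $X$ is a Feller process on $[0,1]$. Since by construction $X$ has right continuous paths (all paths lie in $\CDR$), a standard theorem (for example \cite[Theorem~III.3.1]{ReYo91} or \cite[Theorem~III.15.3]{Wi79}) immediately yields the strong Markov property relative to the (usual augmentation of the) natural filtration $\cF^X$. This takes care of the first assertion.

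For the second assertion I would then run through Definition~\ref{def_BM} point by point. Normality, i.e.\ $P_x(X_0=x)=1$, follows from the construction of $Y$: under $Q_x$ we have $Y_0=O^0_0=x$, and $P_x$ is the image of $Q_x$ under $Y$. The path regularity requirements — right continuity on $\R_+$ and continuity up to the lifetime — are built into the path space $\gO=\CDR$ into which $Y$ takes values, so they hold $P_x$--a.s.\ for every $x\in[0,1]$. The strong Markov property on $[0,1]\cup\{\gD\}$ is what the first part of the corollary just gave us.

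Finally, the equivalence of $\hat X$ (the process $X$ with absorption at the endpoints of $[0,1]$) with $\hat B$ (standard Brownian motion absorbed at $\{0,1\}$) was already observed at the end of Subsection~\ref{ssect_constr}: by Lemma~\ref{int_lemi} applied with $R_1=T_1$, the process $X$ stopped at $T_1$ is equivalent under $P_x$ to $Y$ stopped at $S_1=\rho^0$, and $Y$ on $[0,S_1]$ coincides with $O^0$, which under $Q^0_x$ is a standard Brownian motion started at $x$ and stopped upon hitting $\{0,1\}$. Combining these observations completes the verification. I do not foresee any real obstacle here; the substantive work — the Markov property, the Feller property, and the identification of the absorbed process with absorbed Brownian motion — has already been carried out in the preceding lemmas, so this corollary is essentially a bookkeeping step.
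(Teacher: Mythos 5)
Your proposal is correct and follows essentially the same route as the paper: the paper likewise deduces the strong Markov property from Lemma~\ref{int_lemix} via the standard Feller-process results in \cite{ReYo91, Wi79}, with the remaining clauses of Definition~\ref{def_BM} (normality, path regularity, and the equivalence of the absorbed process with absorbed Brownian motion from Lemma~\ref{int_lemv} and the end of Subsection~\ref{ssect_constr}) being the same bookkeeping you describe.
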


\begin{remark}	\label{int_remxi}
As discussed in, e.g., \cite{KaSh91, ReYo91,
Wi79}, on the basis of corollary~\ref{int_corx} (and the path properties of $X$) one
can prove that $X$ is strongly Markovian relative to the canonical right continuous
augmentation of its natural filtration.
\end{remark}

It remains to calculate the domain of the generator of $X$. We only discuss the
boundary point $0$, the arguments for $1$ are similar. From lemma~\ref{int_lemv} we
know that $X$ with killing at $1$ is equivalent to $A$ with killing at $1$.
Therefore, starting in $0$, if $A$ has $0$ as a trap, then so has $X$, and if $A$
jumps from $0$ to $\gD$ after an exponential holding time, then the same is true for
$X$. The remaining case is the one, where $A$ and hence $X$ leave $0$ immediately
(into $(0,1)$), and in this case we can use Dynkin's formula to compute the
generator of $X$, because it is strongly Markovian (corollary~\ref{int_corx}). But
this only involves the behavior of $X$ before leaving an arbitrarily small
neighborhood of $0$, and hence at $0$ the generator of $X$ is computed in the same
way as for $X$ with killing at $1$, that is for the process $A$. Thus we have proved

\begin{theorem}	\label{int_thmxii}
$X$ is a Brownian motion on $[0,1]$ in the sense of definition~\ref{def_BM},
and its generator is the second derivative on $C^2([0,1])$ with boundary
conditions at $0$ and $1$ given by~\eqref{Wbc_0} and \eqref{Wbc_1} respectively.
\end{theorem}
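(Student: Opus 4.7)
The plan is as follows. By Corollary~\ref{int_corx}, $X$ is already known to be a Brownian motion on $[0,1]$ in the sense of Definition~\ref{def_BM}, so Theorem~\ref{Feller-bc}(b) applies: the generator $A^X$ has the form $A^X f = \tfrac{1}{2} f''$ on a subdomain $\cD(A^X) \subset C^2([0,1])$ uniquely determined by boundary conditions of the form \eqref{Wbc_0} and \eqref{Wbc_1} with some normalized coefficient vectors $(a_v^X, b_v^X, c_v^X)$, $v = 0, 1$. The task thus reduces to identifying these coefficients with those that were prescribed in the construction of the auxiliary processes $A$ (at the endpoint~$0$) and $B$ (at the endpoint~$1$).

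By symmetry I would treat only the endpoint~$0$; the analogous argument at~$1$ uses $B$ in place of $A$. The central tool is Lemma~\ref{int_lemv}, which asserts that the process $X$ killed at $1$ is equivalent under $P_x$ to $A$ killed at $1$ under $\mu_x$. Consequently, for any $\gep \in (0,1)$ and starting from $0$, the exit pair $(\tau_\gep^X, X_{\tau_\gep^X})$, recording when and where $X$ leaves $[0,\gep)$, has the same joint distribution as the corresponding pair $(\tau_\gep^A, A_{\tau_\gep^A})$ for $A$. This is because both exits occur strictly before the first hitting time of $1$, so the joint law is entirely determined by the killed process.

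Next I would split into cases according to how $A$ behaves at $0$. If $0$ is a trap for $A$, the equivalence forces $X$ to remain at $0$ forever starting from $0$, hence $A^X f(0) = 0$ for every $f\in\cD(A^X)$, which is the pure Wentzell boundary condition. If $A$ jumps from $0$ to $\gD$ after an exponential holding time (the subcase $b_0 = 0$ treated in the introduction), the equivalence transports this behaviour verbatim to $X$, and the semigroup identity $U_t f(0) = e^{-\beta t}f(0)$ immediately yields the corresponding special case of \eqref{Wbc_0}. In the generic case $b_0 \neq 0$, the process $A$ leaves $0$ immediately, hence so does $X$; Dynkin's formula is available for the strongly Markovian $X$ by Corollary~\ref{int_corx}, and reads
\[
A^X f(0) = \lim_{\gep \downarrow 0}
\frac{E_0\bigl(f(X_{\tau_\gep^X})\bigr) - f(0)}{E_0(\tau_\gep^X)}.
\]
By the distributional identity above, the right-hand side coincides with the Dynkin expression at $0$ for the process $A$ applied to any $C_0^2([0,+\infty))$ extension of $f$ agreeing with $f$ near $0$. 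Since the analyses in Sections~\ref{sect_el}--\ref{sect_gen} already showed that this limit equals $\tfrac{1}{2} f''(0+)$ precisely when \eqref{Wbc_0} holds with the coefficients prescribed for $A$, the same boundary condition must characterize $\cD(A^X)$ at $0$.

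The main obstacle, in my view, is the unified bookkeeping across all parameter regimes --- including the degenerate trap and exponential-kill subcases --- so as to conclude a single statement without having to repeat the Dynkin calculations of Theorems~\ref{el_thm1}, \ref{st_thm5} and~\ref{gen_thm3}. The remaining work is cosmetic: enforcing the normalization $a_v + b_v + c_v = 1$ to match the parametrization of Theorem~\ref{Feller-bc}, and repeating the argument verbatim at the endpoint~$1$ via $B$ and the second half of Lemma~\ref{int_lemv}.
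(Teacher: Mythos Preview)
Your proposal is correct and follows essentially the same approach as the paper: invoke Lemma~\ref{int_lemv} to transfer the local behaviour at~$0$ from $A$ to $X$, split into the trap, exponential-holding, and generic cases, and in the last case appeal to Dynkin's formula (available via Corollary~\ref{int_corx}) whose locality means the computation coincides with that already done for~$A$. Your write-up is slightly more explicit than the paper's --- you spell out the distributional identity for the exit pair $(\tau_\gep, X_{\tau_\gep})$ and mention the extension of~$f$ --- but the structure and key ideas are identical.
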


\begin{appendix}
\section{Killing With A PCHAF}  \label{app_killing}
This appendix provides a short account on the killing of a Markov process
$X=(X_t,\,t\ge 0)$ via a perfect continuous homogeneous additive functional (PCHAF)
$A=(A_t,\,t\ge 0)$ of the process. Killing via the Feynman--Kac functional
associated with $A$ has been treated in detail for example in the books by
Blumenthal--Getoor~\cite{BlGe68} and Dynkin~\cite{Dy65a}. Here we use a mechanism
which has also been employed in~\cite{Kn81, KaSh91}, and which is slightly different
--- in a sense somewhat more explicit than the method of the Feynman--Kac functional
just mentioned. In particular, we shall show how the simple and the strong
Markov properties are preserved under this way of killing. This has also been argued
in~\cite{Kn81} for the case of Brownian motions, but we find the arguments given
there quite difficult to follow. This appendix is based on the treatment in~\cite{BlGe68}
of killing with the Feynman--Kac weight which, however, needs a number of
non-trivial modifications.

Throughout this appendix we shall use the terminology and notations
of~\cite{BlGe68}, except where otherwise indicated. (However, in order to be
consistent with the other parts of this paper, there will be a few trivial
variations in the notation, such as changing some superscripts into subscripts etc.)

Assume that $X=(\gO,\cM,\cM_t,X_t,\theta_t, P_x)$ is a temporally homogeneous Markov
process with state space $(E,\cE)$ as in section~I.3 of~\cite{BlGe68}. It will be
convenient --- and without loss of generality for the purposes of this paper --- to
assume that for all $x\in E$, $X$ has $P_x$--a.s.\ infinite lifetime. The natural
filtration of $X$ is denoted by $\cF^0=(\cF^0_t,\,t\ge 0)$, its universal
augmentation is denoted by $\cF=(\cF_t,\,t\ge 0)$. In particular we have for all
$t\ge 0$, $\cF^0_t\subset \cM_t$. We set $\cF^0_{\infty}
= \gs\bigl(\cF^0_t,\,t\in\R_+\bigr)$, and similarly for $\cF_{\infty}$ and
$\cM_{\infty}$.

For the definition of a PCHAF we take the point of view of Williams~\cite{Wi79},
which has the advantage that the meaning of the word ``perfect'' is valid
in the sense of both, Dynkin~\cite{Dy65a} (i.e., the additive functional is adapted to $X$),
and of Blumenthal--Getoor~\cite{BlGe68} (i.e., the exceptional set where the additivity
relation does not hold does not depend on the time variables):

\begin{definition}  \label{a_kill_def1}
A \emph{perfect continuous homogeneous additive functional} (PCHAF) of $X$ is an
$\cF$--adapted process $A$ with values in $\R_+$ such that for some set
$\gO_0\in\cM$ with $P_x(\gO_0)=1$ for all $x\in E$, the following two properties
hold for all $\go\in\gO_0$:
\begin{enum_i}
    \item $t\mapsto A_t(\go)$ is continuous from $\R_+$ into itself, non-decreasing,
            and $A_0(\go)=0$;
    \item for all $s$, $t\ge0$, $A_{s+t}(\go) = A_s(\go) + A_t\comp \theta_s(\go)$.
\end{enum_i}
\end{definition}

There will be no loss of generality for the sequel to assume that $\gO_0=\gO$: for
example, for the purposes of this note one could put $A_t =0$, for all $t\ge 0$, on
$\complement\gO_0$. Moreover, we make the convention that for all $\go\in\gO$,
$A_{+\infty}(\go)=+\infty$.

\begin{remark}  \label{a_kill_rem2}
Since $t\mapsto A_t(\go)$ is continuous on $\R_+$ for all $\go\in\gO$, we find that
$A$ is $\cB(\Rbp)\otimes\cA / \cB(\Rbp)$--measurable. Therefore, if $T$ is any
random time, i.e., a random variable with values in $\Rbp$, then $A_T(\go) =
A_{T(\go)}(\go)$ is a well defined random variable. Moreover,  we also get the
additivity relation in the form
\begin{equation}    \label{a_kill1}
    A_{t+T(\go)}(\go)
        = A_{T(\go)}(\go) + A_t\comp \theta_{T(\go)}(\go),\qquad \go\in\gO,\,t\in\R_+.
\end{equation}
\end{remark}

\begin{examples}    \label{a_kill_ex3}
Consider a standard Brownian family $B=(B_t,\,t\ge 0)$ on $\R^d$, $d\in\N$, and
suppose that $B$ has exclusively continuous paths. For example, one could realize
$B$ as the canonical coordinate mapping on Wiener space. Assume that $V$ is any
non-negative, measurable function on $\R^d$. Consider
\begin{equation}    \label{a_kill2}
    A_t = \int_0^t V(B_s)\,ds,\qquad t\ge 0.
\end{equation}
Then $A$ is obviously a PCHAF. Moreover, in the literature (e.g., \cite{ItMc74,
Kn81, KaSh91, ReYo91}) it is proved that for $d=1$ and every $x\in\R$, the local
time $L_t(x)$, $t\ge 0$, of $B$ in $x$ is a PCHAF.
\end{examples}

\subsection{Killing} \label{aappZi}
Define a new family of filtered probability spaces as follows.
\begin{equation}    \label{a_kill3}
    \hgO = \gO\times\Rbp,
\end{equation}
and denote a typical element in $\hgO$ by $\hgo = (\go,s)$, $\go\in\gO$, $s\in\Rbp$.
Define the following $\gs$--algebras on $\hgO$:
\begin{equation}    \label{a_kill4}
    \hcM = \cM\otimes\cB(\Rbp),
\end{equation}
and
\begin{equation}    \label{a_kill5}
    \hcM^0 = \cM\times \Rbp.
\end{equation}
In order to avoid that our notation becomes too unwieldy, we continue to denote the
canonical extension of any random variable $Z$ defined on $(\gO,\cM)$ to
$(\hgO,\hcM)$ by the same symbol, i.e., we simply set $Z(\hgo) = Z(\go)$, for
$\hgo=(\go,s)\in\hgO$. Clearly, for every $t\ge 0$ the extensions of $X_t$ and $A_t$
are in $\hcM^0/\cE$ and $\hcM^0/\cB(\R_+)$, respectively.

For $x\in E$ set
\begin{equation}    \label{a_kill6}
    \hPx = P_x\otimes P_e,
\end{equation}
where $P_e$ is the exponential law on $(\Rbp, \cB(\Rbp))$, i.e., it has density
$\exp(-s)\,1_{\R_+}(s)$, $s\in\Rbp$. Furthermore for $t\ge 0$, $\hgo\in\hgO$,
$\hgo=(\go,s)$, define
\begin{equation}    \label{a_kill7}
    \hth_t(\hgo) = \bigl(\theta_t(\go),s-A_t(\go)\land s\bigr).
\end{equation}
Then we have for all $s$, $t\ge 0$, $X_s\comp \hth_t = X_{s+t}$, and
$A_{s+t} = A_t + A_s\comp \hth_t$.
Moreover, if we set
\begin{equation}    \label{a_kill8}
    S(\hgo) = s,\qquad \hgo=(\go,s)\in\hgO
\end{equation}
then for all $t\ge 0$
\begin{equation}    \label{a_kill9}
    S\comp \hth_t = S - A_t\land S.
\end{equation}

Define
\begin{equation} \label{a_kill10}
    \zeta = \inf\,\{t\ge 0,\, A_t > S\}.
\end{equation}

Set
\begin{equation}    \label{a_kill11}
    \hX_t = \begin{cases}
              X_t,  &\text{if $t<\zeta$},\\
              \gD,  &\text{if $t\ge \zeta$},
            \end{cases}
\end{equation}
where again $\gD$ is a cemetery point.

Finally, for every $t\ge 0$ we define a family $\hcM_t\subset\hcM$ of subsets of $\hgO$
as follows. A set $\hgL\in\hcM$ belongs to $\hcM_t$ if and only if there
exists a set $\gL_t\in\cM_t$ so that
\begin{equation}    \label{a_kill12}
    \hgL \cap \zt = (\gL_t\times\Rbp) \cap \zt.
\end{equation}
If $\hgL=\{\zeta>t\}$, $t\ge 0$, we only need to choose $\gL_t=\gO\in\cM_t$ to see
that $\{\zeta>t\}\in\hcM_t$, i.e., by construction of $(\hcM_t,\,t\ge 0)$, $\zeta$
is a stopping time relative to $(\hcM_t,\,t\ge 0)$.

It is straightforward to show the following.
\begin{lemma}   \label{a_kill_lem4}
$(\hcM_t,\,t\ge 0)$ is a filtration of sub--$\gs$--algebras of $\hcM$, and $\hX$
is adapted to this filtration. Furthermore, if $(\cM_t,\,t\ge 0)$ is right
continuous then so is $(\hcM_t,\,t\ge 0)$.
\end{lemma}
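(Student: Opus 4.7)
The plan is to verify in turn: (a) each $\hcM_t$ is a $\gs$--algebra; (b) $\hcM_s\subset\hcM_t$ for $s\le t$; (c) $\hX$ is $(\hcM_t)$--adapted; (d) right continuity of $(\hcM_t)$ under the stated hypothesis on $(\cM_t)$. Parts (a)--(c) are bookkeeping based on the defining relation~\eqref{a_kill12}; only~(d) requires a genuine idea.

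For~(a), taking $\gL_t=\gO$ shows $\hgO\in\hcM_t$, and if $\hgL,\hgL_n\in\hcM_t$ are witnessed by $\gL_t,\gL_t^{(n)}\in\cM_t$, then $\gL_t^c$ witnesses $\hgL^c$ and $\bigcup_n\gL_t^{(n)}$ witnesses $\bigcup_n\hgL_n$, since intersection with $\zt$ distributes over complements and countable unions. For~(b), whenever $s\le t$ the inclusion $\zt\subset\{\zeta>s\}$ shows that any $\cM_s$--witness for $\hgL\in\hcM_s$ is also a $\cM_t$--witness for $\hgL\in\hcM_t$. For~(c), $\{\hX_t=\gD\}=\{\zeta\le t\}=\zt^c\in\hcM_t$, while for $B\in\cE$ the relation $\{\hX_t\in B\}=\{X_t\in B\}\cap\zt$ exhibits $\{X_t\in B\}\in\cM_t$ as a witness.

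The substantive step is~(d). Assume $(\cM_t)$ is right continuous, and let $\hgL\in\bigcap_{n\in\N}\hcM_{t+1/n}$ with witnesses $\gL_n\in\cM_{t+1/n}$. The candidate witness in $\cM_t$ is
\begin{equation*}
    \gL_\ast=\liminf_{n\to\infty}\gL_n=\bigcup_{n\in\N}\bigcap_{m\ge n}\gL_m.
\end{equation*}
The family $\bigcap_{m\ge n}\gL_m$ is increasing in $n$, so for every $k$ one has $\gL_\ast=\bigcup_{n\ge k}\bigcap_{m\ge n}\gL_m$, a countable union of members of $\cM_{t+1/k}$ (for $n\ge k$ and $m\ge n$, $\gL_m\in\cM_{t+1/m}\subset\cM_{t+1/k}$). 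Hence $\gL_\ast\in\bigcap_k\cM_{t+1/k}=\cM_t$ by right continuity.

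It remains to show $\hgL\cap\zt=(\gL_\ast\times\Rbp)\cap\zt$, using the identity $\zt=\bigcup_n\{\zeta>t+1/n\}$ immediate from the definition of $\zeta$. Given $\hgo=(\go,s)\in\zt$, choose $N$ with $\zeta(\hgo)>t+1/N$; then $\hgo\in\{\zeta>t+1/n\}$ for every $n\ge N$. If $\hgo\in\hgL$, the individual witness relations force $\go\in\gL_n$ for all such $n$, so $\go\in\bigcap_{n\ge N}\gL_n\subset\gL_\ast$. Conversely, if $\go\in\gL_\ast$ one can pick $M\ge N$ with $\go\in\gL_M$, whence $\hgo\in(\gL_M\times\Rbp)\cap\{\zeta>t+1/M\}=\hgL\cap\{\zeta>t+1/M\}\subset\hgL$. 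The main obstacle throughout is this $\liminf$ construction: no individual $\gL_n$ is a witness at time $t$, and right continuity of $(\cM_t)$ is exactly what is needed to promote the set-theoretic $\liminf$ back into $\cM_t$.
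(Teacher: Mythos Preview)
Your proof is correct. The paper omits the proof entirely, stating only that ``it is straightforward to show the following,'' so there is no argument to compare against; your write-up supplies precisely the details the authors left to the reader, and the $\liminf$ construction you use for right continuity is the natural way to produce a $\cM_t$--witness from the sequence of $\cM_{t+1/n}$--witnesses.
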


In the sequel we suppose that $T$ is a random time defined on $(\gO,\cM)$, which as
above is extended to $(\hgO,\hcM)$ in the canonical way. Let us define
\begin{equation}\label{a_kill13}
    \bigl\{S\ge A_{T+}\bigr\} = \bigcup_{\gep>0} \bigl\{S\ge A_{T+\gep}\bigr\}.
\end{equation}
The proof of the following lemma is routine and therefore omitted.

\begin{lemma}   \label{a_kill_lem5}
$A_\zeta = S$ on the event $\{\zeta<+\infty\}$. Moreover, the following relations
hold true:
\begin{align}
    \{\zeta\ge T\} &= \{S\ge A_T\} \label{a_kill14},\\
    \{\zeta>T\}    &= \{S \ge A_{T+}\}\cap\{T<+\infty\} \label{a_kill15}.
\end{align}
\end{lemma}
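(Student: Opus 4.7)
The plan is to establish all three assertions pathwise, working on a fixed $\hat\go\in\hgO$. The central observation is that, by continuity of $t\mapsto A_t(\hat\go)$, the preimage $\{t\ge 0:A_t>S\}$ is an open subset of $[0,+\infty)$, and $\zeta$ is by definition its infimum. All three identities are then elementary consequences of this structure together with the monotonicity of $A$.

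I would begin with the identity $A_\zeta=S$ on $\{\zeta<+\infty\}$. Since $\{A_\cdot>S\}$ is open in $[0,+\infty)$, the point $\zeta$ itself does not belong to it, whence $A_\zeta\le S$. For the reverse inequality one picks, by definition of the infimum, a sequence $t_n\downarrow\zeta$ with $A_{t_n}>S$; continuity of $A$ at $\zeta$ then yields $A_\zeta\ge S$.

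Next I would prove~\eqref{a_kill14}. For the inclusion ``$\subset$'': if $\zeta\ge T$ and $\zeta<+\infty$, then monotonicity of $A$ together with Step~1 gives $A_T\le A_\zeta=S$; if instead $\zeta=+\infty$, then $A_t\le S$ for every finite $t$, so on $\{T<+\infty\}$ one obtains $A_T\le S$. For ``$\supset$'': if $S\ge A_T$ then by monotonicity $A_t\le A_T\le S$ for every $t\le T$, so no such $t$ lies in $\{A_\cdot>S\}$, and consequently $\zeta\ge T$. The boundary case $T=+\infty$ is absorbed into the convention $A_{+\infty}=+\infty$.

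For~\eqref{a_kill15} the inclusion $\{\zeta>T\}\subset\{T<+\infty\}$ is automatic since $\zeta\le+\infty$. On $\{T<+\infty\}$ the condition $\zeta>T$ is equivalent to the existence of some $\gep>0$ with $T+\gep\le\zeta$ (take $\gep\le(\zeta-T)/2$ if $\zeta<+\infty$, and any $\gep>0$ otherwise). Applying~\eqref{a_kill14} to the finite random time $T+\gep$, this is in turn equivalent to $S\ge A_{T+\gep}$ for some $\gep>0$, which is exactly the definition~\eqref{a_kill13} of $\{S\ge A_{T+}\}$. Rather than a serious obstacle, the only bookkeeping point is to treat the case $T=+\infty$ in~\eqref{a_kill14} consistently with the convention $A_{+\infty}=+\infty$; beyond that the proof is a direct unpacking of definitions combined with the continuity and monotonicity of the paths of~$A$.
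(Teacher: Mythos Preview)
Your argument is correct and is exactly the routine pathwise verification the paper has in mind when it omits the proof. One small caveat: the claim that the infimum of an open subset of $[0,+\infty)$ does not belong to it can fail at the left endpoint (e.g., $[0,1)$), but here $A_0=0\le S$ ensures $0\notin\{t:A_t>S\}$, so the set is open in $\R$ and your conclusion $A_\zeta\le S$ stands.
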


Also the proof of the next lemma is left to the reader.
\begin{lemma}   \label{a_kill_lem6}
Let $T$ be a random time as above. Then on $\{\zeta\ge T\}$
\begin{equation}    \label{a_kill16}
    \zeta = T + \zeta\comp \hth_T.
\end{equation}
\end{lemma}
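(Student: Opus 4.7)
The plan is to unfold both sides of the claimed identity directly from the definition of $\hth_T$ and $\zeta$, use the additivity of the PCHAF $A$ to turn the shifted expression into an infimum over a translated time parameter, and finally appeal to the monotonicity of $A$ to identify the result with $\zeta$.

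Fix $\hgo=(\go,s)$ in $\{\zeta\ge T\}$. By the definition of $\hth_T$ in~\eqref{a_kill7}, we have
\begin{equation*}
    \hth_T(\hgo) = \bigl(\theta_{T(\hgo)}(\go),\,s - A_{T(\hgo)}(\go)\land s\bigr).
\end{equation*}
Lemma~\ref{a_kill_lem5} gives $S\ge A_T$ on $\{\zeta\ge T\}$, i.e.\ $s\ge A_{T(\hgo)}(\go)$, so that the minimum collapses and the second coordinate of $\hth_T(\hgo)$ equals $s-A_T(\hgo)$. Applying the definition~\eqref{a_kill10} of $\zeta$ to $\hth_T(\hgo)$ and using the additivity of $A$ (in the form $A_t\comp\theta_{T}=A_{T+t}-A_{T}$, valid for every random time $T$ by remark~\ref{a_kill_rem2}) I would write
\begin{equation*}
    \zeta\comp\hth_T(\hgo)
        = \inf\,\bigl\{t\ge 0:\,A_t\comp\theta_{T(\hgo)}(\go) > s-A_{T(\hgo)}(\go)\bigr\}
        = \inf\,\bigl\{t\ge 0:\,A_{T(\hgo)+t}(\go) > s\bigr\}.
\end{equation*}
After the substitution $u=T(\hgo)+t$ this becomes
\begin{equation*}
    T(\hgo)+\zeta\comp\hth_T(\hgo)
        = \inf\,\bigl\{u\ge T(\hgo):\,A_u(\go) > s\bigr\}.
\end{equation*}

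The remaining step is to check that on $\{\zeta\ge T\}$ this last infimum agrees with $\zeta(\hgo)=\inf\{u\ge 0:\,A_u(\go)>s\}$. Because $A_{\cdot}(\go)$ is non-decreasing and continuous, the set $V_\go=\{u\ge 0:\,A_u(\go)>s\}$ is either empty (in which case $\zeta(\hgo)=+\infty$) or of the form $(\zeta(\hgo),+\infty)$, with $A_{\zeta(\hgo)}(\go)=s$ by continuity (this is already recorded in lemma~\ref{a_kill_lem5}). If $\zeta(\hgo)=+\infty$, then $V_\go$ is empty, hence so is $V_\go\cap[T(\hgo),+\infty)$, and both sides equal $+\infty$ (with the convention that $T+\infty=+\infty$). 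If $\zeta(\hgo)<+\infty$, the hypothesis $\zeta(\hgo)\ge T(\hgo)$ implies $V_\go\subset(\zeta(\hgo),+\infty)\subset[T(\hgo),+\infty)$, whence $V_\go\cap[T(\hgo),+\infty)=V_\go$ and the two infima coincide. This yields~\eqref{a_kill16}.

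I expect the only delicate point to be the handling of the two extremal cases $\zeta=+\infty$ and $T=+\infty$, which require consistent arithmetic conventions in $\Rbp$; once the identity $A_t\comp\theta_T=A_{T+t}-A_T$ has been used and $A_T\land S=A_T$ has been secured from lemma~\ref{a_kill_lem5}, the argument is purely set-theoretic and reduces to the monotonicity of $A$.
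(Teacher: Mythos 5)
The paper leaves this lemma's proof to the reader, so there is no argument to compare against; your verification is correct and is exactly the routine one intended: secure $A_T\land S=A_T$ on $\{\zeta\ge T\}$ from lemma~\ref{a_kill_lem5}, use the additivity relation~\eqref{a_kill1} to rewrite $\zeta\comp\hth_T$ as $\inf\{t\ge 0:\,A_{T+t}>S\}$, and then observe that restricting the infimum defining $\zeta$ to $u\ge T$ changes nothing because $T\le\zeta$. In fact your final case distinction can be compressed: every element of $V_\go=\{u\ge 0:\,A_u(\go)>s\}$ is trivially $\ge\inf V_\go=\zeta(\hgo)\ge T(\hgo)$, so $V_\go\cap[T(\hgo),+\infty)=V_\go$ whether or not $V_\go$ is empty, and there is no need to invoke $A_\zeta=S$ or the precise interval structure of $V_\go$. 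The only caveat is the one you already flag: $\hth_T$ is literally defined only for finite $T$, which is harmless here since $T=+\infty$ forces $\zeta=+\infty$ on $\{\zeta\ge T\}$ and both sides are $+\infty$ under the paper's conventions.
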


\begin{lemma}   \label{a_kill_lem7}
Let $T$ be a random time as above, and let $x\in E$. Then
\begin{equation}    \label{a_kill17}
    \hPx\bigl(\zeta > T \cond \hcM^0\bigr) = e^{-A_T}
\end{equation}
holds true $\hPx$--a.s.
\end{lemma}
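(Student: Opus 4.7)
The plan is to exploit the product structure of $\hPx = P_x \otimes P_e$, together with the rewriting of $\{\zeta > T\}$ provided by Lemma~\ref{a_kill_lem5}, so that the conditional probability reduces to the tail of the standard exponential distribution.

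First I would use Lemma~\ref{a_kill_lem5} together with the continuity of the paths of $A$ to replace the event $\{\zeta > T\}$ by a more convenient one. Indeed, \eqref{a_kill15} gives $\{\zeta > T\} = \{S \ge A_{T+}\} \cap \{T < +\infty\}$; since $t \mapsto A_t(\go)$ is continuous for every $\go \in \gO$, we have $A_{T+} = A_T$ on $\{T < +\infty\}$, and hence
\begin{equation*}
    \{\zeta > T\} = \{S \ge A_T\} \cap \{T < +\infty\}.
\end{equation*}

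Next I observe that, because $\hcM^0 = \cM \times \Rbp$ consists precisely of those measurable sets in $\hgO$ that are cylinders over the first factor, a random variable on $(\hgO,\hcM)$ is $\hcM^0$-measurable if and only if it depends only on $\go$. In particular, the canonical extensions of $T$ and of $A_T$ are $\hcM^0$-measurable, while $S(\hgo) = s$ depends only on the second coordinate. Under the product law $\hPx = P_x \otimes P_e$ this makes $S$ independent of $\hcM^0$, with $\hPx(S \ge u) = e^{-u}$ for every $u \ge 0$. Applying Fubini's theorem (or the elementary computation of the product-measure conditional expectation with respect to one factor) to the $\hcM^0$-measurable non-negative random variable $Y := A_T$, we obtain
\begin{equation*}
    \hPx\bigl(\{S \ge A_T\} \cap \{T < +\infty\} \bigcond \hcM^0\bigr)
        = 1_{\{T<+\infty\}}\, e^{-A_T}, \qquad \hPx\text{-a.s.}
\end{equation*}

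Finally, the convention $A_{+\infty} = +\infty$ (made right after Definition~\ref{a_kill_def1}) ensures that $e^{-A_T} = 0$ on $\{T=+\infty\}$, so the indicator can be absorbed and the right-hand side simplifies to $e^{-A_T}$ on all of $\hgO$. This establishes \eqref{a_kill17}. The argument is essentially routine; the only subtleties are the appeal to path-continuity of $A$ to pass from $A_{T+}$ to $A_T$ on $\{T < +\infty\}$ and the careful identification of $\hcM^0$ with the first-factor $\gs$-algebra, which is exactly what makes $S$ independent of the conditioning $\gs$-algebra.
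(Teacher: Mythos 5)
Your strategy is in substance the same as the paper's: exploit the product structure, the $\hcM^0$--measurability of $A_T$, and the exponential tail of $S$ via Fubini (the paper runs this computation on cylinder sets $\gL\times\Rbp$ rather than through an explicit freezing lemma, which is cosmetic). The one step that does not hold as stated is the reduction $\{\zeta>T\}=\{S\ge A_T\}\cap\{T<+\infty\}$, which you deduce from path continuity of $A$. In \eqref{a_kill15} the expression $\{S\ge A_{T+}\}$ is not the event $\{S\ge\lim_{\gep\downarrow 0}A_{T+\gep}\}$; it is \emph{defined} in \eqref{a_kill13} as $\bigcup_{\gep>0}\{S\ge A_{T+\gep}\}$, i.e.\ the event that $A_{T+\gep}\le S$ for \emph{some} $\gep>0$. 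Continuity gives $A_{T+\gep}\downarrow A_T$, but it does not convert this union into $\{S\ge A_T\}$: if $S=A_T$ and $A$ increases strictly immediately after $T$, then $\zeta=T$, not $\zeta>T$. Concretely, for the PCHAF $A_t=t$ and a constant time $T=t_0$ one has $\zeta=S$, so $\{\zeta>T\}=\{S>t_0\}$ while $\{S\ge A_T\}\cap\{T<+\infty\}=\{S\ge t_0\}$; the asserted identity is false pointwise.

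The damage is confined to the event $\{S=A_T\}\cap\{T<+\infty\}$, which has $\hPx$--measure zero precisely by the independence and atomlessness you already invoke; since \eqref{a_kill17} is an a.s.\ statement, your computation survives once you add this one line (equality of the two events up to a $\hPx$--null set). Alternatively you can avoid the issue the way the paper does: keep the approximation, show for every cylinder $\hgL=\gL\times\Rbp$ with $\gL\subset\{T<+\infty\}$ (w.l.o.g.) that $\hPx\bigl(\{A_{T+\gep}\le S\}\cap\hgL\bigr)=E_x\bigl(e^{-A_{T+\gep}};\gL\bigr)$ by Fubini, and then let $\gep\downarrow 0$, using monotone convergence for the union in \eqref{a_kill13} and the continuity of $A$ (so that $e^{-A_{T+\gep}}\uparrow e^{-A_T}$) inside the expectation. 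In short: right mechanism, but the appeal to continuity has to act either through a null-set argument or at the level of the limit of expectations, not as a pointwise identity of the events in lemma~\ref{a_kill_lem5}.
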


\begin{proof}
Observe that the right hand side of equation~\eqref{a_kill17} is $\hcM^0$ measurable.
Let $\hgL\in \hcM^0$, i.e., $\hgL = \gL\times \Rbp$ with $\gL\in\cM$. Without loss
of generality we may assume that $\gL\subset \{T<+\infty\}$. Using Fubini's theorem,
lemma~\ref{a_kill_lem5}, and the continuity of $t\mapsto A_t(\go)$, $\go\in\gO$, on
$\R_+$, we can calculate in the following way:
\begin{align*}
    \hPx\bigl(\zt[T]\cap \hgL\bigr)
        &= \hPx\bigl(\{A_{T+}\le S\}\cap\hgL\bigr)\\[2ex]
        &= \lim_{\gep\da 0} \hPx\bigl(\{A_{T+\gep}\le S\}\cap\hgL\bigr)\\[2ex]
        &= \lim_{\gep\da 0} \int_{\gL\times\Rbp} 1_{[A_{T+\gep},+\infty]}\comp S\,
                                    d\bigl(P_x\otimes P_e\bigr)\\[2ex]
        &= \lim_{\gep\da 0} \int_\gL\Bigl(\int_{A_{T+\gep}(\go)}^\infty
                                e^{-s}\,ds\Bigr)\,dP_x(\go)\\[2ex]
        &= \lim_{\gep\da 0} E_x\bigl(e^{-A_{T+\gep}}; \gL\bigr)\\[2ex]
        &= E_x\bigl(e^{-A_T}; \gL\bigr)\\[2ex]
        &= \hEx\bigl(e^{-A_T}; \hgL\bigr).  \qedhere
\end{align*}
\end{proof}

Recall our convention that every function $f$ on $E$ is extended
to $E_\gD$ with $f(\gD)=0$.

\begin{corollary}[Feynman--Kac-formula]   \label{a_kill_cor8}
For every bounded measurable function $f$ on $E$, and all $t\ge 0$, $x\in E$, the
following formula holds true
\begin{equation}    \label{a_kill18}
    \hEx\bigl(f(\hX_t)\bigr) = E_x\bigl(f(X_t)\,e^{-A_t}\bigr).
\end{equation}
\end{corollary}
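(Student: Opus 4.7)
The plan is to reduce the claim directly to Lemma~\ref{a_kill_lem7} applied to the deterministic time $T=t$. First I would split the expectation according to whether the process has been killed:
\begin{equation*}
    \hEx\bigl(f(\hX_t)\bigr)
        = \hEx\bigl(f(\hX_t);\,\zeta>t\bigr) + \hEx\bigl(f(\hX_t);\,\zeta\le t\bigr).
\end{equation*}
On the event $\{\zeta \le t\}$ we have $\hX_t=\gD$ and hence $f(\hX_t)=f(\gD)=0$ by the convention of extending $f$ to $E_\gD$; so the second term vanishes. On the event $\{\zeta>t\}$, by construction \eqref{a_kill11}, we have $\hX_t=X_t$, so the remaining term equals $\hEx\bigl(f(X_t);\,\zeta>t\bigr)$.

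Next I would exploit the fact that after the canonical extension, $f(X_t)$ depends only on the $\gO$--coordinate and is therefore $\hcM^0$--measurable. Writing $\{\zeta>t\}=1_{\{\zeta>t\}}$ and conditioning on $\hcM^0$,
\begin{equation*}
    \hEx\bigl(f(X_t);\,\zeta>t\bigr)
        = \hEx\Bigl(f(X_t)\,\hPx\bigl(\zeta>t\cond \hcM^0\bigr)\Bigr)
        = \hEx\bigl(f(X_t)\,e^{-A_t}\bigr),
\end{equation*}
where in the last equality I would invoke Lemma~\ref{a_kill_lem7} with $T\equiv t$ (a constant, hence trivially a random time on $(\gO,\cM)$).

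Finally, since the random variable $f(X_t)\,e^{-A_t}$ is $\hcM^0$--measurable (both $X_t$ and $A_t$ depend only on the $\go$--coordinate), and since by the product structure \eqref{a_kill6} the restriction of $\hPx$ to $\hcM^0$ coincides with $P_x$ (with $P_e$ integrated out to~$1$), I would conclude
\begin{equation*}
    \hEx\bigl(f(X_t)\,e^{-A_t}\bigr) = E_x\bigl(f(X_t)\,e^{-A_t}\bigr),
\end{equation*}
giving~\eqref{a_kill18}. There is no real obstacle: the argument is essentially bookkeeping on the product space, and the only substantive ingredient, the Laplace transform identity $\hPx(\zeta>T\cond \hcM^0)=e^{-A_T}$, has already been established. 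The mild care needed is only to ensure the measurability of $f(X_t)\,e^{-A_t}$ with respect to $\hcM^0$ so that conditioning and the identification of the product measure apply cleanly.
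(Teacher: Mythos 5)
Your proposal is correct and follows essentially the same route as the paper: the killing event is absorbed via $f(\gD)=0$, Lemma~\ref{a_kill_lem7} is applied with $T=t$ after conditioning on $\hcM^0$, and the product structure of $\hPx$ identifies the resulting expectation with one under $P_x$. The paper's proof is just a more compressed version of the same bookkeeping.
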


\begin{proof}
Since $f(X_t)\in\hcM^0/\cB(\R)$, we get from lemma~\ref{a_kill_lem7} (with the choice
$T=t$)
\begin{align*}
    \hEx\bigl(f(\hX_t)\bigr)
        &= \hEx\bigl(f(X_t); \zeta>t\bigr)\\
        &= \hEx\bigl(f(X_t)\, \hPx\bigl(\zeta>t\cond\hcM^0\bigr)\bigr)\\
        &= \hEx\bigl(f(X_t)\,e^{-A_t}\bigr)\\
        &= E_x\bigl(f(X_t)\,e^{-A_t}\bigr). \qedhere
\end{align*}
\end{proof}

\subsection{Preservation of the Markov Property} \label{aappZii}
Now we prove the
\begin{theorem} \label{a_kill_thm9}
$\hX$ is a Markov process with respect to $(\hcM_t,\,t\ge 0)$.
\end{theorem}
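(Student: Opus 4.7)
The plan is to verify the defining identity
\[
\hat E_x\bigl(f(\hat X_{s+t});\hat\gL\bigr) = \hat E_x\bigl(\hat E_{\hat X_t}(f(\hat X_s));\hat\gL\bigr)
\]
for every bounded measurable $f$ on $E_\gD$, every $s,t\ge 0$, $x\in E$, and every $\hat\gL\in\hcM_t$. By the definition of $\hcM_t$, such an $\hat\gL$ admits some $\gL_t\in\cM_t$ with $\hat\gL\cap\{\zeta>t\}=(\gL_t\times\Rbp)\cap\{\zeta>t\}$. The first simplification is to observe that on the complementary event $\{\zeta\le t\}$ both sides vanish trivially: on $\{\zeta\le t\}$ one has $\hat X_t=\hat X_{s+t}=\gD$, hence $f(\hat X_{s+t})=0$, and the factor $\hat E_{\hat X_t}(f(\hat X_s))=f(\gD)=0$ as well. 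So everything reduces to the event $\{\zeta>t\}$.

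First I would work on the left hand side. Since $\{\zeta>s+t\}\subset\{\zeta>t\}$, the restriction to $\hat\gL$ on $\{\zeta>t\}$ is interchangeable with restriction to $\gL_t\times\Rbp$, giving
\[
\hat E_x\bigl(f(\hat X_{s+t});\hat\gL\bigr)
   = \hat E_x\bigl(f(X_{s+t});\,\gL_t\times\Rbp,\,\zeta>s+t\bigr).
\]
Now I apply Lemma~\ref{a_kill_lem7} with $T=s+t$: conditioning on $\hcM^0$ and using $\hat P_x(\zeta>s+t\cond\hcM^0)=e^{-A_{s+t}}$ together with the fact that $f(X_{s+t})1_{\gL_t}$ is $\hcM^0$-measurable, the right side collapses to $E_x\bigl(f(X_{s+t})e^{-A_{s+t}};\gL_t\bigr)$. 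At this point I invoke the additivity of $A$ in the form $A_{s+t}=A_t+A_s\comp\theta_t$, together with $X_{s+t}=X_s\comp\theta_t$, and then the (simple) Markov property of $X$ with respect to $\cM_t$, to rewrite this as
\[
E_x\Bigl(e^{-A_t}\,E_{X_t}\bigl(f(X_s)e^{-A_s}\bigr);\gL_t\Bigr).
\]

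Next I process the right hand side of the target identity the same way. On $\{\zeta>t\}$ one has $\hat X_t=X_t$, and restriction to $\hat\gL$ again reduces to restriction to $\gL_t\times\Rbp$ intersected with $\{\zeta>t\}$. Setting $g(y):=\hat E_y(f(\hat X_s))$, which by Corollary~\ref{a_kill_cor8} equals $E_y(f(X_s)e^{-A_s})$, and applying Lemma~\ref{a_kill_lem7} with $T=t$ to absorb $1_{\{\zeta>t\}}$ into the factor $e^{-A_t}$, I obtain
\[
\hat E_x\bigl(\hat E_{\hat X_t}(f(\hat X_s));\hat\gL\bigr)
   = E_x\Bigl(e^{-A_t}\,E_{X_t}\bigl(f(X_s)e^{-A_s}\bigr);\gL_t\Bigr),
\]
which agrees with the left hand side. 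The main obstacle is the bookkeeping in step three, namely translating $\hcM_t$-measurability of $\hat\gL$ into a genuine $\cM_t$-measurable representative $\gL_t$ on which the original Markov property of $X$ can be applied; this is precisely what the defining relation~\eqref{a_kill12} of $\hcM_t$ delivers. The mechanism that ``memorylessness of the exponential $S$'' makes the killing compatible with conditioning is encoded entirely in Lemma~\ref{a_kill_lem7}, and once that lemma is used twice — once for $T=s+t$ on the left and once for $T=t$ on the right — the identity follows from the Feynman–Kac formula and the Markov property of $X$.
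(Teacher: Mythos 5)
Your proposal is correct and follows essentially the same route as the paper's proof: it uses the defining relation~\eqref{a_kill12} to pass from $\hgL\in\hcM_t$ to a representative $\gL_t\in\cM_t$, applies lemma~\ref{a_kill_lem7} once with $T=s+t$ and once with $T=t$, and combines the Feynman--Kac formula (corollary~\ref{a_kill_cor8}), the additivity of $A$, and the Markov property of $X$ relative to $(\cM_t)$. The only cosmetic difference is that you reduce both sides to the common expression $E_x\bigl(e^{-A_t}E_{X_t}(f(X_s)e^{-A_s});\gL_t\bigr)$, whereas the paper transforms the left-hand side into the right-hand side in a single chain of equalities.
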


\begin{proof}
Let $f$ be a bounded, measurable function on $E$. We remark that~corollary~\ref{a_kill_cor8}
shows that $x\mapsto \hEx(f(\hX_t))$ is $\cE^*/\cB(\R)$--measurable, where
$\cE^*$ denotes the universal augmentation of $\cE$.

Let $x\in E$, $s$, $t\ge 0$. We have to prove that
\begin{equation}    \label{a_kill19}
    \hEx\bigl(f(\hX_{s+t})\cond \hcM_t\bigr)
        = \hat E_{\hX_t}\bigl(f(\hX_s)\bigr),\qquad \text{$\hPx$--a.s.}
\end{equation}
To this end, let $\hgL\in\hcM_t$, and choose $\gL_t\in\cM_t$ to be such that
$\hgL\cap\zt = (\LtR)\cap\zt$. Then we can compute in the following way:
\begin{align*}
    \hEx\bigl(f(\hX_{s+t}); \hgL\bigr)
        &= \hEx\bigl(f(X_{s+t}); \hgL\cap\zt[s+t]\bigr)\\
        &= \hEx\bigl(f(X_{s+t}); (\hgL\cap\zt)\cap\zt[s+t]\bigr)\\
        &= \hEx\bigl(f(X_{s+t}); (\LtR)\cap\zt[s+t]\bigr)\\
        &= \hEx\bigl(f(X_{s+t})\,1_{\LtR}\,\hPx(\zeta>s+t\cond\hcM^0)\bigr)\\
        &= \hEx\bigl(f(X_{s+t})\,1_{\LtR}\,e^{-A_{s+t}}\bigr)\\
        &= E_x\bigl(f(X_{s+t})\,e^{-A_{s+t}};\gL_t\bigr),
\end{align*}
where we made use of lemma~\ref{a_kill_lem7}. By the additivity of $A$ the last
expression is equal to
\begin{align*}
    E_x\bigl((f(X_s)\,e^{-A_s})\comp\theta_t\,e^{-A_t}; \gL_t\bigr)
        &= E_x\Bigl( E_x\bigl((f(X_s)\,e^{-A_s})\comp\theta_t\cond \cM_t\bigr)
                \,e^{-A_t}; \gL_t\Bigr)\\
        &= E_x\Bigl( E_{X_t}\bigl(f(X_s)\,e^{-A_s}\bigr)\,e^{-A_t}; \gL_t\Bigr)\\
        &= E_x\bigl(F(X_t)\,e^{-A_t};\gL_t\bigr),
\end{align*}
where we have used the Markov property of $X$ relative to $(\cM_t,\,t\ge 0)$, and where
we have set
\begin{equation}	\label{a_kill19a}
    F(x) = E_x\bigl(f(X_s)\,e^{-A_s}\bigr).
\end{equation}
Observe that by corollary~\ref{a_kill_cor8}
\begin{equation}    \label{a_kill20}
    F(x) = \hEx\bigl(f(\hX_s\bigr).
\end{equation}
Thus we have
\begin{align*}
    \hEx\bigl(f(\hX_{s+t}); \hgL\bigr)
        &= E_x\bigl(F(X_t)\,e^{-A_t};\gL_t\bigr)\\
        &= \hEx\bigl(F(X_t)\,1_{\LtR}\,e^{-A_t}\bigr)\\
        &= \hEx\Bigl(F(X_t)\,1_{\LtR}\,\hPx\bigl(\zeta>t\cond \hcM^0\bigr)\Bigr)\\
        &= \hEx\bigl(F(X_t); (\LtR)\cap\zt\bigr)\\
        &= \hEx\bigl(F(X_t); \hgL\cap\zt\bigr)\\
        &= \hEx\bigl(F(\hX_t);\hgL\bigr)\\
        &= \hEx\Bigl(\hat E_{\hX_t}\bigl(f(\hX_s)\bigr);\hgL\Bigr),
\end{align*}
and the proof of relation~\eqref{a_kill19} is complete.
\end{proof}

\subsection{Preservation of the Strong Markov Property} \label{aappZiii}
In this subsection we consider the strong Markov property. So we assume in
addition that $X$ is a strong Markov process as in section~I.8  of~\cite{BlGe68}.

The crucial point is the following analogue of Lemma~III.3.9, p.108,
in~\cite{BlGe68}.

\begin{lemma}   \label{a_kill_lem10}
Let $\hT$ be a stopping time relative to $(\hcM_t,\,t\ge 0)$.
\begin{enum_a}
    \item There is a unique stopping time $T$ relative to $(\cM_t,\,t\ge 0)$
          so that%
          \footnote{\footnotesize Recall that we denote the canonical extension
          of $T$ from $\gO$ to $\hgO$ again by $T$.}%
          \begin{equation}  \label{a_kill21}
                \hT \land \zeta = T \land \zeta
          \end{equation}
		 holds.
    \item Let $\hT$ and $T$ be as in~(a), and let $\hgL\in\hMT$. Then there is a set
          $\gL\in\cM_T$ so that
          \begin{equation}  \label{a_kill22}
                \hgL\cap\zt[\hT] = \LR\cap\zt[\hT].
          \end{equation}
\end{enum_a}
\end{lemma}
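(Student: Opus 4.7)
My plan is to build $T$ in part~(a) by projecting the events $\{\hT\le r\}$ from $\hgO$ down to $\gO$ for $r$ ranging over $\Q_+$, and to reuse the same device in part~(b) on the sets $\hgL\cap\{\hT\le r\}$. The scheme mirrors the standard Blumenthal--Getoor argument for killing by a Feynman--Kac weight (cf.\ \cite{BlGe68}, Lemma~III.3.9), but the ``on $\zt[r]$'' qualifier built into the definition of $\hcM_r$ forces me to carry an auxiliary \emph{consistency lemma} through both parts.

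For part~(a), since $\{\hT\le r\}\in\hcM_r$, by definition there exists $\gL_r\in\cM_r$ with $\{\hT\le r\}\cap\zt[r]=(\LtR[\gL_r])\cap\zt[r]$. After replacing $\gL_r$ by $\bigcup_{q\le r,\,q\in\Q_+}\gL_q$ I may assume the family is monotone in~$r$, and then define $T(\go):=\inf\{r\in\Q_+:\go\in\gL_r\}$. Right continuity of $(\cM_t)$ (or passage to its usual augmentation) turns the identity $\{T<t\}=\bigcup_{r<t,\,r\in\Q_+}\gL_r$ into the required $\{T\le t\}\in\cM_t$, so that $T$ is an $\cM$-stopping time. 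The identity $\hT\wedge\zeta=T\wedge\zeta$ on $\hgO$ then follows by a two-sided pathwise sandwich: on $\{\hT<\zeta\}$, any rational $r\in(\hT(\hgo),\zeta(\hgo))$ places $\go$ into $\gL_r$ and yields $T(\go)\le\hT(\hgo)$, while $T(\go)<\hT(\hgo)$ would produce (via the monotonized family) a rational $r\in(T(\go),\hT(\hgo)\wedge\zeta(\hgo))$ with $\go\in\gL_r$ and $\hgo\in\zt[r]$, contradicting $\hT(\hgo)>r$; on $\{\zeta\le\hT\}$ the analogous argument forces $T\ge\zeta$. Uniqueness is clean: fixing $\go$ and choosing $s>A_{T_1(\go)\vee T_2(\go)}(\go)$ makes $\zeta(\go,s)$ exceed both candidates (using $A_t(\go)<\infty$ for every $t$), so that $T_i(\go)=T_i(\go)\wedge\zeta(\go,s)=\hT(\hgo)\wedge\zeta(\go,s)$ is independent of $i$.

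For part~(b) I apply the same construction to $\hgL\cap\{\hT\le r\}\in\hcM_r$, selecting $\gL'_r\in\cM_r$ (monotonized in $r$) with $\hgL\cap\{\hT\le r\}\cap\zt[r]=(\LtR[\gL'_r])\cap\zt[r]$, and set $\gL:=\bigcup_{r\in\Q_+}\gL'_r$. The main obstacle will be to establish simultaneously that (i)~$\gL\cap\{T\le t\}\in\cM_t$ (so $\gL\in\cM_T$) and (ii)~$\hgL\cap\zt[\hT]=(\LtR[\gL])\cap\zt[\hT]$. Both reduce to the following \emph{consistency lemma}: if $(\go,s)\in\hgL$ for some $s$ with $\zeta(\go,s)>T(\go)$, then $(\go,s')\in\hgL$ for every such $s'$. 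Consistency follows by picking a rational $r\in(T(\go),\zeta(\go,s)\wedge\zeta(\go,s'))$: the defining property of $\gL'_r$ converts $(\go,s)\in\hgL\cap\{\hT\le r\}\cap\zt[r]$ into $\go\in\gL'_r$, and then transports back to $(\go,s')\in\hgL$ since also $\zeta(\go,s')>r\ge\hT(\go,s')$. Granted consistency, (ii)~is immediate from the definition of $\gL$, and for (i)~the identity $\gL\cap\{T\le t\}=\gL'_t\cap\{T\le t\}$ reduces to the inclusion $\gL'_r\cap\{T\le t\}\subseteq\gL'_t$ for $r>t$: given such $\go$, I pick $s'$ with $\zeta(\go,s')>t$ (possible because $A_t(\go)<\infty$) and invoke consistency to place $(\go,s')$ into $\hgL\cap\{\hT\le t\}\cap\zt[t]$, whence $\go\in\gL'_t$ as required.
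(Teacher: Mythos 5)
Your proposal is correct and follows essentially the same route as the paper: you represent $\hgL\cap\{\hT\le r\}$ on $\{\zeta>r\}$ by sets in $\cM_r$, take a monotone union over rationals, and exploit the key fact that for every $\go$ and every finite $\tau$ one can pick $s$ with $\zeta(\go,s)>\tau$ --- which is exactly the mechanism (phrased there as the compatibility relations $\gL_{t'}\cap\{T\le t'\}=\gL_t\cap\{T\le t'\}$ rather than as your ``consistency lemma'') driving the paper's proof of part~(b). For part~(a), which the paper omits with a reference to Blumenthal--Getoor, your projection-and-sandwich construction is a legitimate filling-in of that argument; note only that it uses right continuity of $(\cM_t)$, which you flag, and that the uniqueness step needs the trivial additional case in which one of the candidate stopping times is infinite.
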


\begin{proof}
The proof of statement~(a) is very similar to the corresponding one in Lem\-ma~III.3.9
in~\cite{BlGe68}, and is therefore omitted.

We begin the proof of statement~(b) with the following remark which will prove to be
useful. Let $\tau\ge 0$ and $\go\in\gO$ be given. We claim there exists
$\hgo=(\go,s)\in\hgO$, $s\in\Rbp$, so that $\zeta(\hgo)>\tau$. In fact, we note
that by equation~\eqref{a_kill15} this claim is equivalent to saying that
there is an $\gep>0$ so that $A_{\tau+\gep}(\go)\le S(\hgo)$. Choose any $\gep>0$, and
observe that the continuity of the increasing mapping $u\mapsto A_u(\go)$ guarantees
the existence of $s\ge 0$ so that $s\ge A_{\tau+\gep}(\go)$. Set $\hgo=(\go,s)$. Then
the claim follows because $S(\hgo)=s$.

Now let $\hT$ be a stopping time with respect to $(\hcM_t,\,t\ge 0)$, and let $T$ be the
associated $(\cM_t,\,t\ge 0)$--stopping time as in part~(a). Choose
$\hgL\in\hcM_{\hT}$, and for $t\ge 0$ define the set
\begin{equation*}
    \hgL_t = \hgL\cap\{\hT\le t,\, \zeta>t\}.
\end{equation*}
Since $\hgL\cap\{\hT\le t\}\in\hcM_t$, we have that
\begin{equation}    \label{a_kill23}
    \hgL_t = (\LtR)\cap\zt,
\end{equation}
for some set $\gL_t\in\cM_t$. We show that $\gL_t\subset\{T\le t\}$: If
$\go\in\gL_t$ pick $s\in\R_+$ so that $\zeta(\go,s)>t$ (which is always possible due
the remark above). Therefore
\begin{equation*}
    (\go,s)\in\hgL_t\subset \{\hT\le t,\,\zeta>t\}.
\end{equation*}
Equation~\eqref{a_kill21} entails that
\begin{equation}    \label{a_kill24}
    \{\hT\le t,\,\zeta>t\} = \bigl(\{T\le t\}\times\Rbp\bigr)\cap\zt,
\end{equation}
and therefore we get $(\go,s)\in\{T\le t\}$.

Similarly one shows that for $t'$, $t\ge 0$ with $t'\le t$, one has
$\gL_{t'}\subset \gL_t$, where these two sets are associated with $\hgL_t$
and $\hgL_{t'}$ as in~\eqref{a_kill23}.

Now we claim that for $t'$, $t\ge 0$ with $t'\le t$
\begin{equation*}
    \gL_{t'}\cap\{T\le t'\}= \gL_t\cap\{T\le t'\}
\end{equation*}
holds true. Since we have that $\gL_{t'}\subset \gL_t$, we only
need to show the inclusion
\begin{equation*}
    \gL_{t'}\cap\{T\le t'\}\supset \gL_t\cap\{T\le t'\}.
\end{equation*}
To this end, assume that $\go\in\gL_t\cap\{T\le t'\}$. Choose $s\ge 0$ so that
$(\go,s)\in\zt\subset \zt[t']$. Then
\begin{equation*}
    (\go,s) \in (\gL_t\times\Rbp)\cap\{T\le t\}\cap\zt = \hgL_t.
\end{equation*}
Therefore $(\go,s)\in\hgL\cap\{T\le t',\,\zeta> t'\}$, i.e., $(\go,s)\in\hgL_{t'}$.
Using formula~\eqref{a_kill23} for $\hgL_{t'}$, we get that $\go\in\gL_{t'}\cap\{T\le
t'\}$, and our claim is proved.

We conclude that for every $t\ge 0$, $\gL_t\in\cM_T$. Indeed, let $t'\le t$, then
\begin{equation*}
    \gL_t\cap\{T\le t'\} = \gL_{t'}\cap\{T\le t'\} = \gL_{t'}\in\cM_{t'}.
\end{equation*}
If $t'\ge t$, then $\gL_t\subset \{T\le t\}\subset \{T\le t'\}$ and therefore
\begin{equation*}
    \gL_t\cap\{T\le t'\} = \gL_t\in\cM_t\subset \cM_{t'}.
\end{equation*}

To finish the proof of the lemma, set
\begin{equation*}
    \gL = \bigcup_{t\in\Q_+} \gL_t,
\end{equation*}
where $\Q_+ = \Q\cap\R_+$. Then we find that $\gL\in\cM_T$, and moreover by
what has just been proved
\begin{equation*}
    \gL\cap \{T\le t\} = \gL_t,\qquad t\in\Q_+.
\end{equation*}
Now we can calculate as follows
\begin{align*}
    \hgL\cap\zt[\hT]
        &= \hgL\cap \Bigl(\bigcup_{t\in\Q_+} \{\hT\le t,\,\zeta>t\}\Bigr)\\
        &= \bigcup_{t\in\Q_+} \hgL\cap\{\hT\le t,\,\zeta>t\}\\
        &= \bigcup_{t\in\Q_+} \hgL_t\\
        &= \bigcup_{t\in\Q_+} (\gL_t\times\Rbp)\cap\zt\\
        &= \bigcup_{t\in\Q_+} (\gL\times\Rbp)\cap\bigl(\{T\le t\}\times\Rbp\bigr)\cap\zt\\
        &= (\gL\times\Rbp)\cap \Bigl(\bigcup_{t\in\Q_+} \{\hT\le t,\,\zeta>t\}\Bigr)\\
        &= (\gL\times\Rbp)\cap\zt,
\end{align*}
and the proof of the lemma is complete.
\end{proof}

For the statement of the strong Markov property of $\hX$, we need the following
lemma whose proof is straightforward in view of lemma~\ref{a_kill_lem10} and
therefore left to the reader:

\begin{lemma}   \label{a_kill_lem11}
Let $\hT$ be stopping time relative to $(\hcM_t,\,t\ge 0)$. Then
$\hX_{\hT}\in\hcM_{\hT}/\cE$.
\end{lemma}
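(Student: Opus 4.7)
The plan is to reduce the measurability of $\hX_{\hT}$ to the known measurability of $X_T$, where $T$ is the $(\cM_t)$--stopping time associated with $\hT$ via lemma~\ref{a_kill_lem10}(a). Specifically, the identity $\hT\land\zeta=T\land\zeta$ forces $T=\hT$ on the event $\zt[\hT]=\{\hT<\zeta\}$, while $\hX_{\hT}=\gD$ on $\{\hT\ge\zeta\}$. Hence on the one hand $\hX_{\hT}=X_T$ on $\zt[\hT]$, and $X_T$ is $\cM_T/\cE$--measurable because $X$, being right continuous and $\cF$--adapted, is progressively measurable. For any $C\in\cE$ we therefore have
\[
    \{\hX_{\hT}\in C\}=\bigl(\{X_T\in C\}\times\Rbp\bigr)\cap\zt[\hT],
\]
while $\{\hX_{\hT}=\gD\}=\hgO\setminus\zt[\hT]$. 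So the proof reduces to showing, first, that $\zt[\hT]\in\hMT$, and second, that for every $\gL\in\cM_T$ the set $(\gL\times\Rbp)\cap\zt[\hT]$ belongs to $\hMT$. This is essentially a converse to lemma~\ref{a_kill_lem10}(b).

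For the first point, I would fix $t\ge0$ and observe the elementary identity
\[
    \zt[\hT]\cap\{\hT\le t\}\cap\zt=\{\hT\le t\}\cap\zt,
\]
since $\hT\le t<\zeta$ implies $\zeta>\hT$. As $\hT$ is an $\hcM$--stopping time, $\{\hT\le t\}\in\hcM_t$; by the very definition of $\hcM_t$ there exists $\gL_t\in\cM_t$ so that $\{\hT\le t\}\cap\zt=(\LtR)\cap\zt$. This realises $\zt[\hT]\cap\{\hT\le t\}$ in the required form, whence $\zt[\hT]\cap\{\hT\le t\}\in\hcM_t$, and $\zt[\hT]\in\hMT$.

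For the second point, the key observation — the one I expect to be the main subtlety — is that on $\zt$ the two stopping times coincide: from $\hT\land\zeta=T\land\zeta$ and $\zeta>t$ one deduces $\{\hT\le t\}\cap\zt=\{T\le t\}\cap\zt$. Applying this and the trivial inclusion $\{\hT\le t\}\cap\zt\subset\zt[\hT]$ gives
\[
    (\gL\times\Rbp)\cap\zt[\hT]\cap\{\hT\le t\}\cap\zt
    =\bigl((\gL\cap\{T\le t\})\times\Rbp\bigr)\cap\zt.
\]
Since $\gL\in\cM_T$ means $\gL\cap\{T\le t\}\in\cM_t$, the right hand side is of the form required by the definition of $\hcM_t$; hence $(\gL\times\Rbp)\cap\zt[\hT]\cap\{\hT\le t\}\in\hcM_t$ for every $t\ge0$, so $(\gL\times\Rbp)\cap\zt[\hT]\in\hMT$.

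Applying this with $\gL=\{X_T\in C\}\in\cM_T$ shows $\{\hX_{\hT}\in C\}\in\hMT$ for every $C\in\cE$, and combined with $\{\hX_{\hT}=\gD\}\in\hMT$ (the complement of the event in the first step) this yields the measurability of $\hX_{\hT}$ with respect to $\hMT$, completing the proof.
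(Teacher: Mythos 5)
Your proof is correct, and it is exactly the ``straightforward'' argument the paper has in mind (the paper leaves this proof to the reader, citing lemma~\ref{a_kill_lem10}): the identity $\hT=T$ on $\zt[\hT]$, the membership $\zt[\hT]\in\hMT$, and the easy converse of lemma~\ref{a_kill_lem10}(b) showing $(\gL\times\Rbp)\cap\zt[\hT]\in\hMT$ for $\gL\in\cM_T$. The only small point worth adjusting is that adaptedness and progressive measurability of $X$ should be invoked relative to the filtration $(\cM_t,\,t\ge 0)$ (which contains the natural filtration of $X$), so that $X_T\in\cM_T/\cE$ on $\{T<+\infty\}$, which suffices since $T=\hT<+\infty$ on $\zt[\hT]$.
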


\begin{theorem} \label{a_kill_thm12}
Suppose that $X$ is a strong Markov process relative to $(\cM_t,\,t\ge 0)$.
Then $\hX$ is a strong Markov process relative to $(\hcM_t,\,t\ge 0)$.
\end{theorem}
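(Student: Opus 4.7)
The plan is to mimic the proof of theorem~\ref{a_kill_thm9}, replacing the deterministic time $t$ by a generic $(\hcM_t)$-stopping time $\hT$, and combining three ingredients: the strong Markov property of $X$ at $(\cM_t)$-stopping times, the additivity formula~\eqref{a_kill1}, and the dictionary between $(\hcM_t)$- and $(\cM_t)$-stopping times provided by lemma~\ref{a_kill_lem10}.

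First, given an $(\hcM_t)$-stopping time $\hT$, a bounded measurable $f$ on $E$ with $f(\gD)=0$, $s\ge 0$, $x\in E$, and $\hgL\in\hMT$, I would use lemma~\ref{a_kill_lem10}(a) to obtain a $(\cM_t)$-stopping time $T$ with $\hT\land\zeta = T\land\zeta$, and lemma~\ref{a_kill_lem10}(b) to find $\gL\in\cM_T$ with $\hgL\cap\zt[\hT] = \LR\cap\zt[\hT]$. A short case analysis on the identity $\hT\land\zeta=T\land\zeta$ then yields $\hT=T$ and $\hX_{\hT}=X_T$ on $\zt[\hT]$, as well as $\{\zeta>s+\hT\}=\{\zeta>s+T\}$. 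These identifications, together with $f(\gD)=0$, reduce $\hEx(f(\hX_{s+\hT});\hgL)$ to an integral over $\LR\cap\zt[s+T]$ with integrand $f(X_{s+T})$.

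Second, I would execute the analogue of the chain of equalities in the proof of theorem~\ref{a_kill_thm9}: conditioning on $\hcM^0$ and invoking lemma~\ref{a_kill_lem7} applied to the random time $s+T$ converts the expression into an $E_x$-integral over $\gL$ with weight $e^{-A_{s+T}}$; the additivity of $A$ together with $X_{s+T}=X_s\comp\theta_T$ rewrites the integrand as $(f(X_s)e^{-A_s})\comp\theta_T\cdot e^{-A_T}$; the strong Markov property of $X$ at $T$ and the $\cM_T$-measurability of $\gL$ then replace this by $F(X_T)e^{-A_T}$, where
$$F(y):=E_y\bigl(f(X_s)e^{-A_s}\bigr)=\hE_y\bigl(f(\hX_s)\bigr)$$
by corollary~\ref{a_kill_cor8}. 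Running the dictionary backwards (lemma~\ref{a_kill_lem7} applied to $T$, the identification $X_T=\hX_{\hT}$ on $\zt[\hT]$, and $F(\gD)=0$) finally recovers $\hEx(\hE_{\hX_{\hT}}(f(\hX_s));\hgL)$, which is the required strong Markov identity.

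The main obstacle is bookkeeping rather than any single computation. One must verify that $F$ is measurable enough on $(E,\cE)$ for the strong Markov property of $X$ to apply to $F(X_T)$ (which is clear from corollary~\ref{a_kill_cor8} combined with the standard universal-measurability considerations of~\cite{BlGe68}), that the restriction to $\zt[\hT]$ may be dropped on both sides because both $f(\hX_{\hT})$ and $F(\hX_{\hT})$ vanish on $\{\zeta\le \hT\}$ by the convention $f(\gD)=F(\gD)=0$, and that lemma~\ref{a_kill_lem10} really provides the reduction $(\hT,\hgL)\leadsto (T,\gL)$ consistently throughout the calculation, so that every appeal to the strong Markov property of $X$ on the base space is justified. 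Once these checks are in place, the proof is a line-by-line analogue of that of theorem~\ref{a_kill_thm9}.
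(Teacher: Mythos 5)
Your proposal is correct and takes essentially the same route as the paper's own proof: lemma~\ref{a_kill_lem10} to pass from $(\hT,\hgL)$ to $(T,\gL)$, lemma~\ref{a_kill_lem7} to introduce the weight $e^{-A_{s+T}}$, additivity of $A$ together with the strong Markov property of $X$ at $T$, corollary~\ref{a_kill_cor8} to identify $F$, and the identifications $\hT=T$, $\hX_{\hT}=X_T$ on $\{\zeta>\hT\}$ to return to the hat-space, exactly as in the paper's calculation modeled on theorem~\ref{a_kill_thm9}. No substantive difference.
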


\begin{proof}
Let $\hT$ be an $(\hcM_t,\,t\ge 0)$--stopping time, and let $T$ be the associated
stopping with respect to $(\cM_t,\,t\ge 0)$ as in lemma~\ref{a_kill_lem10}. Let $x\in E$,
$\hgL\in\hcM_{\hT}$ and consider a bounded measurable function $f$ on $E$. According
to lemma~\ref{a_kill_lem10}.b there exists a set $\gL\in\cM_T$ so that
relation~\eqref{a_kill22} holds true. With a similar calculation as in the proof of
theorem~\ref{a_kill_thm9} and lemma~\ref{a_kill_lem7} we get
\begin{equation*}
    \hEx\bigl(f(\hX_{s+\hT});\hgL\bigr)
			= E_x\bigl(f(X_{s+T})\,e^{-A_{s+T}}; \gL\bigr),\qquad s\ge 0.
\end{equation*}
With equation~\eqref{a_kill1} and the assumption that $X$ is a strong Markov process
we obtain
\begin{align*}
    \hEx\bigl(f(\hX_{s+\hT});\hgL\bigr)
        &= E_x\Bigl(\bigl(f(X_s)\,e^{-A_s}\bigr)\comp\theta_T\,e^{-A_T};\gL\Bigr)\\
        &= E_x\Bigl(E_{X_T}\bigl(f(X_s)\,e^{-A_s}\bigr)\,e^{-A_T};\gL\Bigr)\\
        &= \hEx\Bigl(E_{X_T}\bigl(f(X_s)\,e^{-A_s}\bigr)\,e^{-A_T};\gL\times\Rbp\Bigr).
\end{align*}
With equation~\eqref{a_kill17} and the notation~\eqref{a_kill19a} we find
\begin{align*}
    \hEx\bigl(f(\hX_{s+\hT});\hgL\bigr)
        &= \hEx\Bigl(F(X_T)\,\hPx\bigl(\zeta>T\cond\hcM^0\bigr);\gL\times\Rbp\Bigr)\\
        &= \hEx\Bigl(F(X_T);\LR\cap\zt[T]\Bigr),
\end{align*}
where we used the fact that the indicator of $\gL\times\Rbp$ and $F(X_T)$ are both
$\hcM^0$--measurable. Because of $\hT=T$, $\hX_{\hT}=X_T$ on $\zt[T]=\zt[\hT]$, and
due to relation~\eqref{a_kill22} we get
\begin{align*}
    \hEx\bigl(f(\hX_{s+\hT});\hgL\bigr)
        &= \hEx\bigl(F(\hX_{\hT}); \hgL\bigr)\\
        &= \hEx\Bigl(\hat E_{\hX_{\hT}}\bigl(f(\hX_s)\bigr); \hgL\Bigr).
\end{align*}
As a consequence, $\hPx$--a.s.\
\begin{equation}
    \hEx\bigl(f(\hX_{s+\hT})\cond\hcM_{\hT}\bigr)
        = \hat E_{\hX_{\hT}}\bigl(f(\hX_s)\bigr),
\end{equation}
which concludes the proof.
\end{proof}

\section{Some Results related to Brownian Local Time}  \label{app_LT}
We consider a standard one-dimensional Brownian motion $B=(B_t,\,t\ge 0)$ on a
family $(\gO,\cA,(P_x,\,x\in\R))$ of probability spaces, endowed with a right
continuous, complete filtration $\cF=(\cF_t,\,t\ge 0)$. For given $x\in\R$ we denote
by $L(x) = (L_t(x),\,t\ge 0)$ its local time at $x$. We assume, as we may, that both
have exclusively continuous paths. (For example, for the local time $L(x)$ this
follows from the Tanaka formula~\cite[Proposition~3.6.8]{KaSh91} for $L(x)$, and
Skorokhod's Lemma~\cite[Lemma~3.6.14]{KaSh91}.) In particular, for given $x\in\R$,
$L(x)$ is a progressively measurable stochastic process. The right continuous
pseudo-inverse of $L$ will be denoted by $K$:
\begin{equation}    \label{a_LT1}
    K_r = \inf\,\{t\ge 0,\, L_t>r\},\qquad r\in\R_+,
\end{equation}
with the convention $\inf\emptyset = +\infty$. Because $L$ is $\cF$--adapted and
$\cF$ is right continuous, for every $r\ge 0$, $K_r$ is an $\cF$--stopping time.

\begin{lemma}   \label{a_LT_lem1}
For $r\ge 0$,
\begin{equation}    \label{a_LT2}
    P_0\bigl(K_r\in dl\bigr) = \frac{r}{\sqrt{\mathstrut2\pi l^3}}\,e^{-r^2/2l}\,dl,\qquad l\ge 0,
\end{equation}
and
\begin{equation}    \label{a_LT3}
    E_0\bigl(e^{-\gl K_r}\bigr) = e^{-\sgl r},\qquad \gl\ge 0.
\end{equation}
\end{lemma}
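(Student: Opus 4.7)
My plan is to reduce both formulas to classical facts about the first passage time $H_r = \inf\{t \ge 0 : B_t > r\}$ of $B$ to level $r$. The bridge is L\'evy's theorem (see, e.g., \cite[Theorem~3.6.17]{KaSh91}), which under $P_0$ identifies the law of $(|B_t|, L_t)_{t \ge 0}$ with that of $(M_t - B_t, M_t)_{t \ge 0}$, where $M_t = \sup_{s \le t} B_s$. Projecting on the second coordinate, $L$ and $M$ have the same law under $P_0$, and since both are continuous, non-decreasing and start at $0$, their right-continuous pseudo-inverses do as well. The pseudo-inverse of $M$ is $H_r$ (using continuity of paths of $B$), while that of $L$ is $K_r$, so $K_r$ and $H_r$ have the same law under $P_0$; it therefore suffices to prove both formulas with $H_r$ in place of $K_r$.

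For the density \eqref{a_LT2}, I would invoke the reflection principle in the form $P_0(H_r \le t) = 2\,P_0(B_t \ge r)$. Writing the right-hand side as $\int_r^\infty (2\pi t)^{-1/2}\,e^{-x^2/(2t)}\,dx$, the substitution $x = u\sqrt{t}$ makes the $t$-dependence entirely explicit through the upper limit $r/\sqrt{t}$, and differentiation in $t$ immediately yields the density $(r/\sqrt{2\pi t^3})\,e^{-r^2/(2t)}$ claimed in \eqref{a_LT2}.

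For the Laplace transform \eqref{a_LT3}, I would apply optional sampling to the exponential martingale $\exp(\theta B_t - \theta^2 t/2)$, $\theta > 0$, at the bounded stopping times $H_r \wedge n$. Since the integrand is bounded by $e^{\theta r}$, dominated convergence permits passage to the limit $n \to \infty$; combined with $B_{H_r} = r$ on $\{H_r < \infty\}$ and the $P_0$--a.s.\ finiteness of $H_r$, this yields $E_0\bigl(\exp(-\theta^2 H_r/2)\bigr) = e^{-\theta r}$, and the substitution $\theta = \sqrt{2\lambda}$ produces \eqref{a_LT3}. Alternatively, one may integrate the density directly by means of the Laplace-transform identity cited later as \eqref{st14}. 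The only delicate point in the whole argument is the transfer of equality in law from $(L,M)$ to their right-continuous pseudo-inverses, but this is guaranteed by the pathwise continuity of both non-decreasing processes, which makes the inverse a Borel-measurable functional of the path compatible with equality in distribution.
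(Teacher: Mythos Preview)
Your argument is correct and takes a genuinely different route from the paper's proof. The paper works directly with $K_r$: by Tanaka's formula the process $t\mapsto \exp\bigl(-\alpha(|B_t|-L_t)-\alpha^2 t/2\bigr)$ is a martingale under $P_0$, and optional sampling at (a truncation of) $K_r$ --- using $|B_{K_r}|=0$ and $L_{K_r}=r$ --- yields the Laplace transform~\eqref{a_LT3} first; the density~\eqref{a_LT2} is then obtained by Laplace inversion via~\eqref{st14}. You instead transfer the problem to the first passage time $H_r$ of $B$ via L\'evy's theorem, and then invoke the reflection principle and the exponential martingale of $B$ itself. The two arguments are in fact cousins: the paper's martingale is precisely the exponential martingale of the Brownian motion $L-|B|$ furnished by Tanaka, which L\'evy's theorem identifies in law with $B$. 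What your approach buys is a direct derivation of the density~\eqref{a_LT2} from the reflection principle, avoiding Laplace inversion; what the paper's approach buys is that it never leaves the pair $(|B|,L)$ and needs only Tanaka (not the full L\'evy identification), which is marginally more self-contained in the present setup.
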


\begin{proof}
By Tanaka's formula (e.g., \cite[Chapter~3.6]{KaSh91}), for every $\ga>0$ the
stochastic process
\begin{equation*}
    t \mapsto e^{-\ga\bigl(|B_t|-L_t\bigr) - \ga^2 t/2}
\end{equation*}
is a martingale under $P_0$. Since $L_t \ua +\infty$, $P_0$--a.s., we get
$P_0(K_r<+\infty)=1$. The usual argument of truncation of $K_r$, application of
Doob's optional sampling/stopping theorem (e.g., \cite[p.~19]{KaSh91}, or
\cite[p.~65, ff.]{ReYo91}), followed by the removal of the truncation yields with
$\ga = \sgl$,
\begin{equation*}
    E_0\bigl(e^{-\gl K_r}\bigr) = e^{-\sgl\,r},\qquad r\ge 0,\,\gl>0.
\end{equation*}
Formula~\eqref{a_LT2} follows from equation~\eqref{a_LT3} by inversion of the
Laplace transform with the help of the formula~\eqref{st14}, and the uniqueness theorem
for Laplace transforms.
\end{proof}

Let $a>0$, and consider the stopping time $H_a$, defined as the hitting time of $a$
by the reflecting Brownian motion $|B|$, i.e., $H_a$ is the hitting time of the set
$\{-a,a\}$ by $B$. We want to calculate the law of $L_{H_a}(0)$ under $P_0$ with a
method similar to the proof of Theorem~6.4.3 in~\cite{KaSh91} --- actually we shall
solve a variant of Problem~6.4.4 in~\cite{KaSh91}.

Denote by $C^2(\R;0,\pm a)$ the space of bounded, continuous functions $f$ on $\R$
which belong to $C^2(\R\setminus\{-a,0,a\})$, and which are such that at $-a$, $0$,
$a$,  $f'$ and $f''$ have (finite) left and right limits. For $f\in C^2(\R;0,\pm
a)$, $t\ge 0$, the generalized It\^o formula reads
(cf.~\cite[Equation~3.6.53]{KaSh91})
\begin{equation}    \label{a_LT5}
\begin{split}
    df(B_t)
        &= f'(B_t)\,dB_t + \frac{1}{2}\,f''(B_t)\,dt\\
            &\quad +\frac{1}{2}\,\Bigl(\gD f'(0)\,dL_t(0) + \gD f'(-a)\,dL_t(-a)
                + \gD f'(a)\,dL_t(a) \Bigr),
\end{split}
\end{equation}
where we denoted $\gD f'(b) = f'(b+)-f'(b-)$, $b\in\R$. For $\ga$, $\gb>0$,
$\rho\in\R$ set
\begin{equation} \label{a_LT6}
    Z_t = \exp\bigl(-\ga t - \gb L_t(0) + \rho (L_t(-a)+L_t(a))\bigr),
\end{equation}
and note that for all $t\ge 0$, $Z_t$ belongs to all $\cL^p(P_x)$, $p\ge 1$,
$x\in\R$. (For example, this follows easily from an application of H\"older's
inequality and of the strong Markov property of $B$, the translation invariance of
the transition density of $B$, and the explicit law of $L_t(0)$ under $P_0$, e.g.,
Theorem~3.6.17 in\cite{KaSh91}.)

Now let $v\in C^2(\R;0,\pm a)$ be an even function, and define the stochastic process
\begin{equation}    \label{a_LT7}
    X_t = v(B_t)\,Z_t,\qquad t\ge 0.
\end{equation}
We choose $v$ in such a way that $X$ becomes for $P_x$, $x\in\R$, a martingale,
namely, we require that $v$ solves the following boundary value problem:
\begin{subequations} \label{a_LT8}
\begin{align}
    \frac{1}{2}\,v'' &= \ga v,\qquad \text{on $\R\setminus\{-a,0,a\}$,}  \label{a_LT8a}\\
    v'(0+)           &= \gb v(0), \label{a_LT8b}\\
    v(a)             &= 1. \label{a_LT8c}
\end{align}
\end{subequations}
Furthermore, in the definition~\eqref{a_LT6} of $Z$ we choose
\begin{equation}    \label{a_LT9}
    \rho = -\frac{1}{2}\,\gD v'(a).
\end{equation}
Then it is straightforward to check with formula~\eqref{a_LT5} that $X_t$ is indeed
a martingale under $P_x$, $x\in\R$.

We remark that $L_t(\pm a)=0$ for all $t\le H_a$. Now we proceed as in the proof of
lemma~\ref{a_LT_lem1}: We truncate $H_a$ so that it becomes a finite stopping time,
apply Doob's optional sampling theorem, and remove the truncation by an application
of the dominated convergence theorem. The result is the formula
\begin{equation}    \label{a_LT10}
    v(x) = E_x\bigl(e^{-\ga H_a - \gb L_{H_a}(0)}\bigr), \qquad x\in\R,
\end{equation}
and in particular
\begin{equation}    \label{a_LT11}
    E_0\bigl(e^{-\gb L_{H_a}(0)}\bigr) = \lim_{\ga\downarrow 0} v(0).
\end{equation}

The unique bounded even solution of the boundary problem~\eqref{a_LT8} is given by
\begin{equation}    \label{a_LT12}
    v(x) = \begin{cases}
                \displaystyle
                \frac{\sqrt{2\ga}\,\cosh(\sqrt{2\ga}x) +\gb \sinh(\sqrt{2\ga}x)}
                    {\sqrt{2\ga}\,\cosh(\sqrt{2\ga}a) +\gb \sinh(\sqrt{2\ga}a)},
                            & 0\le x \le a,\\[3ex]
                \displaystyle
                e^{-\sqrt{2\ga}(x-a)}, & x>a,
           \end{cases}
\end{equation}
and $v(x) = v(-x)$, $x<0$. Thus we get
\begin{equation}    \label{a_LT13}
    E_0\bigl(e^{-\gb L_{H_a}(0)}\bigr) = \frac{1}{1+\gb a},
\end{equation}
and we have proved the

\begin{lemma}   \label{a_LT_lem2}
Under $P_0$, $L_{H_a}(0)$ is exponentially distributed with mean $a$.
\end{lemma}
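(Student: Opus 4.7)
The plan is to compute the Laplace transform of $L_{H_a}(0)$ under $P_0$ and recognize it as that of an exponential random variable with mean $a$; the representation~\eqref{a_LT13} then identifies the law uniquely. To get the Laplace transform, I would build an exponential martingale from Brownian motion and its local times at $0$, $a$, and $-a$, then apply Doob's optional sampling theorem at the stopping time $H_a$, in analogy with the martingale argument used in lemma~\ref{a_LT_lem1}.

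Concretely, for $\ga,\gb>0$ and $\rho\in\R$ (to be fixed), I set
\begin{equation*}
    Z_t = \exp\bigl(-\ga t - \gb L_t(0) + \rho(L_t(-a)+L_t(a))\bigr),
\end{equation*}
and look for an even, bounded function $v\in C^2(\R;0,\pm a)$ such that $X_t = v(B_t)Z_t$ is a $P_x$--martingale. Applying the generalized It\^o formula~\eqref{a_LT5} to $v(B_t)$, then multiplying by $Z_t$ and using the product rule (the $dL_t(\pm a)$ terms from $Z_t$ and from $v$ combine, while the $dL_t(0)$ terms cancel if $v'(0+)=\gb v(0)$), gives that $dX_t$ is a pure stochastic--integral term exactly when $v$ satisfies~\eqref{a_LT8a}, \eqref{a_LT8b}, and $\rho$ is chosen as in~\eqref{a_LT9}. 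Normalizing by~\eqref{a_LT8c} fixes $v$ uniquely among even, bounded solutions.

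I would then solve~\eqref{a_LT8} explicitly. On $[0,a]$ the even solution of $\tfrac12 v''=\ga v$ satisfying the Robin condition~\eqref{a_LT8b} at the origin is a suitable combination of $\cosh(\sgl[2\ga]\,x)$ and $\sinh(\sgl[2\ga]\,x)$; on $[a,+\infty)$ bounded\-ness forces $v(x)=e^{-\sqrt{2\ga}(x-a)}$, and the normalization~\eqref{a_LT8c} together with continuity at $a$ pins down the constants, giving~\eqref{a_LT12}.

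Finally I invoke optional sampling at $H_a$ the standard way: truncate $H_a$ by $H_a\wedge n$ to stay within the martingale regime, use that $L_t(\pm a)=0$ for all $t\le H_a$ (so the $\rho$--term contributes nothing at the stopping time), and remove the truncation by dominated convergence using that $Z_{t\wedge H_a}\le 1$. This yields~\eqref{a_LT10}, from which~\eqref{a_LT11} and $v(0) \to (1+\gb a)^{-1}$ as $\ga\downarrow 0$ give~\eqref{a_LT13}. Since $1/(1+\gb a)$ is the Laplace transform of the exponential distribution with mean $a$, uniqueness of Laplace transforms identifies the law of $L_{H_a}(0)$. The main technical nuisance I anticipate is the bookkeeping in the generalized It\^o formula for $v(B_t)Z_t$ — making sure the $dL_t(0)$ contributions from $v$ and from $Z$ actually cancel under~\eqref{a_LT8b}, and the $dL_t(\pm a)$ contributions cancel under the choice~\eqref{a_LT9} — everything else is routine once $v$ is in hand.
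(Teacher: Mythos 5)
Your proposal is correct and follows essentially the same route as the paper's own proof: the same martingale $v(B_t)Z_t$ built from the generalized It\^o formula, the same boundary value problem~\eqref{a_LT8} with the choice~\eqref{a_LT9}, optional sampling at a truncated $H_a$, and the limit $\ga\downarrow 0$ to obtain~\eqref{a_LT13} and identify the exponential law via uniqueness of Laplace transforms.
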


\begin{remark}  \label{a_LT_rem3}
For the reader who compares this result with equation~(6.4.16) in~\cite{KaSh91}, we
want point out that here $H_a$ denotes the hitting time of $\{-a,a\}$ while there
$T_b$ is the hitting time of $b$, and furthermore our convention for the local time
differs by the one used in~\cite{KaSh91} by a factor $2$.
\end{remark}

For the next result we continue to denote by $H_a$ the hitting time of the set
$\{-a,a\}$ by the Brownian motion $B$, and use the set-up of
appendix~\ref{app_killing}. Fix $\gb>0$, and denote by $P_\gb$ the exponential law
on $(\R,\cB(\R))$ of rate $\gb$. Construct the family of product spaces $(\hgO,\hat
A, (\hat P_x,\,x\in\R))$ analogously as in appendix~\ref{app_killing} with $P_e$
replaced by $P_\gb$ in equation~\eqref{a_kill6}. For a PCHAF we choose here the local
time $L(0)$ at zero, and consider the stopping time $\zeta_\gb$ defined as
in~\eqref{a_kill10} with $A_t=L_t(0)$, $t\ge 0$.

\begin{lemma}   \label{a_LT_lem4}
For all $a$, $\gb>0$,
\begin{equation}    \label{a_LT14}
    \hat P_0\bigl(H_a < \zeta_\gb \bigr) = \frac{1}{1+\gb a}
\end{equation}
\end{lemma}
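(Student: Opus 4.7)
The plan is to reduce the event $\{H_a < \zeta_\gb\}$ to an event about $L_{H_a}(0)$ and the independent exponential random variable $S$, then use Fubini and the explicit law of $L_{H_a}(0)$ from lemma~\ref{a_LT_lem2}.

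First I would translate the event in question. Recall that on $(\hgO,\hcA,\hP_x)$, the random variable $S$ is independent of $B$ and has law $P_\gb$, and
\begin{equation*}
    \zeta_\gb = \inf\,\{t\ge 0,\,L_t(0)>S\}.
\end{equation*}
Because $t\mapsto L_t(0)$ is continuous and non-decreasing, and $S$ is a.s.\ strictly positive, we have $\{\zeta_\gb>t\}=\{L_t(0)\le S\}$ for every $t\ge 0$. Extending this to the stopping time $H_a$ (by monotonicity of $L(0)$ along the path) yields
\begin{equation*}
    \{H_a<\zeta_\gb\}=\{L_{H_a}(0)<S\}
\end{equation*}
up to the $\hP_0$--null event $\{L_{H_a}(0)=S\}$ (which is null since, by lemma~\ref{a_LT_lem2}, $L_{H_a}(0)$ is continuously distributed, and independent of the continuously distributed $S$).

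Next I would condition on the Brownian path. Since $L_{H_a}(0)$ is $\cA$--measurable and $S$ is independent of $\cA$ with $P_\gb(S>u)=e^{-\gb u}$ for $u\ge 0$, Fubini's theorem gives
\begin{equation*}
    \hP_0\bigl(H_a<\zeta_\gb\bigr)
        = \hP_0\bigl(S>L_{H_a}(0)\bigr)
        = E_0\bigl(e^{-\gb L_{H_a}(0)}\bigr).
\end{equation*}

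Finally I would apply lemma~\ref{a_LT_lem2}, which states that under $P_0$, $L_{H_a}(0)$ is exponentially distributed with mean $a$. A direct computation then gives
\begin{equation*}
    E_0\bigl(e^{-\gb L_{H_a}(0)}\bigr)
        = \int_0^\infty e^{-\gb u}\,\frac{1}{a}\,e^{-u/a}\,du
        = \frac{1}{1+\gb a},
\end{equation*}
which is the claimed formula. No step looks to be a genuine obstacle: the only point requiring care is the translation from $\{H_a<\zeta_\gb\}$ to $\{L_{H_a}(0)<S\}$, which relies on the continuity of $L(0)$ together with the a.s.\ strict positivity of $S$ under $P_\gb$.
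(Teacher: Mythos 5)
Your proposal is correct and follows essentially the same route as the paper: both reduce $\{H_a<\zeta_\gb\}$ to the comparison of $S$ with $L_{H_a}(0)$ (the paper via its relation $\{\zeta_\gb\ge H_a\}=\{S\ge L_{H_a}(0)\}$ from the killing appendix), then use the independence of $S$ and lemma~\ref{a_LT_lem2} to evaluate $E_0\bigl(e^{-\gb L_{H_a}(0)}\bigr)=1/(1+\gb a)$. The only (immaterial) difference is how the boundary event is dismissed: you note that $\{S=L_{H_a}(0)\}$ is null because both laws are continuous and the variables independent, while the paper observes that by construction $H_a\ne\zeta_\gb$ unless both are infinite, which is a null event.
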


\begin{proof}
We use equation~\eqref{a_kill14}, and lemma~\ref{a_LT_lem2} to compute
\begin{align*}
    \hat P_0\bigl(H_a \le \zeta_\gb\bigr)
        &= \hat P_0\bigl(S \ge L_{H_a}(0)\bigr)\\
        &= \gb\int_0^\infty e^{-\gb s}\, P_0\bigl(L_{H_a}(0)\le s\bigr)\,ds\\
        &= \gb\int_0^\infty e^{-\gb s}\, \Bigl(\frac{1}{a}\,\int_0^s e^{-u/a}\,du\Bigr)\,ds\\
        &= \frac{1}{1+\gb a}.
\end{align*}
The proof is concluded by the remark that by definition of $H_a$ and $\zeta$
we have that $H_a\ne \zeta_\gb$ unless $H_a=\zeta_\gb=+\infty$, but the latter
event has $\hat P_0$--measure zero.
\end{proof}

\section{An Inverse Laplace Transform and Heat Kernels} \label{app_LTHK}
The following lemma gives a generalization of a well-known formula for Laplace
transforms (e.g., \cite[eq.~(5.1.2)]{ErMa54a}.

\begin{lemma}   \label{a_LTHK_lem1}
Suppose that $f$ is a bounded, measurable function on $\R_+$, and let $a>0$, $b\ge
0$. Define
\begin{equation}    \label{a_LTHK1}
    K_{a,b}f(t) = \sqrt{\frac{a}{2\pi}}\,\int_0^t \frac{s+b}{(t-s)^{3/2}}\,
                    \exp\Bigl(-a\,\frac{(s+b)^2}{2(t-s)}\Bigr)\,f(s)\,ds.
\end{equation}
Then
\begin{equation}    \label{a_LTHK2}
    \cL\bigl(K_{a,b}f\bigr)(\gl)
        = e^{-\sqrt{2a\gl}b}\,\cL f\bigl(\gl+\sqrt{2a\gl}\bigr),\qquad \gl>0,
\end{equation}
where $\cL$ denotes the Laplace transform.
\end{lemma}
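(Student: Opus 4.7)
The plan is to view $K_{a,b}f$ as a (truncated) convolution and then apply Fubini. First I would rewrite the kernel so that the familiar first-passage density is visible: setting $d=\sqrt{a}\,(s+b)$ one checks that
\begin{equation*}
    \sqrt{\frac{a}{2\pi}}\,\frac{s+b}{(t-s)^{3/2}}\,\exp\Bigl(-a\,\frac{(s+b)^2}{2(t-s)}\Bigr)
        =\frac{d}{\sqrt{2\pi(t-s)^3}}\,\exp\Bigl(-\frac{d^2}{2(t-s)}\Bigr),
\end{equation*}
so that the $s$--dependent prefactor is absorbed into a (scaled) L\'evy/first-passage density in the variable $u=t-s$.

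Next I would interchange the two integrations in $\cL(K_{a,b}f)(\gl)=\int_0^\infty e^{-\gl t}K_{a,b}f(t)\,dt$ by Fubini (the integrand is non-negative up to the factor $f$, and $f$ is bounded, so absolute convergence is easy to verify for $\gl>0$). After substituting $u=t-s$ in the inner integral this yields
\begin{equation*}
    \cL(K_{a,b}f)(\gl)=\int_0^\infty f(s)\,e^{-\gl s}\Bigl(\int_0^\infty e^{-\gl u}\,\frac{d}{\sqrt{2\pi u^3}}\,e^{-d^2/2u}\,du\Bigr)\,ds,
\end{equation*}
with $d=\sqrt{a}\,(s+b)$ as above.

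The inner integral is now precisely the Laplace transform evaluated in formula~\eqref{st14}, so it equals $e^{-\sqrt{2\gl}\,d}=e^{-\sqrt{2a\gl}\,(s+b)}$. Factoring out $e^{-\sqrt{2a\gl}\,b}$ then gives
\begin{equation*}
    \cL(K_{a,b}f)(\gl)=e^{-\sqrt{2a\gl}\,b}\int_0^\infty f(s)\,e^{-(\gl+\sqrt{2a\gl})s}\,ds
                     =e^{-\sqrt{2a\gl}\,b}\,\cL f\bigl(\gl+\sqrt{2a\gl}\bigr),
\end{equation*}
which is~\eqref{a_LTHK2}. There is no real obstacle here; the only slightly delicate step is the rescaling trick that turns the kernel into the standard form to which~\eqref{st14} applies, and one must keep track of the resulting arguments $\sqrt{2a\gl}$ in the exponent.
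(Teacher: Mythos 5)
Your proposal is correct and follows essentially the same route as the paper: Fubini--Tonelli, the substitution $u=t-s$ (the paper shifts $t\mapsto t+s$, which is the same thing), and then formula~\eqref{st14} applied with $a$ replaced by $\sqrt{a}\,(s+b)$ to evaluate the inner integral as $e^{-\sqrt{2a\gl}(s+b)}$. No gaps; the rescaling observation you highlight is exactly the step implicit in the paper's computation.
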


\begin{proof}
Let $\gl>0$. The theorem of Fubini-Tonelli yields
\begin{equation*}
    \cL\bigl(K_{a,b}f\bigr)(\gl)
        = \int_0^\infty f(s)\int_s^\infty e^{-\gl t}\,
                \frac{\sqrt{a}\,(s+b)}{\sqrt{2\pi(t-s)^3}}\,
                    \exp\Bigl(-a\,\frac{(s+b)^2}{2(t-s)}\Bigr)\,dt\,ds.
\end{equation*}
The inner integral is equal to
\begin{equation}    \label{a_LTHK3}
    \int_0^\infty e^{-\gl(t+s)}\,\frac{\sqrt{a}\,(s+b)}{\sqrt{2\pi t^3}}\,
            e^{-a(s+b)^2/2t}\,dt
        = e^{-\gl s}\,e^{-\sqrt{2a\gl}(s+b)},
\end{equation}
where we used the Laplace transform~\eqref{st14}. Inserting the right hand side of
equation~\eqref{a_LTHK3} above, we find formula~\eqref{a_LTHK2}.
\end{proof}

Recall the definition~\eqref{gen6} for $\rho(\gl)$, $\gl>0$, and~\eqref{gen14}
for $g_{\gb,\gg}(t,x)$, $\gb$, $\gg$, $t>0$, $x\ge 0$. Then lemma~\ref{a_LTHK_lem1}
immediately gives with the choice $a=\gg^{-2}$, $b=\gg x$, and $f(s)=\exp(-\gb s/\gg)$
the following result.

\begin{corollary}   \label{a_LTHK_cor2}
For $\gb$, $\gg$, $x\ge 0$, the inverse Laplace transform of
\begin{equation}    \label{a_LTHK6}
    \gl \mapsto \rho(\gl)\,e^{-\sqrt{2\gl}x},\qquad \gl>0,
\end{equation}
is given by $t\mapsto g_{\gb,\gg}(t,x)$, $t > 0$.
\end{corollary}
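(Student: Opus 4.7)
\bigskip

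\noindent
\textbf{Proof plan.}
The plan is to apply Lemma~\ref{a_LTHK_lem1} directly with a carefully chosen triple $(a,b,f)$ and then match constants. Set $a=\gg^{-2}$, $b=\gg x$, and take $f(s)=e^{-\gb s/\gg}$. Then $\sqrt{2a\gl}=\sqrt{2\gl}/\gg$, so the prefactor on the right of~\eqref{a_LTHK2} becomes $e^{-\sqrt{2a\gl}\,b}=e^{-\sqrt{2\gl}\,x}$, which is exactly the exponential appearing in~\eqref{a_LTHK6}. The remaining factor $\cL f(\gl+\sqrt{2a\gl})$ is a one-line computation: since $\cL f(\mu)=(\mu+\gb/\gg)^{-1}$, one gets
\begin{equation*}
    \cL f\bigl(\gl+\sqrt{2\gl}/\gg\bigr)
        = \frac{1}{\gl+\sqrt{2\gl}/\gg+\gb/\gg}
        = \frac{\gg}{\gb+\sqrt{2\gl}+\gg\gl}
        = \gg\,\rho(\gl).
\end{equation*}

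Next I would identify $K_{a,b}f$ with $\gg\,g_{\gb,\gg}(\,\cdot\,,x)$. Substituting $a=\gg^{-2}$ and $b=\gg x$ into~\eqref{a_LTHK1} yields $\sqrt{a/(2\pi)}=1/(\gg\sqrt{2\pi})$ and the factor $a(s+b)^2=(s+\gg x)^2/\gg^2$ inside the exponential, so
\begin{equation*}
    K_{\gg^{-2},\,\gg x} f(t)
        = \frac{1}{\gg\sqrt{2\pi}}\int_0^t
            \frac{s+\gg x}{(t-s)^{3/2}}\,
            \exp\Bigl(-\frac{(s+\gg x)^2}{2\gg^2(t-s)}\Bigr)\,
            e^{-\gb s/\gg}\,ds
        = \gg\, g_{\gb,\gg}(t,x),
\end{equation*}
on comparison with the definition~\eqref{gen14}.

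Combining these two identifications with~\eqref{a_LTHK2} gives $\cL\bigl(\gg\,g_{\gb,\gg}(\,\cdot\,,x)\bigr)(\gl)=e^{-\sqrt{2\gl}\,x}\cdot\gg\,\rho(\gl)$, and dividing by $\gg$ yields the claim. There is essentially no obstacle here beyond the bookkeeping of constants; the only subtlety worth checking is that $f(s)=e^{-\gb s/\gg}$ is bounded and measurable on $\R_+$, which is obvious, so the hypothesis of Lemma~\ref{a_LTHK_lem1} applies and uniqueness of the Laplace transform (both sides are continuous in $\gl>0$) then gives the pointwise identification of the inverse transform.
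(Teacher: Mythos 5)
Your proposal is correct and is exactly the paper's argument: the paper also obtains the corollary by applying Lemma~\ref{a_LTHK_lem1} with $a=\gg^{-2}$, $b=\gg x$, $f(s)=e^{-\gb s/\gg}$, and your write-up simply makes the constant bookkeeping (the factor $\gg$ appearing in both $K_{\gg^{-2},\gg x}f=\gg\,g_{\gb,\gg}(\,\cdot\,,x)$ and $\cL f(\gl+\sqrt{2\gl}/\gg)=\gg\rho(\gl)$, which cancels) explicit.
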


In order to relate $g_{\gb,\gg}$ to other heat kernels in this article we consider
the limits when the parameters $\gb$, $\gg$ tend to zero. First fix $\gb>0$. Then a
straightforward (though somewhat tedious) calculation shows that for all $t>0$, $x\ge
0$,
\begin{equation}    \label{a_LTHK7}
    \lim_{\gg\downarrow 0} g_{\gb,\gg}(t,x) = g_{\gb,0}(t,x),
\end{equation}
where $g_{\gb,0}(t,x)$ is given by~\eqref{el24}. On the other hand, for fixed
$\gg>0$, one has
\begin{equation}    \label{a_LTHK10}
    \lim_{\gb\downarrow 0} g_{\gb,\gg}(t,x) = g_{0,\gg}(t,x),
\end{equation}
with $g_{0,\gg}(t,x)$ given by~\eqref{st22}. Maybe the easiest way to see this is to
compute the limit of the expression defining $g_{\gb,\gg}$, i.e., formula~\eqref{gen14},
and then to apply lemma~\ref{a_LTHK_lem1}
to show that the Laplace transform of the resulting integral is given by
\begin{equation*}
   \gl \mapsto  \frac{1}{\sgl+\gg\gl}\,e^{-\sqrt{2\gl}x},\qquad \gl>0.
\end{equation*}
Now one can invert this Laplace transform, e.g., with formula~(5.6.16)
in~\cite{ErMa54a}, and this gives $g_{0,\gg}$ as defined above. Finally, with
formulae~\eqref{el24}, \eqref{st22} it is easy to see that
\begin{equation}    \label{a_LTHK12}
     \lim_{\gb\downarrow 0} g_{\gb,0}(t,x)
        = \lim_{\gg\downarrow 0} g_{0,\gg}(t,x) = g(t,x).
\end{equation}

\section{First Passage Time Formula}    \label{app_FPTF}
For the convenience of the reader we provide in this section a proof of a specific
form of the well-known first passage time formula (e.g., \cite[p.~94]{ItMc74},
\cite[p.~158]{Kn81}) which we use in this article.

We assume that $X$ is a normal, strong Markov process relative to a right continuous
filtration $\cF$, with state space $(E,\cE)$ being a locally compact, separable
metric space, such that $X$ has c\`adl\`ag paths and $\cF$ is complete with respect
to the family $P=(P_x,\,x\in\ E)$ of probability measures on the underlying
measurable space $(\gO,\cA)$.

Let $x_0$ be a given point in $E$, and denote by $H_0$ its hitting time by $X$. We
suppose that $P$--a.s.\ $H_0$ is finite. In particular, this implies that $X$ cannot
be killed before reaching $x_0$. Moreover, our assumptions above entail that $H_0$
is an $\cF$--stopping time (cf., e.g., Theorem~III.2.17 in~\cite{ReYo91}). The
resolvent of $X$ will be denoted by $R=(R_\gl,\,\gl>0)$, while
$R^0=(R^0_\gl,\,\gl>0)$ stands for the resolvent of the process $X^0$
which is obtained from $X$ by killing it when reaching the point $x_0$. Recall that
by convention every real valued function $f$ on $E$ is extended to $E\cup\{\gD\}$ by
setting $f(\gD)=0$. Then we have the

\begin{lemma}   \label{a_FPTF_lem1}
For every bounded measurable functions $f$ on $E$, and all $x\in E$, $\gl>0$,
\begin{equation}    \label{a_FPTF1}
    R_\gl f(x) = R^0_\gl f(x) + E_x\bigl(e^{-\gl H_0}\bigr)\,R_\gl f(x_0).
\end{equation}
\end{lemma}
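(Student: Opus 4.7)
The plan is to split the defining integral for $R_\gl f(x)$ at the stopping time $H_0$ and to treat the two resulting pieces separately, with the strong Markov property applied to the post–$H_0$ piece.

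First I would start from the definition of the resolvent,
\begin{equation*}
    R_\gl f(x) = E_x\Bigl(\int_0^\infty e^{-\gl t} f(X_t)\,dt\Bigr),
\end{equation*}
and write this as $E_x(\int_0^{H_0}\cdots)+E_x(\int_{H_0}^\infty\cdots)$, which is legitimate because $H_0<+\infty$ $P_x$–a.s.\ by hypothesis. For the first piece I would use that the process $X^0$ obtained by killing $X$ at $x_0$ coincides pathwise with $X$ on $[0,H_0)$ and equals $\gD$ on $[H_0,+\infty)$; combined with the convention $f(\gD)=0$, this identifies
\begin{equation*}
    E_x\Bigl(\int_0^{H_0} e^{-\gl t}f(X_t)\,dt\Bigr) = E_x\Bigl(\int_0^\infty e^{-\gl t}f(X^0_t)\,dt\Bigr) = R^0_\gl f(x).
\end{equation*}

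For the second piece I would change variables $t=s+H_0$, pull out the factor $e^{-\gl H_0}$, and use the shift operator $\theta$ to write $f(X_{s+H_0})=f(X_s)\comp\theta_{H_0}$, so that
\begin{equation*}
    \int_0^\infty e^{-\gl s}f(X_{s+H_0})\,ds = \Bigl(\int_0^\infty e^{-\gl s}f(X_s)\,ds\Bigr)\comp\theta_{H_0}.
\end{equation*}
Since $H_0$ is an $\cF$–stopping time (by right continuity of $\cF$ and the standard hitting–time theorem, which applies thanks to the c\`adl\`ag paths and the topological assumptions on $E$), and since $e^{-\gl H_0}$ is $\cF_{H_0}$–measurable, I would condition on $\cF_{H_0}$ and invoke the strong Markov property of $X$ at $H_0$:
\begin{equation*}
    E_x\Bigl(e^{-\gl H_0}\int_0^\infty e^{-\gl s}f(X_s)\,ds\comp\theta_{H_0}\Bigr) = E_x\bigl(e^{-\gl H_0} R_\gl f(X_{H_0})\bigr).
\end{equation*}
Finally, since $X$ is normal and has c\`adl\`ag paths, $X_{H_0}=x_0$ on $\{H_0<+\infty\}$, which is a set of full $P_x$–measure; hence $R_\gl f(X_{H_0})=R_\gl f(x_0)$ a.s., and the second piece collapses to $E_x(e^{-\gl H_0})\,R_\gl f(x_0)$. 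Adding the two pieces gives~\eqref{a_FPTF1}.

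The only genuine subtlety is the application of the strong Markov property, which requires that the integrand $\int_0^\infty e^{-\gl s}f(X_s)\,ds$ be treated as a bounded $\cA$–measurable functional on path space so that $E_x(\,\cdot\,\comp\theta_{H_0}\cond\cF_{H_0}) = E_{X_{H_0}}(\,\cdot\,)$; boundedness of $f$ together with the factor $e^{-\gl s}$ makes the random variable bounded, so Fubini and dominated convergence cover any measure-theoretic technicalities, and the identification $X_{H_0}=x_0$ disposes of the remaining ambiguity on $\{H_0=+\infty\}$.
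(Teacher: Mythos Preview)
Your proof is correct and follows essentially the same route as the paper's: split the resolvent integral at $H_0$, identify the pre-$H_0$ piece with $R^0_\gl f(x)$ via the convention $f(\gD)=0$, and handle the post-$H_0$ piece by the strong Markov property together with $X_{H_0}=x_0$ from right continuity. The paper's argument is slightly terser, but the ideas and their ordering coincide with yours.
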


\begin{proof}
Let $f$, $\gl$ and $x$ be as in the hypothesis. Then
\begin{align*}
    R_\gl f(x)
        &= E_x\Bigl(\int_0^\infty e^{-\gl t} f(X_t)\,dt\Bigr)\\
        &= E_x\Bigl(\int_0^{H_0} e^{-\gl t} f(X_t)\,dt\Bigr)
                + E_x\Bigl(\int_{H_0}^\infty e^{-\gl t} f(X_t)\,dt\Bigr)\\
        &= E_x\Bigl(\int_0^\infty e^{-\gl t} f(X^0_t)\,dt\Bigr)
                + E_x\Bigl(e^{-\gl H_0}\int_0^\infty e^{-\gl t} f(X_{t+H_0})\,dt\Bigr)\\
        &= R^0_\gl f(x)
                + E_x\Bigl(e^{-\gl H_0}\int_0^\infty e^{-\gl t}
                        E_x\bigl(f(X_{t+H_0}\bigr)\cond \cF_{H_0})\,dt\Bigr)\\
        &= R^0_\gl f(x)
                + E_x\Bigl(e^{-\gl H_0}\int_0^\infty e^{-\gl t}
                        E_{x_0}\bigl(f(X_t)\bigr)\,dt\Bigr)\\
        &= R^0_\gl f(x)
                + E_x\bigl(e^{-\gl H_0}\bigr)\,R_\gl f(x_0),
\end{align*}
where we used the strong Markov property of $X$, and its right continuity to
conclude that $X_{H_0}=x_0$.
\end{proof}

\section{Proofs of Lemmas~\ref{int_lemiv}--\ref{int_lemviii}}    \label{app_int}
In this appendix we prove lemmas~\ref{int_lemiv}--\ref{int_lemviii}.

\begin{proof}[Proof of lemma~\ref{int_lemiv}]
Fix $k\in\N$, $x\in [0,1]$. By standard arguments it is enough to show
equality~\eqref{int_eqvii} for $V$ of the form
\begin{equation*}
    V = \prod_{i=1}^n f_i(X_{t_i}),
\end{equation*}
with $n\in\N$, $0<t_1<t_2<\dotsb<t_n$, and bounded measurable functions $f_1$,
$f_2$,\dots, $f_n$ on $[0,1]$. Thus we have to show that
\begin{equation}    \label{aint_eqi}
    E_x\Bigl(\prod_{i=1}^n f_i(X_{t_i+T_k});\,\gL\Bigr)
        =    E_x\Bigl(E_{X_{T_k}}\Bigl(\prod_{i=1}^n f_i(X_{t_i})\Bigr);\,\gL\Bigr)
\end{equation}
for every $\gL\in\cF^X_{T_k}\cap\{X_{T_k}\ne\gD\}$. It is sufficient to show~\eqref{aint_eqi}
for every $\gL$ in a $\cap$--stable generator of $\cF^X_{T_k}\cap\{X_{T_k}\ne\gD\}$, and since
by Galmarino's theorem, lemma~\ref{int_lemii}, $\cF^X_{T_k}=\gs(X_{\,.\,\land T_k})$, we can
choose $\gL$ to be of the form
\begin{equation*}
    \gL = \bigcap_{j=1}^m \{X_{s_j\land T_k}\in C_j\}\cap\{X_{T_k}\ne\gD\},
\end{equation*}
with $m\in\N$, $s_1$, $s_2$, \dots, $s_m\in\R_+$, $C_1$, $C_2$, \dots,
$C_m\in\cB([0,1]\cup\{\gD\})$. Remark that by construction $\{X_{T_k}\ne \gD\}=
\{X_{T_k}\in\{0,1\}\}$. Hence we may decompose $\gL = \gL_0\uplus\gL_1$ in
$\cF^X_{T_k}\cap\{X_{T_k}\ne \gD\}$ with $\gL_c = \gL\cap\{X_{T_k}=c\}$, $c=0$, $1$.
Consider the right hand side of equation~\eqref{aint_eqi} with $\gL$ replaced by $\gL_0$.
By lemma~\ref{int_lemi} we find
\begin{equation}	\label{pfeq1}
	 E_x\Bigl(\prod_{i=1}^n f_i(X_{t_i+T_k}); \gL_0\Bigr)
		= E_x\Bigl(\prod_{i=1}^n f_i(Y_{t_i+S_k}); \gL^Y_0\Bigr)
\end{equation}
with
\begin{equation*}
	\gL^Y_0 = \bigcap_{j=1}^m \{Y_{s_j\land S_k}\in C_j\}\cap \{Y_{S_k}=0\}.
\end{equation*}
In order to compute the expectation value on the right hand side
of equation~\eqref{pfeq1}, we factorize the probability space
$(\Xi,\cC,Q_x)$ as follows:
\begin{align*}
	\Xil &= \BigCart_{k=0}^{n-1} \Xi^k,  &
				\Xiu &= \BigCart_{k=n}^{\infty} \Xi^k,\\
	\cCl &= \bigotimes_{k=0}^{n-1} \cC^k, &
				\cCu &= \bigotimes_{k=n}^{\infty} \cC^k,\\
	\Qxl &= Q_x^0\otimes\Bigl(\bigotimes_{k=1}^{n-1} Q^k\Bigr), &
				\Qu &= \bigotimes_{k=n}^{\infty} Q^k.
\end{align*}
We show that $\gL^Y_0$ belongs to the $\gs$--algebra
$\cCl\times\Xiu$, i.e., that the indicator of this set only is a function
of the variables in $\Xil$. Firstly, from the construction of $Y$ and the
sequence $(S_k,\,k\in\N)$ it follows that
\begin{equation*}
	\{Y_{S_k}=0\}
		= \begin{cases}
			\{\rho^0_1<\rho^0_0\}\cap\{S_k<+\infty\}, &\text{if $k$ is even},\\
			\{\rho^0_0<\rho^0_1\}\cap\{S_k<+\infty\}, &\text{if $k$ is odd}.
		  \end{cases}	
\end{equation*}
Since in both cases the sets on the right hand side belong to $\cCl\times\Xiu$,
we find that $\{Y_{S_k}=0\}\in\cCl\times\Xiu$. Next, for $j\in\{1,\dotsc,m\}$
we get
\begin{equation*}
\begin{split}
	\{Y_{s_j\land S_k}&\in C_j\}\cap\{Y_{S_k}=0\}\\
		&= \{Y_{s_j}\in C_j,\,s_j< S_k,\,Y_{S_k}=0\}\uplus
			\{Y_{S_k}\in C_j,\,s_j\ge S_k,\,Y_{S_k}=0\}.
\end{split}
\end{equation*}
The first term belongs to $\cCl\times\Xiu$, and for the second
we note that it is empty unless $0\in C_j$, and in this case it is equal
to $\{s_j\ge S_k\}\cap\{Y_{S_k}=0\}$. Therefore this event belongs to
$\cCl\times\Xiu$, too. Hence we can write
\begin{align}	\label{pfeqA}
	E_x\Bigl(\prod_{i=1}^n f_i&(Y_{t_i+S_k}); \gL^Y_0\Bigr)\nonumber\\
		&= \int_{\Xil} 1_{\gL^Y_0}\, \Bigl(\int_{\Xiu}
				\prod_{i=1}^n f_i(Y_{t_i+S_k})\,d\Qu\Bigr)\,d\Qxl\nonumber\\
		&= \sum_{k\le l_1\le \dotsb \le l_n}
				\int_{\Xil} 1_{\gL^Y_0}\, R_{l_1,\dotsc,l_n}\,d\Qxl,
\end{align}
with
\begin{equation}	\label{pfeq2}
	R_{l_1,\dotsc,l_n}
		= \int_{\Xiu} \prod_{i=1}^n f_i(Y_{t_i+S_k})
			\,1_{\{S_{l_i}\le t_i+S_k < S_{l_i+1}\}}\,d\Qu,
\end{equation}
and we made use of the fact that $Q_x$--a.s.\ the sequence $(S_n,\,n\in\N)$
is strictly increasing to infinity.

We claim that on $\{Y_{S_k}=0\}$
\begin{equation}	\label{pfeq3}
	R_{l_1,\dotsc,l_n}
		= E_0\Bigl(\prod_{i=1}^n f_i(Y_{t_i})\,
			1_{\{S_{l_i-k+1}\le t_i < S_{l_i-k+2}\}}\Bigr)
\end{equation}
holds true. Consider the integrand on the right hand side of equation~\eqref{pfeq2},
choose $i\in\{1,\dotsc,n\}$, and assume first that $l_i=k$. Then (on $\{Y_{S_k}=0\}$)
the corresponding term in the product is equal to
\begin{equation*}
	f_i(Y_{t_i+S_k})\,1_{\{0\le t_i <\gs^k\}}
		= f_i(A^k_{t_i})\,1_{\{0\le t_i <\gs^k\}}.
\end{equation*}
Note that the term on the right hand side is constant on $\{Y_{S_k}=0\}\subset\Xil$,
and that under $\Qu$ it has the same law as
\begin{equation*}
	f_i(A^1_{t_i})\,1_{\{0\le t_i <\gs^1\}}
\end{equation*}
under $Q_0$. Next consider the case $l_i\ge k+1$. By construction
the relations
\begin{equation}	\label{pfeq4}
\begin{split}
	S_{l_i}-S_k
		&= \gs^k + \tau^{k+1} + \dotsb + \eta^{l_i-1},\\
	S_{l_i+1}-S_k
		&= \gs^k + \tau^{k+1} + \dotsb + \eta^{l_i},
\end{split}
\end{equation}
hold true on $\{Y_{S_k}=0\}$, where for $r\in\N$, $r\ge k$,
\begin{equation}	\label{pfeq5}
	\eta^r = \begin{cases}
				\gs^r,	&\text{if $r-k$ is even},\\
				\tau^r,	&\text{if $r-k$ is odd}.
			\end{cases}
\end{equation}
Moreover, on $\{Y_{S_k}=0\}\cap\{S_{l_i}\le t_i+S_k<S_{l_i+1}\}$,
\begin{equation}	\label{pfeq6}
	Y_{t_i+S_k} = \begin{cases}
					A^{l_i}_{t_i-(S_{l_i}-S_k)},  &\text{if $l_i-k$ is even},\\[2ex]
					B^{l_i}_{t_i-(S_{l_i}-S_k)},  &\text{if $l_i-k$ is odd}.
			     \end{cases}		
\end{equation}
Remark that the right hand sides of the relations~\eqref{pfeq4}, \eqref{pfeq6} are
constant on $\Xil$. On the other hand, under $Q_0$, i.e., with start of $Y$ at $0$, we
get $\rho^0_0=0$,
and hence
\begin{equation}	\label{pfeq7}
\begin{split}
	S_{l_i-k+1}
		&= \gs^1 + \tau^2 + \dotsb + \xi^{l_i-k},\\
	S_{l_i-k+2}
		&= \gs^1 + \tau^2 + \dotsb + \xi^{l_i-k+1},
\end{split}
\end{equation}
with ($r\in\N$, $r\ge 1$)
\begin{equation}	\label{pfeq8}
	\xi^r = \begin{cases}
				\gs^r,	&\text{if $r$ is odd},\\
				\tau^r,  &\text{if $r$ is even.}
		    \end{cases}		
\end{equation}
Furthermore, if $t_i\in[S_{l_i-k+1},S_{l_i-k+2})$ then
\begin{equation}	\label{pfeq9}
	Y_{t_i} = \begin{cases}
				A^{l_i-k+1}_{t_i-S_{l_i-k+1}},	&\text{if $l_i-k$ is even},\\[2ex]
				B^{l_i-k+1}_{t_i-S_{l_i-k+1}},	&\text{if $l_i-k$ is odd}.
			 \end{cases}
\end{equation}
A comparison of the right hand sides of equations~\eqref{pfeq4}--\eqref{pfeq6}
with~\eqref{pfeq7}--\eqref{pfeq9} shows that on $\{Y_{S_k}=0\}$ formula~\eqref{pfeq3}
holds true as claimed.

We insert  formula~\eqref{pfeq3} into equation~\eqref{pfeqA} and get
\begin{align*}
	E_x\Bigl(\prod_{i=1}^n f_i&(Y_{t_i+S_k}); \gL^Y_0\Bigr)\\
		&= \sum_{k\le l_1\le \dotsb \le l_n} Q_x\bigl(\gL^Y_0\bigr)\,
			E_0\Bigl(\prod_{i=1}^n f_i(Y_{t_i})\,
				1_{\{S_{l_i-k+1}\le t_i < S_{l_i-k+2}\}}\Bigr)\\
		&= Q_x\bigl(\gL^Y_0\bigr)\,
			E_0\Bigl(\prod_{i=1}^n f_i(Y_{t_i})\,1_{\{S_1\le t_i \}}\Bigr)\\
		&= Q_x\bigl(\gL^Y_0\bigr)\,
			E_0\Bigl(\prod_{i=1}^n f_i(Y_{t_i})\Bigr)\\
		&= E_x\Bigl(E_0\Bigl(\prod_{i=1}^n f_i(Y_{t_i})\Bigr); \gL^Y_0\Bigr)\\
		&= E_x\Bigl(E_{Y_{S_k}}\Bigl(\prod_{i=1}^n f_i(Y_{t_i})\Bigr); \gL^Y_0\Bigr)
\end{align*}
where for the third equality we used that under $Q_0$, $S_1=0$. With lemma~\ref{int_lemi}
we find
\begin{equation*}
    E_x\Bigl(\prod_{i=1}^n f_i(X_{t_i+T_k}); \gL_0\Bigr)
		= E_x\Bigl(E_{X_{T_k}}\Bigl(\prod_{i=1}^n f_i(X_{t_i})\Bigr); \gL_0\Bigr).
\end{equation*}
We add this equation to the corresponding one where $\gL_0$ is interchanged
with $\gL_1$, and obtain
\begin{equation*}
    E_x\Bigl(\prod_{i=1}^n f_i(X_{t_i+T_k}); \gL\Bigr)
		= E_x\Bigl(E_{X_{T_k}}\Bigl(\prod_{i=1}^n f_i(X_{t_i})\Bigr); \gL\Bigr).
\end{equation*}
Thus lemma~\ref{int_lemiv} is proved.
\end{proof}

For later use we bring the strong Markov property of the last lemma into an
efficient form. To this end, let us denote the family of cylinder functions of $X$
by $\cG$, i.e., $G\in\cG$ is a random variable of the form
\begin{equation}    \label{aint_eqii}
    G = g\bigl(X_{s_1},\dotsc, X_{s_m}\bigr),
\end{equation}
where $m\in\N$, $g$ is a bounded, measurable function on $[0,1]^m$ (extended to
$([0,1]\cup\{\gD\})^m$ in the usual way), $s_1$, \dots, $s_m\in\R_+$, with $s_1\le
s_2\le \dotsb \le s_m$. For $G\in\cG$ of this form set
\begin{equation}    \label{aint_eqiii}
    G_r = g\bigl(X_{s_1-r},\dotsc, X_{s_m-r}\bigr),\qquad r\le s_1.
\end{equation}

\begin{lemma}   \label{aint_lemi}
Assume that $G\in\cG$ is of the form~\eqref{aint_eqii}. Suppose furthermore that
$k\in\N$, and that $\vp$ is a bounded, measurable function on $\overline \R_+$.
Then for all $x\in[0,1]$,
\begin{equation}    \label{aint_eqiv}
\begin{split}
    E_x\bigl(\vp(T_{k+1})\,&G;\,T_k\le s_1 \cond \cF^X_{T_k}\bigr)\\[.5ex]
        &= E_{X_{T_k}}\bigl(\vp(r+H_c)\,G_r\bigr)\,1_{[0,s_1]}(r)\eval_{r=T_k,\,c=(X_{T_k})^*}
\end{split}
\end{equation}
holds true $P_x$--a.s.\ on $\{X_{T_k}\ne \gD\}$.
\end{lemma}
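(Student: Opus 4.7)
The plan is to reduce the identity to the strong Markov property at $T_k$ (lemma~\ref{int_lemiv}) by rewriting both $\vp(T_{k+1})$ and $G$ in a form which is additive under the shift $\theta_{T_k}$. From the construction of $(T_k)$ via \eqref{int_eqvi}, on $\{X_{T_k}\ne\gD\}$ the random variable $T_{k+1}-T_k$ equals $H_c\comp\theta_{T_k}$ with $c=(X_{T_k})^*$, so that
\begin{equation*}
    \vp(T_{k+1}) = \vp(r + H_c\comp\theta_{T_k})\eval_{r=T_k,\,c=(X_{T_k})^*}.
\end{equation*}
On the event $\{T_k\le s_1\}$ every time index $s_i$ appearing in $G$ satisfies $s_i\ge T_k$, hence $X_{s_i}=X_{s_i-T_k}\comp\theta_{T_k}$, and consequently $G = G_r\comp\theta_{T_k}\eval_{r=T_k}$.

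Introducing, for each fixed pair $(r,c)\in[0,s_1]\times\{0,1\}$, the bounded $\cF^X_\infty$--measurable random variable $F_{r,c} = \vp(r+H_c)\,G_r$, the two observations above combine into the pointwise identity
\begin{equation*}
    \vp(T_{k+1})\,G\,1_{\{T_k\le s_1\}} = \bigl(F_{r,c}\comp\theta_{T_k}\bigr)\,1_{[0,s_1]}(r)\eval_{r=T_k,\,c=(X_{T_k})^*},
\end{equation*}
valid $P_x$--a.s.\ on $\{X_{T_k}\ne\gD\}$. For each fixed $(r,c)$, lemma~\ref{int_lemiv} applied to $V=F_{r,c}$ gives
\begin{equation*}
    E_x\bigl(F_{r,c}\comp\theta_{T_k}\cond\cF^X_{T_k}\bigr) = E_{X_{T_k}}(F_{r,c})
\end{equation*}
on $\{X_{T_k}\ne\gD\}$. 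Since the map $\Phi(r,c,y):=E_y(F_{r,c})$ is jointly measurable and both $T_k$ and $(X_{T_k})^*$ are $\cF^X_{T_k}$--measurable, the standard freezing principle for conditional expectations permits substitution of these random parameters, producing precisely the right-hand side of the lemma.

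The main obstacle is the rigorous justification of the freezing step, since $r$ and $c$ are themselves random while the shift $\theta_{T_k}$ is also governed by $T_k$. I would handle this by first establishing the identity for $F_{r,c}$ of the product form $\vp(r+H_c)\prod_{i=1}^m f_i(X_{s_i-r})$ with $\vp(u)=e^{-\gl u}$ and $f_i$ indicators of Borel subsets of $[0,1]$; for such $F_{r,c}$ the identity reduces, after partitioning $\{X_{T_k}\ne\gD\}$ according to $X_{T_k}\in\{0,1\}$ and a dyadic discretization of $T_k$ (for example $T_k^{(n)}=\lfloor 2^n T_k\rfloor/2^n$), to a countable sum of direct applications of lemma~\ref{int_lemiv} at deterministic levels $(r,c)$, after which one passes to the limit using the right-continuity of the paths of $X$. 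A monotone-class argument then extends the identity to all bounded measurable $\vp$ and $g$, completing the proof.
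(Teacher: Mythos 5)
Your argument is correct in substance, but it takes a genuinely different route from the paper. The paper proves lemma~\ref{aint_lemi} by fixing $\gL\in\cF^X_{T_k}$, splitting it according to $X_{T_k}=0$ or $1$, specializing to $\vp(u)=e^{i\ga u}$, and computing the Laplace transform in the time parameter $s$ (with $G=K(s)$) of $E_x\bigl(\vp(T_{k+1})K(s);\,\gL_0\cap\{T_k\le s\}\bigr)$; the relation $T_{k+1}=T_k+H_1\comp\theta_{T_k}$ on $\{X_{T_k}=0\}$ together with lemma~\ref{int_lemiv} factorizes this transform, Laplace inversion then yields a disintegration over the law of $T_k$ on $\gL_0$ which effects the substitution $r=T_k$, and Fourier analysis removes the restriction on $\vp$. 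You instead rewrite the integrand pointwise as $(F_{r,c}\comp\theta_{T_k})\,1_{[0,s_1]}(r)$ evaluated at $r=T_k$, $c=(X_{T_k})^*$ --- the same reading of the construction that the paper itself uses in its proof, namely $T_{k+1}=T_k+H_{(X_{T_k})^*}\comp\theta_{T_k}$, even though \eqref{int_eqvi} as printed is garbled --- and then invoke a freezing principle. This is legitimate and arguably cleaner: since $c$ and the starting point $X_{T_k}$ take only the values $0,1$ on $\{X_{T_k}\ne\gD\}$, the only genuine freezing is in the real parameter $r$, and it can be established once and for all for bounded jointly measurable $\Psi(r,\go)$ by a functional monotone class argument starting from product integrands $h(r)V(\go)$, for which the identity follows at once from lemma~\ref{int_lemiv} after pulling out the bounded $\cF^X_{T_k}$--measurable factor $h(T_k)$. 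The paper's transform computation avoids formulating such a randomized strong Markov property; your route isolates it as a reusable statement and needs no transform inversion.

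Two details of your suggested implementation need repair. First, in the dyadic step with $r_n=\lfloor 2^nT_k\rfloor 2^{-n}\uparrow T_k$ you take the $f_i$ to be indicators, but right continuity of the paths only gives $X_{s_i-r_n}\to X_{s_i-T_k}$, and an indicator composed with a convergent sequence need not converge; run the limit with bounded \emph{continuous} $f_i$ (and continuous $\vp$) and recover indicators afterwards in your monotone class step, or simply drop the discretization in favour of the product-form monotone class argument just described. Second, $\vp(u)=e^{-\gl u}$ vanishes at $u=+\infty$, so the family $\{e^{-\gl\,\cdot}:\gl>0\}$ does not see the event $\{T_{k+1}=+\infty\}$, which has positive probability when killing is present; the extension to bounded measurable $\vp$ on $\overline\R_+$ must include this case separately (e.g.\ via $\vp\equiv 1$). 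The paper's own ``routine approximation combined with Fourier analysis'' glosses the same point, so this is a matter of completeness rather than a flaw specific to your approach.
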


\begin{proof}
Let $\gL\in\cF^X_{T_k}\cap\{X_{T_k}\ne\gD\}$, and decompose it in $\cF^X_{T_k}$ as
$\gL=\gL_0\uplus \gL_1$with $\gL_c = \gL\cap\{X_{T_k}=c\}$, $c=0$, $1$.

We choose first $\vp(x) = \exp(i\ga x)$, $\ga\in\R$, $x\in\R$, $\vp(+\infty)=1$. Set
$u_j = s_j-s_1\ge 0$, $j=1$, \dots, $m$, and define for $s\ge 0$
\begin{equation*}
    K(s) = g\bigl(X_s, X_{u_2+s},\dotsc, X_{u_m+s}\bigr),
\end{equation*}
so that $G=K(s_1)$. We compute the following Laplace transform at $\gl>0$:
\begin{align*}
    \int_0^\infty e^{-\gl s} E_x\bigl(\vp(T_{k+1}) &K(s);\,\gL_0
            \cap \{T_k\le s\}\bigr)\,ds\\
        &= E_x\Bigl(e^{i\ga T_{k+1}} \int_{T_k}^\infty e^{-\gl s} K(s)\,ds;\,\gL_0\Bigr)\\
        &= E_x\Bigl(e^{i\ga T_{k+1}-\gl T_k}\Bigl(\int_0^\infty e^{-\gl s}
                K(s)\,ds\Bigr)\comp\theta_{T_k};\,\gL_0\Bigr).
\end{align*}
By construction $T_{k+1}=T_k + H_1\comp\theta_{T_k}$ when $X_{T_k}=0$. Therefore the last
expression equals
\begin{align*}
    E_x\Bigl(&e^{(i\ga-\gl)T_k}\,E_x\Bigl(\Bigl(e^{i\ga H_1}\int_0^\infty e^{-\gl s}
            K(s)\,ds\Bigr)\comp\theta_{T_k}\cond \cF_{T_k}\Bigr);\,\gL_0\Bigr)\\
        &= E_x\bigl(e^{(i\ga-\gl)T_k};\,\gL_0\bigr)\,
                    E_0\Bigl(e^{i\ga H_1} \int_0^\infty e^{-\gl s} K(s)\,ds\Bigr)\\
        &= \int_0^\infty e^{-\gl s}\,e^{i\ga s}\,P_x\bigl(\{T_k\in ds\}\cap\gL_0\bigr)
                \int_0^\infty e^{-\gl s}\,E_0\bigl(e^{i\ga H_1}\,K(s)\bigr)\,ds,
\end{align*}
where we used the strong Markov property of $X$ relative to $T_k$,
cf.~lemma~\ref{int_lemiv}. Inverting the Laplace transform we get
\begin{align*}
    E_x\bigl(e^{i\ga T_{k+1}}\,&G;\,\gL_0\cap\{T_k\le s_1\}\bigr)\\
        &= \int_0^{s_1} P_x\bigl(\gL_0\cap\{T_k\in dr\}\bigr)\,
                E_0\bigl(e^{i\ga(r+H_1)}\,G_r\bigr)\\
        &= \int_0^\infty P_x\bigl(\gL_0\cap\{T_k\in dr,\,T_k\le s_1\}\bigr)\,
                E_0\bigl(e^{i\ga(r+H_1)}\,G_r\bigr)\\
        &= E_x\Bigl(E_0\bigl(e^{i\ga(r+H_1)}\,G_r\bigr)\,
                1_{[0,s_1]}(r)\eval_{r=T_k};\,\gL_0\Bigr)\\
        &= E_x\Bigl(E_{X_{T_k}}\bigl(e^{i\ga(r+H_c)}\,G_r\bigr)\,
                1_{[0,s_1]}(r)\eval_{r=T_k,\, c = (X_{T_k})^*};\,\gL_0\Bigr).
\end{align*}
Repeating the same calculation with $\gL_0$ replaced by $\gL_1$, gives an
analogous expression, and adding both we obtain the following formula
\begin{equation*}
\begin{split}
    E_x\bigl(e^{i\ga T_{k+1}}\,&G;\,\gL\cap\{T_k\le s_1\}\bigr)\\
        &=E_x\Bigl(E_{X_{T_k}}\bigl(e^{i\ga(r+H_c)}\,G_r\bigr)\,
                1_{[0,s_1]}(r)\eval_{r=T_k,\, c = (X_{T_k})^*};\,\gL\Bigr)
\end{split}
\end{equation*}
for arbitrary $\gL\in\cF^X_{T_k}\cap\{X_{T_k}\ne\gD\}$. Finally, a routine approximation
argument combined with Fourier analysis yields equation~\eqref{aint_eqiv}.
\end{proof}

Now we are ready for the

\begin{proof}[Proof of lemma~\ref{int_lemvi}]
We work on the event $\{0\le t < T_1,\,X_t\ne \gD\}$, $t\in\R_+$. For $s\ge 0$, write
\begin{equation}    \label{aint_eqAi}
\begin{split}
    E_x\bigl(f(X_{s+t})\cond \cF^X_t\bigr)
        = E_x\bigl(&f(X_{s+t});\,s+t < T_1\cond \cF^X_t\bigr)\\
            &+ E_x\bigl(f(X_{s+t});\, s+t \ge T_1\cond \cF^X_t\bigr).
\end{split}
\end{equation}
The first term on the right hand side only involves the process $X$ with killing at
the boundary of the interval $[0,1]$, and this process is Markovian relative to
$\cF^X$ (cf.\ the discussion at the end of subsection~\ref{ssect_constr}). Therefore
this term is equal to
\begin{equation}    \label{aint_eqAii}
    E_{X_t}\bigl(f(X_s);\,s<T_1\bigr).
\end{equation}
For the second term on the right hand side of equation~\eqref{aint_eqAi} we remark
that
\begin{equation*}
    \cF^X_t\cap\{t<T_1\}\subset \cF^X_{T_1}\cap\{t<T_1\},
\end{equation*}
and therefore it is equal to
\begin{align*}
    E_x\Bigl(E_x\bigl(&f(X_{s+t})\cond \cF^X_{T_1}\bigr);\,
                        T_1\le s+t,\,X_{T_1}=0\cond \cF^X_t\Bigr)\\
        &\hspace{4em} + E_x\Bigl(E_x\bigl(f(X_{s+t})\cond \cF^X_{T_1}\bigr);\,
                        T_1\le s+t,\,X_{T_1}=1\cond \cF^X_t\Bigr)\\
        &= E_x\bigl(\vp_0(T_1);\, T_1\le s+t,\,X_{T_1}=0\cond \cF^X_t\bigr)\\
        &\hspace{4em} + E_x\bigl(\vp_1(T_1);\, T_1\le s+t,\,X_{T_1}=1\cond \cF^X_t\bigr),
\end{align*}
where
\begin{equation*}
    \vp_c(r) = E_c\bigl(f(X_{s+t-r})\bigr),\qquad c=0,1,\,0\le r\le s+t,
\end{equation*}
and we used the strong Markov property of $X$ relative to $T_1$,
lemma~\ref{aint_lemi}. The last two expectation values only involve the process
$\hat X$, that is, $X$ with absorption at the endpoints of the interval $[0,1]$, and
its absorption time~$T_1$. This is a Markov process with respect to $\cF^X$ (cf.\
our discussion at the end of subsection~\ref{ssect_constr}). Moreover, $t<T_1$
entails the relations $\vp_c(T_1)=\vp_c(t+T_1)\comp \theta_t$, $\{T_1\le s+t\}=
\{T_1\comp\theta_t \le s\}$, and $\{X_{T_1}=c\}=\{X_{T_1}\comp\theta_t=c\}$. Thus
we get
\begin{align*}
    E_x\bigl(f(X_{s+t});\,&T_1\le s+t\cond \cF^X_t\bigr)\\
        &= E_{X_t}\bigl(\vp_0(t+T_1);\,T_1\le s,\,X_{T_1}=0\bigr)\\
        &\hspace{4em} + E_{X_t}\bigl(\vp_1(t+T_1);\,T_1\le s,\,X_{T_1}=1\bigr)\\
        &= E_{X_t}\Bigl(E_0\bigl(f(X_{s-r})\bigr)\eval_{r=T_1};\,
                    T_1\le s,\,X_{T_1}=0\Bigr)\\
        &\hspace{4em} + E_{X_t}\Bigl(E_1\bigl(f(X_{s-r})\bigr)\eval_{r=T_1};\,
                    T_1\le s,\,X_{T_1}=1\Bigr)\\
        &= E_{X_t}\Bigl(E_{X_{T_1}}\bigl(f(X_{s-r})\bigr)\eval_{r=T_1};\,T_1\le s\Bigr).
\end{align*}
We apply lemma~\ref{aint_lemi} to the last expression, and the result is
\begin{equation*}
    E_{X_t}\Bigl(E_{X_t}\bigl(f(X_s)\cond \cF^X_{T_1}\bigr);\,T_1\le s\Bigr)
        = E_{X_t}\bigl(f(X_s);\,T_1\le s\bigr).
\end{equation*}
Combining this with~\eqref{aint_eqAii} for the first term on the right hand side
of equation~\eqref{aint_eqAi}, we get on $\{t<T_1,\,X_t\ne \gD\}$
\begin{equation*}
    E_x\bigl(f(X_{s+t})\cond \cF^X_t\bigr) = E_{X_t}\bigl(f(X_s)\bigr),\qquad \text{$P_x$--a.s.},
\end{equation*}
and lemma~\ref{int_lemvi} is proved.
\end{proof}

Recall that we write $\tilde X^c$ for the process $X$ with killing at $c$, $c\in\{0,1\}$,
and $\tilde X$ for the process $X$ with killing at either of the points $0$, $1$.
Furthermore, $\tilde A$ and $\tilde B$ denote the processes $A$, $B$, with killing at $1$,
$0$ respectively. We shall make use of the semigroups $U^1$, $U^0$,
generated by $\tilde A$, $\tilde B$ respectively:
\begin{align*}
    U^1_t f(x) &= E_x\bigl(f(\tilde A_t)\bigr) = E_x\bigl(f(A_t);\,t<H^A_1\bigr),\\
    U^0_t f(x) &= E_x\bigl(f(\tilde B_t)\bigr) = E_x\bigl(f(B_t);\,t<H^B_0\bigr).
\end{align*}

With the help of the strong Markov property of the processes $A$, $B$, it is not hard
to prove the following result:

\begin{lemma}   \label{aint_lemii}
If $s\le t$, then a.s.\ for all $y\in [0,1]$,
\begin{equation}    \label{aint_eqv}
\begin{split}
     E_y\bigl(f(A_t);\,t<H^A_1\cond \cF^A_s\bigr)
        &= U^1_{t-s}f(A_s)\,1_{\{s<H^A_1\}},\\
     E_y\bigl(f(B_t);\,t<H^B_0\cond \cF^B_s\bigr)
        &= U^0_{t-s}f(B_s)\,1_{\{s<H^B_0\}} .
\end{split}
\end{equation}
\end{lemma}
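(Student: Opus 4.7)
The plan is to prove the first identity; the second follows by the same argument with $B$, $H^B_0$ and $U^0$ in place of $A$, $H^A_1$ and $U^1$. The only ingredients needed are that $A$ is a Feller process, and hence strongly Markovian relative to its natural filtration $\cF^A$, and that $H^A_1$, being the hitting time of a closed set by a continuous process, is an $\cF^A$--stopping time; in particular $\{s<H^A_1\}\in\cF^A_s$.

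First I would dispose of the complementary event. Since $s\le t$, one has $\{t<H^A_1\}\subset\{s<H^A_1\}$, so the integrand on the left hand side is zero on $\{s\ge H^A_1\}$, and the right hand side also vanishes there because of its indicator. It therefore suffices to establish the identity on $\{s<H^A_1\}$.

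On this event, the additivity of hitting times under the shift operator $\theta^A_s$ of the Feller process $A$ gives $H^A_1 = s + H^A_1\comp\theta^A_s$, and consequently
\begin{equation*}
    f(A_t)\,1_{\{t<H^A_1\}}
        = \bigl(f(A_{t-s})\,1_{\{t-s<H^A_1\}}\bigr)\comp\theta^A_s
            \qquad\text{on $\{s<H^A_1\}$.}
\end{equation*}
Pulling the $\cF^A_s$--measurable factor $1_{\{s<H^A_1\}}$ out of the conditional expectation and invoking the Markov property of $A$ at the deterministic time $s$ would yield
\begin{equation*}
    E_y\bigl(f(A_t);\,t<H^A_1\cond \cF^A_s\bigr)
        = 1_{\{s<H^A_1\}}\,E_{A_s}\bigl(f(A_{t-s});\,t-s<H^A_1\bigr),
\end{equation*}
which is exactly $U^1_{t-s}f(A_s)\,1_{\{s<H^A_1\}}$ by the definition of $U^1$.

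I do not foresee any genuine obstacle; the only point that requires a moment of thought is the shift identity $H^A_1 = s + H^A_1\comp\theta^A_s$ on $\{s<H^A_1\}$, but this is a routine consequence of the path continuity of $A$ together with its normality and strong Markov property.
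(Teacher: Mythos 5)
Your proof is correct and is exactly the routine argument the paper has in mind (the paper omits the proof, merely citing the Markov structure of $A$ and $B$): the shift identity $H^A_1=s+H^A_1\comp\theta^A_s$ on $\{s<H^A_1\}$, the $\cF^A_s$--measurability of $\{s<H^A_1\}$, and the Markov property at the deterministic time $s$ give the claim, the case $B$ being symmetric. Two tiny remarks: the shift identity is a purely pathwise consequence of $A_u\comp\theta^A_s=A_{u+s}$ (no continuity or strong Markov property is needed for it), and indeed your argument uses only the simple Markov property at time $s$, which is even less than the strong Markov property invoked in the paper's one-line justification.
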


\begin{proof}[Proof of lemma~\ref{int_lemv}]
We only show the equivalence of $\tilde X^1$ and $\tilde A$, the equivalence of
$X^0$ and $\tilde B$ follows from an analogous argument. Also in this proof, for the
sake of simplicity of notation we denote by $\cF$ the filtration generated by $A$, and its
hitting times of $c=0$, $1$, by $H_c$. Fix $t>0$, let $G$ be a bounded
$\cF_s$--measurable random variable, $0\le s < t$, and suppose that $F$ is a bounded
measurable function on $[0,1]$. Since $\{H_0<H_1\}\in\cF_{H_0}$ (see, e.g.,
\cite[Lemma~1.2.16]{KaSh91}), we get with the strong Markov property of $A$,
\begin{align*}
    E_x\bigl(G\, &F(A_t);\,s<H_0\le t <H_1\bigr)\\
        &= E_x\bigl(G\, 1_{\{s<H_0<H_1\}}\,F(A_t)\,1_{\{t<H_1\}}\bigr)\\
        &= E_x\bigl(G\, 1_{\{s<H_0<H_1\}}\,E_x\bigl(F(A_t)\,1_{\{t<H_1\}}
                                            \cond \cF_{H_0}\bigr)\bigr)\\
        &= E_x\bigl(G\, 1_{\{s<H_0<H_1\}}\,E_0\bigl(F(A_{t-r});
                                \,t-r<H_1\bigr)\,1_{[0,t]}(r)\eval_{r=H_0}\bigr),
\end{align*}
where we can, for example, use the Laplace transform as in the proof of
lemma~\ref{aint_lemi}, together with fact that on $\{H_0<H_1\}$, we have
$H_1=H_0+H_1\comp\theta_{H_0}$, to prove the last equation.
Therefore we obtain the following formula
\begin{equation}    \label{aint_eqvi}
\begin{split}
    E_x\bigl(G\, F(A_t);\,&s<H_0\le t <H_1\bigr)\\
        &= \int_s^t E_0\bigl(F(A_{t-r});\,t-r<H_1\bigr)\,m_{x,s,G}(dr),
\end{split}
\end{equation}
with the finite, signed measure $m_{x,s,G}$ on $(\overline\R_+,\cB(\overline\R_+))$
being defined by
\begin{equation}    \label{aint_eqvii}
    m_{x,s,G}(C) = E_x\Bigl(G;\,H_0\in C,\,s<H_0<H_1\bigr),
                                        \qquad C\in \cB(\overline\R_+).
\end{equation}

Now suppose that $k\in\N$, $s<t=t_1<t_2<\dots<t_k$, and that $h_1$, $h_2$, \dots,
$h_k$ are bounded measurable functions on $[0,1]$. Put
\begin{equation*}
    F(x) = h_1(x)\,\bigl(U^b_{t_2-t_1}h_2\,U^b_{t_3-t_2} h_3
                        \dotsb U^b_{t_k-t_{k-1}}h_k\bigr)(x),\qquad x\in [0,1].
\end{equation*}
Then equations~\eqref{aint_eqvi} and~\eqref{aint_eqvii} entail
\begin{equation}    \label{aint_eqviii}
\begin{split}
   E_x\bigl(G\, &h_1(A_{t_1}) \dotsb h_k(A_{t_k});
		\,s<H_0\le t_1<t_2<\dotsb<t_k <H_1\bigr)\\&=
		\int_s^t E_0\bigl(h_1(A_{t_1-r})\dotsb
         h_k(A_{{t_k}-r};\,t_k-r<H_1\bigr)\,m_{x,s,G}(dr).
\end{split}
\end{equation}

Next we make the following choice
\begin{equation*}
    G = g_1(A_{s_1})\, g_2(A_{s_2})\dotsb g_l(A_{s_l}),
\end{equation*}
with $l\in\N$, $0<s_1<\dotsb<s_l=s$, $g_1$, \dots, $g_l$ being bounded, measurable
functions on $[0,1]$. Consider the measure $m_{x,s,G}$ in this case, and let
$C\in\cB(\overline\R_+)$. Then
\begin{equation*}
    m_{x,s,G}(C)
        = E_x\bigl(g_1(A_{s_1})\dotsb g_l(A_{s_l});\,H_0\in C,\,s_l<H_0<H_1\bigr).
\end{equation*}
Recall that we assume, as we may, that $A$ is constructed pathwise from the standard
Brownian motion $O$, which exclusively has continuous paths. In particular,
the paths of $A$ and $O$ coincide up to the hitting time $H^O_0$ (inclusive),
so that $H^A_0=H^O_0$, where $H^A_c$ and $H^O_c$ denote the hitting times of $c=0$,
$1$, by $A$ and $O$ respectively. We claim that $\{H_0^A<H^A_1\} = \{H^O_0<H^O_1\}$.
Indeed, this equality is equivalent to $\{H^A_1\le H^A_0\}=\{H^O_1\le H^O_0\}$,
but since $A$ and $O$ coincide on the random interval $[0,H^A_0]=
[0,H^O_0]$ this is obvious. Consequently the formula
\begin{equation*}
    m_{x,s,G}(C)
        = E_x\bigl(g_1(O_{s_1})\dotsb g_l(O_{s_l});\,H^O_0\in C,\,s<H^O_0<H^O_1\bigr)
\end{equation*}
holds true. Moreover, starting in $0$, $A$ is equivalent to the process $A^1$. Hence
formula~\eqref{aint_eqviii} reads for the above choice of $G$
\begin{equation*}
\begin{split}
    E_x\bigl(G &h_1(A_{t_1})\dotsb h_k(A_{t_k});\,s<H_0\le t_1<t_2<\dotsb<t_k<H_1\bigr)\\
        &= \int_s^t E_x\bigl(g_1(O^0_{s_1})\dotsb g_l(O^0_{s_l});\,H^{O^0}_0\in
                dr,\,s<H^{O^0}_0<H^{O^0}_1\bigr)\\
        &\hspace{8em} \times E_0\bigl(h_1(A^1_{t_1-r})\dotsb
                h_k(A^1_{t_k-r});\,t_k-r<\gs^1\bigr),
\end{split}
\end{equation*}
and we recall that $\gs^1$ denotes the hitting time of $1$ by $A^1$.
From the construction of $Y$ we obtain
\begin{align}   \label{aint_eqix}
    E_x\bigl(&g_1(A_{s_1})\dotsb g_l(A_{s_l})\nonumber\\
        &\hspace{4em}\times h_1(A_{t_1})\dotsb h_k(A_{t_k});
            \,s_l<H^A_0\le t_1<t_2<\dotsb<t_k<H^A_1 \bigr)\nonumber\\
    &= E_x\bigl(g_1(Y_{s_1})\dotsb g_l(Y_{s_l})\nonumber\\
        &\hspace{4em}\times h_1(Y_{t_1})\dotsb h_k(Y_{t_k});
            \,s_l<H^Y_0\le t_1<t_2<\dotsb<t_k<H^Y_1 \bigr)\nonumber\\
    &= E_x\bigl(g_1(X_{s_1})\dotsb g_l(X_{s_l})\nonumber\\
        &\hspace{4em}\times h_1(X_{t_1})\dotsb h_k(X_{t_k});
            \,s_l<H^X_0\le t_1<t_2<\dotsb< t_k<H^X_1 \bigr)
\end{align}
and in the last step we used lemma~\ref{int_lemi}.

Finally, let $f_1$, \dots, $f_n$, $n\in\N$, be bounded, measurable functions on
$[0,1]$, and assume that $0< v_1< v_2<\dotsb<v_n$. Then we write
\begin{equation*}
\begin{split}
    E_x\bigl(f_1&(A_{v_1})\dotsb f_n(A_{v_n});\,v_n<H^A_0\bigr)\\[.5ex]
        &= E_x\bigl(f_1(A_{v_1})\dotsb f_n(A_{v_n});\,v_n<H^A_0\land H^A_1\bigr)\\
        &\hspace{3em} +\sum_{j=1}^n E_x\bigl(f_1(A_{v_1})\dotsb f_n(A_{v_n});\,
                        v_{j-1}<H^A_0\le v_j,\,v_n<H^A_1\bigr).
\end{split}
\end{equation*}
Since up to the hitting time of $0$, $A$ and $X$ are both equivalent to a standard
Brownian motion, the first term on the right hand side is equal to
\begin{equation*}
    E_x\bigl(f_1(X_{v_1})\dotsb f_n(X_{v_k});\,v_n<H^X_0\land H^X_1\bigr).
\end{equation*}
For each term under the sum we use formula~\eqref{aint_eqix} with appropriate choices of
$l$, $k$, $s_i$, $g_i$, $i=1$, \dots, $l$, $t_j$, $h_j$, $j=1$, \dots, $k$.
Therefore we finally obtain
\begin{equation*}
    E_x\bigl(f_1(A_{v_1})\dotsb f_n(A_{v_n});\,v_n<H^A_1\bigr)
        = E_x\bigl(f_1(X_{v_1})\dotsb f_n(X_{v_n});\,v_n<H^X_1\bigr),
\end{equation*}
and the proof of the first statement of lemma~\ref{int_lemv} is finished. The second
statement follows immediately, because $A$ is a Markov process, and hence so is
$\tilde A$ (e.g., \cite[Chapter~III.3]{BlGe68}, \cite[Chapter~X]{Dy65a}), and the
simple Markov property actually is a property of the finite dimensional
distributions of a process. The last statement of lemma~\ref{int_lemv} is then a
consequence of the fact that $A$ and $B$ are equivalent to a standard Brownian
motion up to their hitting time of $0$, $1$ respectively.
\end{proof}

For $t\in\R_+$, and $k\in\N$, we consider the family $\cG_{k,t}$ of events $\gL$ of
the form
\begin{equation}    \label{aint_eqx}
    \gL = \gL_1 \cap \gL_2 \cap \{T_k\le s_1\},
\end{equation}
with $\gL_1\in\cF^X_{T_k}$,
\begin{equation*}
    \gL_2 = \bigcap_{j=1}^n \{X_{s_j}\in C_j\},
\end{equation*}
and where $n\in\N$, $0<s_1<$ \dots $<s_n\le t$, $C_1$, $C_2$, \dots
$C_n\in\cB([0,1]\cup\{\gD\})$. Then it is a routine exercise to show the following
result:

\begin{lemma}   \label{aint_lemiii}
$\cG_{k,t}$ is a $\cap$--stable generator of the $\gs$--algebra $\cF^X_t\cap\{T_k\le
t\}$.
\end{lemma}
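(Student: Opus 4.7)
The plan is to verify the two defining properties separately: closure under finite intersections, and equality of the generated $\sigma$-algebra with $\cF^X_t\cap\{T_k\le t\}$.

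For $\cap$-stability, given two sets $\gL=\gL_1\cap\gL_2\cap\{T_k\le s_1\}$ and $\gL'=\gL_1'\cap\gL_2'\cap\{T_k\le s_1'\}$ in $\cG_{k,t}$, I would observe that $\gL_1\cap\gL_1'\in\cF^X_{T_k}$, that $\gL_2\cap\gL_2'$ is again a finite-dimensional cylinder indexed by times lying in $(0,t]$, and that $\{T_k\le s_1\}\cap\{T_k\le s_1'\}=\{T_k\le s_1\wedge s_1'\}$. Since the smallest time appearing in $\gL_2\cap\gL_2'$ equals $s_1\wedge s_1'$, the intersection $\gL\cap\gL'$ has precisely the form defining membership in $\cG_{k,t}$.

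For the inclusion $\cG_{k,t}\subset \cF^X_t\cap\{T_k\le t\}$, I would exploit the standard identity $\cF^X_{T_k}\cap\{T_k\le s_1\}\subset\cF^X_{s_1}\subset\cF^X_t$, so that $\gL_1\cap\{T_k\le s_1\}\in\cF^X_t$; combined with $\gL_2\in\cF^X_t$ (all $s_j\le t$) and the inclusion $\{T_k\le s_1\}\subset\{T_k\le t\}$, this places every element of $\cG_{k,t}$ in the trace $\sigma$-algebra on the right.

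The substantive direction, and the principal obstacle, is $\cF^X_t\cap\{T_k\le t\}\subset\sigma(\cG_{k,t})$. Since $\cF^X_t\cap\{T_k\le t\}$ is generated by the one-dimensional cylinders $\{X_s\in C\}\cap\{T_k\le t\}$ with $s\le t$ and $C\in\cB([0,1]\cup\{\gD\})$, it suffices to place each such set in $\sigma(\cG_{k,t})$. The key observation is the decomposition
\begin{equation*}
\{X_s\in C\}\cap\{T_k\le t\}
 = \bigl[\{X_s\in C\}\cap\{T_k\le s\}\bigr]
   \;\cup\;\bigl[\{X_{s\wedge T_k}\in C\}\cap\{s<T_k\le t\}\bigr],
\end{equation*}
which uses the identity $X_s=X_{s\wedge T_k}$ on the event $\{s\le T_k\}$. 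The first set is already in $\cG_{k,t}$, taking $\gL_1=\gO$, $n=1$, $s_1=s$, $C_1=C$. The second set has the form $\gL_1\cap\{T_k\le t\}$ where
\begin{equation*}
\gL_1 = \{X_{s\wedge T_k}\in C\}\cap\{T_k>s\}\in\cF^X_{T_k},
\end{equation*}
since Galmarino's theorem (lemma~\ref{int_lemii}) identifies $\cF^X_{T_k}$ with $\gs(X_{\,\cdot\,\wedge T_k})$, and $\{T_k>s\}\in\cF^X_{T_k}$ because $T_k$ is an $\cF^X$--stopping time; one then realises this set as an element of $\cG_{k,t}$ by taking $n=1$, $s_1=t$, $C_1=[0,1]\cup\{\gD\}$ so that $\gL_2=\gO$ and $\{T_k\le s_1\}=\{T_k\le t\}$. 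Finite intersections of such one-dimensional cylinders, permitted by the $\cap$-stability just established, exhaust the generators of $\cF^X_t\cap\{T_k\le t\}$, and the lemma follows.
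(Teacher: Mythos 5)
The paper itself gives no proof of this lemma (it is dismissed as ``a routine exercise''), so there is nothing to compare against; your argument is correct and supplies exactly what is needed: $\cap$--stability and the inclusion $\cG_{k,t}\subset\cF^X_t\cap\{T_k\le t\}$ are verified as you say, and the decomposition of the one--dimensional traces $\{X_s\in C\}\cap\{T_k\le t\}$ along $\{T_k\le s\}$ versus $\{s<T_k\le t\}$, together with Galmarino's theorem giving $\{X_{s\land T_k}\in C\}\cap\{T_k>s\}\in\cF^X_{T_k}$, yields the substantive inclusion. The only loose end is the generator with $s=0$: your recipe sets $s_1=s$, which violates the requirement $0<s_1$ in the definition of $\cG_{k,t}$; this is repaired at once, since $\{X_0\in C\}\in\cF^X_0\subset\cF^X_{T_k}$ can be absorbed into $\gL_1$ (taking $n=1$, $s_1=t$, $C_1=[0,1]\cup\{\gD\}$), so that $\{X_0\in C\}\cap\{T_k\le t\}$ lies in $\cG_{k,t}$ directly, with no decomposition needed for that case.
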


\begin{proof}[Proof of lemma~\ref{int_lemvii}]
Take $\gL$ of the form~\eqref{aint_eqx}. Observe that $T_k\le s_1$ entails that
$T_k$ is finite, and thus $X_{T_k}\in\{0,1\}$, i.e., $X_{T_k}\ne \gD$. We have
to compute
\begin{equation*}
    E_x\bigl(f(X_{s+t});\,\gL\cap\{s+t<T_{k+1}\}\bigr).
\end{equation*}
(Note that $\gL\cap\{s+t<T_{k+1}\}$ includes the event $\{T_k\le t <T_{k+1}\}$.)
We want to apply lemma~\ref{aint_lemi}, and choose $G$ in equation~\eqref{aint_eqii}
as follows: $m=n+1$, $s_m=s_{n+1}=s+t$, and
\begin{equation*}
    g(X_{s_1},\dotsc, X_{s_m}) = \prod_{j=1}^n 1_{C_j}(X_{s_j})\,f(X_{s+t}).
\end{equation*}
Moreover, we choose the $\vp$ in lemma~\ref{aint_lemi} as the indicator of
$(s+t,+\infty]$. Then we can compute with lemma~\ref{aint_lemi} as follows:
\begin{align*}
    E_x\bigl(f(X_{s+t});\,&\gL\cap\{s+t<T_{k+1}\}\bigr)\\
        &= E_x\bigl(\vp(T_{k+1})\,G;\,\{T_k\le s_1\}\cap\gL_1\bigr)\\
        &= E_x\Bigl(E_x\bigl(\vp(T_{k+1})\,G;\,T_k\le s_1\cond
                \cF^X_{T_k}\bigr);\,\gL_1\Bigr)\\
        &= E_x\Bigl(E_{X_{T_k}}\bigl(\vp(r+H_c)\,G_r\bigr)\,
                1_{[0,s_1]}(r)\eval_{r=T_k,\,c=(X_{t_k})^*};\,\gL_1\Bigr).
\end{align*}
As before, we decompose $\gL_1=\gL_{1,0}\uplus\gL_{1,1}$ in $\cF^X_{T_k}$ with
$\gL_{1,c}=\gL_1\cap\{X_{T_k}=c\}$, $c=0$, $1$. The term involving $\gL_{1,0}$
is equal to
\begin{equation}    \label{aint_eqxi}
    E_x\Bigl(E_0\bigl(\vp(r+H_1)\,G_r\bigr)\,
                1_{[0,s_1]}(r)\eval_{r=T_k};\,\gL_{1,0}\Bigr).
\end{equation}
We consider the inner expectation for $r\in[0,s_1]$:
\begin{equation*}
    E_0\bigl(\vp(r+H_1)\,G_r\bigr)
        = E_0\Bigl(f(X_{s+t-r})\,\prod_{j=1}^n 1_{C_j}(X_{s_j-r});\,s+t-r<H_1\Bigr),
\end{equation*}
and remark that this expectation only concerns the process $X$ killed when
reaching~$1$. Moreover, the event $\cap_j \{X_{s_j-r}\in C_j\}$ belongs to
$\cF^X_{t-r}$. We use the Markov property (cf.\ lemma~\ref{int_lemv}) of the
killed process relative to this $\gs$--algebra, as well as
\begin{equation*}
    \{s+t-r<H_1\} = \{s<H_1\comp \theta_{t-r},\,t-r<H_1\}
\end{equation*}
to compute
\begin{align*}
    E_0\bigl(\vp(r&+H_1)\,G_r\bigr)\\
        &= E_0\Bigl(E_0\big(f(X_s)\comp\theta_{t-r};\,s<H_1\comp\theta_{t-r}\cond
                \cF^X_{t-r}\bigr);\\
        &\hspace{8em} \{t-r<H_1\}\cap \bigl(\cap_{j=1}^n \{X_{s_j-r}\in C_j\}\bigr)\Bigr)\\
        &= E_0\Bigl( E_{X_{t-r}}\bigl(f(X_s);\,s<H_1\bigr);\\
        &\hspace{8em} \{t-r<H_1\}\cap \bigl(\cap_{j=1}^n \{X_{s_j-r}\in C_j\}\bigr)\Bigr)\\
        &= E_0\bigl(J_r;\,t-r<H_1\bigr),
\end{align*}
with
\begin{equation*}
    J_r = E_{X_{t-r}}\bigl(f(X_s);\,s<H_1\bigr)\,\prod_{j=1}^n 1_{C_j}(X_{s_j-r}).
\end{equation*}
Therefore
\begin{align*}
    E_0\bigl(\vp(r+H_1)\,G_r\bigr)\,1_{[0,s_1]}(r)\eval_{r=T_k}
        &= E_0\bigl(J_r;\, t<r+H_1\bigr)\,1_{[0,s_1]}(r)\eval_{r=T_k}\\[1ex]
        &= E_x\bigl(J_0;\,T_k\le s_1 ,\,t<T_{k+1}\cond \cF^X_{T_k}\bigr),
\end{align*}
with another application of lemma~\ref{aint_lemi}. Set $J=J_0$ and insert this
into~\eqref{aint_eqxi}, then
\begin{align*}
    E_x\bigl(f(X_{s+t});\,&\gL\cap\{X_{T_k}=0\}\cap\{s+t<T_{k+1}\}\bigr)\\
        &= E_x\Bigl(E_x\bigl(J;\,T_k\le s_1,\,t<T_{k+1}\cond
                \cF^X_{T_k}\bigr);\,\gL_{1,0}\Bigr)\\
        &= E_x\bigl(J;\,\{T_k\le s_1,\,t<T_{k+1}\}\cap\gL_{1,0}\bigr)\\
        &= E_x\Bigl(E_{X_t}\bigl(f(X_s);\,s<H_1\bigr);\,\{t<T_{k+1}\}\cap\gL\cap\{X_{T_k=0}\}\Bigr).
\end{align*}
We add this expression to the corresponding one, where $\{X_{T_k}=0\}$ is replaced
by $\{X_{T_k}=1\}$, and find
\begin{align*}
    E_x\bigl(f(X_{s+t});\,&\gL\cap\{s+t<T_{k+1}\}\bigr)\\
        &= E_x\Bigl(E_{X_t}\bigl(f(X_s);\,s<H_c\bigr)
                \eval_{c=(X_{T_k})^*};\,\{t<T_{k+1}\}\cap\gL\Bigr).
\end{align*}
Therefore we have shown ($P_x$--a.s.)
\begin{align*}
    E_x\bigl(f(&X_{s+t});\,T_k\le t,\,s+t<T_{k+1}\cond\cF^X_t \bigr)\\
        &= E_{X_t}\bigl(f(X_s);\,s<H_c\bigr)\eval_{c=(X_{T_k})^*}\,1_{\{T_k\le t<T_{k+1}\}}\\
        &= E_{X_t}\bigl(f(X_s);\,H_{c^*}\le s<H_c\bigr)\eval_{c=(X_{T_k})^*}\,
                                1_{\{T_k\le t< T_{k+1}\}}\\
        &\hspace{4em} + E_{X_t}\bigl(f(X_s);\,s<H_c\land H_{c^*}\bigr)\eval_{c=(X_{T_k})^*}\,
                                1_{\{T_k\le t< T_{k+1}\}}\\
        &= E_{X_t}\bigl(f(X_s);\,T_1\le s<T_2,\,X_{T_1}=c\bigr)
                                \eval_{c=X_{T_k}}\,1_{\{T_k\le t< T_{k+1}\}}\\
        &\hspace{4em} + E_{X_t}\bigl(f(X_s);\,s<T_1\bigr)\,
                                1_{\{T_k\le t<T_{k+1}\}},
\end{align*}
which is the statement of lemma~\ref{int_lemvii}.
\end{proof}

\begin{proof}[Proof of lemma~\ref{int_lemviii}]
We work on the event $\{X_{T_k}=0,\,T_k\le t<T_{k+1},\,X_t\ne \gD\}$, $k\in\N_0$.
Then
\begin{align}	\label{aint_eqxii}
    E_x\bigl(f(X_{s+t});\,T_k&\le t<T_{k+1}\le s+t\cond\cF^X_t\bigr)\nonumber\\
        &= E_x\Bigl(E_x\bigl(f(X_{s+t})\cond\cF^X_{T_{k+1}}\bigr);\,
                                    T_k\le t<T_{k+1}\le s+t\cond\cF^X_t\Bigr)\nonumber\\
        &= E_x\bigl(\vp(T_{k+1});\,T_k\le t\cond \cF^X_t\bigr),
\end{align}
with
\begin{equation*}
    \vp(r) = E_1\bigl(f(X_{s+t-r})\bigr)\,1_{(t,s+t]}(r),\qquad r\in\R_+,
\end{equation*}
and we have applied lemma~\ref{aint_lemi}. Let $\gL\in\cG_{k,t}$ be of the
form~\eqref{aint_eqx}, and as before we write $\gL_{1,0} = \gL_1\cap\{X_{T_k}=0\}$.
We set $G=1_{\gL_2}$, then with another application of lemma~\ref{aint_lemi}
we get
\begin{align}   \label{aint_eqxiii}
    E_x\bigl(\vp(T_{k+1});\,&\gL\cap\{X_{T_k}=0\}\bigr)\nonumber\\
        &= E_x\bigl(\vp(T_{k+1});\,\gL_{1,0}\cap\gL_2\cap\{T_k\le s_1\}\bigr)\nonumber\\
        &= E_x\Bigl(E_x\bigl(\vp(T_{k+1})\,G\cond \cF^X_{T_k}\bigr);\,
                \gL_{1,0}\cap\{T_k\le s_1\}\Bigr)\nonumber\\
        &= E_x\Bigl(E_0\bigl(\vp(u+H_1)\,G_u\bigr)\,1_{[0,s_1]}(u)\eval_{u=T_k};\,
                \gL_{1,0}\Bigr).
\end{align}
Consider the inner expectation value
\begin{equation*}
    E_0\bigl(\vp(u+H_1)\,G_u\bigr)\,1_{[0,s_1]}(u)
        = E_0\Bigl(\vp(u+H_1)\,\prod_{j=1}^n
                1_{C_j}(X_{s_j-u})\Bigr)\,1_{[0,s_1]}(u).
\end{equation*}
The definition of $\vp$ implies that under this expectation we have $t-u<H_1$, and
therefore it only concerns the process $X$ killed at $1$, together with its lifetime
$H_1$ (which we recall is a stopping time relative to the natural filtration of
$X$). Therefore by lemma~\ref{int_lemv} we can use the Markov property of the killed
process to compute this expectation. To this end note that $s_1-u\le$ \dots $\le
s_n-u\le t-u$, so that $G_u$ is $\cF^X_{t-u}$ measurable. Thus
\begin{align} \label{aint_eqxiiia}
    E_0\bigl(&\vp(u+H_1)\,G_u\bigr)\,1_{[0,s_1]}(u)\nonumber\\
        &= E_0\Bigl(E_0\bigl(\vp(u+H_1);\,t-u<H_1\le s+t-u\cond \cF^X_{t-u}\bigr)\,
                G_u\Bigr)\,1_{[0,s_1]}(u)\nonumber\\
        &= E_0\Bigl(E_0\bigl(\vp(t+H_1)\comp\theta_{t-u};\,0<H_1\comp\theta_{t-u}\le s
                \cond \cF^X_{t-u}\bigr)\nonumber\\
        &\hspace{16em}\times G_u;\,t-u<H_1\Bigr)\,1_{[0,s_1]}(u)\nonumber\\
        &= E_0\Bigl(E_{X_{t-u}}\bigl(\vp(t+H_1);\,0<H_1\le s\bigr)\,
                G_u;\,t-u<H_1\Bigr)\,1_{[0,s_1]}(u),
\end{align}
and we made use of the relation
\begin{equation*}
    \{t-u<H_1\le s+t-u\}
        = \{H_1 = t-u+H_1\comp\theta_{t-u},\,0<H_1\comp\theta_{t-u}\le s,\,
                   t-u<H_1\}.
\end{equation*}
We evaluate equality~\eqref{aint_eqxiiia} at $u=T_k$, and apply lemma~\ref{aint_lemi}.
This gives
\begin{equation*}
\begin{split}
    E_0\bigl(&\vp(u+H_1)\,G_u\bigr)\,1_{[0,s_1]}(u)\eval_{u=T_k}\\[1ex]
        &= E_0\Bigl(E_{X_{t-u}}\bigl(\vp(t+H_1);\,0<H_1\le s\bigr)\,
                        G_u;\,t-u<H_1\Bigr)\,1_{[0,s_1]}(u)\eval_{u=T_k}\\[1ex]
        &= E_x\Bigl(E_{X_t}\bigl(\vp(t+H_1);\,0<H_1\le s\bigr)\,
                        G;\,T_k\le s_1,\,t<T_{k+1}\cond\cF^X_{T_k}\Bigr).
\end{split}
\end{equation*}
The last expression is inserted into equation~\eqref{aint_eqxiii}. This results
in
\begin{align*}
    E_x\bigl(&\vp(T_{k+1});\,\gL\cap\{X_{T_k}=0\}\bigr)\\
        &= E_x\Bigl(E_x\Bigl(E_{X_t}\bigl(\vp(t+H_1);\,0<H_1\le s\bigr)\\
        &\hspace{12em}\times G;\,T_k\le s_1,\,t<T_{k+1}\cond \cF^X_{T_k}\Bigr);\gL_{1,0}\Bigr)\\
        &= E_x\Bigl(E_{X_t}\bigl(\vp(t+H_1);\,0<H_1\le s\bigr);\,
                                \{X_{T_k}=0,\,T_k\le t<T_{k+1}\}\cap\gL\Bigr).
\end{align*}
With equation~\eqref{aint_eqxii}, this implies that on
$\{X_{T_k}=0,\,T_k\le t<T_{k+1},\,X_t\ne\gD\}$ we have the relation
\begin{equation}    \label{aint_eqxiv}
\begin{split}
    E_x\bigl(f(X_{s+t});\,T_k\le t&<T_{k+1}\le s+t\cond \cF^X_t\bigr)\\
        &= E_{X_t}\bigl(\vp(t+H_1);\,0<H_1\le s\bigr),
\end{split}
\end{equation}
because $\gL_{1,0}\in\cF^X_{T_k}$. Using lemma~\ref{aint_lemi} we compute the right hand
side of this formula:
\begin{align*}
    E_{X_t}\bigl(\vp(t&+H_1);\,0<H_1\le s\bigr)\\[1ex]
        &= E_{X_t}\bigl(\vp(t+H_1);\,0\le H_0< H_1\le s\bigr)\\
        &\hspace{6em} + E_{X_t}\bigl(\vp(t+H_1);\,0 < H_1\le s\land H_0\bigr)\\[1ex]
        &= E_{X_t}\bigl(\vp(t+T_2);\,T_2\le s,\,X_{T_1}=0\bigr)\\
        &\hspace{6em} + E_{X_t}\bigl(\vp(t+T_1);\,T_1\le s,\,X_{T_1}=1\bigr)\\[1ex]
	    &= E_{X_t}\bigl(E_1\bigl(f(X_{s-r})\bigr)\eval_{r=T_2};\,T_2\le s,\,X_{T_1}=0\bigr)\\
	    &\hspace{6em} + E_{X_t}\bigl(E_1\bigl(f(X_{s-r})\bigr)\eval_{r=T_1};\,
             T_1\le s,\,X_{T_1}=1\bigr)\\[1ex]
        &= E_{X_t}\bigl(E_{X_t}\bigl(f(X_s)\cond \cF^X_{T_2}\bigr);\,
             T_2\le s,\,X_{T_1}=0\bigr)\\
        &\hspace{6em} + E_{X_t}\bigl(E_{X_t}\bigl(f(X_s)\cond \cF^X_{T_1}\bigr);\,
            T_1\le s,\,X_{T_1}=1\bigr)\\[1ex]
        &= E_{X_t}\bigl(f(X_s);\,T_2\le s,\,X_{T_1}=0\bigr)\\
        &\hspace{6em} + E_{X_t}\bigl(f(X_s);\,T_1\le s,\,X_{T_1}=1\bigr).
\end{align*}
We have proved that on $\{X_{T_k}=0,\,T_k\le t <T_{k+1},\,X_t\ne\gD\}$ the following
formula holds true $P_x$--a.s.:
\begin{equation*}
\begin{split}
    E_x\bigl(f(&X_{s+t});\, T_{k+1}\le s+t\cond \cF^X_t\bigr)\\
        &= E_{X_t}\bigl(f(X_s);\,T_2\le s,\,X_{T_1}=0\bigr)
            + E_{X_t}\bigl(f(X_s);\,T_1\le s,\,X_{T_1}=1\bigr).
\end{split}
\end{equation*}
The case $\{X_{T_k}=1\}$ is dealt with in the same way, and lemma~\ref{int_lemviii} is proved.
\end{proof}

\end{appendix}

\providecommand{\bysame}{\leavevmode\hbox to3em{\hrulefill}\thinspace}
\providecommand{\MR}{\relax\ifhmode\unskip\space\fi MR }
\providecommand{\MRhref}[2]{%
  \href{http://www.ams.org/mathscinet-getitem?mr=#1}{#2}
}
\providecommand{\href}[2]{#2}

\end{document}